\newcommand{\reff}[1]{{\rm (\ref{#1})}}
\newtheorem{theorem}{Theorem}[section]
\newtheorem{lemma}{Lemma}[section]
\newtheorem{definition}{Definition}[section]
\numberwithin{equation}{section}
\def\XXint#1#2#3{{\setbox0=\hbox{$#1{#2#3}{\int}$}
\vcenter{\hbox{$#2#3$}}\kern-.51\wd0}}
\newcommand{\R}{\mathbb{R}}            % real numbers
\newcommand{\ve}{\varepsilon}          % varepsilon
\newcommand{\calN}{{\mathcal N}}
\newcommand{\calX}{\mathcal X}
\newcommand{\calV}{\mathcal V}
\begin{document}

\title{ 
The Calculus of Boundary Variations and 
%%Dielectric Boundary Force in Molecular Solvation with Poisson--Boltzmann Theory: Point Charge Case
%%\\ Or: 
%Point Charges, Electrostatic Free Energy, and Dielectric Boundary Force: A Nonlinear 
%Poisson--Boltzmann Theory for Charged Molecules
%\\ Or:
%%Electrostatic Free Energy and Dielectric Boundary Force for Molecular Solvation with Point Charges: 
%%A Nonlinear Poisson--Boltzmann Theory 
%%Point Charges, Electrostatic Free Energy, and Dielectric Boundary Force: A Nonlinear 
%%Poisson--Boltzmann Theory for Molecular Solvation
%Electrostatic Free Energy and 
the Dielectric Boundary Force in the 
%%Nonlinear 
Poisson--Boltzmann Theory 
%for the Solvation of Charged Molecules
for Molecular Solvation
} 

\author{
Bo Li
\thanks{
Department of Mathematics,
University of California, San Diego,
9500 Gilman Drive, MC 0112, La Jolla, CA 92093-0112, USA.
Email: bli@math.ucsd.edu}
\and
Zhengfang Zhang
\thanks{
College of Science, 
Hangzhou Dianzi University, 
Hangzhou, Zhejiang 310018, China. 
Email: zhengfangzhang@hdu.edu.cn
}
\and
Shenggao Zhou
\thanks{Department of Mathematics and Mathematical Center for Interdiscipline Research,
Soochow University, 1 Shizi Street, Suzhou 215006, Jiangsu, China. 
 Email: sgzhou@suda.edu.cn}
}

\date{\today}

\maketitle

\begin{abstract}

\noindent
%%The classical Poisson--Boltzmann (PB) theory relates the equilibrium ionic concentrations to the
%%electrostatic potential through the Boltzmann distributions. 
%%Poisson's equation with such distributions then becomes the nonlinear PB equation which 
%%alone determines the electrostatic potential. 
%Such potential solves a boundary-value problem of Poisson's equation with such distributions lead 
%the nonlinear PB equation that is Poisson's equaiton with the Boltzmann distributions. 
%Such a theory provides a simple yet quantitatively accurate
%description of electrostatic interactions in many physical and biological systems. 
In a continuum model of the solvation of charged molecules in an aqueous solvent, the classical 
Poisson--Boltzmann (PB) theory is generalized to 
include the solute point charges and the dielectric boundary 
that separates the high-dielectric solvent from the low-dielectric solutes. 
With such a setting, we construct an effective electrostatic free-energy functional of 
ionic concentrations, where the solute point charges are regularized by a reaction field. 
We prove that such a functional admits a unique minimizer in a class of admissible 
ionic concentrations and that the corresponding
electrostatic potential is the unique solution to the boundary-value problem of 
the dielectric-boundary PB equation.
%We then construct the PB free-energy functional of electrostatic potentials 
%such that its Euler--Lagrange equation is exactly the dielectric-boundary PB equation, 
%and that its extreme value conicides with the minimum of electrostatic free energy of
%ionic concentrations.  
The negative first variation of this minimum free energy with respect 
to variations of the dielectric boundary defines the normal component of 
the dielectric boundary force. 
Together with the solute-solvent interfacial tension
and van der Waals interaction forces, 
such boundary force drives an underlying charged molecular system to a stable equilibrium, 
as described by a variational implicit-solvent model.  
We develop an $L^2$-theory for the continuity and differentiability of solutions to 
elliptic interface problems with respect to boundary variations,  
and derive an explicit formula of the dielectric boundary force. 
With a continuum description, our result of the dielectric boundary force
confirms a molecular-level prediction that
the electrostatic force points from the high-dielectric and polarizable
aqueous solvent to the charged molecules. 
Our method of analysis is general as it does not rely on any variational principles.

%%The concept and tools of shape derivatives
%%are employed to define and derive the dielecric boundary force. 
%%to The variation of the boundary is defined using the concept of shape derivatives. 
%Connections between such force and the Maxwell stress in the 
%presence of dielectric discontinuity are established. 
%%All the results are generalized to a class of size-modified PB models
%%in which the ionic-size effects are included through the solvent entropy. 

%%and hence is more general then some of the existing methods. 
%%Application of our dielectric boundary force in a variational implicit solvation is discussed. 

\bigskip

\noindent
{\bf Keywords:} 
Molecular solvation, 
Poisson--Boltzmann equation, 
%Implicit-solvent model, 
electrostatic free energy, 
%Maxwell stress, 
point charges, 
dielectric boundary force, 
%elliptic interface problems, 
%shape derivatives, 
the calculus of boundary variations.    
%regularity theory.  
%tangential force, 
%% variational impilcit-solvent model, ionic size effect.
\end{abstract}

%%%%%%%%%%%%%%%%%%%%%%%%%%%%%%%%%%%%%%%%%%
%%\begin{comment}

\tableofcontents

\medskip

\noindent
{\bf Acknowledgements}
\hfill
{\bf 48}

\medskip

\noindent
{\bf References}
\hfill
{\bf 48}

%\medskip

%\noindent
%{
%\color{red}
%{\bf [Table of contents is only needed for drafting the paper. We should remove it afterwards.
%Also, will it be better to put the proof of the main result in 
%%Section 4 instead of having Section 6?  -Bo]}
%}

%\medskip

%%\end{comment}
%%%%%%%%%%%%%%%%%%%%%%%%%%%%%%%%%%%%%%%%%%

{\allowdisplaybreaks

%%%%%%%%%%%%%%%%%%%%%%%%%%%%%%%%%%%%%%%%%%%%%%%%%%%%%%%%%
\section{Introduction}
\label{s:Introduction}

The classic Poisson--Boltzmann (PB) theory provides a continuum description 
of electrostatic interactions in an ionic solution 
%that occupies a region $\Omega_0 \subseteq \R^3$
through the PB equation
%for the electrostatic potential $\psi:\Omega_0\to \R$
\cite{Andelman_Handbook95,
Chapman1913,
DebyeHuckel1923,
Fixman_JCP79,
Gouy1910} 
%%%%%%%%%%%%%%%%%%%%%%%%%%%%%%%
\begin{comment}
Such a theory has been widely used in the modeling of biological molecules
\cite{
%%CDLM_JPCB08,
DavisMcCammon_ChemRev90,
%%FogolariBriggs97,
%FogolariBrigoMolinari02,
%GilsonETC93,
%GrochowskiTrylska08,
%HonigScience95,
%%Li_SIMA09,
%%SharpHonig_JPC90,
BSJHM_PNAS01,
VISMPB_JCTC14,  
SharpHonig_Rev90}. 
%Here, we study the electrostatic free-energy functionals and dielectric boundary forces
%in such modeling within the nonlinear PB theory.
%% in the context of implicit-solvent modeling of charged molecules.  
\end{comment}
%%%%%%%%%%%%%%%%%%%%%%%%%%%%%%%
\begin{equation}
\label{ClassicPBE}
\nabla \cdot \ve \nabla \psi - B'(\psi) = - \rho \qquad \mbox{in } \Omega_0,   
\end{equation}
where $\Omega_0 \subseteq \R^3 $ is the region of the ionic solution, 
$\ve$ is the dielectric coefficient,  
$\rho: \Omega_0 \to \R $ represents the density of fixed charges, and 
$\psi:\Omega_0\to \R$ is the electrostatic potential. 
In \reff{ClassicPBE}, the function $B: \R \to \R$ is defined by 
\begin{equation}
\label{B}
B(s) = \beta^{-1} \sum_{j=1}^M c_j^\infty \left( e^{-\beta q_j s} - 1 \right) 
\qquad \forall s \in \R,   
\end{equation}
where $\beta = (k_{\rm B} T)^{-1}$ with 
$k_{\rm B}$ the Boltzmann constant and $T$ the temperature, 
$M$ is the total number of ionic species, 
$c_j^\infty$ is the bulk ionic concentration of the $j$th ionic species, 
and $q_j = z_j e$ is the charge of an ion of the $j$th species
with $z_j$ the valence of such an ion and $e$ the elementary charge.   
The PB equation \reff{ClassicPBE} is a combination of Poisson's equation 
\begin{equation*}
%\label{cPoisson}
\nabla \cdot \ve \nabla \psi = - \left( \rho  + \sum_{j=1}^M q_j c_j \right)
\qquad \mbox{in } \Omega_0, 
\end{equation*}
where $c_j: \Omega_0 \to [0, \infty)$ is 
the ionic concentration of the $j$th ionic species,  
%and $\rho + \sum_{j=1}^M q_j c_j$ is the total charge density, 
and the Boltzmann distributions for the equilibrium ionic concentrations
\begin{equation*}
%\label{Boltzmann}
c_j(x) = c_j^\infty e^{-\beta q_j \psi(x)}, \qquad x\in \Omega_0, \ j = 1, \dots, M.    
\end{equation*}

In modeling charged molecules (such as proteins) in an aqueous solvent (i.e., water or salted water)
within an implicit-solvent (i.e., continuum-solvent) framework, 
the PB theory is generalized to include the point charges of 
the charged molecules and a dielectric boundary that separates the high-dielectric solvent 
region from the low-dielectric solute region 
\cite{CDLM_JPCB08,
CramerTruhlar_ChemRev99,
TomasiPersico_ChemRev94,
DavisMcCammon_ChemRev90,
%FogolariBriggs97,
%%FogolariBrigoMolinari02,
%GrochowskiTrylska08,
%HonigScience95,
%%BSJHM_PNAS01,
Li_SIMA09,
%%SharpHonig_JPC90,
SharpHonig_Rev90}.   
To be more specific, let us assume that the entire solvation system occupies 
a region $\Omega \subseteq \R^3.$  It is the union of three disjoint parts: 
the region of solutes (i.e., charged molecules) $\Omega_-;$ 
%%which can have multiple connected components; 
the region of aqueous solvent $\Omega_+;$ 
and the solute-solvent interface or dielectric boundary $\Gamma$, which 
is a closed surface with possibly multiple components, that 
separates $\Omega_-$ and $\Omega_+;$ cf.\ Figure~1. 
%%\ref{f:cartoon}.   
We denote by $n$ the unit normal to the boundary $\Gamma$ pointing from $\Omega_-$ 
to $\Omega_+$ and also the exterior unit normal to $\partial \Omega,$ 
the boundary of $\Omega.$  The solute region $\Omega_-$ contains all the solute 
atoms that are located at $x_1, \dots, x_N$ and that carry partial charges $Q_1, \dots, Q_N$, 
respectively, where $N \ge 1$ is a given integer.  The solvent region $\Omega_+$ 
is the region of ionic solution, similar to $\Omega_0$ in \reff{ClassicPBE}. 
%%Implicitly, we assume that any ions in the solvent region cannot penetrate into the solute region. 
As before, we assume that there are $M$ species of ions in 
the solvent region $\Omega_+$ with the valence $z_j$, charge $q_j = z_j e$, 
bulk concentration $c_j^\infty$, and the local concentration $c_j: \Omega_+ \to [0, \infty)$
  for the $j$ ionic species $(j = 1, \dots, M).$ 
The dielectric coefficients in the solute region $\Omega_-$ and solvent region $\Omega_+$ are 
denoted by $\ve_-$ and $\ve_+$, respectively. 
Typically, $\ve_- = 1$ and $\ve_+ = 76 \sim 80$ in the unit of vacuum permittivity.
%% $\ve_0.$ 
Note that the density of fixed charges is now 
 given by $ \rho = \sum_{i=1}^N Q_i \delta_{x_i},$ 
where $\delta_{x_i}$ is the Dirac delta function at $x_i.$

\begin{figure}[bpht]
\begin{center}
\includegraphics[scale=0.4]{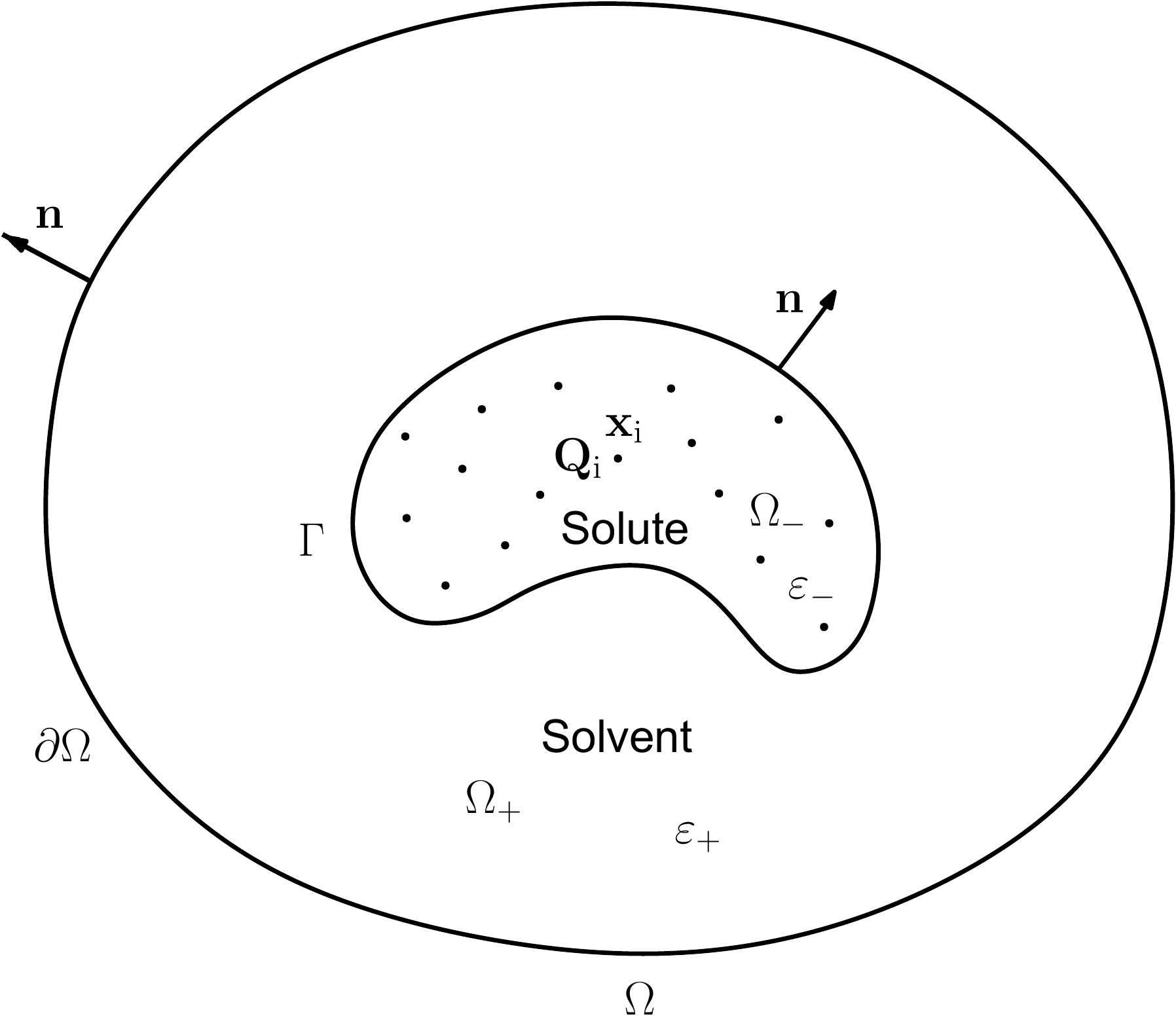}
\end{center}

\vspace{-6 mm}

\caption{
A schematic description of a solvation system with an implicit solvent. 
%The dielectric boundary $\Gamma$ is a solute-solvent interface that 
%separates the solute region $\Omega_-$ and the solvent region $\Omega_+$.
%The unit normal at the interface $\Gamma$ pointing from $\Omega_-$ to $\Omega_+$
%and the exterior unit normal at the boundary $\partial \Omega$ are both denoted by $n$.
%Dots represent solute atoms at $x_i \in \Omega_-$ carrying charges $Q_i$ $(i = 1, \dots, N)$.  
%%There are $M$ ionic species of ions in the solvent. 
%The dielectric coefficients in the solute region 
%$\Omega_-$ and solvent region $\Omega_+$ are denoted by $\ve_-$ and $\ve_+$, respectively.} 
%%\color{red}{{\bf [Need to re-draw this: larger solvent region. -Bo]}}
}
\label{f:cartoon}
\end{figure}

%%%%%%%%%%%%%%%%%%%
\begin{comment}
%%While using the reaction field is quite natural and plausible, the variational principle
%%of the classical PB theory needs to be examined. 
Second, the dielectric boundary $\Gamma$ separating the high dielecric solvent region
$\Omega_+$ from the low dielectric solute region $\Omega_-$ generates a 
macroscopic dielectric boundary force that is known to be crucial in conformational 
changes of macromolecules in water
\cite{GilsonETC_JPC93, WangEtal_VISMCFA_JCTC12, VISMPB_JCTC14}.  
%In most existing implicit-solvent models, a dielectric bounary is defined as the van der Waals surface, 
%solvent-excluded surface, or solvent-accessible surface 
%\cite{LeeRichards71,Richards77,Richmond84,Connolly83,Connolly92}. 
%However, in a more advanced, variational implicit-solvent model, 
%the dielectric boundary is defined to minimize (locally) a solvation free-energy functional 
%that couples the solute-solvent interfacial energy, solute-solvent van der Waals dispersive energy, 
%and the electrostatic interaction energy
%\cite{DSM_PRL06, DSM_JCP06} (cf.\ also 
%\cite{
%WangEtal_VISMCFA_JCTC12,
%VISMPB_JCTC14, 
%Zhou_VISM_PNAS2019,
%DaiLiLu_ARMA18,
%LCW99,
%Borgis_JPCB2005,
%BatesEtal_JMB09}).   
\end{comment}
%%%%%%%%%%%%%%%%%%%

%%In such a variational implicit solvation, it is natural to define the dielectric boundary
%%force and obtain an explicit formula for such force. 

%In this work, we study the dielectric-boundary electrostatic free-energy functional 
%and the corresponding dielectric boundary force for the solvation of charged molecules in 
%the framework of PB theory. 
%We begin with a fixed dielectric boundary $\Gamma,$ and 

We introduce the dielectric-boundary, electrostatic free-energy functional of 
the ionic concentrations $c = (c_1, \dots, c_M)$ in the solvent region $\Omega_+$ 
\cite{ReinerRadke90,CDLM_JPCB08, Li_SIMA09, FogolariBriggs97}
\begin{align}
\label{FGammac}
F_\Gamma [c] &= \frac12  \sum_{i=1}^N Q_i ( \psi - \hat{\phi}_{\rm C} ) (x_i) 
+ \frac12 \int_{\Omega_+}  \left( \sum_{j=1}^M q_j c_j \right) \psi \, dx \nonumber \\
&\qquad +\beta^{-1} \sum_{j=1}^M \int_{\Omega_+} \left\{ c_j \left[ \log (\Lambda^3 c_j )
- 1\right] + c_j^{\infty}\right\}dx  - \sum_{j=1}^M \int_{\Omega_+} \mu_j  c_j   \, dx,    
\end{align}
where $\Lambda $ is the thermal de Broglie wavelength, 
$\mu_j $ is the chemical potential for ions of the $j$th species, and 
$c_j^\infty  = \Lambda^{-3} e^{\beta \mu_j}$ $(j = 1, \dots, M).$ 
In \reff{FGammac}, $\psi: \Omega \to \R$ is the electrostatic potential. It is  
the unique weak solution to the boundary-value problem of Poisson's equation 
\begin{equation}
\label{PoissonNew}
\left\{
\begin{aligned}
& \nabla \cdot \ve_\Gamma \nabla \psi = - \left( \sum_{i=1}^N Q_i \delta_{x_i} + 
\chi_+ \sum_{j=1}^M q_j c_j \right) &  & \quad \mbox{in } \Omega, 
\\
& \psi = \phi_\infty  &  & \quad \mbox{on } \partial \Omega,  
\end{aligned}
\right.
\end{equation}
where the dielectric coefficient $\ve_\Gamma: \Omega \to \R$ is defined by 
\begin{equation}
\label{veGamma}
\ve_\Gamma (x)  = 
\left\{
\begin{aligned}
& \ve_-  \quad & \mbox{if } x \in \Omega_-, \\
& \ve_+   \quad & \mbox{if } x \in \Omega_+,
\end{aligned}
\right.
\end{equation}
$\chi_+ = \chi_{\Omega_+}$ is the characteristic function of $\Omega_+,$ and 
$\phi_\infty$ is a given function on the boundary $\partial \Omega.$ 
%(The boundary value is understood as the trace.) 
The function $\hat{\phi}_{\rm C}$ in \reff{FGammac}
is the Coulomb potential arising from the point charges $Q_i$ at $x_i $ $(i = 1, \dots, N)$ 
in the medium with the dielectric coefficient $\ve_-$, serving as a reference field.  
It is given by 
\begin{equation}
\label{psi0}
%%\phi_0(x) = \sum_{i=1}^N \frac{Q_i}{ 4 \pi \ve_- |x - x_i|}
\hat{\phi}_{\rm C} (x) = \sum_{i=1}^N \frac{Q_i}{ 4 \pi \ve_- |x - x_i|}
\qquad \forall x \in \R^3 \setminus \{ x_1, \dots, x_N \}.  
\end{equation}

%%{\bf [Changed $\psi_\infty$ to $\phi_\infty.$]}
%Note that the first two terms in $F_\Gamma [c]$ combined is the electrostatic potential energy.
%%, and the third and last terms in $F_\Gamma [c]$ are similar 
%%to those in $F[c]$ defined in \reff{Fc}. 
%%except we have added the integral of the constant 
%%%The integral of the constant $\beta^{-1} \sum_{j=1}^M c_j^\infty$ over $\Omega_+$ 
%%is significant now 
%%when a dielectric boundary is present as it 
%%%models the osmotic pressure at the boundary \cite{CDLM_JPCB08}. 

We prove that the functional $F_\Gamma [c]$ has a unique minimizer $c_\Gamma 
= (c_{\Gamma,1}, \dots, c_{\Gamma,M})$ in a class of admissible concentrations, 
and derive the equilibrium conditions $\delta_{c_j} F_\Gamma [c_\Gamma] = 0$ 
$(j = 1, \dots, M),$ which lead
to the (modified) Botlzmann distributions 
\[
c_{\Gamma, j} = c_j^\infty e^{-\beta q_j ( \psi_\Gamma - \phi_{\Gamma, \infty}/2)}
\qquad \mbox{in } \Omega_+,  \ j = 1, \dots, M, 
\]
%%$c_{\Gamma, j} = c_{\Gamma, j}(\psi_\Gamma)$ $(j = 1, \dots, M)$,
where $\psi_\Gamma$ 
%%(depending on $\Gamma$) 
is the corresponding electrostatic potential
and $\phi_{\Gamma,\infty}: \Omega \to \R$ is
the unique weak solution to the boundary-value problem
\begin{equation}
\label{hatpsiinfty}
\left\{
\begin{aligned}
&\nabla \cdot \ve_\Gamma \nabla \phi_{\Gamma,\infty} = 0 & & \quad  \mbox{in } \Omega, 
\\
& \phi_{\Gamma,\infty} = \phi_\infty & & \quad  \mbox{on } \partial \Omega.   
\end{aligned}
\right.
\end{equation}
We also prove that $\psi_\Gamma$ is the unique solution to the boundary-value problem of 
the dielectric-boundary PB equation  
\begin{equation}
\label{PBE}
\left\{
\begin{aligned}
&\nabla \cdot \ve_\Gamma \nabla \psi - \chi_+ B'\left( \psi - \frac{\phi_{\Gamma,\infty}}{2} \right)
%%&\nabla \cdot \ve_\Gamma \nabla \psi - \chi_+ B'\left( \psi - \frac{\hat{\psi}_{\infty}}{2} \right)
= - \sum_{i=1}^N Q_i \delta_{x_i}   & & \quad \mbox{in } \Omega, 
\\
& \psi = \phi_\infty & & \quad \mbox{on } \partial\Omega,    
\end{aligned}
\right. 
\end{equation}
%%{\bf [Changed $\hat{\psi}_\infty$ to $\phi_{\Gamma, \infty}$.]}
where $B$ is given in \reff{B};  
cf.\ Theorem~\ref{t:DBPBE} and Theorem~\ref{t:minFa}. 
With the Boltzmann distributions, the minimum free energy 
$ \min F_\Gamma[\cdot] = F_\Gamma[c_\Gamma ]$
can be expressed via the electrostatic potential $\psi_\Gamma.$  
We shall denote this minimum free energy by 
$E[\Gamma],$ as ultimately it  depends on the dielectric boundary $\Gamma$.    

%%%%%%%%%%%%%%%%%%%%%%%%%%%%%%%%%%%%%%%%%
\begin{comment}
We next construct a dielectric-boundary PB free-energy 
functional of all electrostatic potential $\psi$ that 
satisfy the boundary condition in \reff{PoissonNew} such that 
its Euler--Lagrange equation is exactly the PB equation \reff{PBE} and its extreme value 
is exactly the electrostatic free energy $\min F_\Gamma [ \cdot ].$ 
%%It follows from the case with an integrable fixed charge density 
%%(cf.\ \reff{ClassicPBE} and \reff{PBenergyphi}) that  
%%a natural candidate of such a functional is
%%{\bf [Check this.]}
%\begin{align*}
%& \psi\mapsto \sum_{i=1}^N Q_i (\psi-\phi_0)(x_i)
%\\
%&\qquad \quad 
%- \int_\Omega \left[  \frac{\ve_\Gamma}{2}|\nabla (\psi - \phi_0) |^2
%+\chi_+ B \left( \psi  - \frac{\phi_{\Gamma, \infty}}{2} \right)
%+ \ve_\Gamma \nabla (\psi - \phi_0) \cdot \nabla \phi_0  \right] dx. 
%\end{align*}
%However, we can use the potential $\phi_0$ \reff{psi0} to express 
%the point-charge energy (i.e., the sum) as an integral which can be better handled. 
%Therefore, we should consider the functional
This functional is given by 
{\color{red}
{\bf [Need to check this. -Bo]}
}
\begin{align}
\label{GGammapsi}
G_\Gamma [\psi] & = -\int_\Omega \left[ \frac{\ve_\Gamma}{2}|\nabla (\psi - \phi_0) |^2
+\chi_+ B \left( \psi  - \frac{\phi_{\Gamma, \infty}}{2} \right)\right] dx
\nonumber\\ 
&\qquad 
- (\varepsilon_+ -\varepsilon_-) \int_{\Omega_+} \nabla (\psi - \phi_0) \cdot \nabla\phi_0\,  dx
-\frac{\varepsilon_+ -\varepsilon_-}{2} \int_{\Omega_+} |\nabla\phi_0|^2 \, dx + \gamma, 
\end{align}
where $\gamma $ is a constant indepdennt of $\psi$ and $\Gamma;$ cf.\ Theorem~\ref{t:Jpsi}. 
%%This functionl is maximized uniquely at $\psi_\Gamma$ which is the unique weak solution
%%to the boundary-value problem of the dielectric-boundary PB equation \reff{PBE}, 
%%and the maximum value is the PB electrostatic free energy (i.e., the minimum of $F_\Gamma$) 
%%plus a consrtant; see Theorem~\ref{t:Jpsi}. 

%%, and the Euler--Lagrange equation of the functional 
%%is exactly the dielectric-boundary PB equation \reff{PBE}; cf.\ Theorem~\ref{t:}.   
\end{comment}
%%%%%%%%%%%%%%%%%%%%%%%%%%%%%%%%%%%%%%%%%

%Let us denote by $ E[\Gamma] = G_\Gamma [\psi_\Gamma] $
%the equilibrium electrostatic free energy for a given dielectric boundary $\Gamma.$ 
%%where $\psi_\Gamma$ is the weak solution of the boundary-value problem \reff{PBE}. 

We define the (normal component of the) dielectric boundary force to be $-\delta_\Gamma E[\Gamma],$ the negative
first variation of the functional $E[\Gamma]$ with respect to the variation of the boundary $\Gamma$. 
%%There are several different but equivalent ways to define such variations. 
%%Here, we use the method of shape derivatives. Specifically, 
The boundary variation is defined via a smooth vector field. Specifically, 
let $ V: \R^3 \to \R^3$ be a smooth map vanishing 
outside a small neighborhood of the dielectric boundary $\Gamma$. Let 
$x = x(t,X)$ be the solution map of the dynamical system defined by 
\cite{DelfourZolesio_Book87,KPTZ_LecNotes00,BucurButtazzo_Book05,Zolesio_Book92}
\[
\frac{dx(t,X)}{dt} = V(x(t,X)) \quad \forall  \, t \in \R
%% > 0 \ \mbox{small}
\qquad \mbox{and} \qquad  x(0, X) = X \quad \forall X \in \R^3. 
\]
Such solution maps define a family of transformations $T_t: \R^3 \to \R^3$
$(t \in \R)$ by $T_t(X) = x(t, X)$ for any $X\in \R^3.$  
The variational derivative (i.e., 
the shape derivative) of the functional $E[\Gamma]$ in the direction of $ V: \R^3 \to \R^3$ 
is defined to be 
\[
\delta_{\Gamma, V}E[\Gamma] = \frac{d}{dt} E [\Gamma_t(V)] \biggr|_{t = 0}, 
\]
if it exists, where $\Gamma_t(V) = \{ x(t,X): X \in \Gamma  \}$ 
and $E[\Gamma_t(V)]$ is defined similarly using $\Gamma_t(V)$ instead of $\Gamma.$
%The flow $x(t,X)$ defined by the map $V$ describes the variation of the boundary. 
%The derivative $\delta_{\Gamma, V} G[\Gamma]$ is often called 
%the {\em Eulerian semi-derivative} of $G[\Gamma]$ in the direction of
%$V$, cf. \cite{DelfourZolesio_Book87} (Definition 3.1 of Chapter 8).

We prove that $\delta_{\Gamma, V} E[\Gamma]$ exists, and is
an integral over $\Gamma$ of the product of $V \cdot n$ and some function that is independent of $V$, 
where $n$ is the unit normal along $\Gamma,$ pointing from $\Omega_-$ to $\Omega_+$. 
This function on $\Gamma$ is identified
as the variational derivative of $E[\Gamma]$ and is denoted by $\delta_\Gamma E[\Gamma]$. 
%We define the dielectric boundary force, or more precisely the normal component of the 
%dielectric boundary force, to be $ - \delta_\Gamma G[\Gamma]$ and denote it by $F_n.$ 
%Notice that it is only the normal component, not the tangential components
%of the boundary force, that determines the motion of the boundary. 
We obtain an explicit formula for $\delta_\Gamma E[\Gamma]$.  
If the boundary value $\phi_\infty = 0$ on $\Gamma$, then
%% (cf.\ \reff{PBE}), then 
%%(cf.\ Theorem~\ref{t:DBF})   
%{\color{red}{\bf [Check this. -Bo]}}
\begin{equation}
\label{deltaJ}
\delta_{\Gamma} E[\Gamma] =  - \frac{1}{2} \left( \frac{1}{\ve_+} - \frac{1}{\ve_-} \right)  
\left| \varepsilon_\Gamma \partial_n \psi_\Gamma \right|^2 + \frac{1}{2} ( \ve_+ - \ve_-)  
\left| \nabla_\Gamma \psi_\Gamma\right|^2 + B\left(\psi_\Gamma \right),      
\end{equation}
where $\psi_\Gamma$ is the unique solution to \reff{PBE}, 
$ \varepsilon_\Gamma \partial_n \psi_\Gamma$ is the common value from 
both sides of $\Gamma$, and $\nabla_\Gamma = ( I - n \otimes n) \nabla$ (with $I$ 
the $3\times 3$ identity matrix) is the tangential derivative along $\Gamma$. 
%%solution to the boundary-value problem of the PB equation \reff{PBE}, and 
%%a superscript $+$ or $-$ denotes the restriction onto $\Omega_+$ or $\Omega_-$, respectively.
Additional terms arise from a general, inhomogeneous boundary value $\phi_\infty;$
cf.\ Theorem~\ref{t:DBF}. 

To describe the electrostatic free energy with point charges and to 
prove the main theorem, Theorem~\ref{t:DBF}, we introduce various auxiliary
functions that are weak solutions to the boundary-value problems of 
the operator $-\Delta$ or $- \nabla \cdot \ve_\Gamma \nabla,$ 
with or without the point charges $\sum_{i=1}^N Q_i \delta_{x_i}$ and with homogeneous
or inhomogeneous Dirichlet boundary conditions. 
We also prove several lemmas, Lemmas \ref{l:TForce}--\ref{l:w_t}, 
on the calculus of boundary variations.  Lemma~\ref{l:TForce} is of its own interest. 
It states that if the vector field $V$ satisfies $V \cdot n = 0$ on the boundary $\Gamma$, 
where $n$ is the unit normal along $\Gamma$, then for $|t| \ll 1$ the set $\Gamma_t
= \Gamma_t(V) $ is within an $O(t^2)$-neighborhood of the boundary $\Gamma.$   
Lemmas \ref{l:phi0}--\ref{l:w_t} are on the continuity and differentiability of 
those functions with respect to boundary variations. 
Lemma~\ref{l:pGinfty} states that the ``$\Gamma$-derivative" of the 
function $\phi_{\Gamma, \infty}$ which is defined in \reff{hatpsiinfty} is the unique
weak solution $\zeta_{\Gamma, V} \in H_0^1(\Omega)$ to 
%%the boundary-value problem of an 
the elliptic interface problem $-\nabla \cdot \ve_\Gamma \nabla \zeta_{\Gamma, V} = f$
in $\Omega$, where $f$ involves $\phi_{\Gamma, V}$ and $V$. Moreover, 
\[
\frac{ {\phi}_{\Gamma_t(V), \infty} \circ T_t - \phi_{\Gamma, \infty}}{t} \to \zeta_{\Gamma, V}
\quad \mbox{in } H^1(\Omega) \quad \mbox{as } t \to 0. 
\]
Lemma~\ref{l:xi} and Lemma~\ref{l:w_t} generalize the result to other $\Gamma$-dependent
functions, including the electrostatic potential $\psi_\Gamma$ that is the unique 
soltuion to the boundary-value problem of the dielectric-boundary PB equation \reff{PBE}.

We now make several remarks on our results.
In our model, we use an inhomogeneous Dirichlet boundary condition 
for the electrostatic potential (cf.\ \reff{PoissonNew} and 
\reff{PBE}) that is common in modeling and analysis 
\cite{CDLM_JPCB08,Li_SIMA09,VISMPB_JCTC14}.  
The nonzero Dirichlet boundary value leads to an extra term 
${\phi}_{\Gamma, \infty}/2$ in the Boltzmann distribution and hence in the PB equation \reff{PBE}. 
If there are surface charges on the boundary $\partial \Omega$, then 
one can also use the Neumann boundary condition for the electrostatic potential on $\partial\Omega.$ 
In that case, the electrostatic energy should include a boundary integral term involving
the surface charge density; cf.\ \cite{Lu_SIAP18,LiLiuXuZhou_Nonlinearity2013}.  

If we use the homogeneous Dirichlet boundary condition 
$\phi_\infty = 0$ for the electrostatic potential, 
then the dielectric boundary force points from the high dielectric solvent region 
$\Omega_+$ to the low dielectric solute region $\Omega_-$; cf.\ \reff{deltaJ}. 
Such prediction of a macroscopic property is consistent with a microscopic picture of 
molecular forces that charged solute molecules polarize the surrounding aqueous solvent, 
which is otherwise electrically neutral, 
generating an additional electric filed that attracts the solvent to the solutes \cite{Chu67}.  
In the limiting case where the region $\Omega_+$ is conducting, i.e., the dielectric
coefficient in $\Omega_+$ is infinity, then it is expected that no bounded 
region $\Omega_-$ will minimize the sum of the electrostatic energy
and the surface energy \cite{LuOtto_CPAM14}. 
If a small, high-dielectric solvent region is surrounded by the low-dielectric
solute molecules (such as a few water molecules buried in a protein), then 
the competition between the solute-solvent interfacial tension force
and the dielectric boundary force results an equilibrium solute-solvent interface
which is however unstable with long-wave perturbations, as shown in 
the stability analysis in \cite{ChengLiWhiteZhou_SIAP13}; cf.\ also  
\cite{LiSunZhou_SIAP15}. Such analysis explains partially why water molecules
in proteins are metastable \cite{YinHummerRasaiah_JACS07, Rasaiah_JPCB10}. 
It remains open to confirm if the dielectric boundary force still points from 
the high-dielectric solvent region to the low-dielectric solute region
for a general inhomogeneous Dirichlet boundary value $\phi_{\infty}$.  

In \cite{CaiLuo2011,XiaoLuo_JCP2013,Luo_PCCP12}, the authors use the Maxwell stress tensor to 
define and derive the dielectric boundary force given an electrostatic potential
that is determined by the dielectric-boundary PB equation.  
The existence of such a stress tensor in the presence of dielectric boundary is implicitly assumed. 
The shape derivative approach seems first introduced 
in \cite{LiChengZhang_SIAP11} to define and derive the 
dielectric boundary force. However, approximations of point charges by smooth functions are made
there, and the derivation of the boundary force utilizes heavily on 
the underlying variational principle that the electrostatic potential extremizes the 
dielectric-boundary PB free-energy functional. 
This approach is applied to the electrostatic force acting on membranes \cite{MikuckiZhou_SIAP14}. 
Here, we use the direct calculations to derive the boundary force, which is a more general approach. 

Our study of the dielectric boundary force is closely related to the development of 
a variational implicit-solvent model (VISM) for biomolecules  
%directly motivated by the need of minimizing a 
%macroscopic, solvation free-energy functional of all dielectric boundaries
%in the varaitional implicit-solvent model (VISM) for biomolecules
\cite{DSM_PRL06,DSM_JCP06} (cf.\ also 
\cite{CDML_JCP07, 
CXDMCL_JCTC09,
CWSDLM_JCP09,
%%VISM_LS_PRL09,
WangEtal_VISMCFA_JCTC12,
VISMPB_JCTC14,
%%StoLsmVISM_JCP16,
Zhou_PNAS19}). 
%%Implemented with the level-set method\cite{OsherSethian_JCP88,Sethian_Book99,OsherFedkiw_Book02}, 
%%the VISM has successfully predicted hydrophobic interactions, multiple solvation states, 
%%and transitions between such states 
Central in the VISM is an effective free-energy functional of all possible dielectric boundaries 
that consists mainly of the surface energy of solute molecules, 
solute-solvent van der Waals interaction energy, and continuum electrostatic free energy. 
Minimization of the free-energy functional with respect to the dielectric boundary 
yields optimal solute-solvent interfaces, as well as the solvation free energy. 
Numerical implementation of such minimization requires a formula of the first variation 
of the VISM function, particularly, the dielectric boundary force. 
%%Some mathematial studies of the variational solvation model include 
%\cite{DaiLiLu_ARMA18, LiLiu_SIAP15}. 
In \cite{LiLiu_SIAP15}, the authors use the matched asymptotic analysis to derive
the  sharp-interface limit of a phase-field VISM \cite{Sun_PFVISM_JCP15}. 
In \cite{DaiLiLu_ARMA18}, 
the authors prove the convergence of the free energy and force
in the phase-field VISM to their sharp-interface counterparts. 
In particular, they prove the general result that the variation of the 
van der Waals--Cahn--Hilliard functional converges to the mean curvature which 
is the variation of surface area. 
The recent work \cite{GinsterGladbach_ARMA2020} is a detailed analysis
of the electrostatics in molecular solvation through different scaling regimes 
arising from the large-number limit of solute particles.

The rest of the paper is organized as follows: 
In Section~\ref{s:PB}, we first state our assumptions and introduce some 
auxiliary functions. We then prove the existence, uniqueness, and bounds
for the solution to the boundary-value problem of the dielectric-boundary PB equation. 
We finally   study the electrostatic free-energy functionals
of ionic concentrations and electrostatic potentials, respectively,  
with a given set of point charges and a dielectric boundary. 
%%  introduce the free-energy functional of ionic
%%concentrations with point charges of solutes and dielectric discontinuities, 
%%and derive the related PB electrostatic free-energy functional of dielectric boundaries. 
%%We also obtain the existence and uniqueness for the related free-energy 
%%minimization problems and for the resulting boundary-value problem of the nonlinear PB equation. 
In Section~\ref{s:DBF}, we reformulate the minimum electrostatic free energy, 
define the dielectric boundary force, and present the main formula for such force. 
In Section~\ref{s:Lemmas}, we prove several lemmas on the calculus of 
boundary variations. These lemmas are needed for the proof
of the main theorem on the dielectric boundary force. 
%%Some of these results are themselves of general interest.  
Finally, in Section~\ref{s:Proof}, we prove the main theorem
(Theorem~\ref{t:DBF}) of the dielectric boundary force. 
%Finally, in Section~\ref{s:SizeEffect}, we extend our main results on the free-energy functional
%and dielectric boundary force to a size-modified PB model. 
%%generalized PB model that accounts for the ionic size effect. 
%%The appendix collects the proofs of several 

%%%%%%%%%%%%%%%%%%%%%%%%%%%%%%%%%%%%%%%%%%%%%%%%%%%%%%%%%%
%\section{Poisson--Boltzmann Theory and Free-Energy Functional}

%%%%%%%%%%%%%%%%%%%%%%%%%%%%%%%%%%%%%%%%%%%%%%%%%%%%%%%%%%
\section{The Poisson--Boltzmann Equation and Free-Energy Functional}
\label{s:PB} 
%%\section{The Poisson--Boltzmann Equation and Electrostatic Free-Energy Functional}
%\section{The Dielectric-Boundary Poisson--Boltzmann Theory}
%\section{Preliminaries and the Poisson--Boltzmann Equation}
%\section{Assumptions, Auxiliary Functions, and the Poisson--Boltzmann Equation}
%\label{ss:Assumptions}

%%%%%%%%%%%%%%%%%%%%%%%%%%%%%%%%%%%%%%%%%%%%%%%%%%%%%%%%%%
\subsection{Assumptions and Auxiliary Functions}
\label{ss:Assumptions}

Unless otherwise stated, we assume the following throughout the rest of the paper: 
\begin{compactenum}
\item[A1.]
The set $\Omega \subset \R^3$ is non-empty, bounded, open, and connected. 
The sets $\Omega_- \subset \R^3$ and $\Omega_+ \subset \R^3$ are
non-empty, bounded, and open, and satisfy that $\overline{\Omega_-}
\subset \Omega$ and $\Omega_+ = \Omega\setminus \overline{\Omega_-}.$  
The interface $\Gamma = \partial \Omega_- = \overline{\Omega_-} \cap \overline{\Omega_+} $ 
and the boundary $\partial \Omega$
are of the class $C^3$ and $C^2$, respectively. 
The unit normal vector at the boundary $\Gamma$ exterior to $\Omega_-$ 
and that at $\partial \Omega$ exterior to $\Omega$ are both denoted by $n.$ 
The $N$ points $x_1, \dots, x_N$ for some integer $N \ge 1$ belong to $\Omega_-$;  
cf.~Figure~\ref{f:cartoon}.   
Moreover, there exists a constant $s_0 > 0$ such that 
\begin{equation}
\label{s0}
\mbox{dist}\,(\Gamma, \partial\Omega) \ge s_0;
\end{equation}
\item[A2.]
All the integer $M \ge 2$, and real numbers $\beta > 0$, $\Lambda > 0$, 
$Q_i \in \R$ $(1 \le i \le N)$, $q_j \ne 0$ and 
%%$c_j^\infty > 0$ and 
$\mu_j \in \R$ $(1 \le j \le M)$, and $\ve_- > 0$
and $\ve_+ > 0$ are given. Moreover, $\ve_- \ne  \ve_+.$  
The parameter $c_j^\infty$ is defined by 
$c_j^\infty = \Lambda^{-3} e^{\beta \mu_j}$ $(j = 1, \dots, M).$ 
The parameters $q_j $  and $c_j^\infty $ $(1 \le j \le M)$ satisfy the 
condition of charge neutrality 
%%\reff{neutrality};   
%{\color{red}
%{\bf [Check: do we need this? -Bo]}
%}
\begin{equation}
\label{neutrality}
\sum_{j=1}^M q_j c_j^\infty = 0;     
\end{equation}
\item[A3.]
The function $B: \R \to \R$ is defined in \reff{B}. 
%The function $B \in C^3(\R)$ satisfies 
%that $B(s) > B(0) = 0$ for all $s \ne 0$, $B'(0) = 0,$  $B'' > 0$ in $\R$ and hence $B$ is 
%strictly convex, $B(\pm \infty) = \infty,$  and $B'(\infty) = \infty$ and $B'(-\infty) = - \infty.$ 
The function $\ve_\Gamma \in L^\infty(\Omega) $ is defined in \reff{veGamma}. 
The boundary data $\phi_\infty$ is the trace
of a given function, also denoted by $\phi_\infty$, in $C^2 (\overline{\Omega})$. 
(We use the standard notation for Sobolev spaces and other function spaces; 
cf.\ \cite{Adams75,EvansBook2010,GilbargTrudinger98}.)
%\item[A4.]
\end{compactenum}
Note that the function $B$ defined in \reff{B}  satisfies that $B \in C^\infty(\R)$.  
Since 
\[
B'(s) = -\sum_{j=1}^M q_j c_j^\infty e^{-\beta q_j s}
\quad \mbox{and} \quad 
B''(s) = \sum_{j=1}^M \beta q_j^2 c_j^\infty e^{-\beta q_j s} > 0, 
\quad \forall s \in \R,
\]
the function $B$ is strictly convex, and 
 the charge neutrality \reff{neutrality} implies that $B'(0) = 0$.  
%But $ B''(s) = \sum_{j=1}^M \beta q_j^2 c_j^\infty e^{-\beta q_j s} > 0 $ for any $s \in \R$. 
%%Hence, $B$ is strictly convex, 
Hence, $ s = 0$ is the unique minimum point for $B$ with 
$B(s) > B(0) = 0$ for all $s \ne 0$. 
By the fact that $M \ge 2$ and the charge neutrality \reff{neutrality}, 
there exist some $q_j > 0$ and some $q_k < 0$. Hence, $B(\pm \infty)  = \infty.$ 
Similar arguments show that $B'(\infty) = \infty$ and $B'(-\infty) = - \infty.$ 

%%%%%%%%%%%%%%%%%%%%%%%%%%%%%%%
\begin{comment}
The assumption that $\Gamma$ is of the class $C^3$ implies that 
the corresponding signed distance function is in $C^3$ in a neighborhood of $\Gamma$ 
\cite{KrantzParks_JDE1981}. This will allow the construction of a $C^2$-vector field
in a neighborhood of $\Gamma$ such that the vector is normal to $\Gamma$ at any point on $\Gamma.$ 
Such a vector field is used to prove an approximation 
property on the tangential force; cf.\ Lemma~\ref{l:TForce}
%%in Section~\ref{s:Lemmas} 
and the proof of Theorem~\ref{t:DBF}.
%% in Section~\ref{s:Proof}. 
%We shall consider all possible interfaces $\Gamma$ surrounding the fixed solute atoms
%located at $x_i$ $(i = 1, \dots, N)$.  The assumption \reff{s0}   
%simply means that any of the corresponding solute region $\Omega_-$ 
%should not be too large as in application. This assumption will be used 
%to construct a function that is independent of the interface $\Gamma$; cf.\ \reff{g}. 
Note that the function $B$ defined in \reff{B}  satisfies all the conditions in A3. 
In fact, $ B'(s) = -\sum_{j=1}^M q_j c_j^\infty e^{-\beta q_j s}.  $
So,  the charge netruality \reff{neutrality} imples that $B'(s) = 0$.  But 
$ B''(s) = \sum_{j=1}^M \beta q_j^2 c_j^\infty e^{-\beta q_j s} > 0 $
for all $s \in \R.$ Hence, $s = 0$ is the unique minimum of $B$ with $B(0) = 0.$
By the fact that $M \ge 2$ and the charge neutrality \reff{neutrality}, 
there exist some $q_j > 0$ and some $q_k < 0$. Hence, $B(\pm \infty)  = \infty.$ 
Similar arguments show that $B'(\infty) = \infty$ and $B'(-\infty) = - \infty.$ 
%%For the case with the ionic size effect, we shall assume that 
%%$B$ sastisfies all the same properties except that $B'$ is bounded \cite{Li_Nonlinearity09}, 
%%in replacing $B'(\infty) = \infty$ and $B'(-\infty) = - \infty;$ cf.\ Section~\ref{s:SizeEffect}. 
%%In both cases, the classical or size-modified PB theory, 
%The function $B$ is in fact in $C^\infty(\R)$; cf.\ \reff{B} and \cite{Li_Nonlinearity09}. 
%%But here we only need $B\in C^3(\R).$ 
\end{comment}
%%%%%%%%%%%%%%%%%%%%%%%%%%%%%%%

We now introduce several auxiliary functions to treat the point-charge singularities, the dielectric 
discontinuity $\Gamma$, and the inhomogeneous boundary data $\phi_\infty$ on $\partial\Omega.$   
%%, an operator, and a class of functions
%%\begin{compactenum}
%%\item[F1.]
%{\color{red}
%{\bf [Need to change $\phi_0$ to $\hat{\phi}.$ A hat is related to point charges. A subscript
%$0$ means $0$ bounary value and a subscript $\infty$ means the boundary value is $\phi_\infty.$]}
%}
We first recall that the Coulomb field $\hat{\phi}_{\rm C}$ is defined in \reff{psi0}. 
Let $\hat{\phi} \in \hat{\phi}_{\rm C} + H^1(\Omega) $ be defined by 
\begin{equation}
\label{Wpsi0}
%\label{Whatphis}
\int_{\Omega} \ve_- \nabla \hat{\phi} \cdot  \nabla \eta  \, dx
= \sum_{i=1}^N Q_i \eta(x_i) \qquad \forall \eta \in C_{\rm c}^1 (\Omega), 
\end{equation}
where  $C_{\rm c}^1(\Omega)$ denotes the class of $C^1(\Omega)$-functions that are 
compactly supported in $\Omega.$ Clearly, we can modify the value of 
$\hat{\phi}$ on a set of zero Lebesgue measure, if necessary, so that 
 $\hat{\phi}$ is a $C^\infty$-function in $\Omega \setminus \{ x_1, \dots, x_N \}.$ Moreover, 
$\Delta \hat{\phi} = 0$ in $ \Omega \setminus \{ x_1, \dots, x_N \}$ and 
$ \Delta(\hat{\phi}-\hat{\phi}_{\rm C})   = 0 $ in  $\Omega.$ 
%\[
% \Delta \hat{\phi} = 0 \quad \mbox{in } \Omega \setminus \{ x_1, \dots, x_N \}
%\quad \mbox{and} \quad  \Delta(\hat{\phi}-\hat{\phi}_{\rm C})   = 0 \quad \mbox{in } \Omega. 
%\]
There are infinitely many such functions.  We will only use
three of them. One of them is the Coulomb field $\hat{\phi} = \hat{\phi}_{\rm C}$. 
The other two are $\hat{\phi} = \hat{\phi}_0$ and 
$\hat{\phi} = \hat{\phi}_\infty$. They are uniquely determined by the boundary conditions
\begin{equation}
\label{hatphiBC}
\hat{\phi}_0 = 0  \quad \mbox{on } \partial \Omega \qquad \mbox{and} \qquad  
\hat{\phi}_\infty = \phi_\infty \quad \mbox{on } \partial \Omega, 
\end{equation}
respectively. Since $\partial \Omega$ is $C^2$ and $\phi_\infty \in C^2(\overline{\Omega})$, we
have $\hat{\phi}-\hat{\phi}_{\rm C} \in H^2(\Omega);$
cf.\ Chapter 8 in \cite{GilbargTrudinger98}. Therefore, 
all these three functions belong to 
$\hat{\phi}_{\rm C} + H^2(\Omega)\cap C^\infty(\Omega) \subset W^{1,1}(\Omega). $ 

We remark that $\eta \in C_{\rm c}^1(\Omega)$ in \reff{Wpsi0}
can be replaced by $\eta \in H_0^1(\Omega) $ with $\eta|_{\Omega_-} \in C^1(\Omega_-).$ 
To see this, we first note that \reff{Wpsi0} holds true if $\hat{\phi}$ is 
replaced by $\hat{\phi}_{\rm C}$ (cf.\ \reff{psi0}).  Thus, 
\[
\int_\Omega \ve_- \nabla ( \hat{\phi} - \hat{\phi}_{\rm C} )
\cdot \nabla \eta \, dx = 0 \qquad \forall \eta \in H_0^1(\Omega),
\]
as $ \hat{\phi} - \hat{\phi}_{\rm C} \in H^1(\Omega)$ and $C^1_{\rm c}(\Omega)$ is 
dense in $H^1(\Omega).$ If $\eta \in H_0^1(\Omega) $ also satisfies 
$\eta|_{\Omega_-} \in C^1(\Omega_-),$ then $\nabla \hat{\phi}_{\rm C}
\cdot \nabla \eta$, hence $\nabla \hat{\phi} \cdot\nabla \eta,$
is integrable in $\Omega.$ Moreover, 
\[
\int_\Omega \ve_- \nabla  \hat{\phi} \cdot \nabla \eta \, dx
= \int_\Omega \ve_- \nabla  \hat{\phi}_{\rm C} \cdot \nabla \eta \, dx
= \sum_{i=1}^N Q_i \eta (x_i),  
\]
where the second equality follows from straight forward calculations using the 
definition of $\hat{\phi}_{\rm C}$ (cf.\ \reff{psi0}). 

%%and $\hat{\phi} + H^1(\Omega) = \hat{\phi}_{\rm C} + H^1(\Omega)$ 
%%if $\hat{\phi}$ satisfies \reff{Wpsi0}. 
%Note first that the function $\hat{\phi}$ (i.e., the Coulomb field), 
%We define $\hat{\phi}_0, \hat{\phi}_\infty \in W^{1,1}(\Omega)$ to be the functions that are
%uniquely determined by  
%\[
% \ve_- \Delta \hat{\phi}_s  = - \sum_{i=1}^N Q_i \delta_{x_i} \quad \mbox{in } \Omega 
%\qquad (\mbox{with } s = 0  \mbox{ or } \infty), 
%\]
%which is defined by 
%\begin{equation}
%%\label{Whatphis}
%\int_{\Omega} \ve_- \nabla \phi_s  \cdot  \nabla \eta  \, dx
%= \sum_{i=1}^N Q_i \eta(x_i) \qquad \forall \eta \in C_{\rm c}^1 (\Omega) 
%\qquad (\mbox{with } s = 0  \mbox{ or } \infty), 
%\end{equation}
%together with the boundary conditions
%\begin{align}
%\label{hatphiHarmonic}
%& \Delta \hat{\phi}_s = 0 \qquad \mbox{in } \Omega_- \cup \Omega_+  
%\quad (s = 0 \mbox{ or } \infty).  
%\\
%\label{hatphiJumps}
%& \llbracket \hat{\phi}_s \rrbracket_{\Gamma} = 0  \qquad \mbox{and} \qquad  
%\llbracket \ve_\Gamma \partial_n \hat{\phi}_s \rrbracket_{\Gamma} = 0
%\quad (s = 0 \mbox{ or } \infty).  
%\end{align}

We recall that the function 
$\phi_{\Gamma, \infty} \in H^1(\Omega)$ is 
the unique weak solution to the boundary-value problem \reff{hatpsiinfty}, defined by 
${\phi}_{\Gamma,\infty}  = \phi_\infty$ on $ \partial\Omega$ and   
\begin{equation}
\label{hatpsiinftyweak}
\int_\Omega \ve_\Gamma \nabla {\phi}_{\Gamma,\infty} \cdot \nabla \eta \, dx  
= 0  \qquad   \forall \eta \in H^1_0(\Omega).  
\end{equation}
By the regularity theory, we have, after modifying possibly the value
of $\phi_{\Gamma, \infty}$ on a set of zero Lebesgue measure, that 
\begin{equation}
\label{phiGireg} 
\phi_{\Gamma, \infty} \in C(\overline{\Omega}) \cap W^{1,\infty}(\Omega) 
\quad \mbox{and} \quad \phi_{\Gamma,\infty}|_{\Omega_{\rm s}} 
\in C^\infty(\Omega_{\rm s}) \cap H^2(\Omega_{\rm s}) \quad \mbox{for } s = -, +. 
% \quad \phi_{\Gamma,\infty}|_{\Omega_+} \in C^\infty(\Omega_+)\cap H^2(\Omega_+).
\end{equation}
Moreover, there exists a constant $C = C(\Omega, \ve_+, \ve_-, \phi_\infty) > 0$, 
independent of $\Gamma$, such that 
\begin{equation}
\label{phiGammabounds}
\|\phi_{\Gamma, \infty} \|_{W^{1, \infty}(\Omega)}  \le C. 
\end{equation}
See \cite{GilbargTrudinger98} (Theorem~8.16) and \cite{LiVogelius_ARMA2000} 
(Theorem~1.1 and the beginning part of proof of Theorem~1.1) 
(also \cite{Chipot_ARMA86}) for the global $C(\overline{\Omega})$
and $W^{1,\infty}$ regularities, and the $W^{1,\infty}(\Omega)$ estimate, 
and \cite{Lady_EllipticBook1968} (Section~16 of Chapter~3)
and \cite{HuangZou_JDE2002,HuangZou_DiscCont2007} for the 
piecewise $H^2$-regularity.  By \reff{hatpsiinftyweak}, we have 
\begin{equation}
\label{DphiGi0} 
 \Delta \phi_{\Gamma, \infty} = 0 \qquad \mbox{in } \Omega_- \cup \Omega_+.  
\end{equation}
This implies the piecewise $C^\infty$-regularity in \reff{phiGireg}. 
Moreover, since $\phi_{\Gamma, \infty} \in H_0^1(\Omega)$, 
routine calculations by \reff{hatpsiinftyweak} and 
the Divergence Theorem imply that \cite{Li_SIMA09}
\begin{equation}
\label{phiGiJumps}
 \llbracket \phi_{\Gamma, \infty} \rrbracket_{\Gamma} = 0  \qquad \mbox{and} \qquad  
\llbracket \ve_\Gamma \partial_n \phi_{\Gamma, \infty} \rrbracket_{\Gamma} = 0. 
\end{equation}
Throughout, for any function $u$ on $\Omega$ that has trace on $\Gamma$, we denote 
\begin{equation}
\label{JumpSign}
u^+ = u|_{\Omega_+}, \qquad u^- = u|_{\Omega_-},  \qquad \mbox{and} \qquad  
\llbracket u \rrbracket_{\Gamma} = u^+ - u^- \qquad  \mbox{on } \Gamma. 
\end{equation}
%Note that the trace $\ve_\Gamma \partial_n {\phi}_{\Gamma, \infty} $
%on $\Gamma$ from both sides exists; cf.\ Theorem~30 in \cite{Temam84}. 

%%%%%%%%%%%%%%%%%%%%%%%%%%%%%%%%%%%%%%%%%%%%%%55%%%%%%%%
\begin{comment}
\[
\phi_{\Gamma, \infty} \in W^{1,\infty}(\Omega), \quad 
\phi_{\Gamma, \infty}^+\in C^1(\overline{\Omega_+})\cap H^2(\Omega_+), \quad 
\phi_{\Gamma, \infty}^-\in C^1(\overline{\Omega_-})\cap H^2(\Omega_-).  
\]
Moveover, there exists a constant $C = C(\Omega, \ve_+, \ve_-, \phi_\infty) > 0$, 
independent of $\Gamma$, such that 
\begin{equation}
\label{phiGammabounds}
\|\phi_{\Gamma, \infty} \|_{W^{1, \infty}(\Omega)} 
+ \| \phi^+_{\Gamma, \infty} \|_{C^1(\overline{\Omega_+})} 
+ \| \phi^+_{\Gamma, \infty} \|_{H^2(\Omega_+)} 
+ \| \phi^-_{\Gamma, \infty} \|_{C^1(\overline{\Omega_-})} 
+ \| \phi^-_{\Gamma, \infty} \|_{H^2(\Omega_-)} \le C.
%%(\Omega, \ve_+, \ve_-, \phi_\infty). 
\end{equation}
%%Here and below, we should denote by $C$ a generic, positive constant that
%%may depend on $\Omega$, $x_i$ and $Q_i$ $(1 \le i \le N)$, 
%%$\ve_-,$ $\ve_+,$  $B$, and $\phi_\infty$, but not on $\Gamma$, unless otherwise stated.  
\end{comment}
%%%%%%%%%%%%%%%%%%%%%%%%%%%%%%%%%%%%%%%%%%%%%%55%%%%%%%%

Let $\hat{\phi}_{\Gamma, \infty} \in \hat{\phi}_{\rm C} + H^1(\Omega)$
be the unique function such that 
%%the unique weak solution to the boundary-value problem 
%%\begin{equation*}
%\label{hatpsiinfty}
%%\left\{
%%\begin{aligned}
%% & \nabla \cdot \ve_\Gamma \nabla \hat{\phi}_{\Gamma, \infty} 
%% = - \sum_{i=1}^N Q_i \delta_{x_i} &  &  \qquad \mbox{in } \Omega, 
%%\\
%%& \hat{\phi}_{\Gamma,\infty} = \phi_\infty & & \qquad  \mbox{on } \partial \Omega.   
%%\end{aligned}
%%\right.
%%\end{equation*}
%%This is defined by 
%%$\hat{\phi}_{\Gamma, \infty} \in \phi_0 + H^1(\Omega)$,  
$ \hat{\phi}_{\Gamma, \infty} = \phi_\infty $ on $\partial \Omega$ and 
\begin{equation}
\label{phiGammaWeak}
 \int_\Omega \ve_\Gamma \nabla \hat{\phi}_{\Gamma,\infty}  \cdot \nabla \eta \, dx  
=  \sum_{i=1}^N Q_i \eta(x_i)   \qquad    \forall \eta \in C^1_{\rm c} (\Omega);    
\end{equation}
cf.\ \cite{LSW63,Elschner_IFB2007}. 
%If $\hat{\phi} \in \hat{\phi}_{\rm C}+ C^1(\overline{\Omega})$ 
%satisfies \reff{Wpsi0}, then 
If $\hat{\phi} = \hat{\phi}_{\rm C}$,  or $\hat{\phi}_0$, or $\hat{\phi}_\infty$, then
%It follows from \reff{Wpsi0} with $\hat{\phi}_{\rm C} $ replacing $\hat{\phi}$
%and integration by parts that 
\reff{phiGammaWeak} is equivalent to 
%%can be equivalently defined by $\hat{\phi}_{\Gamma, \infty} \in \phi_0 + H^1(\Omega),$ 
%%$\hat{\phi}_{\Gamma, \infty} = \phi_\infty$ on $\partial \Omega,$ and 
\begin{align}
\label{equivweak}
\int_\Omega \ve_\Gamma \nabla ( \hat{\phi}_{\Gamma, \infty} - \hat{\phi})  \cdot \nabla \eta \, dx  
&= - (\ve_+ - \ve_-) \int_{\Omega_+} \nabla \hat{\phi} \cdot \nabla \eta \, dx
\nonumber \\
%%& = (\ve_+-\ve_-) \int_\Gamma \frac{\partial \hat{\phi} }{\partial n} \eta \, dS
& = (\ve_+-\ve_-) \int_\Gamma \partial_n \hat{\phi} \,  \eta \, dS
\qquad  \forall \eta \in H^1_0 (\Omega),    
\end{align}
where the unit normal $n$ at $\Gamma$ points from $\Omega_-$ to $\Omega_+$.  
If $\eta \in H_0^1(\Omega)$ satisfies $\eta|_{\Omega_-} \in C^1(\Omega_-),$ 
then it follows from \reff{equivweak} that 
\begin{align*}
\int_\Omega \ve_\Gamma \nabla  \hat{\phi}_{\Gamma, \infty} \cdot \nabla \eta \, dx
&= \int_\Omega \ve_\Gamma \nabla \hat{\phi}  \cdot \nabla \eta \, dx  
 - (\ve_+ - \ve_-) \int_{\Omega_+} \nabla \hat{\phi} \cdot \nabla \eta \, dx
\\
&= \int_\Omega \ve_- \nabla \hat{\phi} \cdot \nabla \eta \, dx 
\\
&=  \sum_{i=1}^N Q_i \eta(x_i). 
\end{align*}
Therefore, we can replace $\eta \in C_{\rm c}^1(\Omega)$ in \reff{phiGammaWeak} 
by $\eta \in H_0^1(\Omega)$ that satisfies $\eta|_{\Omega_-} \in C^1(\Omega_-).$ 

By \reff{phiGammaWeak} and \reff{equivweak}, we have,  
after possibly modifying the value of $\hat{\phi}_{\Gamma, \infty}$ on a 
set of zero Lebesgue measure, that 
\begin{align}
\label{DhatphiGi}
&\Delta ( \hat{\phi}_{\Gamma, \infty}  - \hat{\phi} )  = 0 \quad \mbox{in } \Omega_-
\quad \mbox{and} \quad \Delta \hat{\phi}_{\Gamma, \infty} = 0 \quad \mbox{in }
(\Omega_-\setminus \{ x_1, \dots, x_N \} ) \cup  \Omega_+, 
\\
\label{hatphiJumps}
&\llbracket \hat{\phi}_{\Gamma, \infty} \rrbracket_\Gamma = 0 
\qquad \mbox{and} \qquad 
\llbracket \ve_\Gamma \partial_n \hat{\phi}_{\Gamma, \infty} \rrbracket_\Gamma = 0
\qquad \mbox{on } \Gamma. 
\end{align}
%%Moreover, if $\hat{\phi} \in \hat{\phi}_{\rm C}+ C^2(\overline{\Omega})$, in particular, 
%Moreover, if $\hat{\phi} = \hat{\phi}_{\rm C}$,  or $\hat{\phi}_0$, or $\hat{\phi}_\infty$, 
Moreover, it follows from the elliptic regularity theory 
\cite{
Lady_EllipticBook1968,
GilbargTrudinger98,
LiVogelius_ARMA2000,
%%LadyzSolon_1960,
LSW63,
HuangZou_DiscCont2007, 
HuangZou_JDE2002, 
Chipot_ARMA86, 
Elschner_IFB2007}
that 
\begin{equation}
\label{hatphi_reg}
 \hat{\phi}_{\Gamma, \infty} - \hat{\phi}  \in C(\overline{\Omega})\cap 
W^{1,\infty}(\Omega), \quad 
(\hat{\phi}_{\Gamma, \infty}-\hat{\phi} )^-\in C^\infty(\Omega_-)\cap H^2 (\Omega_-), 
\quad \hat{\phi}_{\Gamma,\infty}^+\in C^\infty(\Omega_+)\cap H^2(\Omega_+). 
\end{equation}
Further, then there exists a constant 
$C > 0$ that may depend on $\Omega$, $x_i$ and $Q_i$ $(1\le i \le N)$, 
$ \ve_+,$ $\ve_-,$ $\phi_\infty,$ and $\hat{\phi}$, but does not depend on  $\Gamma$, such that 
%{\bf [Check this.]} 
\begin{equation}
\label{hatphiGibounds}
 \|\hat{\phi}_{\Gamma, \infty}  - \hat{\phi} \|_{W^{1, \infty}(\Omega)} \le C. 
\end{equation}
%%Note that the global and piecewise $H^2(\Omega_{\rm s})$ (${\rm s} = -$ or $+$)
%%regularities and the estimate 
These results \reff{hatphi_reg} and \reff{hatphiGibounds}
follow from the same arguments used above (cf.\ the description below
\reff{phiGammabounds}) applied to \reff{phiGammaWeak} 
with $\eta \in C_{\rm c}^1(\Omega)$ so chosen 
that the support of $\eta$ is in a neighborhood of $\Gamma$ that excluding the
sigularities $x_i$ $(i = 1, \dots, N).$ 
For any $g\in H^{-1}(\Omega)$, let $L_\Gamma g \in H^1_0(\Omega)$  be the unique weak solution 
(defined using test functions in $H^1_0(\Omega)$) to the boundary-value problem
\begin{equation}
\label{Lf}
\nabla \cdot \ve_\Gamma \nabla L_\Gamma g  = - g \quad \mbox{in } \Omega 
\quad \mbox{and} \quad  L_\Gamma g = 0  \quad \mbox{on } \partial \Omega. 
\end{equation}
This defines a linear, continuous, and self-adjoint 
operator $L_\Gamma: H^{-1}(\Omega) \to H^1_0(\Omega).$  The map 
\begin{equation}
\label{NormLGamma}
g \mapsto \| g \|_{L_\Gamma} := \sqrt{ \langle g, L_\Gamma g \rangle_{H^{-1}(\Omega), H^1_0
(\Omega)} } = \left[ \int_\Omega \ve_\Gamma | \nabla ( L_\Gamma g ) |^2 dx \right]^{1/2} 
\end{equation}
defines a norm on $H^{-1}(\Omega)$ which is equivalent to the $H^{-1}(\Omega)$-norm.  
%\item[F5.]
If $g\in L^1(\Omega)$, then we define $g\in L^1(\Omega) \cap H^{-1}(\Omega)$ if 
\begin{equation}
\label{sup}
\sup \left\{ \int_\Omega g u\, dx: u \in H^1_0(\Omega)\cap L^\infty(\Omega)
\mbox{ and }  \| u \|_{H^1(\Omega)}=1 \right\} < \infty.
\end{equation}
In this case, the action of $g$ on $H^1_0(\Omega)$ is defined first for any 
$u \in H^1_0(\Omega)\cap L^\infty(\Omega)$ by the integral of $g u $ over $\Omega$ and 
 then extended for any $u \in H^1_0(\Omega)$ by \reff{sup} and 
the fact that $H^1_0(\Omega)\cap L^\infty(\Omega)$ is dense in $H^1_0(\Omega)$. 

%\end{compactenum}

\medskip

%%%%%%%%%%%%%%%%%%%%%%%%%%%%%%%%%%%%%%%%%%%%%%%%%%%%%%%%%%
\subsection{The Poisson--Boltzmann Equation}
\label{ss:PBE}

We now study the well-posedness of the boundary-value problem of the 
Poisson--Boltzmann (PB) equation \reff{PBE} with a dielectric boundary and point charges. 
%Note that function $\phi_{\Gamma, \infty}$ can be replaced by any 
%function on $\Omega$ that is essentially bounded in $\Omega_+$. 

\begin{definition}
\label{d:PBsolution}
A function $\psi \in \hat{\phi}_{\rm C} + H^1(\Omega)$ 
is a weak solution to the boundary-value problem of the dielectric-boundary
PB equation \reff{PBE}, if $\psi = \phi_\infty$ on $\partial \Omega$, 
$\chi_+  B'(\psi-\phi_{\Gamma, \infty}/2 ) \in L^1(\Omega)\cap H^{-1}(\Omega),$ and 
\begin{equation}
\label{weakPBE}
\int_\Omega \left[ \ve_\Gamma \nabla \psi \cdot \nabla \eta + 
\chi_+ B'\left( \psi- \frac{\phi_{\Gamma, \infty}}{2} \right) 
\eta \right] dx = \sum_{i=1}^N Q_i \eta (x_i) \qquad \forall \eta \in C^1_{\rm c}(\Omega).     
\end{equation}
%%%%%%%%%%%%%%%%%%%
\begin{comment}
or, by \reff{Wpsi0}, equivalently, 
%% (cf.\ \reff{shiftedPoisson}), 
\begin{align}
\label{shiftedweakPBE}
& \int_\Omega \left[ \ve_\Gamma \nabla (\psi - \phi_0)  \cdot \nabla \eta + 
\chi_+ B'\left( \psi- \frac{\phi_{\Gamma, \infty}}{2} \right) \eta \right] dx 
\nonumber \\
&\qquad 
= -(\ve_+  - \ve_- ) \int_{\Omega_+} \nabla {\phi}_0 \cdot \nabla \eta \, dx    
\qquad \forall \eta \in H^1_0(\Omega),   
\end{align}
where $B$ is given in \reff{B}.
\end{comment}
%%%%%%%%%%%%%%%%%%%
\end{definition}

Note that we can replace $\eta\in C_{\rm c}^1(\Omega)$ in \reff{weakPBE}
by $\eta \in H^1_0(\Omega)$ that satisfies $\eta^- \in C^1(\Omega_-)$; cf.\ the remark
below \reff{equivweak}.  
The theorem below provides the existence and uniqueness of the solution to the boundary-value
problem of the dielectric-boundary PB equation, and an equivalent formulation 
of such a boundary-value problem.  These results are essentially proved in 
%\cite{Li_SIMA09,Li_Erratum_SIMA11,LiChengZhang_SIAP11}. 
\cite{LiChengZhang_SIAP11}. 
Here we sketch the proof and add some points that are not included in the previous
proof due to some minor differences between the current and previous statements. 
Note that $\hat{\phi}_{\rm C} + H^1(\Omega) = \hat{\phi}_{\Gamma, \infty} + H^1(\Omega)$.
So, we can replace $\hat{\phi}_{\rm C} $ by $\hat{\phi}_{\Gamma, \infty}$ in the above definition. 
Note also that there is a variational principle for the PB equation; cf.\ Theorem~\ref{t:PBvariation}. 

%%For instance, we assume $\chi_+  B'(\psi-\phi_{\Gamma, \infty}/2 ) \in L^1(\Omega)\cap H^{-1}(\Omega)$ 
%%instead of $\chi_+  B'(\psi-\phi_{\Gamma, \infty}/2 ) \in L^2(\Omega).$ 
%%Also, the regularity result here is more precise than before 
%%\cite{Li_SIMA09,Li_Erratum_SIMA11,LiChengZhang_SIAP11}. 
%%In the proof, the reaction field $\psi - \phi_0$ needs to be replaced by 
%%$\psi - \hat{\phi}_{\Gamma, \infty}$ 
%%or $\psi - \phi_\Gamma - \phi_{\Gamma, \infty}$ 
%%to avoid extra boundary terms and allow the use of a method of energy comparison; 
%%cf.\ \cite{Li_SIMA09,Li_Erratum_SIMA11,LiChengZhang_SIAP11}. 

%\noindent
%{ \color{red} {\bf [Check the regularity. -Bo]} }

\begin{theorem}
\label{t:DBPBE}
\begin{compactenum}
\item[\rm (1)] 
There exists a unique weak solution $\psi_\Gamma \in \hat{\phi}_{\Gamma, \infty} +H^1_0(\Omega)$ 
of the boundary-value problem of the dielectric-boundary PB equation \reff{PBE}. Moreover, 
after a possible modification of $\psi_\Gamma $ on a set of zero Lebesgue measure, 
$\psi_\Gamma - \hat{\phi}_{\Gamma, \infty} \in C(\overline{\Omega})\cap W^{1,\infty}(\Omega), $ 
$( \psi_\Gamma -\hat{\phi}_{\Gamma,\infty})^- \in C^\infty (\Omega_-)\cap H^2(\Omega_-),$
and $\psi_\Gamma^+ \in C^\infty (\Omega_+)\cap H^2(\Omega_+).$ Further, 
there exists a constant $C > 0$ that may depend on $\Omega$, $x_i$ and $Q_i$ $(1\le i \le N)$, 
$ \ve_+,$ $\ve_-,$ $\phi_\infty,$ and $B$, but does not depend on  $\Gamma$, such that 
\begin{align}
\label{PBsolnbounds}
& \| \psi_\Gamma - \hat{\phi}_{\Gamma, \infty} \|_{W^{1,\infty}(\Omega)} 
\le C. 
%+ \| \psi^-_\Gamma - \hat{\phi}^-_{\Gamma, \infty} \|_{C^1(\overline{\Omega_-})} 
%+ \| \psi^-_\Gamma - \hat{\phi}^-_{\Gamma, \infty} \|_{H^2(\Omega_-)} 
%\nonumber \\
%& \quad 
%+ \|\psi^+_{\Gamma} \|_{C^1(\overline{\Omega_+})} + \|\psi^+_{\Gamma} \|_{H^2(\Omega_+)} \le C. 
\end{align}
\item[\rm (2)]
A function $\psi \in \hat{\phi}_{\Gamma, \infty } + H^1(\Omega)$ with 
$\chi_+  B'(\psi-\phi_{\Gamma, \infty}/2 ) \in L^1(\Omega)\cap H^{-1}(\Omega)$ is the weak solution
to the boundary-value problem of the dielectric-boundary PB equation \reff{PBE} if
and only if it is the unique solution to the following elliptic interface problem: 
\begin{equation}
\label{interfaceform}
\left\{
\begin{aligned}
%\label{pbe1}
&\Delta (\psi - \hat{\phi}_{\Gamma, \infty}) = 0 &  \quad & \mbox{in } \Omega_-, \\
%\label{pbe2}
& \ve_+ \Delta \psi - B'\left( \psi -\frac{\phi_{\Gamma, \infty}}{2} \right) =   0 
%& \ve_+ \Delta \psi - B'( \psi -\phi_{\Gamma, \infty}/2 ) =   0 
& \quad & \mbox{in } \Omega_+, \\
%\label{jumpbc}
&\llbracket  \psi \rrbracket_\Gamma  = 0\qquad \mbox{and} \qquad
 \llbracket  \varepsilon_\Gamma  \partial_n \psi
\rrbracket_\Gamma   = 0 & \quad  & \mbox{on } \Gamma, \\
%\label{bcGamma}
& \psi = \psi_\infty  & \quad & \mbox{on } \partial \Omega. 
\end{aligned}
\right. 
\end{equation}
\end{compactenum}
\end{theorem}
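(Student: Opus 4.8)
The plan is to reduce \reff{PBE} to a coercive convex minimization over $H_0^1(\Omega)$ by peeling off the function $\hat{\phi}_{\Gamma,\infty}$ of \reff{phiGammaWeak}, which already carries all the point-charge singularities. Writing any candidate as $\psi = \hat{\phi}_{\Gamma,\infty} + u$ with $u \in H_0^1(\Omega)$ and invoking the defining identity $\int_\Omega \ve_\Gamma \nabla \hat{\phi}_{\Gamma,\infty}\cdot\nabla\eta\,dx = \sum_{i=1}^N Q_i \eta(x_i)$, the weak equation \reff{weakPBE} collapses to the charge-free problem
\[
\int_\Omega \ve_\Gamma \nabla u \cdot \nabla \eta\,dx + \int_{\Omega_+} B'\!\left(\hat{\phi}_{\Gamma,\infty} + u - \frac{\phi_{\Gamma,\infty}}{2}\right)\eta\,dx = 0 .
\]
The key gain is that the nonlinearity now lives only on $\Omega_+$, where the points $x_1,\dots,x_N$ are absent and $\hat{\phi}_{\Gamma,\infty}^+$, $\phi_{\Gamma,\infty}$ are bounded by \reff{hatphi_reg} and \reff{phiGireg}; hence the argument of $B$ is a genuine $H^1$-function. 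I would then recognize this as the Euler--Lagrange equation of the strictly convex functional
\[
I[u] = \int_\Omega \frac{\ve_\Gamma}{2}|\nabla u|^2\,dx + \int_{\Omega_+} B\!\left(\hat{\phi}_{\Gamma,\infty} + u - \frac{\phi_{\Gamma,\infty}}{2}\right) dx
\]
on $H_0^1(\Omega)$.

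For existence and uniqueness in Part~(1) I would run the direct method. Since $\ve_\Gamma \ge \min(\ve_-,\ve_+) > 0$, the Dirichlet term is coercive by Poincar\'e's inequality, and since $B \ge B(0) = 0$ the second term is nonnegative; thus $I$ is coercive, bounded below, and finite at $u = 0$. Strict convexity comes from the Dirichlet term together with the convexity of $B$, and weak lower semicontinuity from convexity and Fatou's lemma (the functional may take the value $+\infty$ since $B$ grows exponentially, but its effective domain is a nonempty convex set, which suffices). A unique minimizer $u_\Gamma$ therefore exists; taking first variations in directions $\eta \in H_0^1(\Omega)$ with $\eta^- \in C^1(\Omega_-)$ recovers exactly the reduced weak form, and strict convexity forces this critical point to be the only weak solution. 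Setting $\psi_\Gamma = \hat{\phi}_{\Gamma,\infty} + u_\Gamma$ gives the asserted unique $\psi_\Gamma \in \hat{\phi}_{\Gamma,\infty} + H_0^1(\Omega)$.

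Part~(2) I would obtain by localizing the weak form. Testing \reff{weakPBE} with $\eta$ supported in $\Omega_-\setminus\{x_1,\dots,x_N\}$ and using that $\hat{\phi}_{\Gamma,\infty}$ solves the same linear equation there gives $\Delta(\psi - \hat{\phi}_{\Gamma,\infty}) = 0$ in $\Omega_-$; testing with $\eta$ supported in $\Omega_+$ and using $\ve_\Gamma \equiv \ve_+$ gives $\ve_+\Delta\psi - B'(\psi - \phi_{\Gamma,\infty}/2) = 0$ in $\Omega_+$. The continuity $\llbracket \psi \rrbracket_\Gamma = 0$ is immediate from $\psi \in H^1(\Omega)$, and the flux condition $\llbracket \ve_\Gamma \partial_n \psi \rrbracket_\Gamma = 0$ follows by integrating by parts in each subdomain against a test function that does not vanish on $\Gamma$ and matching with \reff{weakPBE}; the converse implication reverses these integrations by parts. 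Bootstrapping the smooth interior equations and invoking the interface regularity already recorded for $\hat{\phi}_{\Gamma,\infty}$ in \reff{hatphi_reg} then yields $\psi_\Gamma - \hat{\phi}_{\Gamma,\infty} \in C(\overline{\Omega})\cap W^{1,\infty}(\Omega)$ together with the piecewise $C^\infty$ and $H^2$ regularity.

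The main obstacle is the $\Gamma$-independence of the bound \reff{PBsolnbounds}, which is precisely what the later boundary-variation calculus needs. Writing $u = \psi_\Gamma - \hat{\phi}_{\Gamma,\infty}$ and $g = \hat{\phi}_{\Gamma,\infty} - \phi_{\Gamma,\infty}/2$, the reduced equation reads $\nabla\cdot\ve_\Gamma\nabla u = \chi_+ B'(u + g)$ with $u = 0$ on $\partial\Omega$, where $\|g\|_{L^\infty(\Omega)} \le C_g$ uniformly in $\Gamma$ by \reff{hatphiGibounds} and \reff{phiGammabounds}. For the $L^\infty$ bound I would use the Stampacchia truncation $\eta = (u-k)^+ \in H_0^1(\Omega)$ with $k = C_g$: on the support $\{u > k\}$ one has $u + g > C_g - C_g = 0$, so $B'(u+g) \ge 0$ by the sign condition $s\,B'(s) \ge 0$ coming from the convexity of $B$ and $B'(0)=0$; the reduced equation then forces $\int_\Omega \ve_\Gamma|\nabla(u-k)^+|^2 \le 0$, whence $u \le C_g$, and symmetrically $u \ge -C_g$, giving a $\Gamma$-uniform $L^\infty$ bound. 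With $u$ uniformly bounded, the source $\chi_+ B'(u+g)$ is uniformly bounded in $L^\infty(\Omega)$, and I would close \reff{PBsolnbounds} by applying the Li--Vogelius-type gradient estimate for the divergence-form interface operator $\nabla\cdot\ve_\Gamma\nabla$, whose constant depends only on the ellipticity ratio $\ve_+/\ve_-$ and on uniform $C^3$-bounds for the family of interfaces, hence is independent of the particular $\Gamma$.
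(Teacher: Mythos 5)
Your proposal follows essentially the same route as the paper: subtract $\hat{\phi}_{\Gamma,\infty}$ to remove the point charges, minimize the same strictly convex functional $I[u]=\int_\Omega\bigl[\tfrac{\ve_\Gamma}{2}|\nabla u|^2+\chi_+B(u+\hat{\phi}_{\Gamma,\infty}-\phi_{\Gamma,\infty}/2)\bigr]dx$ over $H_0^1(\Omega)$ by the direct method, obtain a $\Gamma$-uniform $L^\infty$ bound by truncation using the convexity of $B$ and the uniform bounds \reff{hatphiGibounds} and \reff{phiGammabounds}, and then invoke the Li--Vogelius-type interface regularity for \reff{PBsolnbounds}, with Part~(2) obtained by localizing the weak form. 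The only minor difference is that the paper derives the $L^\infty$ bound by comparing $I[u_\Gamma]$ with $I$ evaluated at the truncation of $u_\Gamma$ \emph{before} writing down the Euler--Lagrange equation (so that dominated convergence is available when taking first variations), whereas you take variations first and then run a Stampacchia argument on the Euler--Lagrange equation; your order works but requires truncating the test function $(u-k)^+$ to keep it admissible.
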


\begin{proof}
%%[Proof of Theorem~\ref{t:DBPBE}]
(1) With $u = \psi - \hat{\phi}_{\Gamma,\infty}$ and by  \reff{phiGammaWeak} and \reff{weakPBE}, 
it is equivalent to show that there exists a unique 
$u_{\Gamma }\in H_0^1(\Omega)$ such that 
%%$u_{\Gamma} = \phi_\infty$ on $\partial \Omega$, 
$\chi_+ B'(u_{\Gamma} + \hat{\phi}_{\Gamma, \infty} - \phi_{\Gamma, \infty}/2 ) 
\in L^1(\Omega) \cap H^{-1}(\Omega)$, and 
\begin{equation}
\label{weakuGamma} 
\int_\Omega \left[ \ve_\Gamma \nabla u_{\Gamma} \cdot \nabla \eta 
+ \chi_+ B'\left( u_{\Gamma} + \hat{\phi}_{\Gamma, \infty}  
- \frac{\phi_{\Gamma, \infty}}{2} \right) \eta \right] dx 
%%= \int_{\Gamma} (\ve_+-\ve_-) \frac{\partial \phi_0}{\partial n} \eta \, dS 
= 0 \qquad \forall \eta \in H^1_0 (\Omega).     
\end{equation}
%%Define $V = \{ u \in H^1(\Omega): u =  \phi_\infty \mbox{ on }\partial \Omega \}$ and 
Define 
\[
I[u] = \int_\Omega \left[ \frac{\ve_\Gamma}{2} | \nabla u |^2 + \chi_+ 
B\left(u  + \hat{\phi}_{\Gamma, \infty} -\frac{\phi_{\Gamma, \infty}}{2} \right) \right] dx 
\qquad \forall u\in H_0^1(\Omega).  
\]
Since $B \ge 0$ and $B$ is convex, we can use the direct method in the calculus of variations to 
obtain a unique minimizer $u_{\Gamma} \in H_0^1(\Omega) $ 
of the functional $I: H_0^1(\Omega) \to [0, \infty].$
Moreover, comparing the values $I[u_{\Gamma}]$ and $I[u_{{\Gamma}, \lambda}]$ for any constant 
$\lambda > 0$ large enough,  where $u_{{\Gamma}, \lambda} = u_{{\Gamma}}$ if 
$|u_{\Gamma}| \le \lambda$ and $u_{{\Gamma}, \lambda} = \lambda\, \mbox{sign}\,(u_{\Gamma})$
otherwise, we have by the convexity of $B$ that $u_{\Gamma} = u_{{\Gamma}, \lambda}$
a.e.\ $\Omega$ for some $\lambda$ independent on $\Gamma.$  
Hence, $u_{\Gamma} \in L^\infty(\Omega),$  
and $\| u_\Gamma \|_{L^\infty(\Omega)} \le C$ for some constant $C > 0$ independent of $\Gamma;$ 
cf.\ \cite{LiChengZhang_SIAP11}. 
This allows the use of the Lebesgue Dominated Convergence Theorem in the routine calculations
of $(d/dt)|_{t=0} I [u_\Gamma + t \eta ]= 0$ for any $\eta \in C_{\rm c}^1(\Omega)$
to obtain the equation in \reff{weakuGamma}. Since $C_{\rm c}^1(\Omega)$ is dense in $H_0^1(\Omega)$, 
\reff{weakuGamma} holds true.  
The convexity of $B$ now imiplies that $u_{\Gamma}$ is the unique solution as desired. 
%%See \cite{Li_SIMA09,Li_Erratum_SIMA11,LiChengZhang_SIAP11} for more details. 
%%The estimates in \reff{PBsolnbounds} now follow from the elliptic regularity theory

The regularity of the soluton $\psi_\Gamma$ follows from the elliptic regularity theory
\cite{
%%LadyzSolon_1960,
%%LSW63,
Lady_EllipticBook1968,
Chipot_ARMA86, 
GilbargTrudinger98,
LiVogelius_ARMA2000, 
HuangZou_JDE2002, 
HuangZou_DiscCont2007, 
Elschner_IFB2007}, 
with the same argument above for the regularity of the function $\hat{\phi}_{\Gamma, \infty};$
cf.\  \reff{hatphi_reg} and \reff{hatphiGibounds}. 
Note that the piecewise $C^\infty$ smoothness follows from a usual bootstrapping method. 

%The $W^{1,\infty}(\Omega)$-estimate in \reff{PBsolnbounds} follows from 
%Theorme~1.2 in \cite{LiVogelius_ARMA2000} (cf.\ also \cite{Chipot_ARMA86}). The other estimates
%follow from a usual bootsrapping argument using the first 
%and second equations in \reff{interfaceform}; cf.\ \cite{GilbargTrudinger98}. 

(2) This part of the proof is the same as that given in \cite{Li_SIMA09}. 
\end{proof}

%%%%%%%%%%%%%%%%%%%%%%%%%%%%%%%%%%%%%%%%%%%%%%%%%%%%%%%%%%%%%%%%%%%%%%%%%%%%%%%%%%%%%%%%%%%%
\subsection{Electrostatic Free-Energy Functional of Ionic Concentrations}
\label{ss:FreeEnergyFunctional}
%%\subsection{Free-energy functional of ionic concentrations}
%\subsection{Free-Energy Functional of Ionic Concentrations}
%\label{ss:Fc}

%We consider several effective electrostatic free-energy functionals.  
%The first one is the functional $F_\Gamma[c]$ deinfed in \reff{FGammac} 
%on all admissible ionic concentrations, and the 
%second one is the functional $G_\Gamma[\psi]$ defined in \reff{GGammapsi} on 
%all admissible electrostatic potentials, both with a fixed dielectric boundary 
%$\Gamma.$ The last one is the functional $E[\Gamma]$ defined 
%as the equilibrium electrostatic free energy for any admissible dielectric boundary $\Gamma.$    

We define
\begin{align*}
&{\cal X}  = \biggl\{ (c_1, \dots, c_M)\in L^{1}(\Omega, \R^M): 
c_j = 0 \mbox{ a.e. } \Omega_- \mbox{ for } j = 1, \dots, M 
\mbox{ and } \sum_{j=1}^M q_j c_j \in H^{-1}(\Omega) \biggr\},  
\\
&{\cal X}_+ = \biggl\{ (c_1, \dots, c_M)\in {\cal X}: c_j \ge 0 \mbox{ a.e. } \Omega_+  
\mbox{ for } j = 1, \dots, M \biggr\}. 
\end{align*}
Here, for any $g \in L^1(\Omega)$, we define $g \in L^1(\Omega) \cap H^{-1}(\Omega)$ by \reff{sup}. 
The space ${\cal X}$ is a Banach space equipped with the norm 
\[
\| c \|_{\cal X} = \sum_{j=1}^M \| c_j \|_{L^1(\Omega)} 
+ \left\| \sum_{j=1}^M q_j c_j \right\|_{H^{-1}(\Omega)}
\qquad \forall c = (c_1, \dots, c_M) \in {\cal X}. 
\]
%%for all $c = (c_1, \dots, c_M) \in {\cal X}.$ 
Moreover, ${\cal X}_+$ is a convex and closed subset of ${\cal X}.$ 
For any $c = (c_1, \dots, c_M) \in {\cal X},$  standard arguments 
(cf.\ \cite{Elschner_IFB2007, EvansBook2010,GilbargTrudinger98,LSW63}) 
imply that there exists a unique weak solution $\psi$ 
to the boundary-value problem \reff{PoissonNew}, defined by $\psi \in \hat{\phi}_{\rm C} + H^1(\Omega),$
$\psi = \phi_\infty$ on $\partial \Omega$, and 
\begin{equation}
\label{weakpsi}
\int_\Omega \ve_\Gamma \nabla \psi \cdot \nabla \eta \, dx = \sum_{i=1}^N Q_i \eta (x_i) 
+ \int_{\Omega_+} \left( \sum_{j=1}^M q_j c_j \right) \eta \, dx
\qquad \forall \eta \in C^1_{\rm c}(\Omega),    
\end{equation}
Equivalently, if $\hat{\phi} \in \hat{\phi}_{\rm C} + H^1(\Omega)$ satisfies \reff{Wpsi0}, then
\begin{align*}
\int_\Omega \ve_\Gamma \nabla (\psi - \hat{\phi} ) \cdot \nabla \eta \, dx 
& = \int_{\Omega_+} \left[ (\ve_- - \ve_+) \nabla \hat{\phi} \cdot \nabla \eta 
+  \left( \sum_{j=1}^M q_j c_j \right) \eta \right] dx
%\\
%& = -\int_{\Gamma } (\ve_- - \ve_+) \frac{\partial \phi_0}{\partial n}  \eta \, dS
%+ \int_{\Omega_+} \left( \sum_{j=1}^M q_j c_j \right) \eta \, dx
\qquad \forall \eta \in H^1_0(\Omega).   
\end{align*}
Clearly, $\psi-\hat{\phi}$ is harmonic in $\Omega_-.$ 
Moreover, it follows from the definition of $\hat{\phi}_{\Gamma, \infty}$ (cf.~\reff{phiGammaWeak}) 
%%$\phi_{\Gamma, \infty}$ (cf.\ \reff{hatpsiinftyweak},) 
and $L_\Gamma$ (cf.~\reff{Lf}) that 
\begin{equation}
\label{psippLGamma}
%%\psi = \phi_\Gamma + \phi_{\Gamma,\infty} + L_\Gamma \left( \sum_{j=1}^M q_j c_j \right). 
\psi = \hat{\phi}_{\Gamma,\infty} + L_\Gamma \left( \sum_{j=1}^M q_j c_j \right). 
\end{equation}

Since the function $s\mapsto s \log s $ $(s \ge 0)$ is bounded below and $\Omega$ is bounded, 
$F_\Gamma[c] > -\infty$ for any $c \in {\cal X}_+,$ where $F_\Gamma[c]$ is defined in \reff{FGammac}.

\begin{theorem}
\label{t:minFa}
Let $\psi_\Gamma$ be the unique weak solution to the dielectric-boundary PB equation \reff{PBE}. 
For each $j \in \{ 1, \dots, M\}$, define $c_{\Gamma, j}: \Omega \to [0, \infty)$ by 
\begin{equation}
\label{Cmin}
c_{\Gamma, j}(x)= 
\left\{
\begin{aligned}
& 0 \quad & \mbox{if } x \in \overline{\Omega_-}, 
\\
& c_j^{\infty} e^{-\beta q_j \left[ \psi_\Gamma (x) 
-\phi_{\Gamma,\infty}(x)/2 \right]} \quad & \mbox{if } x \in \Omega_+.    
\end{aligned}
\right.
\end{equation}
%%where $c_j^\infty = \Lambda^{-3} e^{\beta \mu_j}.$
%%  $(j = 1, \dots, M).$ 
Then $c_\Gamma := (c_{\Gamma,1}, \dots, c_{\Gamma, M})\in {\cal X}_+$ and 
$\psi_\Gamma$ is the electrostatic potential corresponding 
to $c_{\Gamma}$, i.e., the unique weak solution
to \reff{PoissonNew} with $c_j$ replaced by $c_{\Gamma, j}$ $(j=1, \dots, M)$. 
Moreover,  $c_{\Gamma}$ is the 
unique minimizer of the functional $F_\Gamma: {\cal X}_+ \to (-\infty, \infty]$ defined
in \reff{FGammac}, and 
\begin{align}
\label{Fpsi}
F_\Gamma [c_\Gamma]  & = \frac12  \sum_{i=1}^N Q_i (\psi_\Gamma - \hat{\phi}_{\rm C} ) (x_i) 
\nonumber \\
& \qquad 
+  \int_{\Omega_+} \left[  \frac12\left( \psi_\Gamma - \phi_{\Gamma,\infty}\right) 
B'\left( \psi_\Gamma -\frac{\phi_{\Gamma, \infty}}{2}  \right) 
- B\left( \psi_\Gamma -\frac{\phi_{\Gamma, \infty} }{2} \right) \right] dx. 
\end{align}
\end{theorem}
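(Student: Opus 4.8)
The plan is to establish the three assertions of Theorem~\ref{t:minFa} in turn: first that $c_\Gamma \in {\cal X}_+$ with $\psi_\Gamma$ its associated potential, then that $c_\Gamma$ minimizes $F_\Gamma$, and finally the energy formula \reff{Fpsi}. For membership in ${\cal X}_+$, I would verify that each $c_{\Gamma,j}$ defined in \reff{Cmin} lies in $L^1(\Omega)$ and that $\sum_j q_j c_{\Gamma,j} \in H^{-1}(\Omega)$. The nonnegativity and the vanishing on $\overline{\Omega_-}$ are immediate from the definition. For integrability I would use the bound $\|\psi_\Gamma - \hat{\phi}_{\Gamma,\infty}\|_{W^{1,\infty}(\Omega)} \le C$ from Theorem~\ref{t:DBPBE}(1) together with the $W^{1,\infty}$ bound \reff{phiGammabounds} on $\phi_{\Gamma,\infty}$; since $\hat{\phi}_{\Gamma,\infty} - \phi_{\Gamma,\infty}$ is bounded on $\Omega_+$ (the singularities $x_i$ lie in $\Omega_-$), the exponent in \reff{Cmin} is bounded on $\Omega_+$, so each $c_{\Gamma,j}$ is bounded, hence in $L^1(\Omega)$. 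To identify $\psi_\Gamma$ as the potential of $c_\Gamma$, I would observe that by the definition \reff{B} of $B$ and the Boltzmann form \reff{Cmin},
\[
-B'\left(\psi_\Gamma - \frac{\phi_{\Gamma,\infty}}{2}\right) = \sum_{j=1}^M q_j c_j^\infty e^{-\beta q_j(\psi_\Gamma - \phi_{\Gamma,\infty}/2)} = \sum_{j=1}^M q_j c_{\Gamma,j} \quad \mbox{in } \Omega_+,
\]
so substituting this into the weak form \reff{weakPBE} of the PB equation yields exactly the weak form \reff{weakpsi} of Poisson's equation with $c_j = c_{\Gamma,j}$; uniqueness of the latter solution then gives the claim.

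For the minimization, the key structural fact is that $F_\Gamma$ is strictly convex on the convex set ${\cal X}_+$, so it suffices to verify the first-order (equilibrium) condition at $c_\Gamma$. I would compute the convexity by writing $\psi = \hat{\phi}_{\Gamma,\infty} + L_\Gamma(\sum_j q_j c_j)$ from \reff{psippLGamma}, which makes the electrostatic part of $F_\Gamma[c]$ a quadratic-plus-affine functional of $\sum_j q_j c_j$ whose leading term is $\frac{1}{2}\|\sum_j q_j c_j\|_{L_\Gamma}^2$ in the norm \reff{NormLGamma} — this is convex in $c$ — while the entropy part $\beta^{-1}\sum_j \int_{\Omega_+} c_j[\log(\Lambda^3 c_j)-1]$ is strictly convex in each $c_j$ because $s \mapsto s\log s$ is strictly convex. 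For the first-order condition I would take an admissible variation $c_\Gamma + t(d - c_\Gamma)$ with $d \in {\cal X}_+$ and show $\frac{d}{dt}\big|_{t=0^+} F_\Gamma \ge 0$; the electrostatic first variation is $\int_{\Omega_+}(\sum_j q_j \delta c_j)\psi_\Gamma\,dx$ (using self-adjointness of $L_\Gamma$ to symmetrize the quadratic term), and the entropy-minus-chemical-potential first variation is $\beta^{-1}\sum_j\int_{\Omega_+}\log(c_{\Gamma,j}/c_j^\infty)\,\delta c_j\,dx$ after inserting $c_j^\infty = \Lambda^{-3}e^{\beta\mu_j}$. Combining these and using that $\log(c_{\Gamma,j}/c_j^\infty) = -\beta q_j(\psi_\Gamma - \phi_{\Gamma,\infty}/2)$ by \reff{Cmin}, the variation collapses to a multiple of $\int_{\Omega_+}(\sum_j q_j \delta c_j)\phi_{\Gamma,\infty}\,dx$; I would then show this term reduces correctly so that the total first variation vanishes, confirming $c_\Gamma$ is the minimizer. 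Uniqueness follows from strict convexity.

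The energy formula \reff{Fpsi} is then obtained by substituting $c_j = c_{\Gamma,j}$ into \reff{FGammac} and simplifying. In the entropy-minus-chemical-potential terms I would again use $\log(\Lambda^3 c_{\Gamma,j}) - 1 - \beta\mu_j = -\beta q_j(\psi_\Gamma - \phi_{\Gamma,\infty}/2) - 1$ and collect terms, recognizing $\sum_j q_j c_{\Gamma,j} = -B'(\psi_\Gamma - \phi_{\Gamma,\infty}/2)$ and $\sum_j c_j^\infty(e^{-\beta q_j(\cdots)} - 1) = \beta B(\psi_\Gamma - \phi_{\Gamma,\infty}/2)$; combining the entropic and electrostatic integrals over $\Omega_+$ produces the integrand $\frac{1}{2}(\psi_\Gamma - \phi_{\Gamma,\infty})B'(\psi_\Gamma - \phi_{\Gamma,\infty}/2) - B(\psi_\Gamma - \phi_{\Gamma,\infty}/2)$ shown in \reff{Fpsi}, while the point-charge term $\frac{1}{2}\sum_i Q_i(\psi_\Gamma - \hat{\phi}_{\rm C})(x_i)$ carries over directly. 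The main obstacle I anticipate is handling the singular point-charge term rigorously: the pairing $\sum_i Q_i \psi_\Gamma(x_i)$ requires that $\psi_\Gamma - \hat{\phi}_{\rm C}$ be continuous at the $x_i$, which is supplied by the regularity in Theorem~\ref{t:DBPBE}(1), and the first-variation computations must be justified by a dominated-convergence argument since $B'$ grows exponentially — this is where the uniform $L^\infty$ bound on $\psi_\Gamma - \hat{\phi}_{\Gamma,\infty}$ restricted to $\Omega_+$ becomes essential to control the integrands.
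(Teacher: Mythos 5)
Your overall strategy is the same as the paper's: verify membership in ${\cal X}_+$ from the $W^{1,\infty}$ bounds, identify $\psi_\Gamma$ as the potential of $c_\Gamma$ by recognizing $-B'(\psi_\Gamma-\phi_{\Gamma,\infty}/2)=\sum_j q_j c_{\Gamma,j}$, rewrite the electrostatic part through $\psi=\hat{\phi}_{\Gamma,\infty}+L_\Gamma(\sum_j q_jc_j)$, and conclude by convexity. Those parts, and the derivation of \reff{Fpsi} by substitution, are sound.

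There is, however, one concrete soft spot in your minimization argument: the first variation of the point-charge term $\frac12\sum_i Q_i(\psi-\hat{\phi}_{\rm C})(x_i)$. Writing $f=\sum_j q_jc_j$ and $\delta f=\sum_j q_j\,\delta c_j$, this term varies by $\frac12\sum_i Q_i(L_\Gamma\delta f)(x_i)$, a sum of point evaluations, and your proposal never explains how this becomes a volume integral against $\delta f$. The paper closes this by the reciprocity identity
\[
\sum_{i=1}^N Q_i\,(L_\Gamma g)(x_i)=\int_{\Omega_+}(\hat{\phi}_{\Gamma,\infty}-\phi_{\Gamma,\infty})\,g\,dx,
\]
quoted from Lemma~3.2 of \cite{Li_SIMA09}. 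With it, the total electrostatic variation is $\int_{\Omega_+}\delta f\,(\psi_\Gamma-\phi_{\Gamma,\infty}/2)\,dx$, which cancels exactly against the entropy variation $\beta^{-1}\log(c_{\Gamma,j}/c_j^\infty)=-q_j(\psi_\Gamma-\phi_{\Gamma,\infty}/2)$. Your stated form $\int_{\Omega_+}(\sum_j q_j\delta c_j)\psi_\Gamma\,dx$ is not correct as written, and the leftover ``multiple of $\int(\sum_j q_j\delta c_j)\phi_{\Gamma,\infty}$'' that you defer does not vanish on its own; it is precisely the half-$\phi_{\Gamma,\infty}$ shift, and it only comes out right once the point-charge term is converted by the identity above. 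You should state and use that identity explicitly.

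A minor methodological difference: rather than differentiating $F_\Gamma$ along $c_\Gamma+t(d-c_\Gamma)$, the paper compares $F_\Gamma[c]$ and $F_\Gamma[c_\Gamma]$ directly via the elementary inequality $s\log s-t\log t\ge(1+\log t)(s-t)$, which avoids justifying one-sided differentiability of the entropy term along arbitrary admissible directions. Your derivative-based route can be made rigorous (the uniform positive lower and upper bounds on $c_{\Gamma,j}$ in $\Omega_+$ control $\log c_{\Gamma,j}$), but the paper's inequality is the cleaner way to the same conclusion.
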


\begin{proof}
By the properties of $\psi_\Gamma$ (cf.\ Theorem~\ref{t:DBPBE}) and $\phi_{\Gamma, \infty}$
(cf.\ \reff{phiGammabounds}), we have $c_\Gamma \in \calX_+$. 
If we replace $c_j$ in \reff{PoissonNew} by $c_{\Gamma, j}$ defined in \reff{Cmin}
 and note the definition of $B$ in \reff{B}, we get exactly the PB equation \reff{PBE}. 
Therefore, the unique solution $\psi_\Gamma$ to the boundary-value problem of the PB equation
\reff{PBE} is also the unique solution to the boundary-value problem of Poisson's equation
\reff{PoissonNew} corresponding to $c_{\Gamma}$. 

We now prove that $c_\Gamma$ is the unique minimizer of $F_\Gamma: \calX_+\to (-\infty,  \infty]$. 
To do so, we first re-write the functional $F_\Gamma$. 
Let $c = (c_1, \dots, c_M) \in {\cal X}_+ $ and let $\psi \in \hat{\phi}_{\rm C} +H^1(\Omega)$ 
be the corresponding electrostatic potential, i.e., 
the weak solution to \reff{PoissonNew} defined in \reff{weakpsi}. 
Denote $f = \sum_{j=1}^M q_j c_j.$ Since $f = 0$ a.e.\ in $\Omega_-$, we have 
by the definition of $L_\Gamma$ (cf.\ \reff{Lf}) that $L_\Gamma f$ is harmonic in $\Omega_-$.  Moreover, 
%%by \reff{psippLGamma} that 
%\[
%\psi = L_\Gamma f + \hat{\phi}_{\Gamma, \infty} \qquad \mbox{in } \Omega. 
%\]
\[
\sum_{i=1}^N Q_i \left( L_\Gamma f \right) (x_i) = \int_{\Omega_+} 
( \hat{\phi}_{\Gamma, \infty} - {\phi}_{\Gamma, \infty} )  f \, dx;   
\]
cf.\ Lemma~3.2 in \cite{Li_SIMA09} (where $L/(4\pi) $ and $G/(4 \pi) $ are  our 
$L_\Gamma$ and $ \hat{\phi}_{\Gamma, \infty} - {\phi}_{\Gamma, \infty}$ here, respectively). 
This, together with \reff{psippLGamma} and the fact that all $\psi - \hat{\phi}_{\rm C},$ 
$\hat{\phi}_{\Gamma, \infty}-\hat{\phi}$, and $L_\Gamma f $ 
%%and ${\phi}_{\Gamma, \infty}$ (cf.\ \reff{hatpsiinftyweak}) 
are harmonic in $\Omega_-$, implies that 
\begin{align*}
\sum_{i=1}^N Q_i (\psi-\hat{\phi}_{\rm C})(x_i) & = \sum_{i=1}^N Q_i (L_\Gamma f )(x_i) +  
\sum_{i=1}^N Q_i ( \hat{\phi}_{\Gamma, \infty} - \hat{\phi}_{\rm C} ) (x_i) 
\\
& = \int_{\Omega_+} ( \hat{\phi}_{\Gamma, \infty} - {\phi}_{\Gamma, \infty} ) 
\left( \sum_{j=1}^M q_j c_j \right)  dx +  
\sum_{i=1}^N  Q_i  (\hat{\phi}_{\Gamma, \infty} - \hat{\phi}_{\rm C} )  (x_i).
\end{align*}
With this and \reff{psippLGamma}, we can rewrite $F_\Gamma [c]$ \reff{FGammac} as 
\begin{equation}
\label{ReformFGammac}
F_\Gamma [c] = \int_{\Omega_+ } \left[ \frac12 \left(\sum_{j=1}^M q_j c_j \right)
L_\Gamma \left(\sum_{j=1}^M q_j c_j \right)+ 
\sum_{j=1}^M \left( \beta^{-1} c_j \log c_j  + \alpha_j c_j \right) \right] dx + E_{0, \Gamma},
\end{equation}
where all $\alpha_j = \alpha_j(x) $ $(j =1 , \dots, M)$ and $E_{0, \Gamma}$ 
are independent of $c$, given by
\begin{align}
\label{alphaj}
&\alpha_j(x) =  q_j  \left[ \hat{\phi}_{\Gamma, \infty} (x) - \frac12 \phi_{\Gamma, \infty}(x)\right]
+  \beta^{-1} \left( 3 \log \Lambda - 1 \right) 
- \mu_j  \quad \forall x \in \Omega, \ j = 1, \dots, M, \\
%\label{E0}
&E_{0, \Gamma} = \frac12 \sum_{i=1}^N Q_i ( \hat{\phi}_{\Gamma, \infty} 
- \hat{\phi}_{\rm C} ) (x_i)    + \beta^{-1} | \Omega_+ | \sum_{j=1}^M c_j^\infty. 
\nonumber 
\end{align}
Here and below, we denote by $|A|$ the Lebesgue measure of $A$ when no confusion arises.

%%Now we can apply direct methods in the calculus of variations to obtain the existence of a minimizer 
%%$c_\Gamma = (c_{\Gamma, 1}, \dots, c_{\Gamma, M}) \in {\cal X}_+$ for 
%%$F_\Gamma: {\cal X}_+ \to (-\infty, \infty].$ 

We now compare $F_\Gamma[c]$ and $F_\Gamma[c_\Gamma].$ 
By Taylor's expansion, we have for any $s, t \in (0, \infty)$ that 
\[
s \log s - t \log t =  ( 1 + \log t ) (s - t) + \frac{1}{2 r } (s - t)^2 \ge  
 ( 1 + \log t ) (s - t),
\]
where $r$ is in between $s$ and $t$. 
Consequently, by \reff{ReformFGammac} and the fact that $L_\Gamma$ is self-adjoint, we have 
%%for any $c = (c_1, \dots, c_M)\in \calX_+$ that 
\begin{align*}
F_\Gamma [c] - F_\Gamma[c_\Gamma] & =  
\int_\Omega \frac12 \left( \sum_{j=1}^M q_j (c_j - c_{\Gamma, j}) \right)  
L_\Gamma  \left( \sum_{j=1}^M q_j (c_j - c_{\Gamma, j}) \right)  dx
\\
&\qquad +   \int_\Omega  \left( \sum_{j=1}^M q_j (c_j - c_{\Gamma, j}) \right)  
L_\Gamma  \left( \sum_{k=1}^M q_k  c_{\Gamma, k} \right)  dx
\\
& \qquad +  \beta^{-1} \sum_{j=1}^M 
\int_\Omega \left( c_j \log c_j - c_{\Gamma, j} \log c_{\Gamma, j} \right) \, dx
+ \sum_{j=1}^M \int_\Omega  (c_j - c_{\Gamma, j}) \alpha_j \, dx
\\
%%\left( 1 + \log c_{\Gamma, j} \right) + \alpha_j \right] \left( c_j - c_{\Gamma, j} \right) \, dx 
&\ge \sum_{j=1}^M \int_\Omega (c_j - c_{\Gamma, j} )
\left[ q_j L_\Gamma  \left( \sum_{k=1}^M q_k  c_{\Gamma, k} \right)
+ \beta^{-1} \left( 1 + \log c_{\Gamma, j} \right) + \alpha_j \right] dx. 
\end{align*}
It follows from the fact that $c_j^\infty = \Lambda^{-3 } e^{\beta \mu_j}$ 
(cf.\ the assumption (A2)), \reff{psippLGamma}, \reff{Cmin}, 
and \reff{alphaj} that the quantity inside the brackets in the 
above integral vanishes. Thus, $F[c]\ge F[c_\Gamma]$. Hence, $c_\Gamma$ is a minimizer
of $F_\Gamma: \calX_+ \to (-\infty, \infty]$. Since $F_\Gamma$ is convex, and in particular, 
$s\mapsto s \log s$ is strictly convex on $(0, \infty)$, the minimizer of $F_\Gamma$ is unique;  
cf.\ \cite{Li_SIMA09}.  

%%Finally, \reff{Fpsi} can be verified directly. 
Finally, we obtain \reff{Fpsi} from \reff{FGammac} with $\psi_\Gamma$ and $c_\Gamma$ replacing
$\psi$ and $c$, respectively, \reff{B}, and \reff{Cmin}. 
\end{proof}

\section{Dielectric Boundary Force}
%% and Maxwell Stress}
\label{s:DBF}

%%%%%%%%%%%%%%%%%%%%%%%%%%%%%%%%%%%%%%%%%%%%%%%%%%%%%%%%%%
%\subsection{Definition and Formula of the Dielectric Boundary Force}
%\label{ss:Flow}

\subsection{Electrostatic Free Energy of a Dielectric Boundary}
\label{ss:FGamma}

Given any dielectric boundary $\Gamma$, we denote by 
\begin{equation}
\label{DefineJGamma}
 E[\Gamma]  = \min_{c\in {\cal X}_+} F_\Gamma[c], 
%%G_\Gamma [\psi_\Gamma] = \max_{ \psi \in {\cal Y}} G_\Gamma[\psi] = 
%\nonumber \\
%& = -\int_\Omega \left[ \frac{\ve_\Gamma}{2}|\nabla (\psi_\Gamma - \phi_0) |^2
%+\chi_+ B \left( \psi_\Gamma   - \frac{\phi_{\Gamma, \infty}}{2} \right)\right] dx
%\nonumber\\ 
%&\qquad 
%- (\varepsilon_+ -\varepsilon_-) \int_{\Omega_+} \nabla 
%(\psi_\Gamma  - \phi_0) \cdot \nabla\phi_0\,  dx
%-\frac{\varepsilon_+ -\varepsilon_-}{2} \int_{\Omega_+} |\nabla\phi_0|^2 \, dx + \gamma,  
\end{equation}
the minimum electrostatic free energy given in Theorem~\ref{t:minFa} (cf.\ \reff{Fpsi}). 
%We now denote by $E[\Gamma]$ the electrostatic free energy for a given dielectric boundary 
%$\Gamma$ with certain smoothness. Explicitly, we have by Theorem~\ref{t:Jpsi} that  
%where $\psi_\Gamma \in {\cal Y}$ is the unique weak solution to the boundary-value problem of 
%the dielectric-boundary PB equation \reff{PBE} and the constant 
%$\gamma$ given in \reff{Z} is independent of $\Gamma.$ 
%% We shall next study the variation of $E[\Gamma]$. 
We reformulate $E[\Gamma]$ to convert the discrete part of the 
energy into volume integrals that will be useful when we calculate the variation of 
$E[\Gamma]$ with respect to the boundary variation of $\Gamma.$ 

%\begin{remark}
%From the definition of the function $g$, we know that the $Z$ term is independent of the 
%%interface $\Gamma$. Thus, we can calculate the dielectric boundary force 
%%by taking a variation of the functional $J[\psi_r]$ with respect to $\Gamma$, rather than the 
%%electrostatic free-energy functional $F[\psi]$ itself. To indicate the dependence on $\Gamma$, 
%%we also write $J[\Gamma; \psi_r,\hat{\psi}_\infty]=J[\psi_r]$. 
%%\end{remark}

\begin{lemma}
\label{l:EGammaReform}
Let $\Gamma$ be a dielectric boundary satisfying 
the part of the assumption A1 on 
$\Gamma$ in Subsection~\ref{ss:Assumptions}. Let $\psi_\Gamma \in \hat{\phi}_{\rm C}
+ H^1(\Omega)$ be the corresponding solution to the boundary-value 
problem of PB equation \reff{PBE}.  We have 
\begin{align}
\label{newEGamma}
E[\Gamma] & =
- \int_\Omega \frac{\ve_\Gamma}{2} | \nabla (\psi_\Gamma - \hat{\phi}_{\Gamma, \infty}) |^2 dx
- \int_{\Omega_+} B\left( \psi_\Gamma -\frac{\phi_{\Gamma, \infty} }{2} \right) dx
\nonumber
\\
& \quad \quad
+ \frac{\ve_- - \ve_+}{2} \int_{\Omega_+}
 \nabla  \hat{\phi}_{\Gamma, \infty} \cdot \nabla  \hat{\phi}_0 \, dx
+ \frac12 \sum_{i=1}^N Q_i (\hat{\phi}_\infty - \hat{\phi}_{\rm C} ) (x_i),
\end{align}
where all the functions $\hat{\phi}_{\rm C}$, $\hat{\phi}_0$, $\hat{\phi}_\infty$,
$\phi_{\Gamma, \infty}$, and $\hat{\phi}_{\Gamma, \infty}$
are defined in Subsection~\ref{ss:Assumptions}. 
\end{lemma}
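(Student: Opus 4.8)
The plan is to begin from the closed form \reff{Fpsi} for $E[\Gamma]=F_\Gamma[c_\Gamma]$ furnished by Theorem~\ref{t:minFa} and to transform it into \reff{newEGamma}. The term $-\int_{\Omega_+}B(\psi_\Gamma-\phi_{\Gamma,\infty}/2)\,dx$ is common to both expressions, so the task reduces to trading the two remaining contributions in \reff{Fpsi}, namely the discrete sum $\tfrac12\sum_i Q_i(\psi_\Gamma-\hat{\phi}_{\rm C})(x_i)$ and the term $\tfrac12\int_{\Omega_+}(\psi_\Gamma-\phi_{\Gamma,\infty})B'(\psi_\Gamma-\phi_{\Gamma,\infty}/2)\,dx$, for the gradient-square term, the cross term $\int_{\Omega_+}\nabla\hat{\phi}_{\Gamma,\infty}\cdot\nabla\hat{\phi}_0\,dx$, and the discrete sum $\tfrac12\sum_i Q_i(\hat{\phi}_\infty-\hat{\phi}_{\rm C})(x_i)$ of \reff{newEGamma}. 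Throughout I would use the weak formulations \reff{weakPBE}, \reff{phiGammaWeak}, \reff{Wpsi0}, and \reff{equivweak}, each extended (as justified in the remark below \reff{equivweak}) to test functions $\eta\in H^1_0(\Omega)$ with $\eta|_{\Omega_-}\in C^1(\Omega_-)$; the regularity recorded in Theorem~\ref{t:DBPBE} and in \reff{hatphi_reg} makes the relevant differences of auxiliary functions admissible as such test functions.

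First I would write $w:=\psi_\Gamma-\hat{\phi}_{\Gamma,\infty}\in H^1_0(\Omega)$ (with $w|_{\Omega_-}\in C^\infty(\Omega_-)$) and subtract the weak form \reff{phiGammaWeak} of $\hat{\phi}_{\Gamma,\infty}$ from the weak PB equation \reff{weakPBE}, both at $\eta=w$. The point-charge sums cancel and I obtain the energy identity $\int_\Omega\ve_\Gamma|\nabla w|^2\,dx=-\int_{\Omega_+}B'(\psi_\Gamma-\phi_{\Gamma,\infty}/2)\,w\,dx$, so the gradient-square term $-\tfrac12\int_\Omega\ve_\Gamma|\nabla w|^2\,dx$ equals $\tfrac12\int_{\Omega_+}B'(\psi_\Gamma-\phi_{\Gamma,\infty}/2)\,w\,dx$. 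Next I would convert the remaining $B'$-integral into point values: by \reff{psippLGamma}, \reff{Cmin}, and \reff{B} one has $w=L_\Gamma g$ with $g=\sum_j q_j c_{\Gamma,j}=-\chi_+B'(\psi_\Gamma-\phi_{\Gamma,\infty}/2)$, and the reciprocity identity of Lemma~3.2 in \cite{Li_SIMA09} (where $L/(4\pi)$ and $G/(4\pi)$ are the present $L_\Gamma$ and $\hat{\phi}_{\Gamma,\infty}-\phi_{\Gamma,\infty}$), which is just the self-adjointness of $L_\Gamma$, gives $\int_{\Omega_+}(\hat{\phi}_{\Gamma,\infty}-\phi_{\Gamma,\infty})B'(\psi_\Gamma-\phi_{\Gamma,\infty}/2)\,dx=-\sum_i Q_i w(x_i)$. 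Substituting these two facts and collecting the discrete sums, the whole identity collapses to the purely linear statement
\[
\sum_{i=1}^N Q_i(\hat{\phi}_{\Gamma,\infty}-\hat{\phi}_\infty)(x_i)=(\ve_--\ve_+)\int_{\Omega_+}\nabla\hat{\phi}_{\Gamma,\infty}\cdot\nabla\hat{\phi}_0\,dx,
\]
in which $B$, $\psi_\Gamma$, and the concentrations no longer appear.

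To prove this linear identity I would set $u:=\hat{\phi}_{\Gamma,\infty}-\hat{\phi}_\infty$, which lies in $H^1_0(\Omega)$, is smooth in $\Omega_-$ (its Coulomb singularities cancelling), and is harmonic in $\Omega_-$ and in $\Omega_+$ by \reff{DhatphiGi} together with the harmonicity of $\hat{\phi}_\infty$ away from the charges. Applying \reff{Wpsi0} with $\hat{\phi}=\hat{\phi}_0$ and $\eta=u$ rewrites the left-hand side as $\int_\Omega\ve_-\nabla\hat{\phi}_0\cdot\nabla u\,dx$; a Green's identity on $\Omega_-$ and $\Omega_+$, differentiating the regular factor $u$ and excising small balls about the $x_i$ (whose contributions vanish because $\hat{\phi}_0\sim Q_i/(4\pi\ve_-|x-x_i|)$ while $\nabla u$ stays bounded), leaves only the interface integral $\ve_-\int_\Gamma\hat{\phi}_0(\partial_n u^--\partial_n u^+)\,dS$, since $\hat{\phi}_0=0$ on $\partial\Omega$. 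Because $\hat{\phi}_\infty$ is smooth across $\Gamma$ and $\llbracket\ve_\Gamma\partial_n\hat{\phi}_{\Gamma,\infty}\rrbracket_\Gamma=0$ by \reff{hatphiJumps}, this equals $(\ve_+-\ve_-)\int_\Gamma\hat{\phi}_0\,\partial_n\hat{\phi}_{\Gamma,\infty}^+\,dS$; and a second Green's identity in $\Omega_+$ (both $\hat{\phi}_{\Gamma,\infty}$ and $\hat{\phi}_0$ harmonic there, $\hat{\phi}_0=0$ on $\partial\Omega$) shows the right-hand side equals the same quantity, closing the argument.

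The main obstacle is the rigorous treatment of the Coulomb singularities at the $x_i$. Every manipulation away from the charges is routine bookkeeping of weak forms and Green's identities, but two places genuinely require care: the reciprocity step converting the $B'$-integral into $-\sum_i Q_i w(x_i)$, which must be justified through the self-adjointness of $L_\Gamma$ acting on the singular source $\sum_i Q_i\delta_{x_i}$ (as in \cite{Li_SIMA09}), and the excised-ball limit in the linear identity. The latter is in fact benign precisely because I differentiate the regular factor $u$ rather than the singular $\hat{\phi}_0$, so the sphere integrals scale like $|x-x_i|^{-1}\cdot|x-x_i|^2\to0$; the only substantive content that remains is the matching of the interface terms on $\Gamma$ through the transmission conditions, which is exactly where the coefficient $\ve_--\ve_+$ is produced.
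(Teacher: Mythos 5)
Your proof is correct, and it shares the paper's skeleton: both start from \reff{Fpsi}, both reduce the sum $\tfrac12\sum_i Q_i(\hat{\phi}_{\Gamma,\infty}-\hat{\phi}_\infty)(x_i)$ to $\tfrac{\ve_--\ve_+}{2}\int_{\Omega_+}\nabla\hat{\phi}_{\Gamma,\infty}\cdot\nabla\hat{\phi}_0\,dx$ by testing \reff{Wpsi0} (with $\hat{\phi}=\hat{\phi}_0$) against $\hat{\phi}_{\Gamma,\infty}-\hat{\phi}_\infty$, excising small balls, and matching fluxes across $\Gamma$ via \reff{hatphiJumps}, and both extract $-\tfrac12\int_\Omega\ve_\Gamma|\nabla(\psi_\Gamma-\hat{\phi}_{\Gamma,\infty})|^2\,dx$ from the difference of \reff{weakPBE} and \reff{phiGammaWeak}. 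Where you genuinely diverge is in disposing of the residual pairing of $\tfrac12\int_{\Omega_+}(\hat{\phi}_{\Gamma,\infty}-\phi_{\Gamma,\infty})\,B'(\psi_\Gamma-\phi_{\Gamma,\infty}/2)\,dx$ against $\tfrac12\sum_i Q_i(\psi_\Gamma-\hat{\phi}_{\Gamma,\infty})(x_i)$: you invoke the reciprocity identity $\sum_i Q_i(L_\Gamma f)(x_i)=\int_{\Omega_+}(\hat{\phi}_{\Gamma,\infty}-\phi_{\Gamma,\infty})f\,dx$ of Lemma~3.2 in \cite{Li_SIMA09} (already quoted in the proof of Theorem~\ref{t:minFa}) with $f=-\chi_+B'(\psi_\Gamma-\phi_{\Gamma,\infty}/2)$, together with $\psi_\Gamma-\hat{\phi}_{\Gamma,\infty}=L_\Gamma f$ from \reff{psippLGamma} and \reff{Cmin}, so the two terms cancel in one line and the whole lemma collapses to a purely linear identity. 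The paper instead re-derives this cancellation from scratch through a longer chain of Green's identities in $\Omega_-$ and $\Omega_+$ using \reff{interfaceform}, \reff{hatpsiinftyweak}, \reff{hatphiJumps}, and the excised-ball identity \reff{fact}. Your route is shorter and makes the mechanism transparent --- the cancellation is exactly the (suitably extended) self-adjointness of $L_\Gamma$ tested against the singular source $\sum_i Q_i\delta_{x_i}\notin H^{-1}(\Omega)$, which is the one step that genuinely needs the external lemma --- while the paper's computation is self-contained at the cost of length; the treatment of the remaining linear identity, including the $O(\alpha)$ estimate on the excised spheres and the appearance of the factor $\ve_+-\ve_-$ from the transmission condition, is essentially identical in the two arguments.
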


\begin{proof}
We first prove an elementary identity.  Let $u \in C^2(\Omega_-) \cap
C^1(\overline {\Omega_-})$ be such that $\Delta u = 0$ in
$\Omega_-.$ Let $v \in \hat{\phi}_{\rm C} + H^1(\Omega_-)
\cap C(\overline{\Omega_-})$,
in particular, $v = \hat{\phi}_{\rm C}$, $\hat{\phi}_0$, $\hat{\phi}_\infty$,
or $\hat{\phi}_{\Gamma, \infty}$ (restricted onto $\Omega_-$).
Denote $B_\alpha = \cup_{i=1}^N B(x_i, \alpha)$ for $0 < \alpha \ll 1$
and $\nu$ the  unit normal at $\partial B(\alpha) = \cup_{i=1}^N \partial
B(x_i, \alpha) $, pointing toward $x_i$ $(i = 1, \dots, N)$.
%%  Since $\psi_\Gamma - \phi_0$ is harmonic in $\Omega_-$ and
Since the unit normal $n$ at $\Gamma$ points from $\Omega_-$ to $\Omega_+$, 
and since $v = \hat{\phi}_{\rm C} + \hat{v}$ for some $\hat{v} \in 
H^1(\Omega_-)\cap C(\overline{\Omega_-})$ 
and $\hat{\phi}_{\rm C} $ is given in \reff{psi0}, we have
\begin{align}
\label{fact}
\int_{\Omega_-} \nabla u  \cdot \nabla v \, dx
& = \lim_{\alpha \to 0^+} \int_{\Omega_-\setminus B_\alpha} \nabla u \cdot\nabla v \, dx
\nonumber \\
& = \int_\Gamma \partial_n u \, v \, dS
+ \lim_{\alpha \to 0^+}  \sum_{i=1}^N \int_{\partial B(x_i,\alpha)} \partial_\nu u \, v \, dS
\nonumber \\
& =  \int_\Gamma \partial_n  u \, v \, dS.
\end{align}
Denoting now $ W = (1/2) \sum_{i=1}^N Q_i (\hat{\phi}_\infty - \hat{\phi}_{\rm C} ) (x_i), $
we have by \reff{DefineJGamma} and \reff{Fpsi} that 
\begin{align}
\label{W+}
%\label{ZY}
%E[\Gamma] & = \frac12  \sum_{i=1}^N Q_i (\psi_\Gamma - \phi_0) (x_i) 
%\nonumber \\
%& \quad \quad +  \int_{\Omega_+} \left[  \frac12\left( \psi_\Gamma - \phi_{\Gamma,\infty}\right) 
%B'\left( \psi_\Gamma -\frac{\phi_{\Gamma, \infty}}{2}  \right) 
%- B\left( \psi_\Gamma -\frac{\phi_{\Gamma, \infty} }{2} \right) \right] dx 
%\nonumber \\
E[\Gamma] & =  
\frac12 \sum_{i=1}^N Q_i ( \hat{\phi}_{\Gamma, \infty} - \hat{\phi}_\infty ) (x_i) 
+ \frac12 \sum_{i=1}^N Q_i (\psi_\Gamma - \hat{\phi}_{\Gamma, \infty} ) (x_i) 
\nonumber \\
& \qquad 
+  \int_{\Omega_+} \left[  \frac12 ( \psi_\Gamma - \phi_{\Gamma,\infty})
B'\left( \psi_\Gamma -\frac{\phi_{\Gamma, \infty}}{2}  \right) 
- B\left( \psi_\Gamma -\frac{\phi_{\Gamma, \infty} }{2} \right) \right] dx + W. 
\end{align}
%%%%%%%%%%%%%%%%%%%%%%
\begin{comment}
For the first term of $E[\Gamma]$, we have by \reff{phiGammaWeak} that 
%%Since $\hat{\phi}_{\Gamma, \infty} - \hat{\phi}_\infty \in H_0^1(\Omega) \cap C(\overline{\Omega})$, 
%%we have by \reff{} that 
\begin{align}
\label{PoissonEnergy}
& \frac12 \sum_{i=1}^N Q_i ( \hat{\phi}_{\Gamma, \infty} - \hat{\phi}_\infty  ) (x_i) 
\nonumber \\
&\quad 
=  \int_{\Omega}  \frac{\ve_\Gamma }{2} \nabla  \hat{\phi}_{\Gamma, \infty} \cdot
\nabla ( \hat{\phi}_{\Gamma, \infty} - \hat{\phi}_\infty) \, dx 
\nonumber \\
&\quad 
= \int_\Omega \frac{\ve_\Gamma}{2} | \nabla (\hat{\phi}_{\Gamma, \infty} - \hat{\phi}_\infty )|^2 dx
+  \int_{\Omega}  \frac{\ve_\Gamma }{2} \nabla  \hat{\phi}_{\infty} \cdot
\nabla ( \hat{\phi}_{\Gamma, \infty} - \hat{\phi}_\infty) \, dx. 
\end{align}
\end{comment}
%%%%%%%%%%%%%%%%%%%%%%
We first consider the first term in \reff{W+}. Note that the unit vector $n$ normal
to $\Gamma$ points from $\Omega_-$ to $\Omega_+$. We have by Green's formula that 
\begin{align*}
& \frac12 \sum_{i=1}^N Q_i ( \hat{\phi}_{\Gamma, \infty} - \hat{\phi}_\infty  ) (x_i) 
\\
& \quad 
=  \int_{\Omega} \frac{\ve_-}{2} \nabla \hat{\phi}_0 \cdot
\nabla  ( \hat{\phi}_{\Gamma, \infty} - \hat{\phi}_\infty )\, dx
\qquad [\mbox{by \reff{Wpsi0} with } \hat{\phi} = \hat{\phi}_{0} \mbox{ and }  
\eta =  \hat{\phi}_{\Gamma, \infty} - \hat{\phi}_\infty]
\\
& \quad 
=  \int_{\Omega_-} \frac{\ve_-}{2} \nabla \hat{\phi}_0 \cdot
\nabla ( \hat{\phi}_{\Gamma, \infty} - \hat{\phi}_\infty )\, dx
+ \int_{\Omega_+} \frac{\ve_-}{2} \nabla \hat{\phi}_0 \cdot \nabla 
( \hat{\phi}_{\Gamma, \infty} - \hat{\phi}_\infty )\, dx
\\
& \quad 
=  \int_{\Gamma} \frac{\ve_-}{2} \hat{\phi}_0
 \partial_n ( \hat{\phi}^-_{\Gamma, \infty} - \hat{\phi}_\infty )\, dS 
+ \int_{\Omega_+} \frac{\ve_-}{2} \nabla \hat{\phi}_0 \cdot
 \nabla ( \hat{\phi}_{\Gamma, \infty} - \hat{\phi}_\infty )\, dx
\qquad [\mbox{by \reff{fact}}] 
\\
& \quad 
=  \int_{\Gamma} \frac{\ve_+}{2} \hat{\phi}_0 \partial_n  \hat{\phi}^+_{\Gamma, \infty}\, dS 
-  \int_{\Gamma} \frac{\ve_-}{2} \hat{\phi}_0 \partial_n  \hat{\phi}_{\infty} \, dS 
\qquad [\mbox{by \reff{hatphiJumps}}] 
\\
&\quad \quad 
+ \int_{\Omega_+} \frac{\ve_-}{2} \nabla \hat{\phi}_0 \cdot \nabla 
( \hat{\phi}_{\Gamma, \infty} - \hat{\phi}_\infty )\, dx
\\
& \quad 
= -\int_{\partial \Omega_+} \frac{\ve_+}{2} \hat{\phi}_0 \partial_n \hat{\phi}_{\Gamma, \infty} \, dS 
+ \int_{\partial \Omega_+} \frac{\ve_-}{2} \hat{\phi}_0 \partial_n \hat{\phi}_\infty \, dS
\qquad [\mbox{since $ \hat{\phi}_0 = 0 $ on $\partial \Omega$}]
\\
&\quad \quad 
+ \int_{\Omega_+} \frac{\ve_-}{2} \nabla \hat{\phi}_0 \cdot
 \nabla ( \hat{\phi}_{\Gamma, \infty} - \hat{\phi}_\infty )\, dx
\\
& \quad 
= - \int_{\Omega_+} \frac{\ve_+}{2} \nabla \hat{\phi}_0 \cdot \nabla \hat{\phi}_{\Gamma, \infty} \, dx
+ \int_{\Omega_+} \frac{\ve_-}{2} \nabla \hat{\phi}_0 \cdot \nabla \hat{\phi}_\infty \, dx
\\ 
& \quad \quad 
+ \int_{\Omega_+} \frac{\ve_-}{2} \nabla \hat{\phi}_0 \cdot \nabla 
( \hat{\phi}_{\Gamma, \infty} - \hat{\phi}_\infty )\, dx
\\
& \quad 
= \frac{\ve_- - \ve_+}{2} \int_{\Omega_+} \nabla  
\hat{\phi}_{\Gamma, \infty} \cdot \nabla  \hat{\phi}_0 \, dx. 
\end{align*}
%%%%%%%%%%%%%%%%%%%%%%555
\begin{comment}
%To reformulate the second and third terms of $E[\Gamma]$ in \reff{W+}, we first prove 
%an elementary identity.  Let $u \in C^2(\Omega_-) \cap 
%C^1(\overline {\Omega_-})$ be such that $\Delta u = 0$ in 
%$\Omega_-.$ Let $v \in \hat{\phi}_{\rm C} + H^1(\Omega)\cap C(\overline{\Omega})$, 
%in particular, $v = \hat{\phi}_{\rm C}$, $\hat{\phi}_0$, $\hat{\phi}_\infty$, 
%or $\hat{\phi}_{\Gamma, \infty}$. 
%Denote $B_\alpha = \cup_{i=1}^N B(x_i, \alpha)$ for $0 < \alpha \ll 1$
%and $\nu$ the  unit normal at $\partial B(\alpha) = \cup_{i=1}^N \partial
%B(x_i, \alpha) $, pointing toward $x_i$ $(i = 1, \dots, N)$.  
%%  Since $\psi_\Gamma - \phi_0$ is harmonic in $\Omega_-$ and 
%Since the unit normal $n$ at $\Gamma$ is defined to point 
%from $\Omega_-$ to $\Omega_+$, we have 
\begin{align}
\label{fact}
\int_{\Omega_-} \nabla u  \cdot \nabla v \, dx 
& = \lim_{\alpha \to 0^+} \int_{\Omega_-\setminus B_\alpha} \nabla u \cdot\nabla v \, dx
\nonumber \\
& = \int_\Gamma \partial_n u \, v \, dS
+ \lim_{\alpha \to 0^+}  \sum_{i=1}^N \int_{\partial B(x_i,\alpha)} \partial_\nu u \, v \, dS
\nonumber \\
& =  \int_\Gamma \partial_n  u \, v \, dS. 
\end{align}
\end{comment}
%%%%%%%%%%%%%%%%%%%%%%555
Considering now the second and third terms in \reff{W+}, we have 
\begin{align*}
&\frac12 \sum_{i=1}^N Q_i  (\psi_\Gamma - \hat{\phi}_{\Gamma, \infty} ) (x_i) 
\\
& \quad 
+  \int_{\Omega_+} \left[  \frac12\left( \psi_\Gamma - \phi_{\Gamma,\infty}\right) 
B'\left( \psi_\Gamma -\frac{\phi_{\Gamma, \infty}}{2}  \right) 
- B\left( \psi_\Gamma -\frac{\phi_{\Gamma, \infty} }{2} \right) \right] dx 
\\
& \quad = 
\sum_{i=1}^N Q_i (\psi_\Gamma - \hat{\phi}_{\Gamma, \infty} ) (x_i) 
 - \frac12 \sum_{i=1}^N Q_i (\psi_\Gamma - \hat{\phi}_{\Gamma, \infty} ) (x_i) 
\\
& \quad \quad
+  \int_{\Omega_+} \left[  \frac12 ( \psi_\Gamma - \phi_{\Gamma,\infty})
B'\left( \psi_\Gamma -\frac{\phi_{\Gamma, \infty}}{2}  \right) 
- B\left( \psi_\Gamma -\frac{\phi_{\Gamma, \infty} }{2} \right) \right] dx 
\\
& \quad =  \int_{\Omega} \ve_\Gamma  \nabla \hat{\phi}_{\Gamma, \infty} 
\cdot  \nabla ( \psi_\Gamma-\hat{\phi}_{\Gamma, \infty} )\, dx
\quad [\mbox{by \reff{phiGammaWeak}}]
\\
& \quad \quad  
- \frac12 \int_\Omega \left[ \ve_\Gamma \nabla \psi_\Gamma \cdot 
 \nabla ( \psi_\Gamma - \hat{\phi}_{\Gamma, \infty} )
+\chi_+ B'\left( \psi_\Gamma - \frac{\phi_{\Gamma, \infty}}{2} \right)
( \psi_\Gamma - \hat{\phi}_{\Gamma, \infty} ) \right] 
\quad [\mbox{by \reff{weakPBE}}]
\\
& \quad \quad 
 +  \int_{\Omega_+} \left[  \frac12\left( \psi_\Gamma - \phi_{\Gamma,\infty}\right) 
B'\left( \psi_\Gamma -\frac{\phi_{\Gamma, \infty}}{2}  \right) 
- B\left( \psi_\Gamma -\frac{\phi_{\Gamma, \infty} }{2} \right) \right] dx 
\\
& \quad = - \int_\Omega \frac{\ve_\Gamma}{2} | \nabla (\psi_\Gamma 
- \hat{\phi}_{\Gamma, \infty} ) |^2 dx 
- \int_{\Omega_+} B\left( \psi_\Gamma -\frac{\phi_{\Gamma, \infty} }{2} \right) dx 
\\
& \quad \quad 
+ \int_\Omega \frac{\ve_\Gamma}{2}  
\nabla (\psi_\Gamma - \hat{\phi}_{\Gamma, \infty} ) 
\cdot \nabla \hat{\phi}_{\Gamma, \infty}\, dx 
 + \int_{\Omega_+}  \frac12 ( \hat{\phi}_{\Gamma, \infty} - \phi_{\Gamma,\infty})
B'\left( \psi_\Gamma -\frac{\phi_{\Gamma, \infty}}{2}  \right) dx 
\\
& \quad = - \int_\Omega \frac{\ve_\Gamma}{2} | \nabla (\psi_\Gamma 
- \hat{\phi}_{\Gamma, \infty} ) |^2 dx 
- \int_{\Omega_+} B\left( \psi_\Gamma -\frac{\phi_{\Gamma, \infty} }{2} \right) dx 
\\
&\quad \quad 
+ \int_\Omega \frac{\ve_\Gamma}{2}  \nabla (\psi_\Gamma - \hat{\phi}_{\Gamma, \infty} ) 
\cdot \nabla \hat{\phi}_{\Gamma, \infty}\, dx 
\\
& \quad \quad 
 + \int_{\Omega_+}  \frac{\ve_+}{2} ( \hat{\phi}_{\Gamma, \infty} - \phi_{\Gamma,\infty} ) 
\Delta (  \psi_\Gamma - \hat{\phi}_{\Gamma, \infty} )\, dx
\quad [\mbox{by \reff{interfaceform} and \reff{DhatphiGi} }]
\\
& \quad = - \int_\Omega \frac{\ve_\Gamma}{2} | \nabla  (\psi_\Gamma 
- \hat{\phi}_{\Gamma, \infty} ) |^2 dx 
- \int_{\Omega_+} B\left( \psi_\Gamma -\frac{\phi_{\Gamma, \infty} }{2} \right) dx 
\\
& \quad \quad 
+ \int_\Omega \frac{\ve_\Gamma}{2}  \nabla (\psi_\Gamma - \hat{\phi}_{\Gamma, \infty}) \cdot 
\nabla ( \hat{\phi}_{\Gamma, \infty} - \phi_{\Gamma, \infty}) \, dx 
\quad [\mbox{by \reff{hatpsiinftyweak}}]
\\
&\quad \quad 
- \int_{\Omega_+}  \frac{\ve_+}{2} \nabla ( \hat{\phi}_{\Gamma, \infty} 
- \phi_{\Gamma,\infty} ) \cdot \nabla ( \psi_\Gamma - \hat{\phi}_{\Gamma, \infty} )\, dx
\\
& \quad \quad 
- \int_\Gamma \frac{\ve_+}{2} \partial_n ( \psi^+_\Gamma - \hat{\phi}^+_{\Gamma, \infty} )
( \hat{\phi}_{\Gamma, \infty} - \phi_{\Gamma,\infty}) \, dS
\quad [\mbox{since $\hat{\phi}_{\Gamma, \infty} - \phi_{\Gamma, \infty} = 0$ on $\partial \Omega$}]
\\
& \quad = - \int_\Omega \frac{\ve_\Gamma}{2} | \nabla (\psi_\Gamma 
- \hat{\phi}_{\Gamma, \infty} ) |^2 dx 
- \int_{\Omega_+} B\left( \psi_\Gamma -\frac{\phi_{\Gamma, \infty} }{2} \right) dx 
\\
& \quad \quad 
+ \int_{\Omega_-} \frac{\ve_- }{2}  \nabla (\psi_\Gamma - \hat{\phi}_{\Gamma, \infty}) \cdot 
\nabla ( \hat{\phi}_{\Gamma, \infty} - \phi_{\Gamma, \infty}) \, dx 
%+  \int_{\Omega_+}  \frac{\ve_+}{2} \nabla ( \psi_\Gamma
%- \hat{\phi}_{\Gamma,\infty} ) \cdot \nabla \phi_{\Gamma, \infty} \, dx
%+ \int_{\Omega_-} \frac{\ve_-}{2}  \nabla (\psi_\Gamma - \hat{\phi}_{\Gamma, \infty} ) 
%\cdot \nabla \hat{\phi}_{\Gamma, \infty}\, dx 
\\
& \quad \quad 
- \int_\Gamma \frac{\ve_-}{2} \partial_n ( \psi^-_\Gamma - \hat{\phi}^-_{\Gamma, \infty} ) 
 ( \hat{\phi}_{\Gamma, \infty} - \phi_{\Gamma,\infty} )\, dS
\quad [\mbox{by \reff{interfaceform} with $\psi = \psi_\Gamma$ and \reff{hatphiJumps} }]
\\
& \quad = - \int_\Omega \frac{\ve_\Gamma}{2} | \nabla (\psi_\Gamma 
- \hat{\phi}_{\Gamma, \infty}) |^2 dx 
- \int_{\Omega_+} B\left( \psi_\Gamma -\frac{\phi_{\Gamma, \infty} }{2} \right) dx. 
\quad [\mbox{by \reff{fact}}]
%\\
%& \quad \quad 
%+  \int_{\Omega_+}  \frac{\ve_+}{2} \nabla ( \psi_\Gamma -  \hat{\phi}_{\Gamma, \infty} 
%) \cdot \nabla \phi_{\Gamma, \infty} \, dx
%+ \int_{\Omega_-} \frac{\ve_-}{2}  \nabla (\psi_\Gamma - \hat{\phi}_{\Gamma, \infty})
%\cdot \nabla \hat{\phi}_{\Gamma, \infty}\, dx 
%\\
%& \quad \quad 
%- \int_{\Omega_-} \frac{\ve_-}{2} \nabla ( \psi_\Gamma - \hat{\phi}_{\Gamma, \infty} )
%\cdot \nabla ( \hat{\phi}_{\Gamma, \infty} - \phi_{\Gamma,\infty} ) \, dx 
%\quad [\mbox{by \reff{fact}}]
%\\
%& \quad = 
%- \int_\Omega \frac{\ve_\Gamma}{2} | \nabla (\psi_\Gamma 
%- \hat{\phi}_{\Gamma, \infty}) |^2 dx 
%- \int_{\Omega_+} B\left( \psi_\Gamma -\frac{\phi_{\Gamma, \infty} }{2} \right) dx 
%\\
%& \quad \quad 
%+  \int_{\Omega}  \frac{\ve_\Gamma }{2} \nabla ( \psi_\Gamma
%- \hat{\phi}_{\Gamma,\infty} ) \cdot \nabla \phi_{\Gamma, \infty} \, dx 
%\\
%& \quad = 
%- \int_\Omega \frac{\ve_\Gamma}{2} | \nabla (\psi_\Gamma 
%- \hat{\phi}_{\Gamma, \infty}) |^2 dx 
%- \int_{\Omega_+} B\left( \psi_\Gamma -\frac{\phi_{\Gamma, \infty} }{2} \right) dx. 
%%\quad [\mbox{by \reff{phiGammaWeak}}]
%\quad [\mbox{by \reff{hatpsiinftyweak}}]
\end{align*}
Now \reff{newEGamma} follows directly from \reff{W+} and the above two expressions.
%This, together with \reff{PoissonEnergy} and \reff{W+}, implies \reff{newEGamma}. 
\end{proof}

We define $G_\Gamma: \hat{\phi}_{\Gamma, \infty} + H_0^1(\Omega ) \to \R \cup \{ - \infty \}$ by 
\[
G_\Gamma [\psi] = 
 - \int_\Omega \frac{\ve_\Gamma}{2} | \nabla (\psi - \hat{\phi}_{\Gamma, \infty}) |^2 dx 
- \int_{\Omega_+} B\left( \psi -\frac{\phi_{\Gamma, \infty} }{2} \right) dx + g_{\Gamma, \infty}, 
\]
where 
\begin{align*}
g_{\Gamma, \infty } 
& = \frac{\ve_- - \ve_+}{2} \int_{\Omega_+}
 \nabla  \hat{\phi}_{\Gamma, \infty} \cdot \nabla  \hat{\phi}_0 \, dx
+ \frac12 \sum_{i=1}^N Q_i (\hat{\phi}_\infty - \hat{\phi}_{\rm C} ) (x_i). 
\end{align*}
We shall call $G_\Gamma $ the PB energy functional.  
Note that by Lemma~\ref{l:EGammaReform}, $E[\Gamma] = G_\Gamma [\psi_\Gamma].$  
In fact, we have the following variational principle for the PB energy functional. 

\begin{theorem}
\label{t:PBvariation}
The Euler--Lagrange equation of the PB energy functional  
$G_\Gamma: \hat{\phi}_{\Gamma, \infty} + H_0^1(\Omega ) \to \R \cup \{ - \infty \}$  
is exactly the dielectric-boundary 
PB equation. Moreover, the functional $G_\Gamma[\cdot]$ is
uniquely maximized over 
$ \hat{\phi}_{\Gamma, \infty} + H_0^1(\Omega )$ by the solution $\psi_\Gamma$ 
to the boundary-value
problem of the PB equation \reff{PBE}, and the maximum value is exactly $E[\Gamma].$  
\end{theorem}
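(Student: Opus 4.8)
The plan is to prove the two assertions separately, using throughout that the additive constant $g_{\Gamma,\infty}$ in the definition of $G_\Gamma$ is independent of $\psi$ and hence drops out of every variation and comparison. First I would derive the Euler--Lagrange equation. Fix $\psi \in \hat{\phi}_{\Gamma,\infty} + H_0^1(\Omega)$ with $G_\Gamma[\psi] > -\infty$ and a test function $\eta \in C_{\rm c}^1(\Omega)$. Differentiating $t \mapsto G_\Gamma[\psi + t\eta]$ at $t = 0$ gives
\[
\delta G_\Gamma[\psi][\eta] = -\int_\Omega \ve_\Gamma \nabla(\psi - \hat{\phi}_{\Gamma,\infty}) \cdot \nabla\eta \, dx - \int_{\Omega_+} B'\left(\psi - \frac{\phi_{\Gamma,\infty}}{2}\right)\eta \, dx,
\]
the interchange of differentiation and integration in the $B$-term being justified by the smoothness and convexity of $B$ together with dominated convergence. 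Setting this to zero and using \reff{phiGammaWeak} to rewrite $\int_\Omega \ve_\Gamma \nabla\hat{\phi}_{\Gamma,\infty}\cdot\nabla\eta \, dx = \sum_{i=1}^N Q_i \eta(x_i)$ reproduces exactly the weak form \reff{weakPBE} of the dielectric-boundary PB equation; thus the critical-point condition for $G_\Gamma$ is precisely that weak equation.

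For the maximization I would argue by concavity. The functional $G_\Gamma$ is strictly concave on the affine space $\hat{\phi}_{\Gamma,\infty} + H_0^1(\Omega)$: the gradient term $-\int_\Omega (\ve_\Gamma/2)|\nabla(\psi - \hat{\phi}_{\Gamma,\infty})|^2\,dx$ is strictly concave because $\int_\Omega \ve_\Gamma |\nabla(\cdot)|^2$ is a positive-definite quadratic form on $H_0^1(\Omega)$ (by the Poincar\'e inequality), while $-\int_{\Omega_+} B(\psi - \phi_{\Gamma,\infty}/2)\,dx$ is concave since $B$ is convex. The decisive point is that the first variation of $G_\Gamma$ at the PB solution $\psi_\Gamma$ vanishes in \emph{every} direction $\eta \in H_0^1(\Omega)$, not merely for $\eta \in C_{\rm c}^1(\Omega)$. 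This follows at once from the shifted weak formulation \reff{weakuGamma} established in the proof of Theorem~\ref{t:DBPBE}, which upon substituting $u_\Gamma = \psi_\Gamma - \hat{\phi}_{\Gamma,\infty}$ becomes exactly the identity $\delta G_\Gamma[\psi_\Gamma][\eta] = 0$ for all $\eta \in H_0^1(\Omega)$. Given any competitor $\psi$ with $G_\Gamma[\psi] > -\infty$, I would set $\eta = \psi - \psi_\Gamma \in H_0^1(\Omega)$ and consider $g(t) = G_\Gamma[\psi_\Gamma + t\eta]$ on $[0,1]$; concavity of $g$ yields $g(1) - g(0) \le g'(0^+) = \delta G_\Gamma[\psi_\Gamma][\eta] = 0$, so $G_\Gamma[\psi] \le G_\Gamma[\psi_\Gamma]$, with strict inequality when $\eta \ne 0$ by strict concavity. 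The case $G_\Gamma[\psi] = -\infty$ is trivial. Hence $\psi_\Gamma$ is the unique maximizer, and by Lemma~\ref{l:EGammaReform} the maximum value equals $E[\Gamma]$.

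The main obstacle I anticipate is the rigorous justification that $g'(0^+) = \delta G_\Gamma[\psi_\Gamma][\eta]$ for the $B$-term with a general $H_0^1$ direction $\eta$, which requires controlling the difference quotients of $B(\psi_\Gamma + t\eta - \phi_{\Gamma,\infty}/2)$. Here the boundedness of $\psi_\Gamma$ and $\phi_{\Gamma,\infty}$ supplied by Theorem~\ref{t:DBPBE} and \reff{phiGammabounds} keeps $B'(\psi_\Gamma - \phi_{\Gamma,\infty}/2)$ bounded, so the tangent-line lower bound $B'(\psi_\Gamma - \phi_{\Gamma,\infty}/2)\,\eta$ lies in $L^1(\Omega_+)$; convexity of $B$ bounds the difference quotients above by the integrable quantity $B(\psi - \phi_{\Gamma,\infty}/2) - B(\psi_\Gamma - \phi_{\Gamma,\infty}/2)$, which is finite since $G_\Gamma[\psi] > -\infty$, and dominated convergence then delivers the one-sided derivative. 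Everything else is routine once this integrability bookkeeping is in place.
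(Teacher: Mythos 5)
Your proof is correct, but for the maximization claim it takes a different route from the paper's. The paper's (very terse) proof establishes the existence of a unique maximizer by re-running the direct method of the calculus of variations exactly as in the proof of Theorem~\ref{t:DBPBE} — i.e., it minimizes the coercive, convex functional $I[u]=-G_\Gamma[u+\hat{\phi}_{\Gamma,\infty}]+g_{\Gamma,\infty}$ over $H_0^1(\Omega)$ from scratch and then identifies the minimizer with $\psi_\Gamma$ via the Euler--Lagrange equation. You instead start from the already-constructed PB solution $\psi_\Gamma$, observe that \reff{weakuGamma} says precisely that the first variation of $G_\Gamma$ at $\psi_\Gamma$ vanishes in every $H_0^1(\Omega)$ direction, and conclude global maximality and uniqueness from strict concavity. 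The two arguments pivot on the same identity \reff{weakuGamma} and the same convexity of $B$, so the mathematical content is essentially shared; what your version buys is that no existence/coercivity/lower-semicontinuity argument needs to be repeated, at the price of the careful one-sided differentiability bookkeeping for the $B$-term along a general $H_0^1$ direction — which you handle correctly (tangent-line lower bound in $L^1$ via the $L^\infty$ bounds on $\psi_\Gamma$ and $\phi_{\Gamma,\infty}$, upper bound by the $t=1$ difference quotient, monotone/dominated convergence). Your Euler--Lagrange derivation, using \reff{phiGammaWeak} to convert $\int_\Omega\ve_\Gamma\nabla\hat{\phi}_{\Gamma,\infty}\cdot\nabla\eta\,dx$ into $\sum_i Q_i\eta(x_i)$ and recover \reff{weakPBE}, is exactly the paper's ``direct calculation,'' and the identification of the maximum value with $E[\Gamma]$ via Lemma~\ref{l:EGammaReform} coincides with the paper's final step.
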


\begin{proof}
Direct calculations verify that the Euler--Lagrange equation for the PB energy functional
$G_\Gamma [\cdot ]$ is indeed the dielectric-boundary
PB equation; cf.\ Definition~\ref{d:PBsolution}. 
The existence of a unique maximizer can be proved exactly the same way as in 
the proof of Theorem~\ref{t:DBPBE}. These, together with Lemma~\ref{l:EGammaReform}, 
imply that the maximum value of the free energy is $G_\Gamma [\psi_\Gamma] = E[\Gamma].$   
\end{proof}

We remark that the PB functional $G_\Gamma[\cdot]$ is maximized, not minimized, among 
all the admissible electrostatic potentials. 
In general, for a charged system occupying a region $D \subseteq \R^3 $ 
with the dielectric coefficient $\ve$ and charge density $\rho\in L^2(D)$, the commonly
used energy functional of electrostatic potentials $\psi$ is given by 
\[
\psi \mapsto \int_D \left( - \frac{\ve}{2} |\nabla \psi |^2 + \rho \psi \right)dx.
%\longrightarrow 
\]
With suitable boundary conditions, this functional is maximized by a unique electrostatic
potential. This maximizer is exactly the solution to Poisson's equation, 
which is the Euler--Lagrange equation of this functional. 
Moreover, the maximum value of the functional is exactly the electrostatic energy
corresponding to the potential determined by Poisson's equation. See
\cite{CDLM_JPCB08} for more related discussions. 

%%%%%%%%%%%%%%%%%%%%%%%%%%%%%%%%%%%%%%%%%%%%%%%%%
\subsection{Definition and Formula of the Dielectric Boundary Force}
\label{ss:DBF}

Let $\Gamma$ be a dielectric boundary as given in the assumption A1
in Subsection~\ref{ss:Assumptions}. 
Let $\phi: \R^3 \to \R$ be the signed distance function to $\Gamma,$
negative in $\Omega_-$ (inside $\Gamma$) and 
positive in $\R^3 \setminus \overline{\Omega_-}$ (outside $\Gamma$).  
Then, $n= \nabla \phi $ is exactly the unit normal along $\Gamma$, pointing from 
$\Omega_-$ to $\Omega_+.$  Since $\Gamma$ is assumed to be of the class $C^3$, 
there exists $d_0 > 0$ with 
\begin{equation*}
%\label{d}
d_0 <   \frac12 \min \left( \mbox{{\rm dist}}\, (\Gamma, \partial \Omega), 
\min_{1 \le i \le N} \mbox{{\rm dist}}\, (x_i, \Gamma) \right) 
\end{equation*}
such that the signed distance function $\phi$
is a $C^3$-function and $\nabla \phi \ne 0$ in the neighborhood
\begin{equation}
\label{N0Gamma}
\calN_0(\Gamma) = \left\{ x \in \Omega: \mbox{{\rm dist}}\,(x, \Gamma) < d_0 \right\}
\end{equation}
in $\Omega$ of $\Gamma;$ cf.\ \cite{GilbargTrudinger98} 
(Section 14.6) and \cite{KrantzParks_JDE1981}.  
%{\bf [May need to change $C^\infty$ to $C^2$ as later need to use the decomposition
%into tangental and normal parts.]}
Define 
\begin{equation}
\label{calV}
{\cal V} = \{ V \in C_{\rm c}^2 (\R^3, \R^3): \mbox{supp}\, (V) \subset \calN_0(\Gamma) \}.
%%V(x) = 0  \mbox{ if } x \in \R^3 \mbox{ and } \mbox{{\rm dist}}\, (x, \Gamma) > d_0 \}. 
\end{equation}
Let $V \in {\cal V}.$
For any $X \in \R^3,$ let $x = x(t, X)$ be the unique solution to the initial-value problem
\begin{equation}
\label{flow}
\dot{x} = V(x) \quad (t \in \R)  \qquad \mbox{and} \qquad x(0,X) = X,  
\end{equation}
where a dot denotes the derivative with respect to $t.$
Define $T_t(X) = x(t, X)$ for any $X \in \R^3 $ and any $t \in \R $. Then, 
$ \{ T_t \}_{t \in \R} $ is a family of diffeomorphisms and $C^2$-maps
from $\R^3$ to $\R^3$ 
with $T_0 = I $ the identity map and $T_{-t} = T_t^{-1}$ for any $t \in \R.$ 
%%For each $t\ge 0,$ both $T_t$ and $T_t^{-1}$ are smooth.  

%%{\bf [We may not need these notations. Just use $T_t(\Omega_-)$, $T_t(\Omega_+)$, and 
%%$T_t(\Gamma)$. Perhaps use: $\Gamma_t = T_t(\Gamma).$]}
%$\Omega_{t} = T_t(\Omega),$ $\Omega_{-t} = T_t(\Omega_-),$ 
%$\Omega_{+t} = T_t(\Omega_+),$ and $\Gamma_t = T_t(\Gamma).$  
%\[
%\Omega_{t} = T_t(\Omega), \qquad \Omega_{-t} = T_t(\Omega_-), \qquad 
%\Omega_{+t} = T_t(\Omega_+), \qquad \Gamma_t = T_t(\Gamma).  
%\]

Let $t \in \R.$ Since $\mbox{supp}\,(V) \subset \calN_0(\Gamma) \subset \Omega$, 
%%V(x) = 0 $ for any $x \in \Omega$ such that 
%%$\mbox{{\rm dist}}\,(x,\Gamma) > d_0,$ where $d_0$ is given in \reff{d}, 
we have  $T_t(\Omega) = \Omega$ and $T_t( \partial \Omega) = \partial \Omega. $
%%We shall denote by $\chi_{+t}$ the characteristic function of the region $\Omega_{+t}$. 
%%To indicate the dependence on $V$, we also write 
%%$\Omega_{-t} = \Omega_{-t}(V)$, $\Omega_{+t} = \Omega_{+t}(V)$, and $\Gamma_t = \Gamma_t(V)$. 
%%Clearly, all $\Gamma_t:= T_t (\Gamma)$, $T_t(\Omega_+) $, 
%%and $T_t (\Omega_-)$ satisfy those conditions
%%listed in the assumption A1 in Section~\ref{s:FreeEnergyFunctionals} for 
%%$\Gamma$,  $\Omega_-$, and $\Omega_+$, respectively.  In particular, 
Clearly, $T_t(\Omega_-) \subset \Omega$  
and $T_t(\Omega_+)  = \Omega \setminus \overline{T_t(\Omega_-)}. $
Moreover, $\Gamma_t := T_t(\Gamma) = \partial T_t(\Omega_-) 
= \overline{T_t( \Omega_-) } \cap \overline{ T_t( \Omega_+) }$ is of class $C^2$.  
%%%Notationwise, $\Omega_{-t} = \Omega_{t-}$ and $\Omega_{+t} = \Omega_{t+}$ 
%%correspond to $\Gamma_t$ same as $\Omega_-$ and $\Omega_+$ correspond to $\Gamma.$ 
Note that $x_i \in T_t (\Omega_-) $ and $T_t(x_i) = x_i$ for all $i = 1, \dots, N.$ 
%%Note that each $T_t$ perturbs $\Gamma$ locally: $ T_t(X) = X $ 
%%if $\mbox{{\rm dist}}\, (X, \Gamma) > d_0.$ 
%%T_t(X) = X \qquad \mbox{if } \mbox{{\rm dist}}\, (X, \Gamma) > d.
%%Since $d_0 > 0$ satisfies \reff{d}, we have $ T_t(X) = X $. 
%\begin{align*}
%& T_t(X_i) = X_i \in \Omega_{t,-} \qquad \forall i = 1, \dots, N, \\
%& T_t(X) = X \qquad \forall X \in \partial \Omega. 
%T_t(X) = X \qquad \mbox{if } f(X) \ne 0 \quad \mbox{or} \quad X \in \partial \Omega.
%\end{align*}
Analogous to \reff{veGamma}, $\ve_{\Gamma_t}$ is defined correspondingly 
with respect to $T_t( \Omega_-)$ and $T_t( \Omega_+). $ 
We shall denote $\Gamma_t = \Gamma_t(V)$ to indicate the 
dependence of $\Gamma_t$ on $V \in {\cal V}.$ 
For each $t \in \R$, the electrostatic free energy $E[\Gamma_t(V)]$ is defined in 
\reff{DefineJGamma} (cf.\ also \reff{newEGamma}) 
with $\Gamma_t = \Gamma_t(V)$ replacing $\Gamma.$

\begin{definition}
\label{d:deltaGVG}
 Let $V\in {\cal V}.$ 
%%$V(X) = 0$ if $\mbox{{\rm dist}}\, (X, \Gamma) > d$ for some $d > 0$ that satisfies \reff{d}. 
%%Let the transformations $T_t$ $(t \ge 0)$ be defined by \reff{flow}.
%%The shape derivative of $E[\Gamma]$ in the direction of $V$ is 
The first variation of $E[\Gamma]$ with respect to the perturbation of $\Gamma$ defined by $V$ is 
\[
\delta_{\Gamma, V} E[\Gamma] = \frac{d}{dt} E [\Gamma_t(V)] \biggr|_{t = 0}
= \lim_{t \to 0^+} \frac{E [\Gamma_t(V)] - E [\Gamma]}{t},  
\]
if the limit exists.
\end{definition}

We recall that the tangential gradient along a dielectric boundary $\Gamma$ 
is given by $\nabla_\Gamma = (I - n \otimes n) \nabla$, where $I$ is the identity matrix. 
The following theorem provides an explicit formula of the first variation
$\delta_{\Gamma, V}E[\Gamma]$, and its proof is given in Section~\ref{s:Proof}: 

\begin{theorem}
\label{t:DBF}
Let $\Gamma$ be a given dielectric boundary as described in the assumption A1 in 
Subsection~\ref{ss:Assumptions}. Let $\psi_\Gamma \in \hat{\phi}_{\Gamma, \infty} + H_0^1(\Omega)$ 
be the unique weak solution to the boundary-value problem of the dielectric-boundary PB equation
\reff{PBE}.  Then, for any $V\in {\cal V}$, the first variation
$\delta_{\Gamma, V} E[\Gamma]$ exists, and is given by  
%%{\bf [Need to fix all the terms, including the boundary terms.]}
%%The shape derivative of the electrostatic free energy $G[\Gamma]$ in the direction of $V$ is given by 
\[
\delta_{\Gamma, V} E[\Gamma] = \int_\Gamma  q_\Gamma (V \cdot n) \, dS,  
\]
where 
%%$q_\Gamma \in L^1(\Gamma)$ is independent of $V \in {\cal V}$ and is given by 
%%{\bf [Need to modify the final form, using normal and tangential gradients.]}
%%{\color{red} {\bf [Need to revise this based on the proof/calculations in the last Section. -Bo]} }
\begin{align}
\label{mainG1}
q_\Gamma  
& = - \frac12 \left( \frac{1}{\ve_+} - \frac{1}{\ve_-} \right)
\left( |\ve_\Gamma  \partial_n \psi_\Gamma |^2 - \ve_\Gamma \partial_n \psi_\Gamma 
\ve_\Gamma \partial_n \phi_{\Gamma, \infty} \right)
\nonumber \\
& \quad 
+ \frac{\ve_+-\ve_-}{2} \left( | \nabla_\Gamma  \psi_\Gamma|^2 
- \nabla_\Gamma \psi_\Gamma \cdot \nabla_\Gamma \phi_{\Gamma, \infty} \right)
 + B\left(\psi_\Gamma - \frac{\phi_{\Gamma, \infty}}{2} \right).     
\end{align}
\end{theorem}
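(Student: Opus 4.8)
The plan is to start from the reformulated energy \reff{newEGamma} of Lemma~\ref{l:EGammaReform} and differentiate $E[\Gamma_t(V)]$ directly at $t=0$, \emph{without} appealing to the variational characterization of Theorem~\ref{t:PBvariation}. First I would note that the last term $\frac12\sum_{i=1}^N Q_i(\hat{\phi}_\infty - \hat{\phi}_{\rm C})(x_i)$ is independent of $\Gamma$, and that $\hat{\phi}_0$, $\hat{\phi}_\infty$, $\hat{\phi}_{\rm C}$ are all $\Gamma$-independent (each is built from $\ve_-$ alone via \reff{Wpsi0}); only $\psi_\Gamma$, $\phi_{\Gamma,\infty}$, $\hat{\phi}_{\Gamma,\infty}$, the subdomains $\Omega_\pm$, and the coefficient $\ve_\Gamma$ carry the boundary dependence. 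Since $\mbox{supp}\,(V)\subset\calN_0(\Gamma)$ excludes the points $x_1,\dots,x_N$, the flow $T_t$ of \reff{flow} fixes each $x_i$ and leaves the Coulombic singularities immobile, so the globally $W^{1,\infty}$ function $u_\Gamma:=\psi_\Gamma-\hat{\phi}_{\Gamma,\infty}\in H_0^1(\Omega)$ is regular and may be transported by standard arguments.

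Next I would pull every volume integral back to the fixed reference domains through $x=T_t(X)$. Because $T_t(\Omega)=\Omega$ and $T_t(\Omega_\pm)$ are the perturbed subdomains on which $\ve_{\Gamma_t}$ is constant, one has $\ve_{\Gamma_t}\circ T_t=\ve_\Gamma$, and each term of $E[\Gamma_t]$ becomes an integral over $\Omega$ or $\Omega_+$ whose $t$-dependence sits entirely in the integrand through the Jacobian $J_t=\det DT_t$, the matrix $(DT_t)^{-\mathsf T}$, and the pulled-back functions. Differentiating at $t=0$ I would use $J_0=1$, $\frac{d}{dt}J_t|_{t=0}=\operatorname{div}V$, $(DT_0)^{-\mathsf T}=I$, and $\frac{d}{dt}(DT_t)^{-\mathsf T}|_{t=0}=-(DV)^{\mathsf T}$, together with the material derivatives of $\psi_\Gamma$, $\phi_{\Gamma,\infty}$, $\hat{\phi}_{\Gamma,\infty}$ supplied by Lemmas~\ref{l:pGinfty}, \ref{l:xi}, and \ref{l:w_t}. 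This produces two kinds of contributions: purely geometric terms, already in Hadamard form $\int_\Gamma(\cdots)(V\cdot n)\,dS$ by the divergence structure and the jump of $\ve_\Gamma$ across $\Gamma$, and implicit terms linear in the material derivatives.

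The decisive step is to dispose of the implicit terms. Here I would insert the material derivatives, carrying the harmonic/interface structure of Lemmas~\ref{l:pGinfty}--\ref{l:w_t}, as test functions in the weak formulations \reff{weakPBE}, \reff{hatpsiinftyweak}, and \reff{phiGammaWeak} satisfied by $\psi_\Gamma$, $\phi_{\Gamma,\infty}$, and $\hat{\phi}_{\Gamma,\infty}$, integrate by parts over $\Omega_-$ and $\Omega_+$ separately, and invoke the transmission conditions $\llbracket\psi_\Gamma\rrbracket_\Gamma=0$ and $\llbracket\ve_\Gamma\partial_n\psi_\Gamma\rrbracket_\Gamma=0$ of \reff{interfaceform}, together with \reff{phiGiJumps} and \reff{hatphiJumps}. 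Because $T_t$ is the identity near $\partial\Omega$ and the traces on $\partial\Omega$ are $\Gamma$-independent, each material derivative lies in $H_0^1(\Omega)$, so no $\partial\Omega$-boundary term survives and the implicit volume integrals collapse onto $\Gamma$, recombining with the geometric terms. On $\Gamma$ I would then split $\nabla\psi_\Gamma=(\partial_n\psi_\Gamma)\,n+\nabla_\Gamma\psi_\Gamma$ and likewise for $\phi_{\Gamma,\infty}$: continuity of $\psi_\Gamma$ makes $\nabla_\Gamma\psi_\Gamma$ single-valued and produces the $(\ve_+-\ve_-)$ tangential terms, while continuity of $\ve_\Gamma\partial_n\psi_\Gamma$ makes the normal flux single-valued and produces the $(\ve_+^{-1}-\ve_-^{-1})$ normal terms; the cross terms with $\phi_{\Gamma,\infty}$ in \reff{mainG1} arise from the same bookkeeping applied to $\tfrac{\ve_- - \ve_+}{2}\int_{\Omega_+}\nabla\hat{\phi}_{\Gamma,\infty}\cdot\nabla\hat{\phi}_0$ and to the $\phi_{\Gamma,\infty}$ shift inside $B$, the latter yielding the undifferentiated $B(\psi_\Gamma-\phi_{\Gamma,\infty}/2)$ boundary term.

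Finally, to show that only the normal component contributes and to read off $q_\Gamma$, I would decompose $V=(V\cdot n)\,n+V_\tau$ in $\calN_0(\Gamma)$ and apply Lemma~\ref{l:TForce}: a tangential field moves $\Gamma$ only within an $O(t^2)$ tube, so its first variation vanishes and every boundary integral reduces to one against $V\cdot n$, giving $\delta_{\Gamma,V}E[\Gamma]=\int_\Gamma q_\Gamma\,(V\cdot n)\,dS$ with $q_\Gamma$ as in \reff{mainG1}. I expect the main obstacle to be precisely this implicit step: without the variational shortcut the material-derivative terms do not visibly cancel, and the cross terms between the shape derivative of $\psi_\Gamma$ and the $\Gamma$-dependent data $\hat{\phi}_{\Gamma,\infty}$, $\phi_{\Gamma,\infty}$, whose derivative is the interface solution $\zeta_{\Gamma,V}$ of Lemma~\ref{l:pGinfty}, must be tracked with care so that all $\partial\Omega$-contributions cancel exactly and only the $\Gamma$-integrals of the form \reff{mainG1} remain.
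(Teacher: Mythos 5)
Your proposal follows essentially the same route as the paper's proof: pull the reformulated energy \reff{newEGamma} back to the fixed domain via $T_t$, differentiate using $J_t$, $A_V(t)$, and the material derivatives of Lemmas~\ref{l:pGinfty}--\ref{l:w_t}, cancel the implicit (material-derivative) terms by testing the weak formulations \reff{weakPBE}, \reff{hatpsiinftyweak}, \reff{phiGammaWeak} against them, convert to surface integrals with the transmission conditions, split into normal and tangential parts on $\Gamma$, and use Lemma~\ref{l:TForce} (via part (4) of Lemma~\ref{l:pGinfty}) to reduce the remaining $\zeta_{\Gamma,V}$-term to the case $V=(V\cdot n)n$. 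The only cosmetic difference is that the paper invokes the tangential-force reduction only for the single leftover volume term involving $B'$ and $\zeta_{\Gamma,V}-\nabla\phi_{\Gamma,\infty}\cdot V$, whereas you state it as a blanket reduction; the mechanism is the same.
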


%%%%%%%%%%%%%%%%%%%%%%%%%%%%%%%%
\begin{comment}
\begin{align}
\label{mainG1}
q_\Gamma  &=  \frac{\ve_+}{2} |\nabla \psi_\Gamma^+ |^2 
-\frac{\ve_-}{2} |\nabla \psi_\Gamma^- |^2  - \ve_+ |\nabla \psi_\Gamma^+ \cdot n|^2 
+ \ve_- |\nabla \psi_\Gamma^- \cdot n|^2
 + B\left(\psi_\Gamma - \frac{\psi_{\Gamma, \infty}}{2} \right)  
\nonumber \\
& \quad 
+  \frac{1}{2} \left( \frac{1}{\ve_+} - \frac{1}{\ve_-} \right)  \varepsilon_\Gamma^2 
 \partial_n   \left( \psi_\Gamma - \hat{\phi}_{\Gamma, \infty} \right)
\partial_n \phi_{\Gamma, \infty} 
- \frac{\ve_+ - \ve_-}{2} \nabla_\Gamma  \left( \psi_\Gamma - \hat{\phi}_{\Gamma, \infty} \right)
  \cdot  \nabla_\Gamma  \phi_{\Gamma, \infty}
\end{align}
\end{comment}
%%%%%%%%%%%%%%%%%%%%%%%%%%%%%%%%

%%%%%%%%%%%%%%%%%%%%%%%%%%%%%%%%%%%%%%%%%%%%%%%%%
\begin{comment}
\begin{align}
\label{mainG1}
\delta_{\Gamma, V} E[\Gamma] &= \int_\Gamma \biggl[  \frac{\ve_+}{2} |\nabla \psi_\Gamma^+ |^2 
-\frac{\ve_-}{2} |\nabla \psi_\Gamma^- |^2  - \ve_+ |\nabla \psi_\Gamma^+ \cdot n|^2 
+ \ve_- |\nabla \psi_\Gamma^- \cdot n|^2
\nonumber \\
&\qquad\quad  
+ B(\psi_\Gamma - \psi_{\Gamma, \infty, t}/2) + Boundary~ terms~\biggr] (V \cdot n) \, dS. 
%%\frac{1} {2  } \left( \frac{1}{\ve_-} - \frac{1}{\ve_+} \right) 
%% \left|\ve_\Gamma \nabla  \psi_0 \cdot n \right|^2 \nonumber \\
%%&\qquad  + \frac12 (\ve_+ - \ve_-) 
%%\left| (I - n \otimes n ) \nabla \psi_0 \right|^2 + B(\psi_0) \biggr] (V \cdot n) \, dS.   
\end{align}
%%where a superscript $+$ or $-$ denotes the restriction onto 
%%$\Omega_+$ or $\Omega_-$, respectively. 
\end{comment}
%%%%%%%%%%%%%%%%%%%%%%%%%%%%%%%%%%%%%%%%%%%%%%%%%

We identify $ q_\Gamma$ in \reff{mainG1} as the first variation of
$E[\Gamma]$ and denote it as $q_\Gamma  = \delta_\Gamma E[\Gamma].$ 
We call $-\delta_\Gamma E[\Gamma]$ the (normal component of the) dielectric boundary force. 

%%Add some discussions  -  the sign of the force? 

%%%%%%%%%%%%%%%%%%%%%%%%%%%%%%%%%%%%%%%%%%%%%%%%%%%%%%%%%%
%\subsection{Maxwell Stress Tensor with a Dielectric Boundary} 
%\label{ss:MaxwellStress}
%%Add discussions on the Maxwell stress tensors. 
%%Look at Ray Luo's work. Point charges? pre assumed electrostatic potential? etc. 

%%%%%%%%%%%%%%%%%%%%%%%%%%%%%%%%%%%%%%%%%%%%%%%%%%%%%%%%%%%%%%%%%%%%%%%%%%%%%%%%%%%%%%%%%%%%
\section{Some Lemmas: The Calculus of Boundary Variations}
\label{s:Lemmas}

%%%%%%%%%%%%%%%%%%%%%%%%%%%%%%%%%%%%%%%%%%%%%%%%%%%%%%%%%%%%%%%%%%%%
\subsection{Properties of the Transformation $T_t$}
\label{ss:properties}

%%Let $\Gamma$ be a dielectric boundary satisfing the assumptions in A1 of 
%%Subsection~\ref{ss:Assumptions}, the set of mappings $\calV$ be defined in \reff{calV}, 
%%and $\{T_t\}_{t \in \R$ the family of diffeomorphisms defined by \reff{flow}. 
We first recall some properties of the family of
transformations $T_t:\R^3\to\R^3$ $(t \in \R)$ defined by \reff{flow} in 
Subsection~\ref{ss:DBF} above via a vector field $V \in C_{\rm c}^2 (\R^3, \R^3).$ 
These properties hold true if we change $\R^3$ to $\R^d$ with a general dimension 
$d \ge 2.$ They can be proved by direct calculations; 
cf.\ \cite{DelfourZolesio_Book87} (Section 4 of Chapter 9). 
%%,Zolesio_Book92}. 

\begin{compactenum}
\item[(1)]
Let $X \in \R^3$ and $t \in \R.$ Let $\nabla T_t(X)$ be the
gradient matrix of $T_t$ at $X$ with its entries $(\nabla T_t(X))_{ij}
=\partial_j T_t^i(X)$ $(i,j=1, 2, 3)$, where $T_t^i$ is the $i$th component of $T_t.$ Let 
\begin{equation}
\label{Jt}
J_t(X)= \det \nabla T_t(X).
\end{equation}
Then for each $X \in \R^3$ the function $t \mapsto J_t(X)$ is a $C^2$-function and  
%%{\bf [Check this.]} and 
  \begin{equation*}
%%\label{part_J_t}
    \frac{d J_t}{dt}=( (\nabla \cdot  V ) \circ T_t) J_t,  
  \end{equation*}
where $\circ$ denotes the composition of functions or maps. 
Clearly, $\nabla T_0 = I$, the identity matrix, and $J_0  = 1.$
%%The continuity of $J_t$ at $t = 0$ then implies that $J_t > 0$ for $t > 0$ small enough.  
Moreover, 
\begin{equation}
\label{Jexp}
J_t(X) =1+t ( \nabla \cdot V) (X)  + H(t, X) t^2 \qquad \forall t \in \R \quad \forall X \in \R^3, 
\end{equation}
where $H(t, X)$ satisfies
\begin{equation}
\label{HtX}
\sup \{ | H(t, X) |: t \in \R,  X \in \R^3 \} < \infty,  
\end{equation}
since $V$ is compactly supported. 

\item[(2)]
For each $t \in \R$, we define $A_V(t): \R^3 \to \R$ by 
%%{\bf [Changed $A(t)$ to $A_V(t)$.]}
\begin{equation}
\label{At}
 A_V(t) (X) = J_t(X) \left( \nabla T_t (X) \right)^{-1} \left( \nabla T_t (X)\right)^{-T},   
\end{equation}
where a superscript $T$ denotes the matrix transpose.  
Clearly, $A(t) \in C^1(\R^3, \R^{3 \times 3}),$ 
and the $t$-derivative of $A_V(t)$ at each point in $\R^3$ is 
\begin{align}
 \label{A_prime_t}
 A_V'(t) & = \left[ ((\nabla \cdot V) \circ T_t)  - (\nabla T_t)^{-1} 
( ( \nabla  V ) \circ T_t) \nabla T_t  
\right. 
\nonumber \\
& \qquad \left. -  (\nabla T_t)^{-1} 
( (\nabla  V ) \circ T_t)^T (\nabla T_t) \right] A_V(t). 
%%   I - (\nabla (V \circ T_t) )^T - \nabla (V \circ T_t).   
%%\\
%% & = \left[ ({\rm div} V_t)I-\left[DT_t\right]^{-1}{}^*\!
%% DV_t DT_t-\left[ DT_t \right]^{-1} DV_t DT_t  \right] A_V(t).
\end{align}
In particular 
\begin{equation}
\label{Ap0}
%%H(\nabla V) := 
A_V'(0) = (\nabla \cdot V) I - \nabla V - (\nabla V)^T.  
\end{equation}
Moreover, 
%\item[(3)]
%If $V$ is sufficiently smooth and $0<t\ll 1$, then
\begin{equation}
\label{Aexp} 
 A_V(t)(X) =I+tA_V'(0)(X) + K(t, X) t^2 \qquad \forall t \in \R \quad \forall X \in \R^3,
\end{equation}
where $K(t, X)$ satisfies
\begin{equation}
\label{KtX}
\sup \{ | K(t, X) |: t \in \R, X \in \R^3 \} < \infty. 
\end{equation}

\item[(3)]
For any $u \in L^2(\Omega)$ and $t \in \R,$ 
$u  \circ T_t \in L^2(\Omega)$ and  $ u \circ T_t^{-1} \in L^2(\Omega)$. Moreover,
%%{\bf [Check these.]}
\begin{equation}
\label{f_T_t}
    \lim_{t\to 0} u \circ T_t = u   \qquad \mbox{and}\qquad
    \lim_{t\to 0}u  \circ T_t^{-1} = u  \qquad \mbox{in } L^2(\Omega). 
\end{equation}
For any $u \in H^1(\Omega)$ and $t \in \R,$
$u  \circ T_t \in H^1(\Omega)$ and  $ u \circ T_t^{-1} \in H^1(\Omega)$. Moreover,
%%{\bf [Check these.]}
%%\item[(4)]
%%Let $t \ge 0.$ Both $u \mapsto u\circ T_t$ and $u \mapsto u \circ T_t^{-1}$ are bijective
%%from $H^1(\Omega)$ to $H^1(\Omega_t) = H^1(\Omega).$ 
%%(resp.\ $H^1_{g }(\Omega)$) to $H^1(\Omega_t) = H^1(\Omega).$ (resp.\ $H^1_{g }(\Omega)$). 
%%Moreover, for any $u \in H^1(\Omega)$ and any $t \ge 0$, 
\begin{align}
\label{dudut}
& \nabla ( u  \circ T_t^{-1})=  (\nabla T_t^{-1})^T \left( \nabla u \circ T_t^{-1} \right)
\quad \mbox{and} \quad \nabla ( u  \circ T_t)=( \nabla T_t)^T \left( \nabla u \circ T_t \right), 
\\
\label{TtuH1conv}
&    \lim_{t\to 0} u \circ T_t = u   \qquad \mbox{and}\qquad
    \lim_{t\to 0}u  \circ T_t^{-1} = u  \qquad \mbox{in } H^1(\Omega).   
\end{align}
If $u \in H^2(\Omega),$ then 
%%$t \mapsto u \circ T_t$ is $H^1(\Omega$ and 
%%{\bf [Check these.]}
\begin{equation}
\label{dtuTt}
%%  \frac{d}{dt} (u \circ T_t ) =(\nabla u \cdot V ) \circ T_t   \qquad \mbox{and} \qquad 
\lim_{t \to 0} \left\| \frac{ u\circ T_t - u }{t} - \nabla u \cdot V \right\|_{H^1(\Omega)} = 0. 
\end{equation}
%%\item[(5)]
%For any $u \in H^1(\Omega)$ and $t \ge 0$, 
%\begin{equation}
%\label{dtuTt}
%    \frac{d}{dt} (u \circ T_t ) =(\nabla u \cdot V ) \circ T_t. 
%\end{equation} 
\end{compactenum}

%%%%%%%%%%%%%%%%%%%%%%%%%%%%%%%%%%%%%%%%%%%%%%%%%%%%%%%%%%%%%%%%%%%%%%%%%%%%
\subsection{Tangential Force}
%% in Surface Motion}
\label{ss:TangentialForce}

This is a geometrical property on the effect of tangential component of a velocity
vector field to the motion of an interface. We shall state and prove it for a general
$d$-dimensional space $\R^d$ with $d \ge 2.$ 
%%Here, in this Subsection, we 
We assume that $\Gamma$ is a $C^3$, closed, hypersurface in $\R^d.$ 
We denote as before the interior and exterior of $\Gamma$ by $\Omega_-$ and $\Omega_+$, 
respectively.  We also denote by $n$ the unit vector normal to the surface $\Gamma$ 
at a point on $\Gamma$, pointing from the interior to 
exterior of $\Gamma$. We assume that $V \in C_{\rm c}^2(\R^d, \R^d)$
and define the transformation $T_t: \R^d \to \R^d$ $(t \in \R)$ by $T_t(X) = x(t, X)$
for any $X \in \R^d$ and $t \in \R$,  where $x = x(t,X)$ is the unique solution to the
initial-value problem \reff{flow}. 

\begin{lemma}
\label{l:TForce}
%%Assume $\Gamma$ is $C^2$, $V\in C^2(\Omega; \Omega)$, and $\text{supp} \subset \subset V. $ 
If $V\cdot n=0$ on $\Gamma$, then there exist $t_0 > 0$ and $C > 0$, depending 
on $\Gamma$ and $V$, such that
\begin{align} 
\label{Ct2}
&
\mbox{{\rm dist}}\,(T_t(X), \Gamma ) \leq C t^2   \qquad  \forall X \in \Gamma \ \forall
t \in \R \mbox{ with }  |t| \le t_0,  
\\
\label{mismatch}
&
%\int_{\Omega} |\chi_{\Omega_+(t)} - \chi_{\Omega_+}| dx \leq C t^2 \quad \text{and} \quad
%\int_{\Omega} |\chi_{\Omega_-(t)} - \chi_{\Omega_-}| dx \leq C t^2,
\left| \left\{ x\in \R^d:  \chi_{T_t(\Omega_{\rm s}) }(x) \neq \chi_{\Omega_{\rm s}}(x) \right\} \right| 
\leq C t^2 \qquad \mbox{if } |t | \le t_0, \ \mbox{ {\rm s} }  = - \mbox{ or } +. 
%%\quad \text{and} \quad
%%\left| \left\{x\in \Omega : \chi_{\Omega_-(t)}(x) \neq \chi_{\Omega_-}(x) \right\} \right| \leq C t^2,
\end{align}
\end{lemma}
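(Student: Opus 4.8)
The plan is to handle both estimates through the signed distance function $\phi$ to $\Gamma$, which is $C^3$ on some tubular neighborhood $\mathcal{N}$ of $\Gamma$ because $\Gamma$ is $C^3$ (cf.\ \cite{KrantzParks_JDE1981}); there $|\nabla \phi| = 1$, and $\nabla \phi = n$ on $\Gamma$. Since $V$ is bounded, the flow satisfies $|T_t(X) - X| \le |t|\,\|V\|_{L^\infty}$, so I fix $t_0 > 0$ small enough (shrinking it later if necessary) that $T_s(X) \in \mathcal{N}$ whenever $X \in \Gamma$ and $|s| \le t_0$.

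For \reff{Ct2}, fix $X \in \Gamma$ and set $g(t) = \phi(T_t(X))$, a $C^2$ function of $t$ on $[-t_0,t_0]$. Then $g(0) = \phi(X) = 0$, and by the chain rule together with $\dot{T}_t(X) = V(T_t(X))$,
\[
g'(t) = \nabla \phi(T_t(X)) \cdot V(T_t(X)),
\]
so $g'(0) = \nabla \phi(X) \cdot V(X) = n(X) \cdot V(X) = 0$ by the hypothesis $V \cdot n = 0$ on $\Gamma$. Differentiating once more shows $g''$ is a sum of terms bounded in terms of $\|\phi\|_{C^2(\overline{\mathcal{N}})}$ and $\|V\|_{C^1}$, hence $|g''| \le C$ uniformly in $X \in \Gamma$ and $|t| \le t_0$. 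Taylor's theorem then gives $\mbox{dist}(T_t(X),\Gamma) = |\phi(T_t(X))| = |g(t)| \le \tfrac12 C t^2$, which is \reff{Ct2}.

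For \reff{mismatch} with $\mathrm{s} = -$, the key reduction is to show that the symmetric difference $\Omega_- \triangle T_t(\Omega_-)$ is contained in the closed tube $\{\,|\phi| \le C t^2\,\}$, with $C$ from \reff{Ct2}. A point $x$ lies in this symmetric difference exactly when $\chi_{\Omega_-}(x) \ne \chi_{\Omega_-}(T_t^{-1}(x))$, i.e.\ when $\phi(x)$ and $\phi(T_{-t}(x))$ have opposite signs. Following the backward trajectory $s \mapsto T_{-s}(x)$ for $s$ between $0$ and $t$ and using that $\phi$ is globally continuous, the intermediate value theorem yields some $s^*$ with $|s^*| \le |t|$ and $z := T_{-s^*}(x) \in \Gamma$; equivalently $x = T_{s^*}(z)$ with $z \in \Gamma$. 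Applying \reff{Ct2} at the base point $z$ and time $s^*$ gives $\mbox{dist}(x, \Gamma) = \mbox{dist}(T_{s^*}(z),\Gamma) \le C (s^*)^2 \le C t^2$, proving the containment.

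Finally, because $\Gamma$ is a compact $C^2$ hypersurface, the coarea formula (or the tubular neighborhood theorem) bounds the volume of the tube by $|\{\,|\phi| \le C t^2\,\}| \le C' t^2$, giving \reff{mismatch} for $\mathrm{s} = -$; the case $\mathrm{s} = +$ follows at once since $\chi_{T_t(\Omega_+)}$ and $\chi_{\Omega_+}$ differ only where $\chi_{T_t(\Omega_-)}$ and $\chi_{\Omega_-}$ do, up to the measure-zero set $\Gamma \cup \Gamma_t$. The main obstacle is the reduction step in the previous paragraph: one must recognize that every point of the symmetric difference sits on a flow line crossing $\Gamma$ within time $|t|$, so that the second-order tangential estimate \reff{Ct2}, rather than the crude first-order bound $|x - z| \le |t|\,\|V\|_{L^\infty}$, controls its distance to $\Gamma$; without the tangency hypothesis $V \cdot n = 0$ this mechanism collapses and only an $O(t)$ tube is available.
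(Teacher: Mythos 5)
Your proof is correct, and for the key estimate \reff{Ct2} it takes a genuinely different and more streamlined route than the paper. The paper covers $\Gamma$ by finitely many local graph patches $X_d = h(X')$, projects $T_t(X_0)$ onto the tangent plane at $X_0$ to get a point $P_t$, lifts $P_t$ back to the surface to get $Q_t$, and proves the two separate second-order bounds $|T_t(X_0)-P_t|\le Ct^2$ and $|P_t-Q_t|\le Ct^2$; the tangency hypothesis enters through the first of these, and the $C^3$ graph regularity through the second. You instead compose the flow with the signed distance function, setting $g(t)=\phi(T_t(X))$, and observe that $g(0)=0$, $g'(0)=n(X)\cdot V(X)=0$, and $|g''|\le C$ uniformly, so Taylor's theorem gives $|\phi(T_t(X))|=\mbox{dist}(T_t(X),\Gamma)\le Ct^2$ in one stroke. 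This avoids the local coordinates entirely and makes the role of the hypothesis $V\cdot n=0$ transparent (it is exactly the vanishing of $g'(0)$); the only inputs are $\phi\in C^2$ on a tubular neighborhood with $|\nabla\phi|=1$, which the paper itself already invokes in Subsection~\ref{ss:DBF}, and the compactness of $\Gamma$ for uniformity of the constants. Your treatment of \reff{mismatch} — every point of $\Omega_-\triangle T_t(\Omega_-)$ lies on a flow line that meets $\Gamma$ at some time $s^*$ with $|s^*|\le|t|$, so \reff{Ct2} applied at that crossing point places it in the tube $\{|\phi|\le Ct^2\}$, whose measure is $O(t^2)$ — is essentially identical to the paper's argument, differing only in that you locate the crossing time via the intermediate value theorem applied to $s\mapsto\phi(T_{-s}(x))$ rather than via the topological separation property of the closed hypersurface, and in the reference used to bound the tube volume.
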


\begin{proof}
We first prove \reff{Ct2}.  Since $\Gamma$ is of $C^3$, there exist finitely many 
open balls in $\R^d$ such that their union covers $\Gamma$ and that the intersection of $\Gamma$ with
each of such open balls is the graph of a $C^3$ function in a local coordinate system. 
Let us fix one of such open balls, ${\cal B}$, and assume without loss of 
generality that the corresponding $C^3$-function
is given by $h: \prod_{j=1}^{d-1} (a_j-\delta, b_j+\delta) \to \R$ 
for some $a_j, b_j \in \R$ with $a_j < b_j$ $(j = 1, \dots, d-1)$ and $\delta > 0,$
where ${\cal B}\cap \Gamma$ is the graph of $h$ on $ \prod_{j=1}^{d-1} (a_j-\delta, b_j+\delta)$. 
Here, we use the local coordinate system depending on $\cal B$ with the notation 
\[
X = (X', X_d) \in \R^d, \qquad X'=(X_1, \cdots, X_{d-1}) \in \R^{d-1}, \qquad X_d \in \R.   
\]
So, $X_d = h(X')$ for all $ X'\in \prod_{j=1}^{d-1} (a_j-\delta, b_j+\delta)$. 
We shall assume that $\delta  > 0$ is small enough so that 
the corresponding concentric balls with radius reduced by $\delta$ still cover $\Gamma,$
and that in particular the union of the graphs of $h$ on $\prod_{j=1}^{d-1} (a_j, b_j)$
with all open balls $\cal B$ is still $\Gamma.$ 
Moreover, since all $\Gamma$ and $T_t$ $(t\in \R)$ are smooth, 
there exists $t_0' \in (0, 1)$ such that, for any $t \in \R$ with $|t| \le t_0'$ 
and for any $X = (X', X_d) \in \Gamma$ with $X' \in \prod_{j=1}^{d-1} (a_j, b_j)$, 
the coordinate $(T_t(X))'$ of $T_t(X) = ( (T_t(X))', (T_t(X))_{d}) $
is still in $ \prod_{j=1}^{d-1} (a_j-\delta, b_j+\delta)$, the domain of $h$. 
With this setup, we shall prove that there exist $t_0 \in (0, t_0')$ and $C > 0$ such that 
\begin{equation}
\label{geometry1}
\mbox{{\rm dist}}\, ( T_t(X_0), \Gamma ) \leq C t^2 \quad \mbox{if } X_0 = (X_0', X_{0d}) \in \Gamma
\mbox{ with } X_0' \in \prod_{j=1}^{d-1} (a_j, b_j) \mbox{ and } |t | \le t_0. 
\end{equation}
This then implies \reff{Ct2}. 

\begin{figure}[hbtp]
\centering
\includegraphics[scale=0.7]{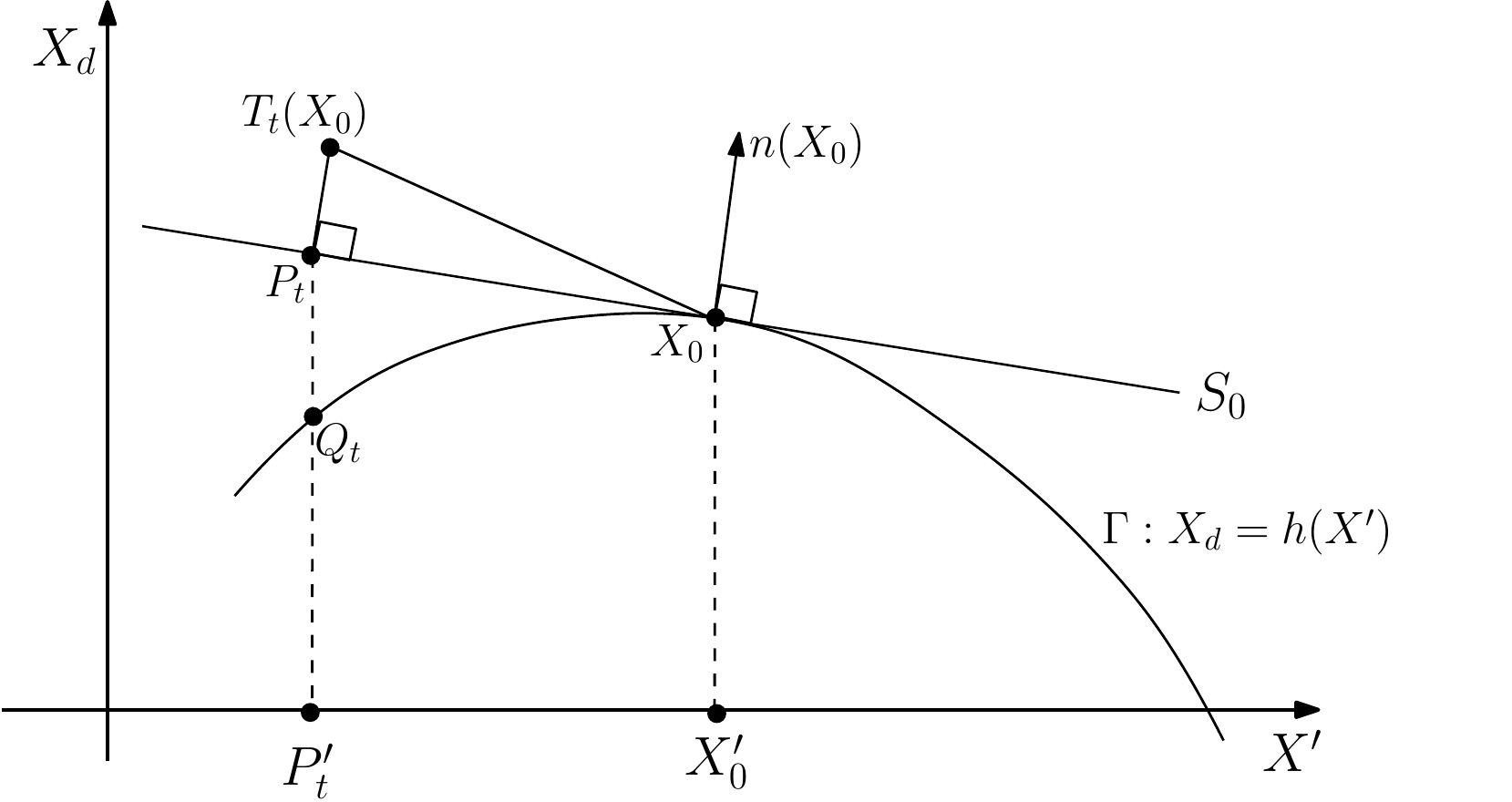}
\caption{A local graph representation of the surface $\Gamma$.} 
%{Figure 2. A local graph representation of the surface $\Gamma$.} 
\label{f:graph}
\end{figure}

Let us fix an arbitrary point $X_0= (X_0', X_{0d}) \in \Gamma$ with 
$X_0' \in \prod_{j=1}^{d-1}(a_j, b_j)$ and $X_{0d} = h(X_0')$; cf.\ Figure~2. 
The equation for the plane, $S_0$, that is tangent to the surface $\Gamma$ at $X_0\in \Gamma$ 
is given by 
\[
( X - X_0) \cdot n(X_0) = 0 \qquad \mbox{with } X = (X', X_d) \in \R^d, 
\]
where $n(X_0)$ is the unit vector normal to $\Gamma$ at $X_0$,  
\begin{equation}
\label{nX0}
n(X_0)=\frac{(-\nabla_{X'} h(X_0'), 1)}{\sqrt{1+|\nabla_{X'} h(X_0')|^2}}.
\end{equation}
Let $t \in \R$ with $|t| \le t_0'.$ Denote by $P_{t} = (P'_t, P_{td}) \in S_0$ the point 
of the orthogonal projection of the vector $T_t(X_0) -X_0$ on this tangent plane $S_0$, given by 
\begin{equation}
\label{Pt}
P_t = -\left[ \left(T_t(X_0) - X_0\right) \cdot n(X_0) \right] n(X_0) + T_t(X_0).
\end{equation}
%%and denote $X_0(t) = \left(X_0'(t), X_{0d}(t)\right) = T_t(X_0). $ 
%%Denote $Q_t=(Q'_t, Q_{td})=( P'_t, h(P'_t) ) \in \Gamma$. 
Denote 
\begin{equation}
\label{Qt}
Q_t =(Q_t', Q_{td}) = ( P'_t, h(P'_t) ) \in \Gamma. 
\end{equation}
We show that 
\begin{equation}
\label{geometry2}
|T_t(X_0) -Q_t  | \leq C t^2 \qquad \mbox{if } |t| \le t_0  
\end{equation}
for some constants $t_0 \in (0, 1)$ 
and  $C > 0$ independent of $X_0.$ This should imply \reff{geometry1} as 
\[
\mbox{{\rm dist}}\,(T_t(X_0), \Gamma) \leq |T_t(X_0) -Q_t|.
\]

To prove \reff{geometry2}, it suffices by the triangle inequality to prove 
\begin{align}
\label{star1}
& |T_t(X_0) -P_t| \leq C t^2, \\
\label{star2} 
& |P_t -Q_t| = |P_{td}- Q_{td} | \leq C t^2, 
\end{align}
for all $t$ with $|t| \le t_0.$
Here and below in the proof, $C$ denotes a generic constant independent of
$X_0$ and $t \in [-t_0, t_0].$ 
%%The plane tangent to $\Gamma_t$ at $X_0$ has the equation $\left( X-X_0 \right) \cdot n = 0$ 
%%where $X=\left(X^{'}, X_d\right)$. The projection point $P$ is defined  by
%%\[
%%(P-X_0) \cdot n = 0 \quad \text{and} \quad T_t(X_0)- P = \alpha n \quad (\alpha \in \R).
%%\]
%%Hence 
%$T_t(X_0)\cdot n - P \cdot n = \alpha$ and $P \cdot n = X_0 \dot n $. 
%%So, $\alpha = T_t(X_0)\cdot n - X_0 \cdot n $. So, 
%%\[
%%P=-\left[ \left(T_t(X_0) - X_0\right) \cdot n \right] n + T_t(X_0).
%%\]
Since $V\in C_{\rm c}^2(\R^3, \R^3)$, we have by \reff{flow} that 
\begin{align*}
\partial_{tt} T_t(X) 
&= \partial_{tt} x (t, X) = \partial_t ( V(x(t,X))) = (\nabla V (x (t, X)))  \partial_t x(t,X) 
\\
& = ( \nabla V (x (t, X))) V(x(t,X)) \le C \qquad \forall t \in \R \  \forall X \in \R^d.  
\end{align*}
%where the constant $C > 0$ is indepdent on $t$ and $X.$
Therefore, by Taylor's expansion, \reff{Pt}, \reff{flow}, 
the fact that $|n(X_0)| = 1,$ and the assumption that $V(X_0) \cdot n(X_0) = 0$, we have 
\begin{align*}
|T_t(X_0) - P_t| 
&= |[ x(t, X_0)  - X_0 ] \cdot n(X_0)| 
\\
&\le |  x(0, X_0) - X_0 | + |t \partial_t x(0, X_0) \cdot n (X_0) | + C t^2 
\\
& = | t V(X_0) \cdot n(X_0) | +  C t^2 
\\
&= Ct^2, 
\end{align*}
proving \reff{star1}.

Since $V$ is compactly supported, we have by \reff{flow} and Taylor's expansion that 
\[
|T_t(X_0) - X_0 | = |x(t, X_0)  - X_0| 
= | t \partial_t x(\xi_t, X_0) | = |t | | V (x (\xi_t, X_0)) | \le C |t|,
\]
where $\xi_t$ is in between $0$ and $t$. 
%% and $C = C(V)> 0$ is a constant, independent of $X_0$. 
This and \reff{star1} imply 
\begin{equation}
\label{t}
|P_t' -X'_0|\leq |P_t-X_0| \leq |P_t-T_t(X_0)|+|T_t(X_0)-X_0| \leq C |t| 
\quad \mbox{ if } |t| < t_0.
\end{equation}
By \reff{Qt} and Taylor's expansion, 
\begin{align}
\label{QtdExpand}
Q_{td}&=h(P_t') 
\nonumber \\
%%&= h(X'_0) +\nabla_{X'} h(X'_0)\cdot (Q'_t -X'_0) 
%%+ \frac12 \nabla^2_{X'} h(Y_t')(Q_t'-X'_0) \cdot (Q_t' - X_0')  
%%\nonumber \\
&= h(X'_0) +\nabla_{X'} h(X'_0)\cdot (P'_t -X'_0) 
+ \frac12 \nabla^2_{X'} h(Y_t')(P_t'-X'_0) \cdot (P_t' - X_0')
%%    &= h(X'_0) +\nabla_{X'} h(X'_0)\cdot (Q'-X'_0) + O(t^2)  
\end{align}
for some $Y_t' \in \prod_{j=1}^{d-1}(a_j, b_j).$
%The normal is given by 
%\[
%n=n(X_0)=\frac{(-\nabla_{X'} h(X_0'), 1)}{\sqrt{1+|\nabla_{X'} h(X_0')|^2}}.
%\]
Since $(P_t-X_0) \cdot n(X_0) = 0,$  $ X_{0d}=h(X'_0),$ and $n(X_0)$ is
given by \reff{nX0}, we have that 
%\[
%\left((P_t', P_{td}) - (X_0', X_{0d}) \right) \cdot \left( - \nabla_{X'} h(X'_0), 1\right)=0. 
%\]
%This and the fact that $ X_{0d}=h(X'_0)$ lead to 
%\[
%-\left(P_t'-X'_0 \right) \cdot  \nabla_{X'} h(X'_0) +P_{td}-X_{0d} =0. 
%\]
%%If $\phi(X)=X_d-h(X')$ then $\phi(X)=0$ which is equivalent to 
%%$X_d=h(X')$. $n=\pm \frac{\nabla \phi}{|\nabla \phi|}$. 
%Since $X_{0d}=h(X'_0)$, we have then 
\[
P_{td}= h(X'_0) +\nabla_{X'} h(X_0) \cdot (P'_t-X'_0).
\]
%%Since $h$ is a $C^3$-function, 
This, together with \reff{QtdExpand} and \reff{t}, 
implies \reff{star2}. The constant $C$ depends on $h$ and $V$, and hence on $\Gamma$ and $V$  only.
%\[
%|Q-P|=|Q_d-P_d| =O(t^2).
%\]
%This is \reff{star2}.

We now prove \reff{mismatch}.  We only consider the case $s = -$, as the case
$s = +$ can be treated similarly. 
Moreover, for $t = 0$, the inequality in \reff{mismatch} holds true obviously, since
$T_0$ is the identity map. 
So, let $t \in \R$ be such that $0 < |t| \le t_0.$ We further assume that $0 < t \le t_0$
as the case $-t_0 \le t < 0$ is similar.  

We claim that 
\begin{equation}
\label{symdiff}
\Omega_-\Delta T_t(\Omega_-) := 
(\Omega_-\setminus T_t( \Omega_-) ) \cup (T_t(\Omega_-)\setminus \Omega_-)
\subseteq \{ X\in \R^d: \mbox{{\rm dist}}\,(X, \Gamma) \le C t^2\}, 
\end{equation}
where the constant $C$ is exactly the same as in \reff{Ct2}. 
%Now, since $\Gamma$ is smooth, $\Gamma_t$ is within a neighborhood  of $\Gamma$ of size $O(t^2)$, 
%%we have $\mbox{{\rm dist}}\,(X, \Gamma) \leq Ct^2,  \forall X\in \Gamma_t $ and $|t| \ll1$. Hence, 
In fact, if $X \in \Omega_-\setminus T_t( \Omega_-),$ 
then $X\in \Omega_-$ and $T_{-t} (X) \in \overline{\Omega_+}.$  
%If $\mbox{{\rm dist}}\,(X, \Gamma) > C t^2,$  
Hence, $ \{ T_s (T_{-t}(X)) \}_{0\le s \le t}$ is a $C^3$-curve in $\R^d$, 
connecting one endpont $T_0(T_{-t}(X)) = T_{-t}(X) \in \overline{\Omega_+}$ 
to the other $T_{t}(T_{-t}(X)) = X \in \Omega_-$. Since $\Gamma$ is a closed
hypersurface in $\R^d$, there must exist $s_0 \in (0, t)$ such that 
$T_{s_0}(T_{-t} (X)) = T_{s_0-t} (X) \in \Gamma. $ Hence, by \reff{Ct2} with 
$t -s_0$ and $T_{s_0-t}(X) $ replacing $t$ and $X$, respectively,    
\[
\mbox{{\rm dist}}\,(X, \Gamma) = 
\mbox{{\rm dist}}\,(T_{t-s_0} (T_{s_0-t}(X)), \Gamma) \le C (t-s_0)^2 \le Ct^2.  
\]
Similarly, if $X \in T_t(\Omega_-) \setminus \Omega_-$, then 
$ \mbox{{\rm dist}}\,(X, \Gamma) \le Ct^2.$  Hence, \reff{symdiff} holds true. 

By \reff{symdiff}, we have 
\[
\left|\{ X\in \R^d: 
\chi_{\Omega_-(t)} \neq  \chi_{\Omega_-(X)} \} \right|
= \left| \Omega_- \Delta T_t( \Omega_-) \right| 
\le \left| \{ X\in \R^d: \mbox{{\rm dist}}\,(X, \Gamma) \le C t^2\} \right|.  
\]
This implies \reff{mismatch} (for ${\rm s} = -$), as  
the right-hand side of the above inequality is bounded by $Ct^2$
if $|t| \le t_0$ with a possible smaller $t_0$ 
(cf.\ Lemma~2.1 in \cite{Li_Order_SIMA06}).  
\end{proof}

\subsection{Continuity and Differentiability}
%% with Respect to Boundary Variations}
%%\subsection{Three Lemmas on Approximations}
\label{ss:contdiff}

Let $\Gamma$ be a dielectric boundary satisfying the assumptions in A1 of 
Subsection~\ref{ss:Assumptions} and $V \in \calV$ (cf.\ \reff{calV}). 
Let $\{T_t\}_{t \in \R}$ be the corresponding family of diffeomorphisms defined by \reff{flow}. 
Let $\hat{\phi} \in W^{1,1}(\Omega)$ satisfy  \reff{Wpsi0}. 
%$ denote one of the three functions $\hat{\phi}$, $\hat{\phi}_0$, and $\hat{\phi}_\infty$
%defined in \reff{Wpsi0} and \reff{Whatphis}, respectively. 
We consider the approximations $\hat{\phi} \circ T_t.$ 
Note that $\hat{\phi}  \circ T_t - \hat{\phi}$ 
and $\nabla \hat{\phi} \cdot V$ vanish in any small neighborhood of $ \cup_{i=1}^N x_i$, 
as $V (X) = 0$ and $T_t (X)  = X $ for any $X$ in such a neighborhood and any $t \in \R.$ 

%%{\color{red} {\bf [I think this lemma should hold true also for $\hat{\phi}_0$
%%and $\hat{\phi}_\infty.$ Please check. -Bo]}} 

\begin{lemma}
%%\label{t:psi0}
\label{l:phi0}
%%Let $\phi_0$ be given in \reff{psi0} and let 
%%$B_\alpha = \cup_{i=1}^N B(x_i, \alpha)$ for $0 < \alpha \ll 1. $
%%Then 
%%\begin{align}
%%\psi_0=\sum_{i=0}^{N}\frac{Q_i}{4\pi \varepsilon_-|x-x_i|} \in C^\infty
%%\left (\Omega\setminus B(x_i) \right),
%%\end{align}
%%where $B(x_i)=\cup_{i=1}^N B(x_i,\alpha) $ and $B(x_i,\alpha)$ denotes 
%%the ball centered at $x_i$ with radius $\alpha$.
%%For $\psi_0$, we have the following convergence results:
%%{\bf [We may not need this lemma.]}
%%Let $\hat{\phi}_s = \hat{\phi}$, or $\hat{\phi}_0$, or $\hat{\phi}_\infty.$ We have 
Let $\hat{\phi} \in \hat{\phi}_{\rm C} + H^{1}(\Omega)$ satisfy  \reff{Wpsi0}.  We have 
\begin{align}
\label{phi00}
&\lim_{t\to 0} \| \hat{\phi}  \circ T_t - \hat{\phi}  \|_{H^1(\Omega)} = 0.
\end{align} 
Moreover, $\nabla \hat{\phi} \cdot V \in H^1(\Omega)$ and 
\begin{align}
\label{phi0t}
&\lim_{t\to 0} \left\| \frac{ \hat{\phi} \circ T_t-\hat{\phi}}{t}
- \nabla \hat{\phi}  \cdot V \right\| _{H^1(\Omega)} = 0.  
\end{align}
%%\psi_{0}\circ T_t \to \psi_0 \quad \text{in } H^1(\Omega\setminus B(x_i)), \label{psi_0_convergence}
%%\frac{d}{d t}(\phi_0\circ T_t) = (\nabla \phi_0\cdot V)\circ T_t,
%%w_{0,t}&:=\frac{\psi_{0}\circ T_t-\psi_0}{t}\to \nabla \phi_0 \cdot V, 
%%\qquad \text{in } H^1(\Omega\setminus B(x_i)). 
\end{lemma}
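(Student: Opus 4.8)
The plan is to reduce both assertions to the general transformation estimates \reff{TtuH1conv} and \reff{dtuTt} proved in Subsection~\ref{ss:properties}, which require $H^1$- and $H^2$-regularity of the function being pulled back. The obstruction is that $\hat{\phi}$ is not in $H^1(\Omega)$ globally: although $\hat{\phi} - \hat{\phi}_{\rm C} \in H^2(\Omega)$, the Coulomb field $\hat{\phi}_{\rm C}$ of \reff{psi0} has a non-$L^2$ gradient near the point charges $x_1, \dots, x_N$. The saving feature is that the flow $T_t$ acts trivially away from $\mbox{supp}\,(V) \subset \calN_0(\Gamma)$, and by \reff{N0Gamma} the neighborhood $\calN_0(\Gamma)$ lies at positive distance from each $x_i$. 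Moreover, since the coefficient in \reff{Wpsi0} is the single constant $\ve_-$, the function $\hat{\phi}$ is harmonic and hence $C^\infty$ throughout $\Omega \setminus \{ x_1, \dots, x_N \}$; in particular it is smooth on all of $\calN_0(\Gamma)$, with no jump across $\Gamma$. So the strategy is to localize $\hat{\phi}$ to a region where it is smooth, apply the $T_t$-calculus there, and observe that the localization changes nothing.

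First I would record two elementary facts about the flow. Writing $K = \mbox{supp}\,(V)$, a compact subset of $\calN_0(\Gamma)$, we have $V = 0$ on $\Omega \setminus K$, so $T_t(X) = X$ for every $X \notin K$ and every $t$; consequently $\hat{\phi} \circ T_t - \hat{\phi}$ is supported in $K$. By uniqueness of solutions to \reff{flow}, a trajectory cannot reach the stationary set $\{ V = 0 \}$ in finite time, so trajectories starting in $K$ stay in $K$, that is, $T_t(K) \subseteq K$ for all $t \in \R$. Next I would choose a cut-off $\theta \in C_{\rm c}^\infty(\calN_0(\Gamma))$ with $\theta \equiv 1$ on a neighborhood of $K$. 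Since $\theta \equiv 1$ on $K$, $T_t$ maps $K$ into $K$, and $T_t$ is the identity off $K$, a short case distinction (on $K$, on $\{\theta \equiv 1\}\setminus K$, and off $\mbox{supp}\,\theta$) yields the pointwise identity
\[
\hat{\phi} \circ T_t - \hat{\phi} = (\theta \hat{\phi}) \circ T_t - \theta \hat{\phi}
\qquad \mbox{on } \Omega, \ \forall t \in \R.
\]
The payoff is that $\theta \hat{\phi} \in C_{\rm c}^\infty(\calN_0(\Gamma)) \subset H^2(\Omega)$, because $\hat{\phi}$ is smooth on $\mbox{supp}\,\theta$.

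With this identity in hand, \reff{phi00} follows by applying \reff{TtuH1conv} to $u = \theta \hat{\phi} \in H^1(\Omega)$, and \reff{phi0t} follows by applying \reff{dtuTt} to $u = \theta \hat{\phi} \in H^2(\Omega)$, which gives
\[
\lim_{t \to 0} \left\| \frac{(\theta \hat{\phi}) \circ T_t - \theta \hat{\phi}}{t}
- \nabla (\theta \hat{\phi}) \cdot V \right\|_{H^1(\Omega)} = 0.
\]
It then remains to replace $\nabla (\theta \hat{\phi}) \cdot V$ by $\nabla \hat{\phi} \cdot V$: on the neighborhood of $K$ where $\theta \equiv 1$ one has $\nabla \theta = 0$, hence $\nabla (\theta \hat{\phi}) = \nabla \hat{\phi}$ there, and since $V$ is supported in $K$ the two products agree on all of $\Omega$; this identification simultaneously shows $\nabla \hat{\phi} \cdot V = \nabla (\theta \hat{\phi}) \cdot V \in H^1(\Omega)$. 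The only real obstacle is the point-charge singularity of $\hat{\phi}$, and it is entirely circumvented by the localization; everything else is a direct invocation of the calculus of $T_t$ from Subsection~\ref{ss:properties}.
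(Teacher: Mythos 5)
Your proof is correct and follows essentially the same route as the paper's: both replace $\hat{\phi}$ by a globally smooth $H^2$ substitute that coincides with it wherever $T_t$ acts nontrivially, and then invoke \reff{TtuH1conv} and \reff{dtuTt}; the paper truncates $\hat{\phi}$ to zero in small balls around the $x_i$ where $V$ vanishes, while you multiply by a cutoff equal to $1$ near $\mbox{supp}\,(V)$, which is the same localization idea. (Your invariance claim $T_t(K)\subseteq K$ is fine, though it follows more directly from $T_t$ being a bijection of $\R^3$ that equals the identity off $K$.)
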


\begin{proof}
Let $\sigma > 0$ be such that $B_{\sigma} := \cup_{i=1}^N B(x_i, \sigma) \subset \Omega$ and 
$V = 0 $ on $B_{\sigma}.$ Then, there exists $\tilde{\phi} \in C^\infty(\Omega)\cap H^2(\Omega)$ such that 
$\tilde{\phi} = 0$ in $B_{\sigma/2}$ and $\tilde{\phi} = \hat{\phi}$ a.e.\ in 
$\Omega \setminus B_\sigma.$ These imply that 
$\tilde{\phi} \circ T_t - \tilde{\phi} = \hat{\phi}  \circ T_t - \hat{\phi},$
and $\nabla \tilde{\phi} \cdot V = \nabla \hat{\phi} \cdot V$ a.e.\ in $\Omega$ for all $t.$
This implies that $\nabla \hat{\phi} \cdot V \in H^1(\Omega).$ Moreover, 
it follows from \reff{TtuH1conv} that 
\[
\lim_{t\to 0} \| \hat{\phi}  \circ T_t - \hat{\phi}  \|_{H^1(\Omega)}
= \lim_{t\to 0} \| \tilde{\phi} \circ T_t - \tilde{\phi} \|_{H^1(\Omega)} = 0, 
\]
implying \reff{phi00}, and from \reff{dtuTt} that 
\[
\lim_{t\to 0} \left\| \frac{ \hat{\phi}\circ T_t-\hat{\phi}} {t}
- \nabla \hat{\phi}  \cdot V \right\| _{H^1(\Omega)} 
= \lim_{t\to 0} \left\| \frac{\tilde{\phi}\circ T_t-\tilde{\phi}}{t}
- \nabla \tilde{\phi} \cdot V \right\| _{H^1(\Omega)} = 0,   
\] 
implying \reff{phi0t}. 
\end{proof}

We recall that $\phi_{\Gamma, \infty} \in H^1(\Omega)\cap C(\overline{\Omega})$ 
is the unique weak solution
to the boundary-value problem \reff{hatpsiinfty}, defined in \reff{hatpsiinftyweak}.
Similarly, $\phi_{\Gamma_t, \infty}\in H^1(\Omega)\cap C(\overline{\Omega})$ 
for each $t \in \R$ is  the
unique weak solution to the same boundary-value problem with $\Gamma_t = T_t(\Gamma)$
replacing $\Gamma.$

\begin{lemma}
%\label{t:psihat}
\label{l:pGinfty}
\begin{compactenum}
\item[\rm (1)]
There exists a unique $\zeta_{\Gamma, V} \in H^1_0(\Omega)$ such that
%%{\bf [Changed $\hat{w}_\Gamma $ to $w_\Gamma.$ Further changed it to $\zeta_{\Gamma, V}.$]}
\begin{align}
\label{hat_w}
\int_\Omega \varepsilon_\Gamma \nabla \zeta_{\Gamma, V} \cdot \nabla \eta \, dx
=-\int_\Omega \varepsilon_\Gamma A_V'(0) \nabla \phi_{\Gamma, \infty} \cdot\nabla\eta \, dx
\qquad \forall \eta \in H_0^1(\Omega),
\end{align}
where $A'_V(0) $ is defined in \reff{Ap0}.  Moreover, the mapping
$V \mapsto \zeta_{\Gamma, V} $ is linear in $V$, i.e.,
\[
\zeta_{\Gamma, c_1 V_1 + c_2 V_2} = c_1 \zeta_{\Gamma, V_1 } + c_2 \zeta_{\Gamma, V_2 }
\qquad \mbox{for all }\, V_1, V_2 \in {\cal V} \mbox{ and }  c_1, c_2 \in \R.
\]

\item[\rm (2)]
By modifying the value of $\zeta_{\Gamma, V}$ on a set of zero Lebesgue measure, 
we have that  $\zeta_{\Gamma, V}^{\rm s}
\in H^2(\Omega_{\rm s}) \cap C^1(\Omega_{\rm s})$  
for ${\rm s} = -$ or $+$. Moreover, 
%%We have $\xi_{\Gamma, V}^- \in H^2_{\rm loc}(\Omega_-)$ and 
%We have $\zeta_{\Gamma, V}^- \in H^2_{\rm loc}(\Omega_-)$ and
%$ \zeta_{\Gamma, V}^+ \in H^2_{\rm loc}(\Omega_+).$  Moreover,
\begin{align}
%%\label{hat_w_equation}
\label{DwG}
& \Delta \zeta_{\Gamma, V}
 = -\nabla \cdot \left[ A_V'(0) \nabla  \phi_{\Gamma, \infty} \right]
= \Delta ( \nabla \phi_{\Gamma, \infty} \cdot V )
%%= 2 \nabla^2 \phi_{\Gamma, \infty}: \nabla V + \nabla \psi_{\Gamma, \infty} \cdot \Delta V
\qquad \text{in } \Omega_- \cup \Omega_+,
\\
\label{jump_pnwG}
& \llbracket \varepsilon_\Gamma \partial_n \zeta_{\Gamma, V} \rrbracket_\Gamma
= - \llbracket \varepsilon_\Gamma  A'_V(0) \nabla \phi_{\Gamma, \infty} \cdot n   \rrbracket_\Gamma
\qquad \mbox{on } \Gamma.
\end{align}

%%%%%%%%%%%%%%%%%%%%%%%%%%%%%%%%%%%%%%%
\begin{comment}
\item[\rm (3)]
The function $\zeta_{\Gamma, V} \in H_0^1(\Omega)$ satisfies that
$\zeta_{\Gamma, V} \in W^{1,p}(\Omega)$ for any $p\in [1,\infty)$,
$\zeta_{\Gamma,V}^+ \in H^2(\Omega_+),$ and  $\zeta_{\Gamma,V}^- \in H^2(\Omega_-).$ Moreover,
there exists a constant $C > 0$, depending on $\Omega,$ $x_i$ and $Q_i$ $(1 \le i \le N)$,
%%$x_1, \dots, x_N,$ $Q_1, \dots, Q_N,$
$\ve_+,$ $\ve_-,$ $\phi_\infty,$ and $ p$, but not on $\Gamma$, such that
\begin{equation}
\label{zetabounds}
\|\zeta_{\Gamma, V} \|_{W^{1, p} (\Omega)} + \| \zeta^+_{\Gamma, V} \|_{H^2(\Omega_+)}
+ \| \zeta^-_{\Gamma, V} \|_{H^2(\Omega_-)} \le C.
\end{equation}
\end{comment}
%%%%%%%%%%%%%%%%%%%%%%%%%%%%%%%%%%%%%%%

\item[\rm (3)]
We have
\begin{align}
\label{limpGi}
&\lim_{t\to 0} \| \phi_{\Gamma_t, \infty} \circ T_t - \phi_{\Gamma,\infty}  \|_{H^1(\Omega)}= 0,
\\
\label{limzetaG}
& \lim_{t \to 0} \left\| \frac{\phi_{\Gamma_t, \infty} \circ T_t - \phi_{\Gamma, \infty}}{t}
-  \zeta_{\Gamma, V} \right\|_{H^1(\Omega)} = 0.
\end{align}

\item[\rm (4)]
If $V\cdot n = 0$ on $\Gamma$, then $\zeta_{\Gamma, V} = \nabla \phi_{\Gamma, \infty}
\cdot V $ in $\Omega.$
\end{compactenum}
\end{lemma}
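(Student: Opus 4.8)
The plan is to transport every object to the fixed reference configuration $(\Omega,\ve_\Gamma)$ by pulling back along the flow $T_t$. Setting $w_t = \phi_{\Gamma_t,\infty}\circ T_t$ and changing variables $x=T_t(X)$ in the weak formulation \reff{hatpsiinftyweak} for $\phi_{\Gamma_t,\infty}$ (using $\ve_{\Gamma_t}\circ T_t=\ve_\Gamma$ since $T_t(\Omega_-)$ defines $\ve_{\Gamma_t}$, the chain rule \reff{dudut}, and $dx=J_t\,dX$ with $J_t$ from \reff{Jt}), I obtain, after relabeling the test function, that $w_t\in\phi_\infty+H^1_0(\Omega)$—the boundary data is preserved because $\mathrm{supp}\,V\subset\calN_0(\Gamma)$ keeps $T_t=\mathrm{id}$ on $\partial\Omega$—and
\[
\int_\Omega \ve_\Gamma A_V(t)\nabla w_t\cdot\nabla\eta\,dx = 0\qquad\forall\eta\in H^1_0(\Omega),
\]
with $A_V(t)$ as in \reff{At}. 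This single identity drives all four parts; at $t=0$ it reduces to \reff{hatpsiinftyweak}, so $w_0=\phi_{\Gamma,\infty}$. For Part~(1), the right-hand side of \reff{hat_w} is a bounded linear functional on $H^1_0(\Omega)$ because $A_V'(0)\in C^1$ (cf.\ \reff{Ap0}) and $\nabla\phi_{\Gamma,\infty}\in L^\infty(\Omega)$ (cf.\ \reff{phiGammabounds}), while the left-hand side is the bilinear form of the coercive, symmetric operator $-\nabla\cdot\ve_\Gamma\nabla$; Lax--Milgram gives existence and uniqueness of $\zeta_{\Gamma,V}$, and linearity in $V$ follows from the linearity of $V\mapsto A_V'(0)$ together with uniqueness.

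For Part~(3), I first observe that $A_V(t)\to I$ uniformly by \reff{Aexp}--\reff{KtX}, so the forms $\int\ve_\Gamma A_V(t)\nabla\cdot\,\nabla\cdot$ are uniformly coercive for $|t|\le t_0$. Subtracting the $t=0$ identity from the transported one and dividing by $t$ shows that $\theta_t:=(w_t-\phi_{\Gamma,\infty})/t\in H^1_0(\Omega)$ solves
\[
\int_\Omega\ve_\Gamma A_V(t)\nabla\theta_t\cdot\nabla\eta\,dx = -\int_\Omega\ve_\Gamma\tfrac{A_V(t)-I}{t}\nabla\phi_{\Gamma,\infty}\cdot\nabla\eta\,dx.
\]
Since $(A_V(t)-I)/t=A_V'(0)+tK(t,\cdot)$ is uniformly bounded, testing with $\eta=\theta_t$ yields a uniform $H^1$ bound on $\theta_t$; this already gives \reff{limpGi}, as $\|w_t-\phi_{\Gamma,\infty}\|_{H^1}=|t|\,\|\theta_t\|_{H^1}\to0$. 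Extracting a weak $H^1_0$ limit and sending $t\to0$ (using $A_V(t)\to I$ and $(A_V(t)-I)/t\to A_V'(0)$ strongly) identifies every weak limit as the unique solution of \reff{hat_w}, so $\theta_t\rightharpoonup\zeta_{\Gamma,V}$. To upgrade to the strong convergence \reff{limzetaG} I invoke uniform coercivity: writing the error energy $\int\ve_\Gamma A_V(t)\nabla(\theta_t-\zeta_{\Gamma,V})\cdot\nabla(\theta_t-\zeta_{\Gamma,V})$ and expanding, each resulting term pairs a weakly-null sequence against a strongly convergent one, so the error energy tends to $0$.

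For Part~(2), I restrict \reff{hat_w} to test functions supported in $\Omega_-$ or $\Omega_+$, where $\ve_\Gamma$ is constant, to obtain $\Delta\zeta_{\Gamma,V}=-\nabla\cdot(A_V'(0)\nabla\phi_{\Gamma,\infty})$ in each subdomain. Because $\phi_{\Gamma,\infty}$ is harmonic there (cf.\ \reff{DphiGi0}), a direct computation—expanding both sides using $A_V'(0)=(\nabla\cdot V)I-\nabla V-(\nabla V)^T$ and $\Delta\phi_{\Gamma,\infty}=0$—gives the commutator identity $-\nabla\cdot(A_V'(0)\nabla\phi_{\Gamma,\infty})=\Delta(\nabla\phi_{\Gamma,\infty}\cdot V)$, which is \reff{DwG}. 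Since this right-hand side lies in $L^2(\Omega_{\rm s})$ (as $\phi_{\Gamma,\infty}^{\rm s}\in H^2(\Omega_{\rm s})$ by \reff{phiGireg} and $A_V'(0)\in C^1$), the elliptic interface regularity cited for $\phi_{\Gamma,\infty}$ yields $\zeta_{\Gamma,V}^{\rm s}\in H^2(\Omega_{\rm s})\cap C^1(\Omega_{\rm s})$; the jump relation \reff{jump_pnwG} then follows by integrating \reff{hat_w} by parts over $\Omega_-$ and $\Omega_+$ separately, using the PDE in each piece, and collecting the terms on $\Gamma$, where the opposite normals produce the bracketed jump.

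The main obstacle is Part~(4). I decompose $\theta_t=(u_t\circ T_t)/t+(\phi_{\Gamma,\infty}\circ T_t-\phi_{\Gamma,\infty})/t$, where $u_t:=\phi_{\Gamma_t,\infty}-\phi_{\Gamma,\infty}$ is the pure Eulerian difference. Since $\phi_{\Gamma,\infty}\in W^{1,\infty}(\Omega)$, the second term converges to $\nabla\phi_{\Gamma,\infty}\cdot V$ in $L^2$ by dominated convergence. The crux is to show the first term vanishes, i.e.\ that the shape derivative $u_t/t\to0$ in $L^2$, and this is exactly where the tangency hypothesis $V\cdot n=0$ enters through Lemma~\ref{l:TForce}. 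Subtracting the defining identities for $\phi_{\Gamma_t,\infty}$ and $\phi_{\Gamma,\infty}$ shows $u_t/t\in H^1_0(\Omega)$ solves $\int\ve_{\Gamma_t}\nabla(u_t/t)\cdot\nabla\eta=-\int\frac{\ve_{\Gamma_t}-\ve_\Gamma}{t}\nabla\phi_{\Gamma,\infty}\cdot\nabla\eta$, whose right-hand side is supported on $\{\ve_{\Gamma_t}\ne\ve_\Gamma\}$, a set of measure $O(t^2)$ by \reff{mismatch}. Combining this $O(t^2)$ smallness with $\nabla\phi_{\Gamma,\infty}\in L^\infty$ and Hölder's inequality bounds the functional by $C\big(\int_{\{\ve_{\Gamma_t}\ne\ve_\Gamma\}}|\nabla\eta|^2\big)^{1/2}$, which tends to $0$ for fixed $\eta$ by absolute continuity of the Dirichlet integral; a uniform $H^1$ bound on $u_t/t$ (from the same estimate with $\eta=u_t/t$) together with this convergence forces the weak $H^1_0$ limit to solve the homogeneous problem, hence to vanish. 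Then $\theta_t\to\nabla\phi_{\Gamma,\infty}\cdot V$ in $L^2$, and comparison with the limit from Part~(3) yields $\zeta_{\Gamma,V}=\nabla\phi_{\Gamma,\infty}\cdot V$. The delicate point throughout Part~(4) is reconciling the $1/t$ scaling against the $O(t^2)$ geometric smallness of the mismatch set, which is precisely the content Lemma~\ref{l:TForce} is designed to supply.
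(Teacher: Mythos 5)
Your proposal is correct and follows essentially the same route as the paper's proof: pull back to the fixed configuration via $T_t$ to get the transported identity with $A_V(t)$, apply Lax--Milgram and uniform coercivity for parts (1) and (3), derive the piecewise PDE, commutator identity, and jump condition for part (2) by localizing test functions and integrating by parts, and for part (4) use Lemma~\ref{l:TForce} to show the Eulerian difference quotient $(\phi_{\Gamma_t,\infty}-\phi_{\Gamma,\infty})/t$ vanishes because its source is supported on the $O(t^2)$ mismatch set. The only cosmetic differences are that the paper proves \reff{limzetaG} by testing the error equation directly with the error (avoiding your weak-limit extraction plus energy-expansion upgrade), and in part (4) it identifies the limit of the transport term only weakly in $L^2$ against test functions rather than via Rademacher and dominated convergence; both variants are sound.
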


\begin{proof}
(1) The existence and uniqueness of $\zeta_{\Gamma, V} \in H^1_0(\Omega)$ that satisfies
\reff{hat_w} follow from the Lax--Milgram Lemma \cite{EvansBook2010,GilbargTrudinger98}.
By \reff{Ap0},  $A'_V(0)$ is linear in $V$. Therefore, by the definition \reff{hat_w}
of $\zeta_{\Gamma, V} \in H_0^1(\Omega),$ $\zeta_{\Gamma, V}$ is linear in $V.$

(2) Let $\mbox{s} $ denote $-$  or $+.$ Note by
\reff{phiGireg}, \reff{calV}, and \reff{Ap0} that $A_V'(0) \nabla \phi_{\Gamma, \infty}
\in C^1({\Omega_{\rm s}})\cap H^1(\Omega_{\rm s})$. 
For any $\eta \in C_{\rm c}^1(\Omega)$ with $\mbox{supp}\,(\eta)
\subset \Omega_{\rm s},$ we have by \reff{hat_w} and the Divergence Theorem that 
\[
\int_{\Omega_{\rm s}} \ve_{\rm s} \nabla \zeta_{\Gamma, V} \cdot \nabla \eta \, dx
= \int_{\Omega_{\rm s}} \ve_{\rm s}
\nabla \cdot \left[ A_V'(0) \nabla  \phi_{\Gamma, \infty} \right] \eta  \, dx.
\]
Hence,
%%$\zeta_{\Gamma, V}\in H^1(\Omega_{\rm s}) $ is a weak soluton to
%%\[
$ - \Delta \zeta_{\Gamma, V} = \nabla \cdot \left[ A_V'(0) \nabla  \phi_{\Gamma, \infty} \right]
$ in $\Omega_{\rm s}$.
%\qquad \mbox{in } \Omega_{\rm s}.
%\]
Since the right-hand side is in $L^2(\Omega_{\rm s})\cap C(\Omega_{\rm s})$, it follows from
the elliptic regularity theory \cite{Lady_EllipticBook1968, GilbargTrudinger98} that
$\zeta_{\Gamma, V}^{\rm s} \in H^2(\Omega_{\rm s})\cap C^1(\Omega_{\rm s}),$ 
after a possible modification of the value of $\zeta_{\Gamma, V}$ on a set of 
zero Lebesgue measure.  Moreover, the first equality in \reff{DwG} follows.

Let us denote by $V^i $ $(i = 1, 2, 3)$ the components of $V.$
With the conventional summation notation (i.e., repeated indices are summed),
%%and the Kronecker delta (i.e., $  \delta_{ij} = 1$ if $i = j$ and $0$ if $i\ne j$),
we have by \reff{Ap0}, \reff{phiGireg}, and \reff{DphiGi0} that
%%$\Delta \phi_{\Gamma, \infty} = \partial_{ii} \phi_{\Gamma, \infty} = 0$
%%in $\Omega_-\cup \Omega_+$ (cf.\ ?) that
\begin{align}
\label{hidden}
&- \nabla \cdot (  A_V'(0) \nabla \phi_{\Gamma, \infty} )
\nonumber \\
&\quad 
= \nabla \cdot \left[ \nabla V + (\nabla V)^T - (\nabla \cdot V) I  \right] \nabla
\phi_{\Gamma, \infty}
\nonumber \\
& \quad = \partial_i \left( \partial_j V^i \partial_j \phi_{\Gamma, \infty}
+ \partial_i V^j \partial_j \phi_{\Gamma, \infty}
- \partial_k V^k \partial_i \phi_{\Gamma, \infty} \right)
\nonumber \\
& \quad = 2 \partial_{ij}  \phi_{\Gamma, \infty} \partial_i V^j + \partial_{ii}V^j
\partial_j \phi_{\Gamma, \infty}
\nonumber \\
& \quad = \partial_{ii} \partial_j \phi_{\Gamma, \infty} V^j
+ 2 \partial_{ij}  \phi_{\Gamma, \infty} \partial_i V^j + \partial_{ii}V^j
\partial_j \phi_{\Gamma, \infty}
\quad \mbox{[since $\partial_{ii} \phi_{\Gamma, \infty} = 0$]}
\nonumber \\
&\quad = \partial_{ii} \left( \partial_j \phi_{\Gamma, \infty} V^j \right)
\nonumber \\
& \quad = \Delta ( \nabla  \phi_{\Gamma, \infty} \cdot V)
\qquad \mbox{in }  \Omega_-\cup \Omega_+,
\end{align}
implying the second equation in \reff{DwG}.

Since $\zeta_{\Gamma, V} \in H^1_0(\Omega) $ and $\Delta \zeta_{\Gamma, V}
\in L^2(\Omega_{\rm s})$ for s being $-$ or $+$,
and since the unit normal $n$ at the $\Gamma$ points from $\Omega_-$ to $\Omega_+$,
we have by the Divergence Theorem that both sides of the equation in \reff{hat_w} are
\begin{align*}
%%\label{wG_left}
\int_\Omega \varepsilon_\Gamma \nabla \zeta_{\Gamma, V} \cdot \nabla \eta \, dx
& = \int_{\Omega_-} \varepsilon_- \nabla \zeta_{\Gamma, V} \cdot \nabla \eta \, dx
+ \int_{\Omega_+} \varepsilon_+ \nabla \zeta_{\Gamma, V} \cdot \nabla \eta \, dx
\nonumber
\\
&=- \int_{\Omega_-} \varepsilon_- \Delta {\zeta}_{\Gamma, V}  \eta \, dx
-\int_{\Omega_+ } \varepsilon_+ \Delta {\zeta}_{\Gamma, V}  \eta \, dx
-\int_\Gamma \llbracket \varepsilon_\Gamma \partial_n {\zeta}_{\Gamma, V}
\rrbracket_{\Gamma}  \eta  \, dS,
\end{align*}
and
\begin{align*}
%%\label{wG_right}
& -\int_\Omega \varepsilon_\Gamma A_V'(0) \nabla \phi_{\Gamma,\infty} \cdot \nabla \eta \, dx
\nonumber
\\
&\quad
= -\int_{\Omega_-} \varepsilon_- A_V'(0) \nabla  \phi_{\Gamma, \infty} \cdot \nabla \eta \, dx
-\int_{\Omega_+} \varepsilon_+ A_V'(0) \nabla \phi_{\Gamma, \infty}  \cdot \nabla \eta \, dx
\nonumber
\\
&\quad
= \int_{\Omega_-} \varepsilon_- \nabla \cdot ( A_V'(0) \nabla \phi_{\Gamma, \infty}) \eta \, dx
+ \int_{\Omega_+} \varepsilon_+ \nabla \cdot ( A_V'(0) \nabla \phi_{\Gamma, \infty} ) \eta \, dx
\nonumber
\\
&\quad \quad
+\int_\Gamma \llbracket \varepsilon_\Gamma  A_V'(0) \nabla
\phi_{\Gamma, \infty}   \cdot n\rrbracket_{\Gamma} \eta  \, dS,
\end{align*}
respectively.  These, together with and \reff{DwG}, imply \reff{jump_pnwG}.

%%%%%%%%%%%%%%%%%%%%%%%%%%%%
\begin{comment}
(3)
Since the right-hand side of \reff{hat_w} is a bounded linear functional of
$\eta \in W_0^{1, 1}(\Omega)$, we have by the regularity result in
\cite{Elschner_IFB2007} that $\zeta_{\Gamma, V} \in W^{1,p}(\Omega)$ for any $p \in [1, \infty)$
and the related $W^{1,p}(\Omega)$-estimate in \reff{zetabounds}.
By \reff{DwG} and \reff{jump_pnwG}, and the elliptic regularity theory
\cite{
%%LadyzSolon_1960,
Lady_EllipticBook1968,
HuangZou_JDE2002,
HuangZou_DiscCont2007}
(cf.\ also \cite{Chipot_ARMA86, HuangZou_DiscCont2007,LiVogelius_ARMA2000}),
we also obtain that
$\zeta_{\Gamma, V}^- \in H^2(\Omega_-)$ and $\zeta_{\Gamma, V}^+ \in H^2(\Omega_+),$
and the related estimates in \reff{zetabounds}.
\end{comment}
%%%%%%%%%%%%%%%%%%%%%%%%%%%%

(3) Replacing $\Gamma$, $\phi_{\Gamma, \infty}$, and $\eta$ by $\Gamma_t$, $\phi_{\Gamma_t, \infty}$,
and $\eta \circ T_t^{-1}$ for $t \in \R$, respectively,  in the weak formulation
\reff{hatpsiinftyweak},
%% with $\Gamma_t$ and $\phi_{\Gamma_t, \infty}$ replacing $\Gamma$
%%and $\phi_{\Gamma, \infty},$ respectively,
we get by the change of variable $ x = T_t(X)$ that
%\reff{hatpsiinfty} (cf.\ \reff{hatpsiinftyweak}
\[
\int_\Omega \ve_{\Gamma} A_V(t) \nabla ( \phi_{\Gamma_t, \infty} \circ T_t) \cdot
\nabla \eta \, dX  = 0  \qquad \forall \eta \in H_0^1(\Omega).
\]
This and \reff{hatpsiinftyweak} imply for any $\eta \in H_0^1(\Omega)$ that
\begin{equation}
\label{phiGtH1}
\int_\Omega \ve_{\Gamma} \nabla ( \phi_{\Gamma_t, \infty} \circ T_t - \phi_{\Gamma, \infty} )
\cdot  \nabla \eta \, dX =  \int_\Omega \ve_\Gamma [ I - A_V(t)]
\nabla (\phi_{\Gamma_t, \infty} \circ T_t) \cdot  \nabla \eta \, dX.
%%\qquad \forall \eta \in H_0^1(\Omega).
\end{equation}
It follows from a change of variable, \reff{Jexp}, \reff{HtX}, \reff{Aexp}, and \reff{KtX}
that $\| \nabla ( \phi_{\Gamma_t, \infty} \circ T_t) \|_{L^2(\Omega)}$ is bounded
uniformly in $t.$ 
Setting $\eta= \phi_{\Gamma_t, \infty} \circ T_t -\phi_{\Gamma, \infty} \in H_0^1(\Omega)$ in
\reff{phiGtH1}, we then obtain \reff{limpGi} by \reff{Aexp}, \reff{KtX}, and
the Cauchy--Schwarz and Poincar\'e inequalities.

Dividing both sides of \reff{phiGtH1} by $t \ne 0$ and setting now
$\eta = ( \phi_{\Gamma_t, \infty} \circ T_t - \phi_{\Gamma, \infty})/t - \zeta_{\Gamma, V} $
in the resulting equation and also in \reff{hat_w}, we have
by the Cauchy--Schwarz inequality that
\begin{align*}
%%\label{phiGtH1}
&\int_\Omega \ve_{\Gamma}  \left| \nabla \left(  \frac{ \phi_{\Gamma_t, \infty} \circ T_t
- \phi_{\Gamma, \infty}}{t}-\zeta_{\Gamma, V} \right) \right|^2 dX
\\
&\quad
= \int_\Omega \ve_\Gamma \left[ \frac{I - A_V(t)}{t} + A_V'(0) \right]
\nabla (  \phi_{\Gamma_t, \infty} \circ T_t)  \cdot
\nabla \left(  \frac{ \phi_{\Gamma_t, \infty} \circ T_t
- \phi_{\Gamma, \infty}}{t}-\zeta_{\Gamma, V} \right) dX
\\
& \quad \quad
+ \int_\Omega \ve_\Gamma A_V'(0)
\nabla \left[  \phi_{\Gamma, \infty} -   \phi_{\Gamma_t, \infty} \circ T_t) \right] \cdot
\nabla \left(  \frac{ \phi_{\Gamma_t, \infty} \circ T_t
- \phi_{\Gamma, \infty}}{t}-\zeta_{\Gamma, V} \right) dX
\\
& \quad
\le C \left\|\frac{ A_V(t) - I - t A_V'(0)}{t} \right\|_{L^\infty(\Omega)}
\| \phi_{\Gamma_t, \infty} \circ T_t  \|_{H^1(\Omega)}
\left\| \frac{ \phi_{\Gamma_t, \infty} \circ T_t - \phi_{\Gamma, \infty}}{t}-\zeta_{\Gamma, V}
\right\|_{H^1(\Omega)}
\\
&\quad \quad
+ C \| \phi_{\Gamma, \infty} -   \phi_{\Gamma_t, \infty} \circ T_t \|_{H^1(\Omega)}
\left\| \frac{ \phi_{\Gamma_t, \infty} \circ T_t - \phi_{\Gamma, \infty}}{t}-\zeta_{\Gamma, V}
\right\|_{H^1(\Omega)}.
\end{align*}
This, together with Poincar\'e's inequality, \reff{Aexp}, \reff{KtX}, and \reff{limpGi}, 
leads to \reff{limzetaG}.

(4) Assume now $V\cdot n = 0 $ on $\Gamma$.
Recall from subsection~\ref{ss:DBF} that the signed distance to $\Gamma$,
$\phi: \R^3 \to \R$, which is negative in $\Omega_-$ and positive outside $\Gamma$,
is in fact a $C^3$-function and also $\nabla \phi \ne 0$
in $\calN_0(\Gamma)$, a neighborhood of $\Gamma$ in $\Omega$; cf.\ \reff{N0Gamma}.
%%$\phi(x) = -\mbox{{\rm dist}}\, (x, \Gamma)$ if $x\in \Omega_-$, $0$ if
%%$x \in \Gamma$, and $ \mbox{{\rm dist}}\, (x, \Gamma)$ if $x\in \Omega_+.$
%Since $\Gamma$ is $C^2$, there exists a neighborhood $\cal N$ of
%$\Gamma$ in $\Omega$ such that $\phi$ is $C^2$ on $\cal N.$
Moreover, $n = \nabla \phi$ on $\Gamma.$ We define $n = \nabla \phi $ on $\calN_0(\Gamma).$
Now, since $V\cdot n = 0$ on $\Gamma$,
 by Lemma~\ref{l:TForce}, there exists $t_0 = t_0(V)> 0$ and a constant $C_0 > 0$ which
may depend on $\Gamma$, such that
$\mbox{{\rm dist}}\,(x, \Gamma) \le C_0 t^2$ for all $x \in \Gamma_t$ and all $t \in [-t_0, t_0]$.
Let $D(t) = \{ x \in \Omega: \mbox{{\rm dist}}\,(x, \Gamma) \le C_0t^2\}.$
Then $\Gamma_t \subset D(t),$
and hence $\ve_{\Gamma_t} = \ve_\Gamma$ on $\Omega \setminus D(t)$, for all $|t| \le t_0.$
Moreover, the measure $|D(t)| = O(t^2)$ as $t \to 0$; cf.\ Lemma~2.1 in \cite{Li_Order_SIMA06}.

Let $ h_t = (\phi_{\Gamma_t, \infty} - \phi_{\Gamma, \infty}) / t $ with $|t| \le t_0.$ 
We have now by \reff{hatpsiinfty} and that with $\Gamma_t $ replacing $\Gamma$ that
\begin{equation}
\label{veGtveG}
\int_\Omega \ve_{\Gamma_t}  \nabla h_t \cdot \nabla \eta \, dx
= - \int_\Omega \frac{ \ve_{\Gamma_t} - \ve_\Gamma}{t} \nabla \phi_{\Gamma, \infty}
\cdot \nabla \eta \, dx \qquad \forall \eta \in H_0^1(\Omega).
%% - \int_\Omega (\ve_{\Gamma_t} - \ve_\Gamma) \nabla h_t \cdot \nabla \eta \, dx.
\end{equation}
Setting $\eta = h_t\in H_0^1(\Omega)$, we
have by the uniform boundedness of $\nabla \phi_{\Gamma_t, \infty}$ (cf.\ \reff{phiGammabounds}),
the Poincar\'e and Cauchy--Schwarz inequalities, and the fact
that $|D(t)| = O(t^2)$ as $t \to 0$  that
\begin{align*}
\| h_t \|_{H^1(\Omega)} \le C \left( \int_{D(t)}
\left| \frac{\ve_{\Gamma_t} - \ve_\Gamma}{t} \right|^2 dx \right)^{1/2} \le C,
\end{align*}
where $C > 0$ is a generic constant, independent of $t$.
Thus, there exists a subsequence of $h_t$ $( |t| \le t_0 )$, not relabeled,
and some $h\in H^1_0(\Omega)$, such that $h_t \to h$
weakly in  $H^1(\Omega)$ as $ t\to 0$.
%%$, strongly converges in $L^2(\Omega)$,
%%and almost everywhere converges in $\Omega$ to some $h \in H^1_0(\Omega).$

Working on this subsequence, we have by \reff{hatpsiinftyweak}
(with $\phi_{\Gamma_t, \infty}$ replacing $\phi_{\Gamma, \infty}$), \reff{veGtveG},
\reff{phiGammabounds}, and the fact that the measure $|D(t)| = O(t^2) \to 0 $ as $t \to 0$
that  for any $\eta \in H_0^1(\Omega)$
\begin{align*}
\left| \int_\Omega \ve_{\Gamma}  \nabla h_t \cdot \nabla \eta \, dx  \right|
&
= \left| \int_\Omega ( \ve_\Gamma - \ve_{\Gamma_t} ) \nabla h_t \cdot \nabla \eta \, dx
- \int_\Omega \frac{ \ve_{\Gamma_t} - \ve_\Gamma}{t} \nabla \phi_{\Gamma, \infty}
\cdot \nabla \eta \, dx \right|
\\
&
\le \left|   \int_{D(t)} (\ve_{\Gamma_t} - \ve_\Gamma) \nabla h_t \cdot \nabla \eta \, dx \right|
+ \left|  \int_{D(t)} \frac{ \ve_{\Gamma_t} - \ve_\Gamma}{t} \nabla \phi_{\Gamma, \infty}
\cdot \nabla \eta \, dx \right|
\\
& \le C \left[ \| h_t \|_{H^1(\Omega)} 
+ \frac{1}{|t|} | D(t)|^{1/2} \right]
%%\left( \int_{D(t)}  \frac{1}{t^2} \, dx \right)^{1/2} \right]
\left( \int_{D(t)} |\nabla \eta|^2 dx \right)^{1/2}
\\
& \le C \left( \int_{D(t)} |\nabla \eta|^2 dx \right)^{1/2}
\\
& \to 0
\qquad \mbox{as } t \to 0.
\end{align*}
%On the other hand,
%\[
%\int_\Omega \ve_{\Gamma}  \nabla h_t \cdot \nabla \eta \, dx
%\to
%\int_\Omega \ve_{\Gamma}  \nabla h \cdot \nabla \eta \, dx
%\qquad \mbox{as } t \to 0.
%\]
Since $h_t \to h$ weakly in $H^1(\Omega)$, we have
\[
\int_\Omega \ve_{\Gamma}  \nabla h \cdot \nabla \eta \, dx = 0
\qquad \forall \eta \in H_0^1(\Omega).
\]
Setting $\eta = h \in H_0^1(\Omega)$, we see that $h = 0$ in $H^1_0(\Omega).$

We now show that $\zeta_\Gamma = \nabla \phi_{\Gamma, \infty} \cdot V$ in $\Omega.$
Let $\eta \in L^2(\Omega)$ and $t \ne 0$.
Since $h_t \to h = 0$ weakly in $H^1(\Omega)$, we
have by the properties of the transformations $T_t $ $(t \in \R)$
\reff{f_T_t}, \reff{dtuTt}, and \reff{Jt}--\reff{HtX} that
\begin{align*}
&\int_\Omega \frac{\phi_{\Gamma_t, \infty} \circ T_t - \phi_{\Gamma, \infty}}{t} \eta \, dX
\\
&\quad
 = \int_\Omega \frac{  \phi_{\Gamma_t, \infty} \circ T_t -
\phi_{\Gamma, \infty} \circ T_t }{t} \eta \, dX
 +  \int_\Omega \frac{ \phi_{\Gamma, \infty} \circ T_t }{t} \eta \, dX
 -  \int_\Omega \frac{ \phi_{\Gamma, \infty} \eta}{t} \, dX
\\
& \quad
= \int_\Omega h_t \left( \eta \circ T_t^{-1} \right) \det \nabla T_t^{-1} d x
+ \int_\Omega  \frac{ \phi_{\Gamma, \infty}}{t} \left( \eta  \circ T_t^{-1} \right)
 \det \nabla T_t^{-1} dx -  \int_\Omega \frac{ \phi_{\Gamma, \infty} \eta}{t} \, dx
\\
& \quad
= \int_\Omega h_t \left( \eta \circ T_t^{-1} \right) \det \nabla T_t^{-1} d x
\\
&\quad \quad
+ \int_\Omega \phi_{\Gamma, \infty}
\left( \frac{ \eta \circ T_t^{-1} - \eta}{t} \det \nabla T_t^{-1}
+ \eta \frac{ \det \nabla T_t^{-1}  - 1 }{t} \right) dx
\\
&\quad
\to - \int_\Omega \phi_{\Gamma, \infty} \nabla \eta \cdot V \, dx
 - \int_\Omega \phi_{\Gamma, \infty} \eta (\nabla \cdot  V) \, dx
\\
& \quad
= \int_\Omega (\nabla \phi_{\Gamma, \infty} \cdot V ) \eta \, dx
\qquad \mbox{as } t \to 0.
\end{align*}
This and \reff{limzetaG}, together with the arbitrariness of $\eta \in L^2(\Omega)$,
 imply that $\zeta_{\Gamma, V} = \nabla \phi_{\Gamma, \infty} \cdot V$ in $\Omega.$
\end{proof}

We recall that $\hat{\phi}_{\Gamma, \infty} $ is determined by \reff{phiGammaWeak}
and the boundary condition $\hat{\phi}_{\Gamma,\infty} = \phi_\infty$ on $\partial\Omega.$ 
%The difference between this function and the function $u_{\Gamma, \infty}$
%in Lemma~\ref{l:pGinfty} is that the right-hand side of the equations defining these two functions
%are different. One is the point charges $\sum_{i=1}^N Q_i \delta_{x_i}$; cf.\ \reff{phiGammaWeak}. 
%The other is $f \in H^1(\Omega)$; cf.\ \reff{uGi}. 
%However, the main results of Lemma~\ref{l:pGinfty} still hold true, as the  
%singularities $x_i$ $(i = 1, \dots, N)$ are outside the support of the vector field $V.$
For each $t \in \R$, we denote by $\hat{\phi}_{\Gamma_t,\infty}$
the unique function that is defined by \reff{phiGammaWeak}
with $\Gamma_t $ replacing $\Gamma$ and the same boundary condition 
$\hat{\phi}_{\Gamma_t,\infty} = \phi_\infty$ on $\partial \Omega.$ 
Note that all the singularities $x_i$ $(i = 1, \dots, N)$ are outside
the support of the vector field $V.$ 

%%We derive the $t$-derivative of $\hat{\phi}_{\Gamma_t,\infty}$ at $t = 0.$  
%%We note that $\hat{\phi}_0$ and $\hat{\phi}_\infty$ are in the class
%%$\hat{\phi}_{\rm C} + C^2(\overline{\Omega})$ as $\partial \Omega$ is of 
%%$C^2$ and $\phi_\infty \in C^2(\overline{\Omega}).$ 

%%Since $V \in {\cal V}$ vanishes in a neighborhood of all the 
%%singularities $x_i$ $(i = 1, \dots, N)$, 
%%$T_t (X) = X$, and hence $(\psi_{\Gamma_t} \circ T_t - \psi_{\Gamma_t}) (X) = 0$
%% for any $X$ in such a neighborhood. Therefore, 

%{\color{red} {\bf [This lemma should also hold true if $\phi_0$ is replaced by 
%$\hat{\phi}_0$ or $\hat{\phi}_\infty$. Please check. 
%Also, we need a lemma for the derivative of $\Gamma$ for $\hat{\phi}_{\Gamma, \infty}.$ 
%It should be similar to the follow lemma but simpler as we can set $B = 0.$ Need to check
%the proof. We can denote $\xi_{\Gamma, V}$ against $\zeta_{\Gamma, V}.$ -Bo]}} 

%{ \color{red} {\bf [Is it true that $\xi_{\Gamma, V} \in W^{1, \infty}(\Omega)$ and the relted
%a prior estimate holds true? Check it.]} }

\begin{lemma}
\label{l:xi}
%%Let $\hat{\phi} \in \hat{\phi}_{\rm C} + C^2(\overline{\Omega})$ satisfy  \reff{Wpsi0}.  
\begin{compactenum}
\item[\rm (1)]
There exists a unique $\xi_{\Gamma, V} \in H^1_0(\Omega) $ such that 
\begin{align}
\label{xi_eqn}
&\int_\Omega  \varepsilon_\Gamma \nabla \xi_{\Gamma, V}  \cdot \nabla \eta \, dx   
= -\int_\Omega  \varepsilon_\Gamma A_V' (0) \nabla
\hat{\phi}_{\Gamma, \infty}    \cdot \nabla \eta \, d x 
\qquad \forall \eta \in H_0^1(\Omega).
% (\hat{\phi}_{\Gamma, \infty}  - \hat{\phi} ) \cdot \nabla \eta \, d x 
%\nonumber \\
%&\quad \qquad
% -(\varepsilon_+ -\varepsilon_-) \int_{\Omega_+} [ A'_V (0) \nabla \hat{\phi}  +  
%\nabla (\nabla \hat{\phi}  \cdot V) ]
%\cdot \nabla \eta \,  d x  
%\qquad \forall \eta \in H_0^1(\Omega).
\end{align}

\item[\rm (2)]
By modifying the value of $\xi_{\Gamma, V}$ on a set of zero Lebesgue measure, 
we have that  $\xi_{\Gamma, V}^{\rm s} \in H^2(\Omega_{\rm s}) \cap C^1(\Omega_{\rm s})$  
for ${\rm s} = -$ or $+$. Moreover, 
%%We have $\xi_{\Gamma, V}^- \in H^2_{\rm loc}(\Omega_-)$ and 
%%$ \xi_{\Gamma, V}^+ \in H^2_{\rm loc}(\Omega_+).$  Moreover, 
\begin{align}
\label{xi-}
&  \Delta \xi_{\Gamma, V} = - \nabla \cdot A_V'(0) \nabla  \hat{\phi}_{\Gamma,\infty}
= \Delta (\nabla \hat{\phi}_{\Gamma, \infty} \cdot V)
%A_V'(0) \nabla ( \hat{\phi}_{\Gamma,\infty} - \hat{\phi} ) 
\qquad \mbox{in } \Omega_- \cup \Omega_+, 
%\\
%%\label{xi+}
%& - \ve_+ \Delta \xi_{\Gamma, V} 
% = \nabla \cdot A'_V(0) ( \ve_+ \nabla \hat{\phi}_{\Gamma,\infty} 
%- \ve_- \nabla \hat{\phi} ) + (\ve_+ - \ve_-) \Delta (\nabla \hat{\phi}  \cdot V)
%\qquad \mbox{in } \Omega_+, 
\\
%%\label{xiGamma}
&
 \llbracket \varepsilon_\Gamma \partial_n \xi_{\Gamma, V} \rrbracket_\Gamma  
= - \llbracket \varepsilon_\Gamma  A'_V(0) \partial_n 
 \hat{\phi}_{\Gamma,\infty}  \rrbracket_\Gamma 
%\nonumber \\
%&\qquad \qquad \qquad \quad 
%- (\ve_+ - \ve_-) [ A'_V(0) \nabla \hat{\phi}  
%+ \nabla (\nabla \hat{\phi}  \cdot V) ] \cdot n 
\qquad \mbox{on } \Gamma.  
\nonumber 
\end{align}

%%%%%%%%%%%%%%%%%%%%%%%%%%%%%%
\begin{comment}
\item[\rm (3)]
The function $\xi_{\Gamma, V} \in H_0^1(\Omega)$ satisfies that 
$\xi_{\Gamma, V} \in W^{1,p}(\Omega)$ for any $p\in [1,\infty)$,  
$\xi_{\Gamma,V}^+ \in H^2(\Omega_+),$ and  $\xi_{\Gamma,V}^- \in H^2(\Omega_-).$ Moreover, 
there exists a constant $C > 0$, depending on $\Omega,$ $x_i$ and $Q_i$ $(1 \le i \le N)$, 
%%$x_1, \dots, x_N,$ $Q_1, \dots, Q_N,$ 
$\ve_+,$ $\ve_-,$  $ \phi_\infty,$ and $ p$, but not on $\Gamma$, such that 
\begin{equation*}
%\label{xibounds}
\|\xi_{\Gamma, V} \|_{W^{1, p} (\Omega)} + \| \xi^+_{\Gamma, V} \|_{H^2(\Omega_+)} 
+ \| \xi^-_{\Gamma, V} \|_{H^2(\Omega_-)} \le C. 
\end{equation*}
\end{comment}
%%%%%%%%%%%%%%%%%%%%%%%%%%%%%%

\item[\rm (3)]
We have 
\begin{align*}
%\label{hatphiGconv}
%&\lim_{t\to 0} \| (\hat{\phi}_{\Gamma_t,\infty} - \hat{\phi})  
%\circ T_t - (\hat{\phi}_{\Gamma,\infty} - \hat{\phi}) \|_{H^1(\Omega)} = 0.    
&\lim_{t\to 0} \| \hat{\phi}_{\Gamma_t,\infty} \circ T_t 
- \hat{\phi}_{\Gamma,\infty} \|_{H^1(\Omega)} = 0,     
\\
%\label{hatphiGderiConv}
%&\lim_{t\to 0} \left\| 
%\frac{ ( \hat{\phi}_{\Gamma_t,\infty} - \hat{\phi})  \circ T_t - (\hat{\phi}_{\Gamma,\infty} 
%- \hat{\phi} )}{t} - \xi_{\Gamma,V} \right\|_{H^1(\Omega)} = 0.    
&\lim_{t\to 0} \left\| 
\frac{  \hat{\phi}_{\Gamma_t,\infty} \circ T_t - \hat{\phi}_{\Gamma,\infty}}{t} 
- \xi_{\Gamma,V} \right\|_{H^1(\Omega)} = 0.    
\end{align*}
\end{compactenum}
\end{lemma}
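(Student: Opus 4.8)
The plan is to follow the proof of Lemma~\ref{l:pGinfty} almost verbatim, with $\hat{\phi}_{\Gamma, \infty}$ (defined by \reff{phiGammaWeak}) replacing $\phi_{\Gamma, \infty}$ throughout. The only structural difference between the two lemmas is the presence of the point charges $\sum_{i=1}^N Q_i \delta_{x_i}$ on the right-hand side of \reff{phiGammaWeak}. The observation that renders these charges harmless is that, by \reff{calV} and the choice of $d_0$ in Subsection~\ref{ss:DBF}, the vector field $V$ vanishes in a neighborhood of each singularity $x_i$; hence $A_V'(0)$ (cf.\ \reff{Ap0}) vanishes there as well, and $T_t(x_i) = x_i$ for all $t \in \R$ and all $i$.

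For Part~(1), the existence and uniqueness of $\xi_{\Gamma, V} \in H_0^1(\Omega)$ satisfying \reff{xi_eqn} follow from the Lax--Milgram Lemma, exactly as in Part~(1) of Lemma~\ref{l:pGinfty}. Here the right-hand side of \reff{xi_eqn} is a bounded linear functional of $\eta \in H_0^1(\Omega)$ because $A_V'(0) \nabla \hat{\phi}_{\Gamma, \infty} \in L^\infty(\Omega)$: the factor $A_V'(0)$ is bounded and vanishes near each $x_i$, while $\nabla \hat{\phi}_{\Gamma, \infty}$ is bounded away from the $x_i$ by \reff{hatphi_reg}. Linearity of $V \mapsto \xi_{\Gamma, V}$ follows from the linearity of $A_V'(0)$ in $V$. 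For Part~(2), the crucial point is that $A_V'(0) \nabla \hat{\phi}_{\Gamma, \infty} \in C^1(\Omega_{\rm s}) \cap H^1(\Omega_{\rm s})$ for ${\rm s} = -, +$ despite the singularities of $\hat{\phi}_{\Gamma, \infty}$, since $A_V'(0)$ vanishes in a neighborhood of each $x_i$. With this in hand, testing \reff{xi_eqn} against $\eta \in C_{\rm c}^1(\Omega)$ supported in $\Omega_{\rm s}$ and invoking the elliptic regularity theory yield $\xi_{\Gamma, V}^{\rm s} \in H^2(\Omega_{\rm s}) \cap C^1(\Omega_{\rm s})$ together with the first equality in \reff{xi-}. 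The second equality follows from the same algebraic manipulation as in \reff{hidden}, which uses only $\Delta \hat{\phi}_{\Gamma, \infty} = 0$; by \reff{DhatphiGi} this holds on $\Omega_+$ and on $\Omega_- \setminus \{x_1, \dots, x_N\}$, in particular wherever $A_V'(0) \ne 0$. The jump condition is then obtained by integration by parts on each side of $\Gamma$, exactly as in the derivation of \reff{jump_pnwG}.

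For Part~(3), I would again mirror the corresponding part of Lemma~\ref{l:pGinfty}. Replacing $\Gamma$ by $\Gamma_t$ and $\eta$ by $\eta \circ T_t^{-1}$ in \reff{phiGammaWeak} and changing variables $x = T_t(X)$ gives
\[
\int_\Omega \ve_\Gamma A_V(t) \nabla(\hat{\phi}_{\Gamma_t, \infty} \circ T_t) \cdot \nabla \eta \, dX = \sum_{i=1}^N Q_i \eta(x_i) \qquad \forall \eta \in H_0^1(\Omega),
\]
where I have used $(\eta \circ T_t^{-1})(x_i) = \eta(T_{-t}(x_i)) = \eta(x_i)$ since $T_t$ fixes each $x_i$. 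Subtracting \reff{phiGammaWeak} for $\Gamma$ then cancels the point-charge terms and yields the exact analog of \reff{phiGtH1}, namely
\[
\int_\Omega \ve_\Gamma \nabla(\hat{\phi}_{\Gamma_t, \infty} \circ T_t - \hat{\phi}_{\Gamma, \infty}) \cdot \nabla \eta \, dX = \int_\Omega \ve_\Gamma [I - A_V(t)] \nabla(\hat{\phi}_{\Gamma_t, \infty} \circ T_t) \cdot \nabla \eta \, dX.
\]
From here the uniform $H^1$-bound on $\hat{\phi}_{\Gamma_t, \infty} \circ T_t$, the first convergence statement, and then the second (obtained by dividing by $t \ne 0$ and comparing the result with \reff{xi_eqn}) follow verbatim from the arguments used for \reff{limpGi} and \reff{limzetaG}, using the expansions \reff{Jexp}--\reff{KtX} and the Cauchy--Schwarz and Poincar\'e inequalities.

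The step requiring the most care — rather than a genuine obstacle — is the bookkeeping around the singularities: at each stage one must confirm that every quantity involving $V$ or $A_V'(0)$ is evaluated away from the $x_i$, so that the singular part of $\hat{\phi}_{\Gamma, \infty}$ never interacts with the transformation. Since $\mbox{supp}\,(V) \subset \calN_0(\Gamma)$ and $\calN_0(\Gamma)$ excludes a fixed neighborhood of each $x_i$ by construction, this is automatic, and no new analytical difficulty arises beyond those already resolved in Lemma~\ref{l:pGinfty}.
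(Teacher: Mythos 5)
Your proof is correct, but it takes a different (and arguably more direct) route than the paper. The paper disposes of Lemma~\ref{l:xi} by deferring to the proof of Lemma~\ref{l:w_t} with the nonlinearity $B$ set to zero: there one works with the shifted function $\psi_{\Gamma_t}-\hat{\phi}_{\rm C}$, carries the extra right-hand-side terms $(\ve_+-\ve_-)\int_{\Omega_+}\nabla(\hat{\phi}_{\rm C}\circ T_t)\cdot\nabla\eta$ coming from \reff{equivweak}, and reassembles the answer at the end using Lemma~\ref{l:phi0}. You instead mirror Lemma~\ref{l:pGinfty} and never split off the Coulomb field, observing that the point-charge terms $\sum_i Q_i\eta(x_i)$ cancel exactly in the subtraction because $T_t(x_i)=x_i$ and $A_V(t)=I$, $A_V'(0)=0$ off $\mbox{supp}(V)$. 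What your route buys is that the singular part of $\hat{\phi}_{\Gamma,\infty}$ never enters any estimate; what the paper's route buys is economy, since the $\psi_{\rm r}$-machinery of Lemma~\ref{l:w_t} must be built anyway. Both proofs handle the second equality in \reff{xi-} identically, via the computation \reff{hidden} together with harmonicity of $\hat{\phi}_{\Gamma,\infty}$ away from the (excluded) singularities. One presentational caution: the transformed identity you write with right-hand side $\sum_i Q_i\eta(x_i)$ cannot be asserted for all $\eta\in H_0^1(\Omega)$, since point evaluation and the integral against $\nabla\hat{\phi}_{\rm C}$ are not defined there; you should state it for $\eta\in C_{\rm c}^1(\Omega)$ (or $\eta\in H_0^1(\Omega)$ with $\eta|_{\Omega_-}\in C^1(\Omega_-)$), perform the subtraction in that class, and only then extend the resulting identity --- whose both sides are $H^1$-continuous in $\eta$ --- to all of $H_0^1(\Omega)$ by density. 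This is the same convention the paper uses around \reff{equivweak}, so it is a one-line fix, not a gap.
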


\begin{proof}
The proof is the same as and simpler than that of the next lemma, Lemma~\ref{l:w_t}, 
as there is an extra term $B$ there, which can be set to $0$ here. 
The only exception is the second equality in \reff{xi-} which can be obtained by the calculations
same as in \reff{hidden}. 
\end{proof}

We recall that $\psi_\Gamma\in \hat{\phi}_{\rm C} + H^1(\Omega)\cap C(\overline{\Omega})
 = \hat{\phi}_{\Gamma, \infty} + H^1(\Omega)\cap C(\overline{\Omega}) $ 
is the unique weak solution to the boundary-value problem of the dielectric-boundary
PB equation \reff{PBE}; cf.\ Definition~\ref{d:PBsolution}. 
For each $t \in \R$, we denote by $\psi_{\Gamma_t} \in \hat{\phi}_{\rm C} 
+ H^1(\Omega)\cap C(\overline{\Omega})$ the unique
solution to the same boundary-value problem 
%%of the dielectric-boundary PB equation \reff{PBE} 
with $\Gamma_t$ replacing $\Gamma.$ 
%%Since $V \in {\cal V}$ vanishes in a neighborhood of all the 
%%singularities $x_i$ $(i = 1, \dots, N)$, 
%%$T_t (X) = X$, and hence $(\psi_{\Gamma_t} \circ T_t - \psi_{\Gamma_t}) (X) = 0$
%% for any $X$ in such a neighborhood. Therefore, 
%%We now obtain the $t$-derivative of $\psi_{\Gamma_t}$ at $t = 0.$  

%{\color{red} {\bf [This lemma should also hold true if $\phi_0$ is replaced by 
%$\hat{\phi}_0$ or $\hat{\phi}_\infty$. Please check. 
%Also, we need a lemma for the derivative of $\Gamma$ for $\hat{\phi}_{\Gamma, \infty}.$ 
%It should be similar to the follow lemma but simpler as we can set $B = 0.$ Need to check
%the proof. We can denote $\xi_{\Gamma, V}$ against $\zeta_{\Gamma, V}.$ -Bo]}} 

%{ \color{red} {\bf [Is it true that $\omega_{\Gamma, V} \in W^{1, \infty}(\Omega)$ and the relted
%a prior estimate holds true? Check it.]} }

\begin{lemma}
\label{l:w_t}
%%\label{t:w_t}
%%Let $\hat{\phi}\in \hat{\phi}_{\rm C} + C^2(\overline{\Omega})$ satisfy \reff{Wpsi0}. 
\begin{compactenum}
\item[\rm (1)]
There exists a unique $\omega_{\Gamma, V} \in H^1_0(\Omega) $ such that 
%%{\bf [Change this $w$ to $\omega_{\Gamma, V}.$]}
\begin{align}
\label{w_eqn}
&\int_\Omega \left[  \varepsilon_\Gamma \nabla \omega_{\Gamma, V}  \cdot \nabla \eta 
+ \chi_+ B''\left(\psi_\Gamma -\frac{\phi_{\Gamma, \infty}}{2} \right) 
\omega_{\Gamma, V}  \eta\, \right] dx 
\nonumber\\
&\quad  
= -\int_\Omega  \varepsilon_\Gamma A_V' (0) \nabla \psi_\Gamma \cdot \nabla \eta \, d x 
\nonumber \\
&\quad \quad 
-\int_{\Omega_+} \left[ 
(\nabla \cdot V) 
 B'\left( \psi_\Gamma -\frac{\phi_{\Gamma, \infty}}{2} \right) 
-\frac{\zeta_{\Gamma, V} }{2} 
 B'' \left( \psi_\Gamma -\frac{\phi_{\Gamma, \infty}}{2} \right) \right] \eta\,  d x 
 \qquad \forall \eta \in H_0^1(\Omega).
%%\nonumber\\
%%&\quad \quad
%% -(\varepsilon_+ -\varepsilon_-) \int_{\Omega_+} [ A'_V (0) \nabla \hat{\phi} + 
%%\nabla (\nabla \hat{\phi} \cdot V) ] \cdot \nabla \eta \, d x 
%% \qquad \forall \eta \in H_0^1(\Omega).
\end{align}

\item[\rm (2)]
By modifying the value of $\omega_{\Gamma, V}$ on a set of zero Lebesgue measure, 
we have that  $\omega_{\Gamma, V}^{\rm s}  
\in H^2(\Omega_{\rm s}) \cap C^1(\Omega_{\rm s})$  
for ${\rm s} = -$ or $+$. Moreover, 
%%and $ \omega_{\Gamma, V}^+ \in H^2_{\rm loc}(\Omega_+).$  Moreover, 
\begin{align}
\label{omega-}
&  \Delta \omega_{\Gamma, V} = - \nabla \cdot 
A_V'(0) \nabla  \psi_\Gamma  = \Delta ( \nabla \psi_\Gamma \cdot V)  \qquad \mbox{in } \Omega_-, 
\\
\label{omega+}
&  
\ve_+ \Delta \omega_{\Gamma, V} - B''\left( \psi_\Gamma -\frac{\phi_{\Gamma, \infty}}{2} \right) 
\omega_{\Gamma, V} 
=  - \ve_+ \nabla \cdot A'_V(0)  \nabla \psi_\Gamma
\nonumber \\
& \quad 
+  (\nabla \cdot V) B'\left( \psi_\Gamma -\frac{\phi_{\Gamma, \infty}}{2} \right) 
- \frac{\zeta_{\Gamma, V} }{2} B'' \left( \psi_\Gamma -\frac{\phi_{\Gamma, \infty}}{2} \right) 
\qquad  \mbox{in } \Omega_+, 
%\nonumber \\
%&\qquad \quad 
%+ \nabla \cdot A'_V(0) ( \ve_+ \nabla \psi_\Gamma - \ve_- \nabla \hat{\phi} ) 
%+ (\ve_+ - \ve_-) \Delta (\nabla \hat{\phi} \cdot V) \qquad \mbox{in } \Omega_+, 
\\
\label{omegaGamma}
& \llbracket \varepsilon_\Gamma \partial_n \omega_{\Gamma, V} \rrbracket_\Gamma  
= - \llbracket \varepsilon_\Gamma  A'_V(0) \partial_n  
\psi_\Gamma \rrbracket_\Gamma 
\qquad \mbox{on } \Gamma.  
%\nonumber \\
%&\qquad \qquad \qquad \quad 
%- (\ve_+ - \ve_-) [ A'_V(0) \nabla \hat{\phi} + \nabla (\nabla \hat{\phi} \cdot V) ] \cdot n 
%\qquad \mbox{on } \Gamma.  
\end{align}

%%%%%%%%%%%%%%%%%%%%%%%%%%
\begin{comment}
\item[\rm (3)]
The function $\omega_{\Gamma, V} \in H_0^1(\Omega)$ satisfies that 
$\omega_{\Gamma, V} \in W^{1,p}(\Omega)$ for any $p\in [1,\infty)$,  
$\omega_{\Gamma,V}^+ \in H^2(\Omega_+),$ and  $\omega_{\Gamma,V}^- \in H^2(\Omega_-).$ Moreover, 
there exists a constant $C > 0$, depending on $\Omega,$ $x_i$ and $Q_i$ $(1 \le i \le N)$, 
$\ve_+,$ $\ve_-,$ $ B,$ $ \phi_\infty,$  and $ p$, but not on $\Gamma$, such that 
\begin{equation}
\label{omegabounds}
\|\omega_{\Gamma, V} \|_{W^{1, p} (\Omega)} + \| \omega^+_{\Gamma, V} \|_{H^2(\Omega_+)} 
+ \| \omega^-_{\Gamma, V} \|_{H^2(\Omega_-)} \le C. 
\end{equation}
\end{comment}
%%%%%%%%%%%%%%%%%%%%%%%%%%

\item[\rm (3)]
We have 
\begin{align}
\label{psi_t_convergence}
&\lim_{t\to 0} \|  \psi_{\Gamma_t} \circ T_t - \psi_\Gamma \|_{H^1(\Omega)} = 0,     
\\
\label{w_t_convergence}
&\lim_{t\to 0} \left\| 
\frac{ \psi_{\Gamma_t}  \circ T_t - \psi_\Gamma }{t}
  - \omega_{\Gamma,V} \right\|_{H^1(\Omega)} = 0.    
\end{align}
\end{compactenum}
%%w_t \to w \quad \text{in } H_0^1(\Omega),
%For $ 0\leq t \ll 1$, we have
%\begin{align}
%&\| w_t \|_{H^1(\Omega)} \leq C
%\end{align}
\end{lemma}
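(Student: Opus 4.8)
The plan for Part~(1) is a direct application of the Lax--Milgram lemma. The bilinear form on the left of \reff{w_eqn}, namely $(\omega, \eta) \mapsto \int_\Omega [\ve_\Gamma \nabla\omega\cdot\nabla\eta + \chi_+ B''(\psi_\Gamma - \phi_{\Gamma,\infty}/2)\,\omega\eta]\,dx$, is bounded and coercive on $H^1_0(\Omega)$, since $\ve_\Gamma \ge \min(\ve_-,\ve_+) > 0$ and $B'' > 0$ by the strict convexity of $B$; the right-hand side is a bounded linear functional on $H^1_0(\Omega)$ because $A_V'(0)\nabla\psi_\Gamma \in L^2(\Omega)$, $\nabla\cdot V$ is bounded, and $B'$ and $B''$ evaluated at $\psi_\Gamma - \phi_{\Gamma,\infty}/2$ are bounded by the $L^\infty$-bounds of Theorem~\ref{t:DBPBE} and \reff{phiGammabounds}. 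Linearity of $V \mapsto \omega_{\Gamma,V}$ follows from the linearity of $A_V'(0)$ and of $\zeta_{\Gamma,V}$ in $V$ (Lemma~\ref{l:pGinfty}(1)). For Part~(2) I would argue exactly as in Lemma~\ref{l:pGinfty}(2): testing \reff{w_eqn} with $\eta \in C^1_{\rm c}(\Omega)$ supported in $\Omega_-$ or in $\Omega_+$ and using the Divergence Theorem yields the first equalities in \reff{omega-} and \reff{omega+}; the second equality in \reff{omega-} follows from the computation \reff{hidden} applied to $\psi_\Gamma$, which is harmonic in $\Omega_-$ away from the singularities that $V$ avoids. The interface condition \reff{omegaGamma} comes from integrating both sides of \reff{w_eqn} by parts over $\Omega_-$ and $\Omega_+$ separately and comparing the boundary terms on $\Gamma$, while the piecewise $H^2\cap C^1$ regularity follows from elliptic regularity, each right-hand side lying in $L^2\cap C$ on the respective subdomain.

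For Part~(3) the central device is the change of variables $x = T_t(X)$. Pulling back the weak formulation of the PB equation for $\psi_{\Gamma_t}$ and using that $\Gamma_t = T_t(\Gamma)$ gives $\ve_{\Gamma_t}\circ T_t = \ve_\Gamma$, that $T_t(x_i) = x_i$, and that $V$ vanishes near the singularities, I obtain, writing $u_t := \psi_{\Gamma_t}\circ T_t$,
\begin{equation*}
\int_\Omega \ve_\Gamma A_V(t)\nabla u_t \cdot\nabla\eta\,dX + \int_{\Omega_+} J_t\, B'\!\left(u_t - \frac{\phi_{\Gamma_t,\infty}\circ T_t}{2}\right)\eta\,dX = \sum_{i=1}^N Q_i\,\eta(x_i)
\end{equation*}
for all admissible $\eta$, where $A_V(t)$ and $J_t$ are as in \reff{At} and \reff{Jt}. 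Subtracting the weak equation for $\psi_\Gamma$ and testing the difference with $\eta = u_t - \psi_\Gamma \in H^1_0(\Omega)$, I would use the monotonicity $[B'(a)-B'(b)](a-b)\ge 0$ to discard the leading nonlinear contribution, bound the remaining nonlinear term by the $L^\infty$-bound on $B''$ together with the $H^1$-convergence $\phi_{\Gamma_t,\infty}\circ T_t \to \phi_{\Gamma,\infty}$ from Lemma~\ref{l:pGinfty}(3), and estimate the right-hand side via $A_V(t) - I = O(t)$ and $J_t - 1 = O(t)$ (cf.\ \reff{Aexp}, \reff{KtX}, \reff{Jexp}, \reff{HtX}). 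Coercivity then yields \reff{psi_t_convergence}.

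To prove \reff{w_t_convergence}, set $w_t := (u_t - \psi_\Gamma)/t$ and divide the difference equation by $t$. A uniform $H^1$-bound on $w_t$ follows by testing with $\eta = w_t$: by the mean value theorem, $B'(a_t) - B'(b) = B''(r_t)(a_t - b)$, where $a_t = u_t - (\phi_{\Gamma_t,\infty}\circ T_t)/2$, $b = \psi_\Gamma - \phi_{\Gamma,\infty}/2$, and $r_t$ lies between $a_t$ and $b$; the uniform $L^\infty$-bounds keep $B''(r_t)$ bounded, the part quadratic in $w_t$ is nonnegative and is absorbed on the left, and the cross term is controlled by the $L^2$-bound on $(\phi_{\Gamma_t,\infty}\circ T_t - \phi_{\Gamma,\infty})/t$ from Lemma~\ref{l:pGinfty}(3). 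Extracting a weak $H^1$-limit $w$, I pass to the limit using the factorization $\frac{B'(a_t)-B'(b)}{t} = \frac{B'(a_t)-B'(b)}{a_t - b}\cdot\frac{a_t - b}{t}$, in which the first factor tends to $B''(b)$ (continuity of $B''$ and $a_t\to b$ a.e.) and the second tends weakly to $w - \zeta_{\Gamma,V}/2$; together with \reff{Aexp} and \reff{Jexp} this shows that $w$ satisfies \reff{w_eqn}, whence $w = \omega_{\Gamma,V}$ by the uniqueness in Part~(1) and the full family converges. Strong convergence in \reff{w_t_convergence} is then obtained by testing the difference of the $w_t$-equation and \reff{w_eqn} with $\eta = w_t - \omega_{\Gamma,V}$ and invoking coercivity of $\ve_\Gamma$ together with the nonnegativity of the $B''$ term.

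The hard part will be the passage to the limit in the nonlinear term and, above all, the strong-convergence step, where $w_t$ and the difference quotient of $\phi_{\Gamma_t,\infty}$ must be handled simultaneously. The genuinely new ingredient, absent from the linear Lemma~\ref{l:pGinfty}, is the coupling through $\zeta_{\Gamma,V}$: the linearized equation \reff{w_eqn} carries a source term involving $\zeta_{\Gamma,V}\,B''(\psi_\Gamma - \phi_{\Gamma,\infty}/2)$, so identifying and closing the limit requires using Lemma~\ref{l:pGinfty}(3) in tandem with the uniform $L^\infty$- and $H^1$-bounds on $\psi_{\Gamma_t}$ guaranteed by Theorem~\ref{t:DBPBE}.
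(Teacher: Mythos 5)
Your proposal is correct and, for Parts (1) and (2) and for the continuity statement \reff{psi_t_convergence}, it follows essentially the same path as the paper: Lax--Milgram for existence, localized test functions plus the Divergence Theorem and elliptic regularity for the interface form, and the pull-back/subtract/test-with-the-difference argument combined with the convexity of $B$ (splitting $a_t-b$ into the $\eta$-part, whose quadratic contribution has a sign, and the $\phi_{\Gamma_t,\infty}$- and Coulomb-parts, which vanish in the limit). The one organizational difference in this portion is that the paper first peels off the Coulomb field, working with $\psi_{{\rm r},t}=\psi_{\Gamma_t}-\hat{\phi}_{\rm C}$ and the shifted weak form \reff{shiftedweakPBE} so that every test function in $H_0^1(\Omega)$ is admissible and the point-charge terms never appear; if you keep $\psi_{\Gamma_t}$ whole as in your displayed pulled-back equation, you should note that $u_t-\psi_\Gamma$ is harmonic near the $x_i$ (so it is an admissible test function in the sense of the remark below \reff{equivweak}) and that the $\sum Q_i\eta(x_i)$ terms cancel upon subtraction. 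The genuine divergence is in the proof of \reff{w_t_convergence}: you take the compactness route --- uniform $H^1$-bound on $w_t$, extraction of a weak limit, identification of the limit with $\omega_{\Gamma,V}$ via the factorization $\frac{B'(a_t)-B'(b)}{a_t-b}\cdot\frac{a_t-b}{t}$, and then an upgrade to strong convergence --- whereas the paper estimates $\|w_t-\hat{\omega}_{\Gamma,V}\|_{H^1}^2$ directly (after shifting $\omega_{\Gamma,V}$ by $\nabla\hat{\phi}_{\rm C}\cdot V$), bounding it by four explicit quantities $S_1(t),\dots,S_4(t)$ that tend to zero, using the Mean-Value Theorem twice (once for $B'$, once for $B''$, hence a $B'''$ bound) and never passing to subsequences. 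Your route is standard and sound --- $B\in C^\infty$ with the uniform $L^\infty$-bounds makes either treatment of the nonlinearity work --- but it proves the strong convergence twice over (once to identify the weak limit, once to upgrade), while the paper's direct estimate is quantitative and avoids the subsequence extraction; conversely, your version requires slightly less smoothness of $B$. Either way the coupling to $\zeta_{\Gamma,V}$ through Lemma~\ref{l:pGinfty}(3), which you correctly single out as the new ingredient, is handled identically.
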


\begin{proof}
(1) Since $B''> 0$, the support of $V$ does not contain any of the singularities $x_i$ 
$(i  = 1, \dots, N)$, and $\psi_\Gamma$ and $\phi_{\Gamma, \infty}$ are uniformly bounded on 
the union of the support of $V$ and $\Omega_+$ (cf.\ \reff{hatphiGibounds} and \reff{PBsolnbounds}), 
the existence and uniqueness of $\omega_{\Gamma,V} \in H_0^1(\Omega)$
that satisfies \reff{w_eqn} follows from the Lax--Milgram Lemma
\cite{EvansBook2010,GilbargTrudinger98}.

(2) Choosing $\eta \in C_{\rm c}^1(\Omega)$ in \reff{w_eqn} with $\mbox{supp}\,(\eta) 
\subset \Omega_-$ and applying the Divergence Theorem, we obtain 
the first equation of \reff{omega-} in a.e.\ $\Omega_-.$  
Since the right-hand side of this first equation is 
in $L^2(\Omega_-)\cap C(\Omega_-)$, it follows from the regularity theory
\cite{Lady_EllipticBook1968,GilbargTrudinger98}
that, with a possible modification of the value of $\omega_{\Gamma, V}$ on a set of 
zero Lebesgue measure, $\omega_{\Gamma, V}^- \in H^2(\Omega_-)\cap C^1(\Omega_-).$ 
Now, the first equation in \reff{omega-} holds for each point in $\Omega_-.$ The second
equation is similar to that in \reff{DwG} (cf.\ \reff{hidden}). 
By similar arguments, we obtain 
that $\omega_{\Gamma, V}^+ \in H^2(\Omega_+)\cap C^1(\Omega_+)$ and \reff{omega+}.  
By splitting each of those two integrals in \reff{w_eqn} that has the term $\nabla \eta$ into 
integrals over $\Omega_-$ and $\Omega_+$, respectively, using the Divergence 
Theorem, and using \reff{omega-} and \reff{omega+}, we obtain \reff{omegaGamma}.  

%%%%%%%%%%%%%%%%%%%%%%%%%%%
\begin{comment}
(3) By the bounds \reff{phiGammabounds}, \reff{hatphiGibounds}, \reff{PBsolnbounds}, 
and \reff{zetabounds}, the right-hand side of \reff{w_eqn} defines a bounded linear functional of
$\eta \in W^{1, p}(\Omega)$ for any $p > 1$. Therefore, it follows from  the regularity result in 
\cite{Elschner_IFB2007} that $\omega_{\Gamma, V} \in W^{1,p}(\Omega)$ for any $p \in [1, \infty)$
and the related $W^{1,p}(\Omega)$-estimate in \reff{omegabounds}. 
By \reff{omega-} and \reff{omega+}, and the elliptic regularity theory 
\cite{
%%LadyzSolon_1960,
Lady_EllipticBook1968,
HuangZou_JDE2002, 
HuangZou_DiscCont2007} 
(cf.\ also \cite{Chipot_ARMA86, HuangZou_DiscCont2007,LiVogelius_ARMA2000}), 
we also obtain that $\omega_{\Gamma, V}^- \in H^2(\Omega_-)$ and 
$\omega_{\Gamma, V}^+ \in H^2(\Omega_+),$ and the related estimates in \reff{omegabounds}. 

%%(2) The proof of this part consists of two steps. In Step 1, we prove 
%%\begin{theorem}
%%\label{t:phi_r}
%%\label{l:phi_r}
%%For the reaction potential $\psi_r$, we have 
%\begin{align}
%\label{psi_t_convergence}
%\lim_{t \to 0} \|  (\psi_{\Gamma_t} - \phi_0) \circ T_t - (\psi_\Gamma - \phi_0)  \|_{H^1(\Omega)} = 0. 
%%- \to \psi_r \quad \text{in } H^1(\Omega).
%\end{align}
%%\end{theorem}
\end{comment}
%%%%%%%%%%%%%%%%%%%%%%%%%%%

(3) Let $\hat{\phi}_{\rm C}$ be given as in \reff{psi0} and $t \in \R$. Denote 
\[
\psi_{\rm r} = \psi_{\Gamma} - \hat{\phi}_{\rm C} \qquad \mbox{and} \qquad 
\psi_{{\rm r}, t} = \psi_{\Gamma_t } - \hat{\phi}_{\rm C}.  
\]
We first prove \reff{psi_t_convergence}.  By \reff{phi00} (with $\hat{\phi} = \hat{\phi}_{\rm C}$)
in Lemma~\ref{l:phi0}, it suffices to prove that 
\begin{equation}
\label{hatphiC_conv}
\lim_{t\to 0} \| \psi_{{\rm r}, t} \circ T_t - \psi_{\rm r}  \|_{H^1(\Omega)}  = 0. 
\end{equation}

By Definition \ref{d:PBsolution} and \reff{Wpsi0} (cf.\ also \reff{equivweak}), we have 
%% that the solution $\psi_\Gamma \in \hat{\phi}  + H^1(\Omega)$ of the boundary-value 
%%problem of the PB equation \reff{PBE} satisfies 
\begin{align}
\label{shiftedweakPBE}
& \int_\Omega \left[ \ve_\Gamma \nabla \psi_r  \cdot \nabla \eta + 
\chi_+ B'\left( \psi_r + \hat{\phi}_{\rm C} - \frac{\phi_{\Gamma, \infty}}{2} \right) \eta \right] dx 
\nonumber \\
&\quad 
= -(\ve_+  - \ve_- ) \int_{\Omega_+} \nabla \hat{\phi}_{\rm C} \cdot \nabla \eta \, dx    
\qquad \forall \eta \in H^1_0(\Omega).    
\end{align}
Replacing $\Gamma$, $\Omega_+$, $\psi$, and $\eta$ in \reff{shiftedweakPBE} by 
$\Gamma_t = T_t(\Gamma)$, $T_t(\Omega_+)$, $\psi_{\Gamma_t}$, and $\eta=\eta\circ T_t^{-1},$ 
respectively, we obtain by the change of variable $x = T_t(X)$ and \reff{At} that
%%(\ref{psirweak}), 
\begin{align}
\label{psi_ref_weak_t1}
&\int_{\Omega} \left[  \varepsilon_{\Gamma}A_V(t)\nabla(\psi_{r,t}\circ T_t) \cdot \nabla \eta 
%%+ \chi_{+} B' \left( \left( \psi_{r,t}+\hat{\phi}-\frac{\phi_{\Gamma_t, \infty}}{2} \right)
+ \chi_{+} B' \left( \left( \psi_{{\rm r}, t}+\hat{\phi}_{\rm C} 
-\frac{\phi_{\Gamma_t, \infty}}{2} \right)
\circ T_t \right)J_t \, \eta \right] dX 
\nonumber\\
&\quad 
= - (\varepsilon_+ -\varepsilon_-)
\int_{\Omega_{+}} A_V(t)\nabla ( \hat{\phi}_{\rm C}  \circ T_t)\cdot \nabla \eta \, dX 
\qquad \forall \eta \in H_0^1(\Omega).
\end{align}
Subtracting \reff{shiftedweakPBE} 
%%with $\psi_\Gamma$ replacing $\psi$ 
%%(\ref{psirweak}) 
from \reff{psi_ref_weak_t1} and rearranging terms, we get 
\begin{align}
\label{diff_phi_r}
&\int_\Omega  \varepsilon_\Gamma 
\left[ \nabla(\psi_{{\rm r},t}\circ T_t) -\nabla \psi_{\rm r} \right] \cdot \nabla \eta \, dX
\nonumber \\
& \quad 
= - \int_\Omega  \varepsilon_\Gamma 
[ A_V(t) - I ] \nabla (\psi_{{\rm r},t}\circ T_t) \cdot \nabla \eta \, dX
\nonumber \\
& \quad \quad 
-\int_{\Omega_+} B'\left( \left( \psi_{{\rm r}, t} + \hat{\phi}_{\rm C}   
-\frac{\phi_{\Gamma_t, \infty}}{2} \right) \circ T_t \right) ( J_t - 1) \eta\, dX   
\nonumber \\
&\quad \quad
-\int_{\Omega_+} \left[ B'\left( \left( \psi_{{\rm r}, t} + \hat{\phi}_{\rm C}   
-\frac{\phi_{\Gamma_t, \infty}}{2} \right) \circ T_t \right) 
- B'\left( \psi_{\rm r}+ \hat{\phi}_{\rm C}  -\frac{\phi_{\Gamma, \infty}}{2} \right) \right] \eta\, dX
\nonumber \\
&\quad \quad 
 - (\ve_+ - \ve_-) \int_{\Omega_+} 
[ \nabla ( \hat{\phi}_{\rm C} \circ T_t)  -\nabla \hat{\phi}_{\rm C} ] \cdot \nabla \eta \, dX
\nonumber \\
&\quad \quad 
 - (\ve_+ - \ve_-) \int_{\Omega_+} 
[ A_V(t)  - I ]  \nabla ( \hat{\phi}_{\rm C} \circ T_t)  \cdot \nabla \eta \, dX
\qquad \forall \eta \in H_0^1(\Omega). 
\end{align}
%%%%%%%%%%%%%%%%%%%%%%%%%%%%%%%%%%%%%%%%%%%%%
\begin{comment}
\begin{align}
&\int_\Omega  \varepsilon_\Gamma\nabla(\psi_{r,t}\circ T_t-\psi_r) \cdot \nabla \eta + \chi_- B^{\prime\prime} \left(\theta(t))(\psi_{r,t}\circ T_t-\psi_r \right) \eta J_tdX\nonumber\\
& =-\int_\Omega  \varepsilon_\Gamma (A_V(t)-I)\nabla(\psi_{r,t}\circ T_t)\cdot \nabla \eta dX- \int_{\Omega_-}  B^{\prime\prime} (\theta(t))(\psi_{0}\circ T_t-\psi_0-\frac{1}{2}(\hat{\psi}_t\circ T_t-\hat{\psi}_\infty)) \eta J_tdX\nonumber\\
&-\int_{\Omega_-}B^\prime(\psi_r+\psi_0-\frac{\hat{\psi}_\infty}{2})\eta (J_t-1) dX
+(\varepsilon_--\varepsilon_+)\int_{\Omega_+} (A_V(t)-I) \nabla(\psi_0\circ T_t) \cdot \nabla \eta dX\nonumber\\
&+(\varepsilon_--\varepsilon_+)\int_{\Omega_+} \nabla(\psi_0\circ T_t-\psi_0) \cdot \nabla \eta dX, ~\forall \eta \in H_0^1(\Omega),\label{diff_phi_r}
\end{align}
where, for any $t$, $\theta(t)$ is between $(\psi_{r,t}+\psi_0-\frac{\hat{\psi}_t}{2})\circ T_t$ and $\psi_{r}+\psi_0-\frac{\hat{\psi}}{2}$ at each point in $\Omega$, and therefore uniformly bounded in $L^\infty (\Omega\setminus B(x_i))$. It follows from the convexity of $B$ that $B^{\prime\prime} (\theta(t))>0$.
\end{comment}
%%%%%%%%%%%%%%%%%%%%%%%%%%%%%%%%%%%%%%%%%%%%%
Setting $\eta = \psi_{{\rm r}, t} \circ T_t - \psi_{\rm r}$, we have 
by the uniform bound of all $\psi_{{\rm r}, t}$ and $\phi_{\Gamma_t, \infty}$
%%on $\Omega_+$ 
(cf.\ \reff{PBsolnbounds} and \reff{phiGammabounds}), 
the Mean-Value Theorem, and the convexity of $B$ that 
\begin{align}
\label{convexB}
& -\left[ B'\left( \left( \psi_{{\rm r}, t} + \hat{\phi}_{\rm C} 
-\frac{\phi_{\Gamma_t, \infty}}{2} \right) 
\circ T_t \right) - B'\left( \psi_{\rm r}+ \hat{\phi}_{\rm C} 
-\frac{\phi_{\Gamma, \infty}}{2} \right)\right] \eta 
\nonumber \\
&\quad 
= -  B''(\lambda_t) \left(  \psi_{{\rm r}, t} \circ T_t - \psi_{\rm r} 
+  \hat{\phi}_{\rm C} \circ T_t - \hat{\phi}_{\rm C}  
+ \frac12 \phi_{\Gamma_t, \infty} \circ T_t 
- \frac12 \phi_{\Gamma, \infty}\right) ( \psi_{{\rm r}, t} \circ T_t - \psi_{\rm r}) 
\nonumber \\
&\quad 
= -  B''(\lambda_t) (  \psi_{{\rm r}, t} \circ T_t - \psi_{\rm r} )^2 
\nonumber \\
&\quad \quad  
- B''(\lambda_t) \left( \hat{\phi}_{\rm C}  \circ T_t - \hat{\phi}_{\rm C}
 + \frac12 \phi_{\Gamma_t, \infty} \circ T_t 
- \frac12 \phi_{\Gamma, \infty} \right) ( \psi_{{\rm r}, t} \circ T_t - \psi_{\rm r})   
\nonumber \\
&\quad 
\le  C  | ( \hat{\phi}_{\rm C}  \circ T_t - \hat{\phi}_{\rm C} ) 
( \psi_{{\rm r}, t} \circ T_t - \psi_{\rm r})  |  
 + C \left|  ( \phi_{\Gamma_t, \infty} \circ T_t - \phi_{\Gamma, \infty}) 
( \psi_{{\rm r}, t} \circ T_t - \psi_{\rm r})  \right|,  
\end{align}
where $\lambda_t$ is in between 
$ \left( \psi_{{\rm r}, t} + \hat{\phi}_{\rm C}  
-{\phi_{\Gamma_t, \infty}}/{2} \right) \circ T_t $
and $\psi_{\rm r}+ \hat{\phi}_{\rm C}  -{\phi_{\Gamma, \infty}}/{2}$ at each point in $\Omega_+,$ 
and the constant $C > 0$ 
%%in \reff{convexB}
%%Here and below, $C > 0$ denotes a generic constant that may depend on 
%%$\ve_-$, $\ve_+$, $B$, $\phi_0$, $V$, and $\Omega$, but 
is independent of $t$ and $\Gamma$. 
Now, the combination of \reff{diff_phi_r} with $\eta = \psi_{{\rm r}, t} \circ T_t - \psi_{\rm r}$
 and \reff{convexB}, together with 
the uniform bounds for $\psi_{{\rm r}, t}$ and $\phi_{\Gamma_t, \infty},$ 
%%on $\Omega_+$, 
and the Cauchy--Schwarz and Poincar\'e inequalities, leads to 
\begin{align*}
\| \psi_{{\rm r} ,t} \circ T_t -\psi_{\rm r}  \|_{H^1(\Omega)} 
& \le  C  \| A_V(t) - I \|_{L^\infty(\Omega)}  \| \psi_{{\rm r},t}\circ T_t \|_{H^1(\Omega)} 
+  C \| J_t - 1 \|_{L^\infty(\Omega)}
\\
&\quad  + C  \| \hat{\phi}_{\rm C}  \circ T_t - \hat{\phi}_{\rm C}  \|_{H^1({\Omega_+})} 
+ C \left\| \phi_{\Gamma_t, \infty} \circ T_t - \phi_{\Gamma, \infty}  \right\|_{H^1(\Omega)}
\\
& \quad + C \left\| A_V(t) - I \right\|_{L^\infty(\Omega_+)} 
\| \hat{\phi}_{\rm C} \circ T_t \|_{H^1(\Omega_+)}. 
\end{align*}
Now the convergence \reff{hatphiC_conv} follows from \reff{Jexp}, \reff{HtX}, 
\reff{Aexp}, \reff{KtX}, the uniform bound of $\psi_{{\rm r}, t}$, Lemma
\ref{l:phi0} (with $\hat{\phi} = \hat{\phi}_{\rm C}$), and Lemma \ref{l:pGinfty}. 

%%%%%%%%%%%%%%%%%%%%%%%%%%%%%%%%%%%%%%%%%%%%%%%%
\begin{comment}

Consequently, the uniform boundedness of $\psi_{r,t}$ in $L^\infty(\Omega)$ and in $H^1(\Omega)$, the uniform boundedness of $\psi_{0}$ in $L^\infty(\Omega\setminus B(x_i))$ and in $H^1(\Omega\setminus B(x_i))$ for small $t>0$, the uniform convergence $A_V(t)\to I$ and $J_t\to 1$ as $t\to 0$, and the convergence $\psi_{0}\circ T_t\to \psi_0$ in $H^1(\Omega\setminus B(x_i)))$ and $\hat{\psi}_{t}\circ T_t\to \hat{\psi}$ in $H^1(\Omega)$ imply that
\begin{align*}
\| \psi_{r,t}\circ T_t-\psi_r\|_{H^1(\Omega)} \leq &C \Big[   \| (A_V(t)-I)\|_{L^\infty(\Omega)} \|\nabla(\psi_{r,t}\circ T_t)\|_{L^2(\Omega)}\nonumber\\
&+  \| B^{\prime\prime} (\theta(t))J_t\|_{L^\infty(\Omega\setminus B(x_i))} (\|\psi_{0}\circ T_t-\psi_0\|_{L^2(\Omega\setminus B(x_i))}\nonumber\\
&+\frac{1}{2}\|\hat{\psi}_t\circ T_t-\hat{\psi}\|_{L^2(\Omega)})
+\| B^\prime(\psi_r+\psi_0-\frac{\hat{\psi}_\infty}{2})(J_t-1)\|_{L^\infty(\Omega\setminus B(x_i))}\nonumber\\
&+\| (A_V(t)-I) \|_{L^\infty(\Omega) } \|\nabla(\psi_0\circ T_t) \|_{L^2(\Omega\setminus B(x_i))}+ \| \nabla(\psi_0\circ T_t-\psi_0) \|_{L^2(\Omega\setminus B(x_i))} \Big ]\nonumber\\
&\to 0 \quad \text{as } t\to 0.
\end{align*}
This implies \reff{psi_t_convergence}. 

\end{comment}
%%%%%%%%%%%%%%%%%%%%%%%%%%%%%%%%%%%%%%%%%%%%%%%%

We now prove \reff{w_t_convergence}. 
Let us denote $\hat{\omega}_{\Gamma, V} = \omega_{\Gamma, V} - \nabla \hat{\phi}_{\rm C} \cdot V.$ 
By Lemma~\ref{l:phi0} (cf.\ \reff{phi0t} with $\hat{\phi} = \hat{\phi}_{\rm C}$), we need only to prove that 
\begin{equation}
\label{Only_omega}
\lim_{t\to 0} \left\| \frac{  \psi_{{\rm r}, t}  \circ T_t - \psi_{\rm r}}{t} 
- \hat{\omega}_{\Gamma,V} \right\|_{H^1(\Omega)} = 0.
%- \nabla \hat{\phi}_{\rm C} \cdot V \right\|_{H^1(\Omega)} = 0.
\end{equation}

We first note that the Divergence Theorem and the calculations in \reff{hidden} imply that 
\begin{align*}
& \int_\Omega [  A'_V(0) \nabla \hat{\phi}_{\rm C}  + \nabla ( \nabla \hat{\phi}_{\rm C} \cdot V )]
\cdot \nabla \eta \, dx
\\
& \quad 
= - \int_\Omega [ \nabla \cdot ( A'_V(0) \nabla \hat{\phi}_{\rm C})  + \Delta ( \nabla \hat{\phi}_{\rm C} \cdot V )]
\, \eta \, dx 
\\
&\quad  = 0 \qquad \forall \eta \in H_0^1(\Omega). 
\end{align*}
This allows us to rewrite \reff{w_eqn} into the following equation for $\hat{\omega}_{\Gamma, V}$:  
\begin{align}
\label{New_w_eqn}
&\int_\Omega \left[  \varepsilon_\Gamma \nabla \hat{\omega}_{\Gamma, V}  \cdot \nabla \eta
+ \chi_+ B''\left(\psi_{\rm r} +\hat{\phi}_{\rm C}  -\frac{ \phi_{\Gamma, \infty}}{2} \right)
\hat{\omega}_{\Gamma, V}  \eta\, \right] dX
\nonumber\\
&\quad
= -\int_\Omega  \varepsilon_\Gamma A_V' (0) \nabla (\psi_\Gamma - \hat{\phi}_{\rm C} )
\cdot \nabla \eta \, dX
\nonumber \\
&\quad \quad
-\int_{\Omega_+} \left[  B'\left( \psi_{\rm r} + \hat{\phi}_{\rm C} -\frac{\phi_{\Gamma, \infty}}{2} \right)
(\nabla \cdot V) 
\right. 
\nonumber \\
& \qquad \quad 
\left. 
 +   B'' \left( \psi_{\rm r} + \hat{\phi}_{\rm C}  -\frac{\phi_{\Gamma, \infty}}{2} \right)
\left( \nabla \hat{\phi}_{\rm C} \cdot V -\frac{\zeta_{\Gamma, V} }{2} \right) \right] \eta\,  dX
\nonumber\\
&\quad \quad
 -(\varepsilon_+ -\varepsilon_-) \int_{\Omega_+} [ A'_V (0) \nabla \hat{\phi}_{\rm C} +
\nabla (\nabla \hat{\phi}_{\rm C} \cdot V) ]
\cdot \nabla \eta \, dX \qquad \forall \eta \in H_0^1(\Omega).
\end{align}
%%%%%%%%%%%%%%%%%%%%%%%%%%%%%
\begin{comment}
First, by the Divergence Theorem and noting that the unit normal $n$ along
$\Gamma = \partial \Omega_-$ points from $\Omega_-$ to $\Omega_+$, 
we have for any $\eta \in H_0^1(\Omega)$ that
\begin{align*}
 \int_{\Omega_-}  \nabla (\nabla  \hat{\phi}_{\rm C} \cdot V ) \cdot \nabla \eta \, dx 
&=   \int_{\Gamma}  \partial_n ( \nabla  \hat{\phi}_{\rm C} \cdot V )\, \eta \, dS
\\
& =   -  \int_{\partial \Omega_+ } \partial_n ( \nabla  \hat{\phi}_{\rm C} \cdot V )\, \eta \, dS
\\
& = \int_{\Omega_+}  \nabla (\nabla  \hat{\phi}_{\rm C} \cdot V ) \cdot \nabla \eta \, dx.  
\end{align*}
Hence, 
\begin{align}
\label{extra}
 \int_\Omega \ve_\Gamma \nabla (\nabla  \hat{\phi}_{\rm C} \cdot V ) \cdot \nabla \eta \, dx 
= (\ve_+ - \ve_-) \int_{\Omega_+} \nabla ( \nabla \hat{\phi}_{\rm C} \cdot V) \cdot \nabla 
\eta \, dx \qquad \forall \eta \in H_0^1(\Omega).  
\end{align}
\end{comment}
%%%%%%%%%%%%%%%%%%%%%%%%%%%%%

Multiplying both sides of \reff{diff_phi_r} by $1/t$  and combining the resulting 
equation with \reff{New_w_eqn}, we obtain by rearranging terms that 
\begin{align}
\label{MVTMVT}
&\int_\Omega  \varepsilon_\Gamma \nabla  \left( \frac{  \psi_{{\rm r},t} \circ T_t - \psi_{\rm r} }{t}  
-\hat{\omega}_{\Gamma, V}  \right) \cdot \nabla \eta \, dX 
\nonumber \\
& \quad 
= - \int_\Omega  \varepsilon_\Gamma \left[  \left( \frac{A_V(t) - I}{t} \right) 
\nabla (\psi_{{\rm r},t}\circ T_t) - A'_V(0) \nabla \psi_{\rm r} \right]  \cdot \nabla \eta \, dX
\nonumber \\
&\quad \quad 
- \int_{\Omega_+} \left[ B'\left( \left( \psi_{{\rm r}, t} + \hat{\phi}_{\rm C} 
-\frac{\phi_{\Gamma_t, \infty}}{2} \right) \circ T_t \right) \left( \frac{J_t - 1}{t}  \right) 
\right.
\nonumber \\
&\quad \quad  
\quad 
\left. 
- B'\left( \psi_{\rm r}+ \hat{\phi}_{\rm C}  -\frac{\phi_{\Gamma, \infty}}{2} \right) 
(\nabla \cdot V) \right] \eta\, dX 
\nonumber \\
&\quad \quad   
-\int_{\Omega_+} \left\{  \frac{1}{t} 
\left[   B'\left( \left( \psi_{{\rm r}, t} + 
\hat{\phi}_{\rm C} -\frac{\phi_{\Gamma_t, \infty}}{2} \right) 
\circ T_t \right)   - B'\left( \psi_{\rm r}+ \hat{\phi}_{\rm C} 
-\frac{\phi_{\Gamma, \infty}}{2} \right) \right] 
\right.  
\nonumber \\
& \quad \quad 
\quad 
\left.   - B''\left( \psi_{\rm r} + \hat{\phi}_{\rm C} 
-\frac{\phi_{\Gamma, \infty}}{2} \right) 
 \left( \hat{\omega}_{\Gamma,V} + \nabla \hat{\phi}_{\rm C} 
\cdot V - \frac{\zeta_{\Gamma, V}}{2} \right) \right\} 
\eta \, dX 
\nonumber \\
& \quad \quad 
- (\ve_+ - \ve_-) \int_{\Omega_+} 
\left[ \left(  \frac{ A_V(t)  - I}{t} \right)  \nabla ( \hat{\phi}_{\rm C} \circ T_t)
- A_V'(0) \nabla \hat{\phi}_{\rm C} \right] \cdot \nabla  \eta \, dX
\nonumber \\
& \quad \quad 
- (\ve_+ - \ve_-) \int_{\Omega_+} 
\nabla \left(  \frac{ \hat{\phi}_{\rm C} \circ T_t - \hat{\phi}_{\rm C} }{t} 
- \nabla \hat{\phi}_{\rm C} \cdot V \right) 
\cdot \nabla \eta \, dX \qquad \forall \eta \in H_0^1(\Omega). 
\end{align}
Specifying $\eta = ( \psi_{{\rm r}, t} \circ T_t - \psi_{\rm r})/t 
- \hat{\omega}_{\Gamma, V} \in H^1_0(\Omega),$ we have by the fact that $B''> 0$, the 
Mean-Value Theorem, 
the uniform bound for all the functions $\psi_{{\rm r}, t}$, $\zeta_{\Gamma,V}$, and $\omega_{\Gamma,V}$ 
(cf.\ \reff{phiGammabounds}, \reff{hatphiGibounds}, \reff{PBsolnbounds})
%%, \reff{zetabounds}, and \reff{omegabounds}) 
that in $\Omega_+$ 
%%{\bf [Need to prove the uniform bound for $\omega_{\Gamma,V}$ and $\zeta_{\Gamma_t, \infty}.$]}  
\begin{align*}
& -  \left\{ \frac{1}{t} 
\left[   B'\left( \left( \psi_{{\rm r}, t} + \hat{\phi}_{\rm C} 
-\frac{\phi_{\Gamma_t, \infty}}{2} \right) \circ T_t \right)   
- B'\left( \psi_{\rm r}+ \hat{\phi}_{\rm C}  -\frac{\phi_{\Gamma, \infty}}{2} \right) \right] 
\right. 
 \\
& \quad 
\left. 
- B''\left( \psi_{\rm r} + \hat{\phi}_{\rm C} -\frac{\phi_{\Gamma, \infty}}{2} \right) 
\left( \hat{\omega}_{\Gamma,V} + \nabla \hat{\phi}_{\rm C} \cdot V - \frac{\zeta_{\Gamma,V}}{2} \right) \right\} 
\left( \frac{  \psi_{{\rm r}, t} \circ T_t - \psi_{\rm r} }{t}  - \hat{\omega}_{\Gamma,V} \right)
\\
& \quad
= - B''\left( \xi_t \right) \left( \frac{ \psi_{{\rm r}, t} \circ  T_t  - \psi_{\rm r}}{t}  +
\frac{ \hat{\phi}_{\rm C} \circ T_t - \hat{\phi}_{\rm C} }{t} - \frac{ \phi_{\Gamma_t, \infty} \circ T_t -
\phi_{\Gamma, \infty}}{2t} \right) 
\left( \frac{  \psi_{{\rm r}, t} \circ T_t - \psi_{\rm r} }{t} -\hat{\omega}_{\Gamma,V} \right)
\\
& \quad \quad 
+ B''\left( \psi_{\rm r} + \hat{\phi}_{\rm C}  -\frac{\phi_{\Gamma, \infty}}{2} \right) 
\left( \hat{\omega}_{\Gamma,V} + \nabla \hat{\phi}_{\rm C} \cdot V - \frac{\zeta_{\Gamma,V}}{2} \right) 
\left( \frac{  \psi_{{\rm r}, t} \circ T_t - \psi_{\rm r} }{t} - \hat{\omega}_{\Gamma,V} \right)
\\
& \quad
= - B''\left( \xi_t \right) \left( \frac{ \psi_{{\rm r}, t} \circ  T_t  - \psi_{\rm r}}{t}
-\hat{\omega}_{\Gamma,V}\right)^2 
\\
& \quad \quad 
- B''\left( \xi_t \right) \left( \hat{\omega}_{\Gamma,V} +  
\frac{ \hat{\phi}_{\rm C} \circ T_t - \hat{\phi}_{\rm C} }{t} - \frac{ \phi_{\Gamma_t, \infty} \circ T_t -
\phi_{\Gamma, \infty}}{2t} \right) 
\left( \frac{  \psi_{{\rm r}, t} \circ T_t - \psi_{\rm r} }{t} -\hat{\omega}_{\Gamma,V} \right)
\\
& \quad \quad 
+ \left[ B''\left( \psi_{\rm r} + \hat{\phi}_{\rm C}  -\frac{\phi_{\Gamma, \infty}}{2} \right) 
- B''(\xi_t ) \right]
\left( \hat{\omega}_{\Gamma,V} + \nabla \hat{\phi}_{\rm C} \cdot V - \frac{\zeta_{\Gamma,V}}{2} \right) 
\left( \frac{  \psi_{{\rm r}, t} \circ T_t - \psi_{\rm r} }{t} - \hat{\omega}_{\Gamma,V} \right)
\\
& \quad \quad 
+ B''(\xi_t ) \left( \hat{\omega}_{\Gamma,V} + \nabla \hat{\phi}_{\rm C} 
\cdot V - \frac{\zeta_{\Gamma,V}}{2} \right) 
\left( \frac{  \psi_{{\rm r}, t} \circ T_t - \psi_{\rm r} }{t} - \hat{\omega}_{\Gamma, V} \right)
\\
& \quad 
\le - B''\left( \xi_t \right) \left( 
\frac{ \hat{\phi}_{\rm C} \circ T_t - \hat{\phi}_{\rm C} }{t} - \nabla \hat{ \phi}_{\rm C} \cdot V
- \frac{ \phi_{\Gamma_t, \infty} \circ T_t - \phi_{\Gamma, \infty}}{2t}
+ \frac{\zeta_{\Gamma, V}}{2} \right) 
\\
&\quad \quad 
\cdot  \left( \frac{  \psi_{{\rm r}, t} \circ T_t - \psi_{\rm r} }{t} -\hat{\omega}_{\Gamma, V} \right)
\\
& \quad \quad 
+  B'''(\sigma_t ) \left( \psi_{\rm r} + \hat{\phi}_{\rm C} 
 - \frac{\phi_{\Gamma, \infty}}{2} - \xi_t \right)
\left( \hat{\omega}_{\Gamma, V} + \nabla \hat{\phi}_{\rm C} 
\cdot V - \frac{\zeta_{\Gamma,V}}{2} \right) 
\left( \frac{  \psi_{{\rm r}, t} \circ T_t - \psi_{\rm r} }{t} - \hat{\omega}_{\Gamma, V} \right)
\\
& \quad 
\le  C \left( \left| \frac{ \hat{\phi}_{\rm C} \circ T_t - \hat{\phi}_{\rm C} } {t} 
- \nabla \hat{\phi}_{\rm C}  \cdot V\right|
+\left|  \frac{ \phi_{\Gamma_t, \infty} \circ T_t - \phi_{\Gamma, \infty}}{t}
- \zeta_{\Gamma, V} \right| \right) 
\left|  \frac{  \psi_{{\rm r}, t} \circ T_t - \psi_{\rm r} }{t} -\hat{\omega}_{\Gamma, V} \right| 
\\
& \quad \quad 
+  C \left(  \left| \psi_{{\rm r}, t} \circ T_t - \psi_{\rm r} \right|
+ \left| \hat{\phi}_{\rm C}  \circ T_t - \hat{\phi}_{\rm C}  \right| 
+  \left| \phi_{\Gamma_t, \infty} \circ T_t - \phi_{\Gamma, \infty} \right|\right)
\left| \frac{  \psi_{{\rm r}, t} \circ T_t - \psi_{\rm r} }{t} - \hat{\omega}_{\Gamma, V} \right|,
\end{align*}
where $\xi_t$ and $\sigma_t$ are in between 
$ ( \psi_{{\rm r}, t} + \hat{\phi}_{\rm C}  -{\phi_{\Gamma_t, \infty}}/{2} ) \circ T_t $
and $\psi_{\rm r}+ \hat{\phi}_{\rm C} -{\phi_{\Gamma, \infty}}/{2}$ at each point in $\Omega_+.$ 
Now, combining this inequality and the identity \reff{MVTMVT}
with $\eta = ( \psi_{{\rm r}, t} \circ T_t - \psi_{\rm r})/t 
- \hat{\omega}_{\Gamma, V} \in H^1_0(\Omega),$ 
we obtain by the Poincar\'e and Cauchy--Schwarz inequalities and rearranging terms that 
\begin{align}
\label{S1toS4}
& \left\| \frac{  \psi_{{\rm r},t}\circ T_t - \psi_{\rm r} }{t}  
-\hat{\omega}_{\Gamma, V}   \right\|^2_{H^1(\Omega)}  
\nonumber \\
& \quad 
\le C \int_\Omega   \left|   \left( \frac{A_V(t) - I}{t} \right) 
\nabla (\psi_{{\rm r},t}\circ T_t) - A'_V(0) \nabla \psi_{\rm r} \right|^2  dX
\nonumber \\
&\quad \quad 
+ C \int_{\Omega_+} \left|  B'\left( \left( \psi_{{\rm r}, t} + \hat{\phi}_{\rm C} 
-\frac{\phi_{\Gamma_t, \infty}}{2} \right) \circ T_t \right) \left( \frac{J_t - 1}{t}  \right) 
\right.
\nonumber \\
&\quad \quad \quad 
\left. 
- B'\left( \psi_{\rm r}+ \hat{\phi}_{\rm C} 
-\frac{\phi_{\Gamma, \infty}}{2} \right) (\nabla \cdot V) \right|^2 dX 
\nonumber \\
%%&\qquad \qquad   
%%+ C \int_{\Omega_+} \left| \frac{1}{t} \left[   B'\left( \left( \psi_{{\rm r}, t} 
%%+ \phi_0 -\frac{\phi_{\Gamma_t, \infty}}{2} \right) \circ T_t \right)   
%%- B'\left( \psi_{\rm r}+ \phi_0  -\frac{\phi_{\Gamma, \infty}}{2} \right) \right] 
%%\right.  \\
%%&\qquad \qquad \qquad 
%%\left.   - B''\left( \psi_{\rm r} + \phi_0  -\frac{\phi_{\Gamma, \infty}}{2} \right) 
%% \left(  \frac{\psi_{{\rm r}, t} \circ T_t - \psi_{\rm r}}{t}  
%%+ \nabla \phi_0 \cdot V - \frac{\zeta_{\Gamma, V}}{2} \right) \right|^2  dX 
%%\nonumber \\
& \quad \quad 
%%+ C  \int_{\Omega_+}
+ C \biggl(  \left\| \frac{ \hat{\phi}_{\rm C} \circ T_t - \hat{\phi}_{\rm C} }{t} - \nabla 
\hat{\phi}_{\rm C}  \cdot V 
\right\|_{H^1(\Omega_+)}^2 
+  \left\|  \frac{ \phi_{\Gamma_t, \infty} \circ T_t - \phi_{\Gamma, \infty}}{t}
- \zeta_{\Gamma, V} \right\|_{L^2(\Omega_+)}^2  
\nonumber \\
&  \quad \quad 
+  \| \psi_{{\rm r}, t} \circ T_t - \psi_{\rm r} \|^2_{L^2(\Omega_+)}
+  \| \hat{\phi}_{\rm C}  \circ T_t - \hat{\phi}_{\rm C}  \|^2_{L^2(\Omega_+)}
+  \| \phi_{\Gamma_t, \infty} \circ T_t - \phi_{\Gamma, \infty}\|^2_{L^2(\Omega_+)} \biggr) 
\nonumber \\
&\quad  \quad 
+ C \int_{\Omega_+} \left|  \left(  \frac{ A_V(t)  - I}{t} \right)  
\nabla ( \hat{\phi}_{\rm C}  \circ T_t) - A_V'(0) \nabla \hat{\phi}_{\rm C}  \right|^2 dX
\nonumber \\
& \quad 
= C \left[ S_1(t) +S_2(t) +S_3(t) +S_4(t) \right]. 
\end{align}

It follows from \reff{Ap0}--\reff{KtX}, Lemma~\ref{l:phi0} (with $\hat{\phi} = \hat{\phi}_{\rm C}$), 
and \reff{psi_t_convergence} that 
\begin{align}
\label{S1t}
S_1(t)&= \int_\Omega   \left|   \left[ \frac{A_V(t) - I}{t} \right] 
\nabla (\psi_{{\rm r},t}\circ T_t) - A'_V(0) \nabla \psi_{\rm r} \right|^2  dX
\nonumber \\
& \le 2 \int_\Omega   \left|   \left[  \frac{A_V(t) - I}{t} - A_V'(0)  \right]  
\nabla (\psi_{{\rm r},t}\circ T_t) \right|^2  dX 
+  2 \int_\Omega   \left|   A'_V(0) \nabla (\psi_{{\rm r},t}\circ T_t -  \psi_{\rm r} ) \right|^2  dX
\nonumber \\ 
&\
\to 0 \qquad \mbox{as } t \to 0. 
\end{align}
By the uniform boundedness of $\psi_{{\rm r}, t}$ and $\phi_{\Gamma_t, \infty}$ 
(cf.\ \reff{phiGammabounds}, \reff{hatphiGibounds}, \reff{PBsolnbounds}) the Mean-Value Theorem, 
\reff{Jexp} and \reff{HtX}, Lemmas \ref{l:phi0} and \ref{l:pGinfty}, 
and \reff{psi_t_convergence}, we have 
\begin{align}
\label{S2t}
S_2(t) & = 
 \int_{\Omega_+} \left|  B'\left( \left( \psi_{{\rm r}, t} + \hat{\phi}_{\rm C}   
-\frac{\phi_{\Gamma_t, \infty}}{2} \right) \circ T_t \right) \left( \frac{J_t - 1}{t}  \right) 
- B'\left( \psi_{\rm r}+ \hat{\phi}_{\rm C} 
-\frac{\phi_{\Gamma, \infty}}{2} \right) (\nabla \cdot V) \right|^2 dX 
\nonumber \\
& \le 2  \int_{\Omega_+} \left|  B'\left( \left( \psi_{{\rm r}, t} + \hat{\phi}_{\rm C}  
-\frac{\phi_{\Gamma_t, \infty}}{2} \right) \circ T_t \right) 
\left( \frac{J_t - 1}{t}   - \nabla \cdot V  \right) \right|^2 dX 
\nonumber \\
& \quad 
 + 2 \int_{\Omega_+} \left|  B'\left( \left( \psi_{{\rm r}, t} + \hat{\phi}_{\rm C}  
-\frac{\phi_{\Gamma_t, \infty}}{2} \right) \circ T_t \right) 
- B'\left( \psi_{\rm r}+ \hat{\phi}_{\rm C} 
-\frac{\phi_{\Gamma, \infty}}{2} \right) \right|^2 
 |\nabla \cdot V |^2   dX 
\nonumber \\
&\le C \int_{\Omega_+} \left| \frac{J_t - 1}{t}   - \nabla \cdot V  \right|^2 dX 
\nonumber \\
& \quad 
+ C \int_{\Omega_+}  ( |  \psi_{{\rm r}, t} \circ T_t - \psi_{\rm r} |^2 
+ | \hat{\phi}_{\rm C} \circ T_t - \hat{\phi}_{\rm C} |^2 
+  | \phi_{\Gamma_t, \infty} \circ T_t - \phi_{\Gamma, \infty} |^2 )\, dX
\nonumber \\
& \to 0 \qquad \mbox{as } t \to 0.
\end{align}
By Lemma \ref{l:phi0} (with $\hat{\phi} = \hat{\phi}_{\rm C}$), 
Lemma~\ref{l:pGinfty}, and \reff{psi_t_convergence}, we have 
\begin{align}
\label{S3t}
S_3(t) & = 
  \left\| \frac{ \hat{\phi}_{\rm C} \circ T_t - \hat{\phi}_{\rm C}  }{t} 
- \nabla \hat{\phi}_{\rm C} \cdot V \right\|_{H^1(\Omega_+)}^2 
+  \left\|  \frac{ \phi_{\Gamma_t, \infty} \circ T_t - \phi_{\Gamma, \infty}}{t}
- \zeta_{\Gamma, V} \right\|_{L^2(\Omega_+)}  
\nonumber \\
& \quad 
+  \| \psi_{{\rm r}, t} \circ T_t - \psi_{\rm r} \|_{L^2(\Omega_+)}
+  \| \hat{\phi}_{\rm C} \circ T_t - \hat{\phi}_{\rm C}  \|^2_{L^2(\Omega_+)}
+  \| \phi_{\Gamma_t, \infty} \circ T_t - \phi_{\Gamma, \infty}\|^2_{L^2(\Omega_+)} 
\nonumber \\
& \quad  \to 0 \qquad \mbox{as } t \to 0.
\end{align}
It follows from \reff{Ap0}--\reff{KtX} and Lemma \ref{l:phi0} 
(with $\hat{\phi} = \hat{\phi}_{\rm C}$) that  
\begin{align}
\label{S4t}
S_4(t) & = 
\int_{\Omega_+} \left|  \left[   \frac{ A_V(t)  - I}{t} \right]   \nabla 
( \hat{\phi}_{\rm C} \circ T_t)
- A_V'(0) \nabla \hat{\phi}_{\rm C}  \right|^2 dX
\nonumber \\
& \le C  \int_{\Omega_+} \left|  \left[ \frac{ A_V(t)  - I}{t} - A'_V(0)  \right]   
\nabla (\hat{\phi}_{\rm C}  \circ T_t) \right|^2 + C \int_{\Omega_+} 
| \nabla ( \hat{\phi}_{\rm C}  \circ T_t - \hat{\phi}_{\rm C}  ) |^2  dX
\nonumber \\
& \to 0 \qquad \mbox{as } t \to 0.
\end{align}
Now the desired convergence \reff{Only_omega} follows from \reff{S1toS4}--\reff{S4t}. 
%%The proof is complete. 
\end{proof}

\section{Proof of Theorem~\ref{t:DBF}}
%% on the Dielectric Boundary Force}
\label{s:Proof}

%%%%%%%%%%%%%%%%%%%%%%%%%%%%%%%%%%%%%%%%%%%%%%%%%%%%%%%%%%%%%%
\begin{comment}
%\begin{lemma}\label{equality}
%\begin{align}
%-\int_\Gamma  \nabla \psi_0 \cdot \nabla \hat{\psi}^- V\cdot n dS +\int_\Gamma \nabla \hat{\psi}^- \cdot V \nabla \psi_0 \cdot n dS=-\int_\Gamma  \nabla \psi_0 \cdot \nabla \hat{\psi}^+ V\cdot n dS +\int_\Gamma \nabla \hat{\psi}^+ \cdot V \nabla \psi_0 \cdot n dS.
%\end{align}
%\end{lemma}
%\begin{proof}
%\begin{align}
%&-\int_\Gamma  \nabla \psi_0 (\nabla \hat{\psi}^--\nabla \hat{\psi}^+) V\cdot n dS+\int_\Gamma (\nabla \hat{\psi}^--\nabla \hat{\psi}^+) \cdot V \nabla \psi_0 \cdot n dS\nonumber\\
%&\quad =-\int_\Gamma  \nabla \psi_0\cdot n (\nabla \hat{\psi}^-\cdot n-\nabla \hat{\psi}^+\cdot n) V\cdot n dS+\int_\Gamma (\nabla \hat{\psi}^-\cdot n-\nabla \hat{\psi}^+\cdot n) V\cdot n \nabla \psi_0 \cdot n dS\nonumber\\
%&\quad=0\nonumber
%\end{align}
%\end{proof}
%
%\begin{lemma}\label{equality2}
%\begin{align}\label{lemma_equal_2}
%&\int_\Gamma  \ve_+ \psi_r^+ \nabla \hat{w}^+ \cdot n dS-\int_\Gamma \ve_- \psi_r^- \nabla \hat{w}^- \cdot ndS\nonumber\\
%&-\int_\Gamma  \ve_+ \psi_r^+ \nabla (\nabla \hat{\psi}^+\cdot V) \cdot n dS+\int_\Gamma  \ve_- \psi_r^- \nabla (\nabla \hat{\psi}^-\cdot V) \cdot n dS\nonumber\\
%&+\int_\Gamma  \ve_+ \nabla \psi_r^+\cdot n (\nabla \hat{\psi}^+\cdot V) dS-\int_\Gamma  \ve_- \nabla \psi_r^- \cdot n (\nabla \hat{\psi}^-\cdot V) dS\nonumber\\
%=&-\int_\Gamma  \ve_+ \nabla \psi^+ \nabla \hat{\psi}^+ V\cdot n dS+\int_\Gamma  \ve_- \nabla \psi^- \nabla \hat{\psi}^-  V\cdot n dS\nonumber\\
%&+2\int_\Gamma  \ve_+ \nabla \psi^+\cdot n \nabla \hat{\psi}^+\cdot n V\cdot n dS-2\int_\Gamma  \ve_- \nabla \psi^- \cdot n \nabla \hat{\psi}^- \cdot n  V\cdot n dS\nonumber\\
%&+\int_\Gamma  (\ve_+-\ve_-) \nabla \psi_0 \nabla \hat{\psi}^+ V\cdot n dS-\int_\Gamma  (\ve_+-\ve_-) \nabla \psi_0\cdot n \nabla \hat{\psi}^+ \cdot VdS.
%\end{align}
%\end{lemma}
%\begin{proof}
%From (\ref{w_eqn}), we have
%\begin{align}
%\ve_+ \nabla \hat{w}^+\cdot n - \ve_- \nabla \hat{w}^-\cdot n&=-(\ve_+ A^\prime(0)\nabla\hat{\psi}^+ \cdot n-\ve_- A^\prime(0)\nabla\hat{\psi}^- \cdot n).
%\end{align}
%Thus, we have
%\begin{align}
%&\int_\Gamma \psi_r \ve_+ \nabla \hat{w}^+\cdot n dS - \int_\Gamma \psi_r \ve_- \nabla \hat{w}^-\cdot n dS\nonumber\\
%=&-(\int_\Gamma \psi_r \ve_+ A^\prime(0)\nabla\hat{\psi}^+ \cdot ndS-\int_\Gamma \psi_r \ve_- A^\prime(0)\nabla\hat{\psi}^- \cdot ndS)\nonumber\\
%=&\int_{\Omega_+}  \psi_r\ve_+\nabla (A^\prime(0) \nabla\hat{\psi}^+ )dx+\int_{\Omega_+} \nabla \psi_r\ve_+ (A^\prime(0)\nabla\hat{\psi}^+ )dx\nonumber\\
%&+\int_{\Omega_+}  \psi_r\ve_-\nabla (A^\prime(0) \nabla\hat{\psi}^+ )dx+\int_{\Omega_-} \nabla \psi_r\ve_- (A^\prime(0)\nabla\hat{\psi}^- )dx.
%\end{align}
%By using (\ref{w_A_prime}) and (\ref{w_A_prime1}) and Lemma \ref{A0}, we have
%\begin{align}
%&\int_\Gamma \psi_r \ve_+ \nabla \hat{w}^+\cdot n dS - \int_\Gamma \psi_r \ve_- \nabla \hat{w}^-\cdot n dS\nonumber\\
%=&-\int_{\Omega_+} \ve_+ \psi_r\Delta (\nabla \hat{\psi}\cdot V )dx+\int_{\Omega_+} \ve_+ \nabla \psi_r (A^\prime(0)\nabla\hat{\psi}^+ )dx\nonumber\\
%&-\int_{\Omega_+}  \ve_-\psi_r\Delta (\nabla \hat{\psi}\cdot V )dx+\int_{\Omega_-} \ve_- \nabla \psi_r (A^\prime(0)\nabla\hat{\psi}^- )dx\nonumber\\
%=&-\int_{\Omega_+} \ve_+ \Delta\psi_r \nabla \hat{\psi}\cdot V dx+\int_\Gamma \ve_+ \psi_r \nabla (\nabla \hat{\psi}\cdot V )\cdot ndS-\int_\Gamma \ve_+ \nabla\psi_r\cdot n \nabla \hat{\psi}\cdot VdS\nonumber\\
%&+\int_{\Omega_+} \ve_+ \Delta \psi_r \nabla\hat{\psi}_\infty\cdot V dx-\int_\Gamma \ve_+ \nabla\psi_r^+ \nabla \hat{\psi}^+ V\cdot ndS+\int_\Gamma \ve_+ \nabla\psi_r^+\cdot V \nabla \hat{\psi}^+ \cdot ndS+\int_\Gamma \ve_+ \nabla\psi_r^+\cdot n \nabla \hat{\psi}^+ \cdot VdS\nonumber\\
%&-\int_{\Omega_-} \ve_- \Delta\psi_r (\nabla \hat{\psi}\cdot V )dx-\int_\Gamma \ve_- \psi_r \nabla (\nabla \hat{\psi}\cdot V )\cdot ndS+\int_\Gamma \ve_- \nabla\psi_r\cdot n \nabla \hat{\psi}\cdot VdS\nonumber\\
%&+\int_\Gamma \ve_- \nabla\psi_r^- \nabla \hat{\psi}^- V\cdot ndS-\int_\Gamma \ve_- \nabla\psi_r^-\cdot V \nabla \hat{\psi}^- \cdot ndS-\int_\Gamma \ve_- \nabla\psi_r^-\cdot n \nabla \hat{\psi}^- \cdot VdS\nonumber\\
%=&\int_\Gamma \ve_+ \psi_r \nabla (\nabla \hat{\psi}\cdot V )\cdot ndS-\int_\Gamma \ve_+ \nabla\psi_r\cdot n \nabla \hat{\psi}\cdot VdS\nonumber\\
%&-\int_\Gamma \ve_+ \nabla\psi_r^+ \nabla \hat{\psi}^+ V\cdot ndS+\int_\Gamma \ve_+ \nabla\psi_r^+\cdot V \nabla \hat{\psi}^+ \cdot ndS+\int_\Gamma \ve_- \nabla\psi_r^+\cdot n \nabla \hat{\psi}^+ \cdot VdS\nonumber\\
%&-\int_\Gamma \ve_- \psi_r \nabla (\nabla \hat{\psi}\cdot V )\cdot ndS+\int_\Gamma \ve_- \nabla\psi_r\cdot n \nabla \hat{\psi}\cdot VdS\nonumber\\
%&+\int_\Gamma \ve_- \nabla\psi_r^- \nabla \hat{\psi}^- V\cdot ndS-\int_\Gamma \ve_- \nabla\psi_r^-\cdot V \nabla \hat{\psi}^- \cdot ndS-\int_\Gamma \ve_- \nabla\psi_r^-\cdot n \nabla \hat{\psi}^- \cdot VdS\nonumber
%\end{align}
%Thus
%\begin{align}
%&\int_\Gamma  \ve_+ \psi_r^+ \nabla \hat{w}^+ \cdot n dS-\int_\Gamma \ve_- \psi_r^- \nabla \hat{w}^- \cdot ndS\nonumber\\
%&-\int_\Gamma  \ve_+ \psi_r^+ \nabla (\nabla \hat{\psi}^+\cdot V) \cdot n dS+\int_\Gamma  \ve_- \psi_r^- \nabla (\nabla \hat{\psi}^-\cdot V) \cdot n dS\nonumber\\
%&+\int_\Gamma  \ve_+ \nabla \psi_r^+\cdot n (\nabla \hat{\psi}^+\cdot V) dS-\int_\Gamma  \ve_- \nabla \psi_r^- \cdot n (\nabla \hat{\psi}^-\cdot V) dS\nonumber\\
%=&-\int_\Gamma \ve_+ \nabla\psi_r^+ \nabla \hat{\psi}^+ V\cdot ndS+\int_\Gamma \ve_+ \nabla\psi_r^+\cdot V \nabla \hat{\psi}^+ \cdot ndS+\int_\Gamma \ve_+ \nabla\psi_r^+\cdot n \nabla \hat{\psi}^+ \cdot VdS\nonumber\\
%&+\int_\Gamma \ve_- \nabla\psi_r^- \nabla \hat{\psi}^- V\cdot ndS-\int_\Gamma \ve_- \nabla\psi_r^-\cdot V \nabla \hat{\psi}^- \cdot ndS-\int_\Gamma \ve_- \nabla\psi_r^-\cdot n \nabla \hat{\psi}^- \cdot VdS\nonumber\\
%=&-\int_\Gamma  \ve_+ \nabla \psi^+ \nabla \hat{\psi}^+ V\cdot n dS+\int_\Gamma  \ve_- \nabla \psi^- \nabla \hat{\psi}^-  V\cdot n dS\nonumber\\
%&+2\int_\Gamma  \ve_+ \nabla \psi^+\cdot n \nabla \hat{\psi}^+\cdot n V\cdot n dS-2\int_\Gamma  \ve_- \nabla \psi^- \cdot n \nabla \hat{\psi}^- \cdot n  V\cdot n dS\nonumber\\
%&+\int_\Gamma  \ve_+ \nabla \psi_0 \nabla \hat{\psi}^+ V\cdot n dS-\int_\Gamma  \ve_- \nabla \psi_0 \nabla \hat{\psi}^- V\cdot n dS\nonumber\\ &-\int_\Gamma  \ve_+ \nabla \psi_0\cdot n \nabla \hat{\psi}^+ \cdot VdS+\int_\Gamma  \ve_- \nabla \psi_0\cdot n \nabla \hat{\psi}^- \cdot VdS
%\end{align}
%By Lemma \ref{equality}, the last term and the last third term could be changed into terms related to $w$. Then (\ref{lemma_equal_2}) is proved.
%\end{proof}
\end{comment}
%%%%%%%%%%%%%%%%%%%%%%%%%%%%%%%%%%%%%%%%%%%%%%%%%%%%%%%%%%%%%%

\begin{proof}[Proof of Theorem~\ref{t:DBF}]
Fix $V \in {\cal V}$ (cf.\ \reff{calV}).  Let $\{ T_t\}_{t \in \R} $ be the family of diffeomorphisms
from $\R^3$ to $\R^3$ defined by $T_t(X) = x(t,X)$ as the solution to the 
initial-value problem \reff{flow}. 
%%Denote $\Gamma_t = T_t (\Gamma)$ as 
%%For $t \ge 0$, we use all the notations $\Omega_{-t},$ $\Omega_{+t},$  and  $\Gamma_t$ as defined 
%%in Subsection~\ref{ss:Flow}. 
We proceed in five steps. In Step 1, we calculate the limit
as $t\to 0$ that defines the variation $\delta_{\Gamma, V}E[\Gamma];$ 
cf.\ Definition~\ref{d:deltaGVG}. 
In Step 2, we simplify the expression of $\delta_{\Gamma, V}E[\Gamma].$ 
In Step 3, we convert all the volume integrals in $\delta_{\Gamma, V} E[\Gamma]$
into surface integrals on the boundary $\Gamma$, except one volume integral that involves the $B'$ term. 
In Step 4, we rewrite the surface integrals to have the desired form (i.e., with
a factor $V\cdot n$ in the integrand). 
Finally, in Step 5, we treat the only 
volume integral term that involves $B'$ to get the desired formula.

%%%%%%%%%%%%%%%%%%%%%%%%%%%%%%%%%%%%%%%%%%%%%%%%%%%
\begin{comment}
%%We shall also use the following simplified notations for $t \ge 0$: 
%\begin{compactenum}
%%\item[\small $\bullet$]
%%$\ve_t = \ve_{\Gamma_t}$:  the dielectric coefficient with respect to $\Gamma_t$; cf.\ \reff{veGamma}; 

%\item[\small $\bullet$]
%%$\chi_{+t} = \chi_{\Omega_{+t}} = (\chi \circ T_t )(\Omega_+):$ the characteristic function 
%%of the set $\Omega_{+t} = T_t(\Omega_+)$.  In particular, $\chi_{+0} = \chi_+;$   

%\item[\small $\bullet$]
$\phi_{t,\infty} = \phi_{\Gamma_t, \infty}$:  the unique weak
solution to the boundary-value problem \reff{hatpsiinfty} (cf.\ \reff{hatpsiinftyweak}) 
with $\Gamma_t$ replacing $\Gamma$. In particular, $\phi_{0, \infty} = \phi_{\Gamma, \infty}; $   

%%\item[\small $\bullet$]
$\psi_{t} = \psi_{\Gamma_t} \in \phi_0+H^1(\Omega):$ the
unique weak solution to the boundary-value problem of the dielectric-boundary PB equation
\reff{PBE} (cf.\ \reff{weakPBE} and \reff{shiftedweakPBE}) with $\ve_{t}$, 
$\chi_{+t}$, and $\phi_{t,\infty}$, replacing $\ve_\Gamma$, 
$\chi_+ = \chi_{\Omega_+}$, and $\phi_{\Gamma,\infty}$, respectively. 
In particular, $\psi_0 = \psi_\Gamma;$   

%%\item[\small $\bullet$]
$\psi_{{\rm r}, t} = \psi_t - \phi_0. $  
In particular, $\psi_{{\rm r}, 0} = \psi_\Gamma - \phi_0 = \psi_{\rm r}.$ 
%%\end{compactenum}
\end{comment}
%%%%%%%%%%%%%%%%%%%%%%%%%%%%%%%%%%%%%%%%%%%%%%%%%%%

\medskip

{\it Step 1.}
Let $t \in \R.$ We recall that $\phi_{\Gamma_t, \infty}$, $\hat{\phi}_{\Gamma_t, \infty}$, 
and $\psi_{\Gamma_t}$ are the solutions to \reff{hatpsiinftyweak}, \reff{phiGammaWeak}, 
and \reff{weakPBE} with $\Gamma_t = T_t(\Gamma) $ replacing $\Gamma,$  respectively, 
and that all these functions have the boundary value $\phi_\infty$ on $\partial \Omega.$  
%%Let $\hat{\phi} \in \hat{\phi}_{\rm C} + C^2(\overline{\Omega})$ satisfy \reff{Wpsi0}. 
Recall that $\hat{\phi}_0$ and $\hat{\phi}_\infty$ are defined by \reff{Wpsi0}
%% (with $\hat{\phi}_\infty$ replacing $\hat{\phi}$) 
and \reff{hatphiBC}. We denote in this proof
\begin{equation}
\label{4notations}
\psi_{\rm r} = \psi_{\Gamma} - \hat{\phi}_{\Gamma, \infty} \quad  
\mbox{and} \quad 
\psi_{{\rm r},t} = \psi_{\Gamma_t} - \hat{\phi}_{\Gamma_t, \infty}.
%%,  \quad 
%%\phi_{\rm r} = \hat{\phi}_{\Gamma, \infty} - \hat{\phi}_{\infty}, \quad 
%%\phi_{{\rm r},t} = \hat{\phi}_{\Gamma_t, \infty} - \hat{\phi}_{\infty}. 
\end{equation}
By \reff{DefineJGamma} and \reff{newEGamma} with $\Gamma_t$ replacing $\Gamma$,
the definition of $A_V(t)$ \reff{At}
and $J_t$ \reff{Jt}, and the change of variable $x = T_t(X)$, we have
\begin{align*}
E[\Gamma_t] & = -\int_\Omega \frac{\ve_{\Gamma_t}}{2} |\nabla \psi_{{\rm r}, t} |^2 dx
-  \int_{ T_t(\Omega_+) } B \left( \psi_{\Gamma_t} - \frac{\phi_{\Gamma_t, \infty}}{2} \right) dx
\nonumber
\\
&\quad
%%+  \int_{\Omega}  \frac{\ve_{\Gamma_t} }{2} \nabla \psi_{r,t}
%% \cdot \nabla \phi_{\Gamma_t, \infty} \, dx
+ \frac{\ve_- - \ve_+}{2} \int_{T_t(\Omega_+)}
 \nabla  \hat{\phi}_{\Gamma_t, \infty} \cdot \nabla  \hat{\phi}_0 \, dx + W
\\
& = - \int_{\Omega}  \frac{\ve_{\Gamma} }{2}\left[ A_V(t) \nabla ( \psi_{r, t} \circ T_t)
\cdot \nabla ( \psi_{r, t} \circ T_t ) \right] dX
\\
& \quad
- \int_{\Omega_+} B\left( \left(\psi_{\Gamma_t} -\frac{\phi_{\Gamma_t,\infty} }{2} \right)
\circ T_t\right) J_t \, dX
\\
%& \quad
%+ \int_{\Omega}  \frac{\ve_{\Gamma} }{2}\left[ A_V(t) \nabla ( \psi_{r, t} \circ T_t)
%\cdot \nabla ( \phi_{\Gamma_t, \infty}  \circ T_t ) \right] dX
%\\
&\quad
 + \frac{\ve_- - \ve_+}{2} \int_{\Omega_+} A_V(t)
 \nabla  (\hat{\phi}_{\Gamma_t, \infty} \circ T_t )  \cdot  \nabla  (\hat{\phi}_0
\circ T_t)  \, dX + W,
\end{align*}
where $W = (1/2)  \sum_{i=1}^N Q_i (\hat{\phi}_\infty - \hat{\phi}_0 ) (x_i)$ is
independent of $\Gamma.$ Consequently,
%%by \reff{DefineJGamma} again with $\psi_{{\rm r}} = \psi_\Gamma - \phi_0,$ we obtain
\begin{align}
\label{deltat}
\frac{ E[\Gamma_t]-E[\Gamma] }{t}
& = - \int_\Omega \frac{\varepsilon_{\Gamma} }{2t}
\left[ A_V(t) \nabla (\psi_{{\rm r},t} \circ T_t) \cdot \nabla (\psi_{{\rm r},t}\circ T_t)
- \nabla \psi_{\rm r} \cdot \nabla \psi_{\rm r} \right] dX
\nonumber \\
&\qquad
 -\int_{\Omega_+} \frac{1}{t}
\left[ B\left( \left(\psi_{\Gamma_t} -\frac{\phi_{\Gamma_t,\infty} }{2} \right) \circ T_t\right) J_t
- B\left( \psi_{\Gamma} -\frac{\phi_{\Gamma,\infty} }{2} \right) \right] dX
\nonumber \\
%&\qquad
%-\frac12 \int_{\Omega} \frac{1}{t}
%\left[ A_V(t)\nabla (\psi_{{\rm r}, t}\circ T_t ) \cdot \nabla ( \phi_{\Gamma_t, \infty} \circ T_t)
%- \nabla \psi_{\rm r} \cdot \nabla \phi_{\Gamma, \infty}  \right]\, dX
%\nonumber
%\\
&\qquad
+\frac{\varepsilon_- -\varepsilon_+}{2} \int_{\Omega_{+}}
\frac{1}{t} \left[ A_V(t) \nabla ( \hat{\phi}_{\Gamma_t, \infty} \circ T_t ) \cdot
\nabla(\hat{\phi}_0 \circ T_t)
-  \nabla \hat{\phi}_{\Gamma, \infty}  \cdot \nabla \hat{\phi}_0 \right] \, dX
\nonumber
\\
%%&= -\delta_1(t) - \delta_2(t) - \frac12  \delta_3(t) + \frac{\ve_- - \ve_+}{2} \delta_4(t).
&= -\delta_1(t) - \delta_2(t) + \frac{\ve_- - \ve_+}{2} \delta_3(t).
\end{align}

%%%%%%%%%%%%%%%%%%%%%%%%%%%%%%%%%%%%%%%%%%%
\begin{comment}
We now consider the limit of each of $\delta_i(t)$ $(i = 1, \dots, 4)$ as $t\to 0^+.$ 
For convenience, let us denote 
%%{\bf [Changed $w_{0,t}$ to $\phi_t$ and $\hat{w}_t $ to $\hat{\phi}_t$. Also, 
%%we may not need to use these notations. We already have too many notations.]}
\[
\phi_{t}=\frac{\phi_{0}\circ T_t-\phi_0}{t}, 
\qquad
\hat{\phi}_t =\frac{\phi_{t,\infty}\circ T_t- \phi_{t,\infty}}{t}, 
\qquad 
w_t =\frac{\psi_{{\rm r},t}\circ T_t-\psi_{\rm r}}{t}.   
\]
%\begin{align}
%w_{0,t}&=\frac{\psi_{0}\circ T_t-\psi_0}{t}\label{w_0},\\
%w_t&=\frac{\psi_{r,t}\circ T_t-\psi_r}{t},\label{w_t}\\
%\hat{w}_t&=\frac{\hat{\psi}_{t}\circ T_t-\hat{\psi}}{t}.\label{w_hat}
%\end{align}
\end{comment}
%%%%%%%%%%%%%%%%%%%%%%%%%%%%%%%%%%%%%%%%%%%

By rearranging the terms, we obtain that 
\begin{align*}
\delta_1(t) 
%&= \int_\Omega \frac{\varepsilon_{\Gamma} }{2t} 
%\left[ A_V(t) \nabla (\psi_{{\rm r},t} \circ T_t) \cdot \nabla (\psi_{{\rm r},t}\circ T_t) 
%- \nabla \psi_{\rm r} \cdot \nabla \psi_{\rm r} \right] dX 
%\\
&= \int_\Omega \frac{\ve_\Gamma }{2} \left[ \frac{ A_V(t) - I - tA_V'(0)}{t}  \right] 
\nabla (\psi_{ {\rm r},t}\circ T_t) \cdot \nabla (\psi_{ {\rm r},t}\circ T_t) \, dX  
\nonumber\\
&\qquad +  \int_\Omega \frac{\ve_\Gamma }{2} A_V'(0) 
\nabla (\psi_{ {\rm r},t}\circ T_t) \cdot \nabla (\psi_{ {\rm r},t}\circ T_t) \, dX  
\nonumber\\
&\qquad 
+ \int_\Omega \frac{\varepsilon_{\Gamma}}{2}  
%%\left[ \nabla(\psi_{ {\rm r},t}\circ T_t)+\nabla \psi_{\rm r} \right]  \cdot \nabla w_t \, dX. 
\left[ \nabla(\psi_{ {\rm r},t}\circ T_t)+\nabla \psi_{\rm r} \right]  \cdot \nabla 
\left( \frac{\psi_{{\rm r},t}\circ T_t-\psi_{\rm r}}{t} \right) dX. 
\end{align*}
%%Moreover, 
%%\begin{equation}
%%\label{spsp}
%% \frac{\psi_{{\rm r},t}\circ T_t-\psi_{\rm r}}{t} 
%% = \frac{ ( \psi_{\Gamma_t} - \hat{\phi}_0 ) \circ T_t - ( \psi_\Gamma - \hat{\phi}_0 ) }{t}  
%% - \frac{ ( \hat{\phi}_{\Gamma_t, \infty} - \hat{\phi}_0 ) \circ T_t - 
%%( \hat{\phi}_{\Gamma, \infty}  - \hat{\phi}_0 ) }{t}.   
%%\end{equation}
It thus follows from \reff{Aexp}, \reff{KtX}, Lemma~\ref{l:xi}, and Lemma~\ref{l:w_t} that 
\begin{equation}
\label{limitdelta1}
\lim_{t\to 0} \delta_1(t) = 
\int_\Omega \varepsilon_{\Gamma}
\left[ \frac{1}{2} A_V'(0)\nabla \psi_{\rm r} \cdot \nabla \psi_{\rm r}
+ \nabla \psi_{\rm r} \cdot \nabla ( \omega_{\Gamma, V} - \xi_{\Gamma, V} ) \right] dX, 
\end{equation}
where $\xi_{\Gamma, V}$ and $\omega_{\Gamma, V}$ are defined in 
\reff{xi_eqn} in Lemma~\ref{l:xi} and \reff{w_eqn} in Lemma~\ref{l:w_t}, respectively.

Denote $q = \psi_{\Gamma} -\phi_{\Gamma,\infty}/ 2$ and 
$q_t =  (\psi_{\Gamma_t} - \phi_{\Gamma_t,\infty}/2 ) \circ T_t.$  
The second term $\delta_2(t)$ in \reff{deltat} can be written as  
\begin{align}
\label{d2t}
\delta_2(t) 
& = \int_{\Omega_+} \frac{J_t - 1}{t} B(q_t)\, dX 
+  \int_{\Omega_+} \frac{B(q_t) - B(q)}{t} \, dX, 
%%& = \int_{\Omega_+} \frac{1}{t} \left[
%%B\left( \left( \psi_{\Gamma_t} -\frac{\phi_{\Gamma_t,\infty} }{2} \right) \circ T_t\right) J_t
%%- B\left( \psi_{\Gamma} -\frac{\phi_{\Gamma,\infty} }{2} \right) \right] dX
%%\nonumber 
%%\\
% B\left( \left(\psi_{\Gamma_t} -\frac{\phi_{\Gamma_t,\infty} }{2} \right) \circ T_t\right) 
%- B\left( \psi_\Gamma -\frac{\phi_{\Gamma,\infty} }{2} \right) \right] dX.
% B\left( \left(\psi_{\Gamma_t} -\frac{\phi_{\Gamma_t,\infty} }{2} \right) \circ T_t\right) dX
%\nonumber \\
%&\qquad 
%+ \int_{\Omega_+} \frac{1}{t} \left[
% B\left( \left(\psi_{\Gamma_t} -\frac{\phi_{\Gamma_t,\infty} }{2} \right) \circ T_t\right) 
%- B\left( \psi_\Gamma -\frac{\phi_{\Gamma,\infty} }{2} \right) \right] dX.
\end{align}
%Denoting 
%\[
%q_t = \left( \psi_{\Gamma_t} -\frac{\phi_{\Gamma_t,\infty} }{2} \right) \circ T_t
%\qquad \mbox{and} \qquad  q = \psi_{\Gamma} -\frac{\phi_{\Gamma,\infty} }{2}, 
%\]
%we have by the Mean-Value Theorem that 
%\[
%B(q_t(x)) -  B(q(x)) = B'(\sigma_t(x)) [ q_t(x)-q(x) ] \qquad \mbox{a.e. } x \in \Omega_+,
%\]
%where $\sigma_t(x)$ is in between $q(x)$ and $q_t(x).$ 
Since the $L^\infty(\Omega)$-norm of $q_t$ 
is bounded uniformly in $t \in \R $ (cf.\ \reff{hatphi_reg} and \reff{PBsolnbounds}), it follows
from  Lemma \ref{l:pGinfty} and Lemma \ref{l:w_t} that  $q_t \to q$ in $L^2(\Omega)$. 
%%%%%%%%%%%%%%%%%%%%%%%%%%
\begin{comment}
\begin{equation*}
%\label{ata}
q_t - q = ( \psi_{\Gamma_t} \circ T_t - \psi_{\Gamma} ) 
-\frac12 ( \phi_{\Gamma_t,\infty} \circ T_t - \phi_{\Gamma, \infty} )
\to 0 \qquad \mbox{in } L^2(\Omega_+). 
%\lim_{t \to 0} \| q_t- q \|_{L^2(\Omega_+)}
%= \lim_{t \to 0} \left\| ( \psi_{\Gamma_t} \circ T_t - \psi_{\Gamma} ) 
%-\frac12 ( \phi_{\Gamma_t,\infty} \circ T_t - \phi_{\Gamma, \infty} ) \right\|_{L^2(\Omega_+)} = 0.  
\end{equation*}
\end{comment}
%%%%%%%%%%%%%%%%%%%%%%%%%%
Hence, $B(q_t) \to B(q)$ in $L^2(\Omega_+)$ as $t\to 0.$ 
This, together with \reff{Jexp} and \reff{HtX}, implies that 
\begin{equation}
\label{Jtone}
\lim_{t\to 0} \int_{\Omega_+}  \frac{J_t - 1}{t} B ( q_t)\, dX 
= \int_{\Omega_+} (\nabla \cdot V ) B (q) \, dX. 
%B\left(  \left(\psi_{\Gamma_t} -\frac{\phi_{\Gamma_t,\infty} }{2} \right) \circ T_t  \right) dX 
%= \int_{\Omega_+} (\nabla \cdot V ) B \left( \psi_\Gamma - \frac{\phi_{\Gamma,\infty} }{2} \right) dX.  
\end{equation}
Now Taylor's expansion implies that 
\begin{align*}
&\frac{ B(q_t(X)) -  B(q(X))}{t}  
\\
&\quad 
= B'(q(X)) \frac{q_t(X)-q(X)}{t} + \frac12 B''(\eta_t (X)) [ q_t(X) - q(X)] \frac{q_t(X) - q(X)}{t},  
\quad \mbox{a.e. } X \in \Omega_+,  
\end{align*}
where $\eta_t (X)$ is in between $q(X)$ and $q_t(X),$ and its 
$L^\infty(\Omega)$-norm is bounded uniformly in $t.$ 
It then follows from Lemma~\ref{l:pGinfty} and Lemma~\ref{l:w_t} that 
%%$(q_t - q)/t$ converges in $L^2(\Omega_+)$ and $q_t \to q$ in $L^2(\Omega_+)$ as $t \to 0.$ Hence, 
\[
\left| \int_{\Omega_+} B''(\eta_t )  (q_t - q) \frac{q_t - q}{t} \, dX \right|
\le C \| q_t - q \|_{L^2(\Omega_+)}  
 \left\| \frac{q_t - q}{t} \right\|_{L^2(\Omega_+)} \to 0 \qquad \mbox{as } t \to 0,  
\]
where $C $ is a constant independent of $t$. 
Consequently, by Lemma~\ref{l:pGinfty} and Lemma~\ref{l:w_t} that 
\begin{align*}
 \lim_{t\to 0} \int_{\Omega_+} \frac{B(q_t) - B(q)}{t} \, dX 
%\int_{\Omega_+} \frac{1}{t} \left[
% B\left( \left( \psi_{\Gamma_t} -\frac{\phi_{\Gamma_t,\infty}}{2} \right) \circ T_t\right) 
%- B\left( \psi_\Gamma -\frac{\phi_{\Gamma,\infty} }{2} \right) \right] dX
= \lim_{t\to 0} \int_{\Omega_+} B'(q) \frac{q_t-q}{t} \, dX
%%+ \lim_{t\to 0} \int_{\Omega_+} B''(\eta_t ) (  q_t - q) \frac{q_t - q}{t} \, dX   
= \int_{\Omega_+} B'(q) \left( \omega_{\Gamma, V}  - \frac{\zeta_{\Gamma,V}}{2} \right) dX, 
\end{align*}
%%%%%%%%%%%%%%%%%%%%%%%%%%%%%%%%%
%& \quad 
%= \lim_{t\to 0} \int_{\Omega_+} B'\left( \psi_\Gamma -\frac{\phi_{\Gamma,\infty} }{2} \right)
%\left(   \frac{\psi_{\Gamma_t} \circ T_t - \psi_{\Gamma } }{t} 
%%+ \frac{ \phi_0 \circ T_t - \phi_0 }{t} 
%-\frac{\phi_{\Gamma_t,\infty} \circ T_t - \phi_{\Gamma, \infty}}{2t} \right) dX   
%\\
%& \quad 
%= \lim_{t\to 0} \int_{\Omega_+} B'\left( \psi_\Gamma -\frac{\phi_{\Gamma,\infty} }{2} \right)
%\left[ \frac{ ( \psi_{\Gamma_t} - \hat{\phi}_0) \circ T_t - (\psi_{\Gamma }-\hat{\phi}_0 ) }{t} 
%\right.
%\\
%& \quad \quad 
%\left. 
%+ \frac{\hat{\phi}_0\circ T_t - \hat{\phi}_0 }{t} 
%- \frac{\phi_{\Gamma_t,\infty} \circ T_t - \phi_{\Gamma, \infty}}{2t} \right] dX   
%\\
%&\quad 
%= \int_{\Omega_+} B'\left( \psi_{\Gamma} -\frac{\phi_{\Gamma,\infty} }{2} \right)
%\left( \omega_{\Gamma, V} + \nabla \hat{\phi}_0 \cdot V - \frac{\zeta_{\Gamma,V}}{2} \right) dX, 
%%%%%%%%%%%%%%%%%%%%%%%%%%%%%%%%%
where $\omega_{\Gamma, V}$ and $\zeta_{\Gamma, V}$ are given 
in \reff{w_eqn} and \reff{hat_w}, respectively. 
%{\bf [Changed $w$ to $w_\Gamma$ and $\hat{w}$ to $\hat{w}_\Gamma.$ Changed them further
%to $\omega_\Gamma$ and $\zeta_{\Gamma, V}$.]}
This, together with \reff{d2t} and \reff{Jtone}, and our definition of $q$ and $q_t$, implies that 
\begin{align}
\label{limitdelta2}
\lim_{t\to 0} \delta_2(t) 
 = \int_{\Omega_+} \left[ (\nabla \cdot V ) 
B \left( \psi_{\Gamma} - \frac{\phi_{\Gamma,\infty} }{2} \right) 
  + B'\left( \psi_{\Gamma}  -\frac{\phi_{\Gamma,\infty} }{2} \right)
\left( \omega_{\Gamma, V} - \frac{\zeta_{\Gamma, V}}{2} \right) \right] dX. 
\end{align}

Rearranging the terms, we have 
\begin{align*}
\delta_3(t) 
%& = \int_{\Omega_+} \frac{1}{t} \left[ A_V(t) \nabla ( \hat{\phi}_{\Gamma_t, \infty} 
%\circ T_t ) \cdot \nabla(\hat{\phi}_0 \circ T_t)
%- \nabla \hat{\phi}_{\Gamma, \infty}  \cdot \nabla \hat{\phi}_0 \right] \, dX 
%\nonumber \\
& = \int_{\Omega_+} \frac{A_V(t) - I - t A'_V(0)}{t} \nabla(\hat{\phi}_{\Gamma_t, \infty} \circ T_t) 
\cdot \nabla( \hat{\phi}_0\circ T_t) \, dX
\nonumber \\
& \quad 
+ \int_{\Omega_+} \frac{\nabla( \hat{\phi}_{\Gamma_t, \infty}  
\circ T_t)   -  \nabla  \hat{\phi}_{\Gamma,\infty}  }{t} \cdot 
\nabla(\hat{\phi}_0\circ T_t)\, dx
\nonumber \\
%& \quad 
%+ \int_{\Omega_+} \frac{\nabla( \hat{\phi}_0\circ T_t)   - \nabla \hat{\phi}_0 }{t}
%\cdot \nabla (\hat{\phi}_{0} \circ T_t) \, dX 
%\nonumber \\
& \quad 
+ \int_{\Omega_+} \nabla \hat{\phi}_{\Gamma, \infty} \cdot 
\frac{\nabla( \hat{\phi}_0\circ T_t)   - \nabla \hat{\phi}_0 }{t} \, dX 
\nonumber \\
& \quad 
+ \int_{\Omega_+}
A_V'(0)  \nabla( \hat{\phi}_{\Gamma_t, \infty} \circ T_t) \cdot \nabla( \hat{\phi}_0\circ T_t) \, dX. 
\end{align*}
Therefore, we have by \reff{Aexp}, \reff{KtX}, Lemma~\ref{l:phi0} (with $\hat{\phi} 
= \hat{\phi}_{\rm C}$), and Lemma~\ref{l:xi} that 
\begin{align}
\label{limitdelta3}
\lim_{t\to 0} \delta_3(t) 
&= \int_{\Omega_{+}} \left[ \nabla  \xi_{\Gamma, V} \cdot \nabla \hat{\phi}_0 
% + \nabla ( \nabla \hat{\phi}_0 \cdot V) \cdot \nabla \hat{\phi}_0 
%\right. 
%\nonumber \\
%&\quad 
%\left. 
+ \nabla \hat{\phi}_{\Gamma, \infty} \cdot \nabla  ( \nabla \hat{\phi}_0 \cdot V)
+ A'_V(0) \nabla \hat{\phi}_{\Gamma,\infty} \cdot \nabla \hat{\phi}_0 \right] dX.  
\end{align}

It now follows from Definition~\ref{d:deltaGVG}, \reff{deltat},
\reff{limitdelta1}, \reff{limitdelta2}, and \reff{limitdelta3} that
the first variation $\delta_\Gamma E[\Gamma]$ exists and is given by
\begin{align}
\label{afterdelta}
\delta_{\Gamma, V} E[\Gamma]
& = - \int_\Omega \frac{\varepsilon_{\Gamma}}{2} 
 A_V'(0)\nabla \psi_{\rm r} \cdot \nabla \psi_{\rm r}\, dX 
+\int_\Omega \ve_\Gamma \nabla \psi_{\rm r} \cdot \nabla \xi_{\Gamma, V} \, dX
- \int_\Omega \ve_\Gamma \nabla \psi_{\rm r} \cdot \nabla  \omega_{\Gamma, V}\, dX
\nonumber \\
&\quad  - \int_{\Omega_+} \left[ (\nabla \cdot V )
B \left( \psi_{\Gamma} - \frac{\phi_{\Gamma,\infty} }{2} \right)
+ B'\left( \psi_{\Gamma}  -\frac{\phi_{\Gamma,\infty} }{2} \right)
\left( \omega_{\Gamma, V} 
%%+ \nabla \hat{\phi}_0 \cdot V 
- \frac{\zeta_{\Gamma, V}}{2} \right) \right] dX
\nonumber \\
&\quad + \frac{\ve_- - \ve_+}{2} \int_{\Omega_{+}} 
\nabla  \xi_{\Gamma, V} \cdot \nabla \hat{\phi}_0 \, dX
\nonumber \\
&\quad + \frac{\ve_- - \ve_+}{2} \int_{\Omega_{+}} 
\left[ \nabla \hat{\phi}_{\Gamma, \infty} \cdot \nabla  ( \nabla \hat{\phi}_0 \cdot V)
+ A'_V(0) \nabla \hat{\phi}_{\Gamma,\infty} \cdot \nabla \hat{\phi}_0 \right]\,dX
\nonumber 
\\
&= M_1 + M_2+M_3 +M_4 + M_5 + M_6.
\end{align}

{\it Step 2.} 
We now simplify this expression.  By  Lemma~\ref{l:xi} and  
our notation $\psi_{\rm r} = \psi_\Gamma - \hat{\phi}_{\Gamma, \infty}$,  we can 
express the sum of the first two integrals above as
\begin{align}
\label{M1M2} 
%& - \int_\Omega \frac{\varepsilon_{\Gamma}}{2} 
% A_V'(0)\nabla \psi_{\rm r} \cdot \nabla \psi_{\rm r}\, dX 
%+\int_\Omega \ve_\Gamma \nabla \psi_{\rm r} \cdot \nabla \xi_{\Gamma, V} \, dX
%\nonumber 
%\\
M_1 + M_2  
& = - \int_\Omega \frac{\varepsilon_{\Gamma}}{2} 
 A_V'(0) \nabla ( \psi_{\Gamma } - \hat{\phi}_{\Gamma, \infty} ) \cdot 
\nabla ( \psi_{\Gamma } - \hat{\phi}_{\Gamma, \infty} )\, dX 
\nonumber 
\\
&\quad \quad
- \int_\Omega \ve_\Gamma A_V'(0) \nabla \hat{\phi}_{\Gamma, \infty} \cdot 
\nabla ( \psi_\Gamma - \hat{\phi}_{\Gamma, \infty} ) \, dX
\nonumber 
\\
&\quad 
= - \int_\Omega \frac{\varepsilon_{\Gamma}}{2} 
 A_V'(0)\nabla \psi_{\Gamma} \cdot \nabla \psi_{\Gamma }\, dX 
+ \int_\Omega \frac{\varepsilon_{\Gamma}}{2} 
 A_V'(0)\nabla \hat{\phi}_{\Gamma, \infty} \cdot \nabla \hat{\phi}_{\Gamma,\infty }\, dX.  
\end{align}
Note that the last two integrals exist as the singularities $x_i$ $(1 \le i \le N)$
of $\psi_{\Gamma}$ and $\hat{\phi}_{\Gamma, \infty}$ are 
outside the support of $V$ and $A'_V(0)$ is given in \reff{Ap0}. 
By \reff{weakPBE} in Definition \ref{d:PBsolution} and \reff{phiGammaWeak}
 we have
\begin{equation*}
 \int_\Omega \left[ \ve_\Gamma \nabla \psi_r  \cdot \nabla \eta +
\chi_+ B'\left( \psi_\Gamma - \frac{\phi_{\Gamma, \infty}}{2} \right) \eta \right] dX
= 0
%(\ve_-  - \ve_+ ) \int_{\Omega_+} \nabla \hat{\phi}_{\Gamma, \infty} \cdot \nabla \eta \, dX
\end{equation*}
for all $\eta \in C_{\rm c}^1(\Omega)$ and hence all $\eta \in H_0^1(\Omega)$.
Setting $\eta = \omega_{\Gamma, V}$, we get the two-$\omega_{\Gamma, V}$ terms in 
\reff{afterdelta} (one is $M_3$ and the other is part of $M_4$) cancelled: 
\begin{align}
\label{M3M4}
& \int_{\Omega} \left[ \ve_\Gamma \nabla \psi_{\rm r} \cdot \nabla \omega_{\Gamma, V} 
+ \chi_+ B'\left(\psi_\Gamma - \frac{\phi_{\Gamma, \infty}}{2} \right) \omega_{\Gamma, V} \right] dX = 0. 
\end{align}

To simplify $M_5$, we note that we can replace $\eta$ in \reff{Wpsi0}
(with $\hat{\phi} = \hat{\phi}_0$) 
and \reff{phiGammaWeak} by $\xi_{\Gamma,V} \in H_0^1(\Omega)$, as 
 $\xi_{\Gamma, V}|_{\Omega_-} \in C^2(\Omega_-)$; cf.\ the remark below \reff{hatphiBC}
and that below \reff{equivweak}. 
It then follows that 
%\begin{align*}
%\frac{\ve_-}{2} \int_{\Omega_+} \nabla \xi_{\Gamma, V} \cdot \nabla \hat{\phi}_0 \, dX 
%& = - \frac{\ve_-}{2} \int_{\Gamma} \xi_{\Gamma, V} \partial_n \hat{\phi}_0 \, dS  
%\\
%& = - \frac{\ve_-}{2} 
%\int_{\Omega_-} \nabla \xi_{\Gamma, V} \cdot \nabla \hat{\phi}_0 \, dX  
%+ \frac12 \sum_{i=1}^N Q_i \xi_{\Gamma, V}(x_i).    
%\end{align*}
\begin{align}
\label{M5}
M_5 
& = \frac{\ve_- }{2} \int_{\Omega_+} \nabla \xi_{\Gamma, V} \cdot \nabla \hat{\phi}_0\, dX 
-  \frac{\ve_+}{2} \int_{\Omega_+} \nabla \xi_{\Gamma, V} \cdot \nabla \hat{\phi}_0\, dX 
\nonumber 
\\
&= -\int_\Omega \frac{\ve_\Gamma}{2} \nabla \xi_{\Gamma, V} \cdot \nabla \hat{\phi}_0 \, dX 
+ \frac12 \sum_{i=1}^N Q_i \xi_{\Gamma, V}(x_i)    
\quad  [\mbox{by \reff{Wpsi0} with $\hat{\phi}=\hat{\phi}_0$}] 
\nonumber 
\\
&= -\int_\Omega \frac{\ve_\Gamma}{2} \nabla \xi_{\Gamma, V} \cdot \nabla \hat{\phi}_0 \, dX 
+ \int_\Omega \frac{\ve_\Gamma}{2} \nabla \hat{\phi}_{\Gamma, \infty} \cdot
\nabla \xi_{\Gamma, V}\, dX  
\quad  [\mbox{by \reff{phiGammaWeak}}]
\nonumber 
\\
&= \int_\Omega \frac{\ve_\Gamma}{2} \nabla \xi_{\Gamma, V} \cdot 
\nabla (\hat{\phi}_{\Gamma, \infty} -  \hat{\phi}_0 - \phi_{\Gamma, \infty})   \, dX 
\quad  [\mbox{by \reff{hatpsiinftyweak}}]
\nonumber 
\\
&= - \int_\Omega \frac{\ve_\Gamma}{2} A'_V(0) 
\nabla \hat{\phi}_{\Gamma, \infty} \cdot 
\nabla (\hat{\phi}_{\Gamma, \infty} -  \hat{\phi}_0 - \phi_{\Gamma, \infty})   \, dX.  
\quad  [\mbox{by Lemma~\ref{l:xi}}]
\end{align}

Since $\hat{\phi}_0$ is harmonic in the support of $V$ that excludes all 
$x_i $ $ (i=1, \dots, N)$, we have by the same calculations as in 
\reff{hidden} that 
\[
 \nabla \cdot \left[ \nabla ( \nabla \hat{\phi}_0 \cdot V)
+ A'_V(0) \nabla \hat{\phi}_0 \right] = 0 \quad \mbox{in } \Omega. 
\]
Thus, since the normal $n$ along $\Gamma$ points from $\Omega_-$ to $\Omega_+,$
we have by the Divergence Theorem that 
\begin{align*}
&\frac{ \ve_-}{2}  \int_{\Omega_+} \nabla \hat{\phi}_{\Gamma, \infty} 
\cdot  [ \nabla ( \nabla \hat{\phi}_0 \cdot V) + A'_V(0) \nabla \hat{\phi}_0 ] \, dX 
\\
&\quad 
 = -\frac{ \ve_-}{2}  \int_{\Gamma} \hat{\phi}_{\Gamma, \infty} 
[ \nabla ( \nabla \hat{\phi}_0 \cdot V) + A'_V(0) \nabla \hat{\phi}_0 ] 
\cdot n \, dS  
\\
& \quad 
= -\frac{ \ve_-}{2} \int_{\Omega_-} \nabla \hat{\phi}_{\Gamma, \infty} 
\cdot  [ \nabla ( \nabla \hat{\phi}_0 \cdot V) + A'_V(0) \nabla \hat{\phi}_0 ] \, dX.  
\end{align*}
Therefore, since $A_V'(0)$ (cf.\ \reff{Ap0}) is symmetric, 
\begin{align}
\label{M6}
M_6 & = \frac{\ve_- - \ve_+}{2} \int_{\Omega_{+}} 
 \nabla \hat{\phi}_{\Gamma, \infty} \cdot [ \nabla  ( \nabla \hat{\phi}_0 \cdot V)
+ A'_V(0) \nabla \hat{\phi}_0 ]\,dX
\nonumber 
\\
& = - \int_\Omega \frac{\ve_\Gamma}{2} 
[ \nabla \hat{\phi}_{\Gamma, \infty} \cdot \nabla  ( \nabla \hat{\phi}_0 \cdot V)
+ A'_V(0) \nabla \hat{\phi}_{\Gamma,\infty} \cdot \nabla \hat{\phi}_0 ]\,dX
\nonumber 
\\
& = - \int_\Omega \frac{\ve_\Gamma}{2} 
 A'_V(0) \nabla \hat{\phi}_{\Gamma,\infty} \cdot \nabla \hat{\phi}_0 \,dX. 
\quad  [\mbox{by \reff{phiGammaWeak}}]
\end{align}

It now follows from \reff{afterdelta}--\reff{M6} that 
\begin{align}
\label{P1-P4}
\delta_{\Gamma, V} E[\Gamma]
&= - \int_\Omega \frac{\varepsilon_{\Gamma}}{2} 
 A_V'(0)\nabla \psi_{\Gamma} \cdot \nabla \psi_{\Gamma }\, dX 
+ \int_\Omega \frac{\varepsilon_{\Gamma}}{2} 
 A_V'(0)\nabla \hat{\phi}_{\Gamma, \infty} \cdot \nabla {\phi}_{\Gamma,\infty }\, dX  
%& = - \int_\Omega \frac{\ve_\Gamma}{2} A_V'(0)\nabla \psi_{\rm r} \cdot \nabla \psi_{\rm r} \, dX
%+ \int_\Omega \ve_\Gamma \nabla \psi_{\rm r} \cdot \nabla \xi_{\Gamma, V} \, dX
%\nonumber \\
%&\quad
%+ \int_\Omega \ve_\Gamma \nabla \psi_{\rm r} \cdot \nabla ( \nabla \hat{\phi}_0 \cdot V) \, dX
\nonumber \\
&\quad
+ \int_{\Omega_+} \left[  \frac{\zeta_{\Gamma,V}}{2}
B'\left( \psi_\Gamma -\frac{\phi_{\Gamma,\infty} }{2} \right)
- (\nabla \cdot V ) B \left( \psi_\Gamma - \frac{\phi_{\Gamma,\infty} }{2} \right) \right] dX
\nonumber
\\
& = P_1 + P_2 + P_3. 
\end{align}

{\it Step 3.}
We convert most of these volume integrals into surface integrals on $\Gamma.$ 
We shall use the following identities that can be verified by using
the Divergence Theorem and approximations by smooth functions: 
%% for any smooth vector field $U$ and smooth scalar fileds $a, b$ defined on a 
%%smooth and bounded open set $D \subset \R^3$ with the boundary $\partial D:$ 
\begin{align}
\label{Uab1}
& \int_D (\nabla \cdot U ) \nabla a \cdot \nabla b \, dx 
= - \int_D U \cdot  (\nabla^2 a  \nabla b + \nabla^2 b \nabla a) \, dx
+ \int_{\partial D} ( \nabla a \cdot \nabla b) (U \cdot \nu) \, dx, 
\\
\label{Uab2}
& \int_D (\nabla U ) \nabla a \cdot \nabla b \, dx 
= - \int_D  U \cdot ( \Delta a \nabla b + \nabla^2 b \nabla a ) \, dx
+ \int_{\partial D} ( \nabla a \cdot \nu ) (\nabla b \cdot U) \, dx.  
\end{align}
Here, $D \subset \R^3$ is a bounded open set with a $C^1$ boundary $\partial D,$
$U \in H^1(D, \R^3)$, $a,b\in H^2(D),$
$\nabla^2 a$ is the Hessian matrix of $a$, and 
$\nu$ is the unit exterior normal at the boundary $\partial D.$
If in addition $\Delta a = \Delta b = 0$ in $D$, then 
we have by \reff{Uab1} and \reff{Uab2} that 
\begin{align}
\label{UabMore}
&\int_D (\nabla U + (\nabla U)^T - (\nabla \cdot U ) I ) \nabla a \cdot \nabla b \, dx 
\nonumber 
\\
& \quad 
= \int_{\partial D} [ ( \nabla a \cdot U) \cdot (\nabla b \cdot \nu)  
+ ( \nabla b \cdot U) \cdot (\nabla a \cdot \nu) 
- ( \nabla a \cdot \nabla b ) \cdot ( U \cdot \nu)] \, dS. 
\end{align}

Note that $V = 0$ in a neighborhood of all $x_i$ $(1 \le i \le N)$ and $V = 0$ on $\partial \Omega$ 
and that the unit normal vector $n$ on $\Gamma$ points from $\Omega_-$ to $\Omega_+.$ 
By Theorem~\ref{t:DBPBE}, $\Delta \psi_\Gamma  = 0$ 
on $\Omega_- \cap \mbox{supp}\,(V) $ and $\ve_+ \Delta \psi_\Gamma = 
B'(\psi_\Gamma - \phi_{\Gamma, \infty}/2)$ on $\Omega_+.$ 
Therefore, we have by \reff{Ap0}, \reff{Uab1}, and \reff{Uab2} that 
\begin{align}
\label{TermP1}
P_1 
%% & = - \int_\Omega \frac{\ve_\Gamma}{2} A'_V(0)\nabla \psi_\Gamma 
%%\cdot \nabla \psi_\Gamma \, dX
%%\nonumber \\
& =  \int_\Omega \frac{\ve_\Gamma}{2} 
[\nabla V + (\nabla V)^T - (\nabla \cdot V)I ]\nabla \psi_\Gamma \cdot \nabla \psi_\Gamma \, dX
\nonumber 
\\
%&=  \int_\Omega \left[ \ve_\Gamma (\nabla V) \nabla \psi_{\rm r} \cdot \nabla \psi_{\rm r} 
%-   \frac{\ve_\Gamma}{2} (\nabla \cdot V) 
%\nabla \psi_{\rm r} \cdot \nabla \psi_{\rm r} \right] dX
%\nonumber 
%\\
&=  \int_{\Omega_-}  \ve_-  (\nabla V) \nabla \psi_\Gamma \cdot \nabla \psi_\Gamma \, dX 
+\int_{\Omega_+}  \ve_+ (\nabla V) \nabla \psi_\Gamma  \cdot \nabla \psi_\Gamma \, dX
\nonumber 
\\
&\quad 
- \int_{\Omega_-} 
 \frac{\ve_-}{2} (\nabla \cdot V) \nabla \psi_\Gamma \cdot \nabla \psi_\Gamma \,  dX
- \int_{\Omega_+} 
\frac{\ve_+}{2} (\nabla \cdot V) \nabla \psi_\Gamma \cdot \nabla \psi_\Gamma \, dX
\nonumber 
\\
& = - \int_{\Omega_-} \ve_-  V \cdot ( \Delta \psi_\Gamma \nabla \psi_\Gamma 
+ \nabla^2 \psi_\Gamma  \nabla \psi_\Gamma )  \, dX
+ \int_\Gamma \ve_- (\nabla \psi^-_\Gamma \cdot V) (\nabla \psi^-_\Gamma  \cdot n) \, dS
\nonumber 
\\
&\quad 
- \int_{\Omega_+} \ve_+  V \cdot ( \Delta \psi_\Gamma \nabla \psi_\Gamma
+ \nabla^2 \psi_\Gamma \nabla \psi_\Gamma ) \, dX
- \int_\Gamma \ve_+ (\nabla \psi^+_\Gamma \cdot V) (\psi^+_\Gamma \cdot n) \, dS
\nonumber 
\\
&\quad 
+ \int_{\Omega_-} \ve_-   V \cdot  \nabla^2 \psi_\Gamma \nabla \psi_\Gamma \, dX
- \int_\Gamma \frac{\ve_-}{2}  | \nabla \psi^-_\Gamma |^2 ( V \cdot n) \, dS
\nonumber 
\\
&\quad 
+ \int_{\Omega_+} \ve_+   V \cdot  \nabla^2 \psi_\Gamma \nabla \psi_\Gamma \, dX
+ \int_\Gamma \frac{\ve_+}{2}  | \nabla \psi^+_\Gamma |^2 ( V \cdot n) \, dS  
\nonumber 
\\
&= - \int_{\Omega_-}  \ve_- \Delta  \psi_\Gamma ( \nabla \psi_\Gamma \cdot V) \, dX
+ \int_\Gamma \ve_- (\nabla \psi^-_\Gamma \cdot V) (\nabla \psi^-_\Gamma \cdot n) \, dS
- \int_\Gamma \frac{\ve_-}{2} | \nabla \psi^-_\Gamma  |^2 ( V\cdot n)\, dS
\nonumber 
\\
&\quad 
- \int_{\Omega_+}  \ve_+  \Delta  \psi_\Gamma ( \nabla \psi_\Gamma \cdot V) \, dX
- \int_\Gamma \ve_+ (\nabla \psi^+_\Gamma \cdot V) (\nabla \psi^+_\Gamma \cdot n) \, dS
+ \int_\Gamma \frac{\ve_+}{2} | \nabla \psi^+_\Gamma |^2 ( V\cdot n)\, dS
\nonumber 
\\
& = \int_\Gamma \ve_- (\nabla \psi^-_\Gamma \cdot V) (\nabla \psi^-_\Gamma \cdot n) \, dS
 - \int_\Gamma \ve_+ (\nabla \psi^+_\Gamma \cdot V) (\nabla \psi^+_\Gamma \cdot n) \, dS
\nonumber \\
&\quad 
- \int_\Gamma \frac{\ve_-}{2} | \nabla \psi^-_\Gamma |^2 ( V\cdot n)\, dS
+ \int_\Gamma \frac{\ve_+}{2} | \nabla \psi^+_\Gamma |^2 ( V\cdot n)\, dS
\nonumber \\
&\quad 
- \int_{\Omega_+} B'\left(\psi_\Gamma - \frac{ \phi_{\Gamma, \infty}}{2} \right) 
(\nabla \psi_\Gamma  \cdot V)\, dX,  
\end{align}
where a superscript $-$ or $+$ denotes the restriction from $\Omega_-$ or $\Omega_+$, respectively.

Since $\hat{\phi}_{\Gamma, \infty}$ and $\phi_{\Gamma, \infty}$ are harmonic 
in $\Omega_- \cap \mbox{supp}\,(V) $ and $\Omega_+$, and since the normal 
$n$ points from $\Omega_-$ to $\Omega_+$, we have by \reff{Ap0}, 
\reff{UabMore}, and the notation of jumps \reff{JumpSign}  that 
\begin{align}
\label{TermP2}
P_2 &= 
\int_{\Omega_-} \frac{\ve_-}{2}
[ (\nabla \cdot V) I - \nabla V - (\nabla V)^T]
\nabla \hat{\phi}_{\Gamma, \infty} \cdot \nabla \phi_{\Gamma, \infty} \, dX
\nonumber 
\\
&\quad 
+ \int_{\Omega_+} \frac{\ve_+ }{2}
[ (\nabla \cdot V) I - \nabla V - (\nabla V)^T]
\nabla \hat{\phi}_{\Gamma, \infty} \cdot \nabla \phi_{\Gamma, \infty} \, dX
\nonumber 
\\
& =  \frac12 \int_\Gamma  \llbracket  \ve_\Gamma   
(\nabla \hat{\phi}_{\Gamma, \infty} \cdot V) (\nabla {\phi}_{\Gamma, \infty} \cdot n) 
+ \ve_\Gamma (\nabla \hat{\phi}_{\Gamma, \infty} \cdot n) (\nabla {\phi}_{\Gamma, \infty} \cdot V) 
\nonumber \\
&\quad 
- \ve_\Gamma (\nabla \hat{\phi}_{\Gamma, \infty} \cdot 
\nabla {\phi}_{\Gamma, \infty}) ( V\cdot n)  \rrbracket_{\Gamma}\, dS. 
\end{align}

Using the Divergence Theorem and noting again that the normal $n$ at $\Gamma$ 
 points from $\Omega_-$ to $\Omega_+$, we obtain
\begin{align}
\label{TermP3}
P_3 
%%& = \int_{\Omega_+} \frac{\zeta_{\Gamma,V}}{2} 
%%B'\left( \psi_\Gamma -\frac{\phi_{\Gamma,\infty} }{2} \right) dX 
%%- \int_{\Omega_+} (\nabla \cdot V ) B \left(\psi_\Gamma - \frac{\phi_{\Gamma,\infty} }{2} \right) dX
%%\nonumber \\
& = \int_{\Omega_+} \frac{\zeta_{\Gamma,V}}{2} 
B'\left( \psi_\Gamma -\frac{\phi_{\Gamma,\infty} }{2} \right) dX 
+ \int_{\Omega_+} V\cdot B'\left( \psi_\Gamma  -\frac{\phi_{\Gamma,\infty} }{2} \right)
\left( \nabla \psi_\Gamma -\frac{\nabla \phi_{\Gamma,\infty} }{2} \right) dX 
\nonumber \\
&\qquad 
+ \int_\Gamma B \left( \psi_\Gamma  -\frac{\phi_{\Gamma,\infty} }{2} \right) (V\cdot n)\, dS
\nonumber \\
& = \int_{\Omega_+} 
\left[ \frac{1}{2}\left(  \zeta_{\Gamma,V} - \nabla \phi_{\Gamma, \infty} \cdot V \right)
+  \nabla \psi_\Gamma \cdot V \right] B'\left( \psi_\Gamma -\frac{\phi_{\Gamma,\infty} }{2} \right) dX 
\nonumber \\
&\qquad 
+ \int_\Gamma B \left( \psi_\Gamma -\frac{\phi_{\Gamma,\infty} }{2} \right) (V\cdot n)\, dS. 
\end{align}

It now follows from \reff{P1-P4} and \reff{TermP1}--\reff{TermP3} that
\begin{align}
\label{dGVE_2}
\delta_{\Gamma, V} E[\Gamma] 
& = \int_\Gamma \ve_- (\nabla \psi^-_\Gamma \cdot V) (\nabla \psi^-_\Gamma \cdot n) \, dS
 - \int_\Gamma \ve_+ (\nabla \psi^+_\Gamma \cdot V) (\nabla \psi^+_\Gamma \cdot n) \, dS
\nonumber \\
&\quad 
- \int_\Gamma \frac{\ve_-}{2} | \nabla \psi^-_\Gamma |^2 ( V\cdot n)\, dS
+ \int_\Gamma \frac{\ve_+}{2} | \nabla \psi^+_\Gamma |^2 ( V\cdot n)\, dS
\nonumber \\
&\quad 
  +  \frac12 \int_\Gamma  \llbracket  \ve_\Gamma   
(\nabla \hat{\phi}_{\Gamma, \infty} \cdot V) (\nabla {\phi}_{\Gamma, \infty} \cdot n) 
\rrbracket_\Gamma \, dS
\nonumber \\
&\quad 
+ \frac12 \int_\Gamma \llbracket \ve_\Gamma (\nabla \hat{\phi}_{\Gamma, \infty} \cdot n) 
(\nabla {\phi}_{\Gamma, \infty} \cdot V) \rrbracket_\Gamma \, dS 
\nonumber \\
&\quad 
- \frac12 \int_\Gamma \llbracket \ve_\Gamma  
(\nabla \hat{\phi}_{\Gamma, \infty} \cdot \nabla {\phi}_{\Gamma, \infty}) ( V\cdot n) 
\rrbracket_{\Gamma}\, dS 
\nonumber \\
&\quad 
+ \int_{\Omega_+} \frac{1}{2}\left(  \zeta_{\Gamma,V} - \nabla 
\phi_{\Gamma, \infty} \cdot V \right)
 B'\left( \psi_\Gamma -\frac{\phi_{\Gamma,\infty} }{2} \right) dX 
\nonumber \\
&\quad 
+ \int_\Gamma B \left( \psi_\Gamma -\frac{\phi_{\Gamma,\infty} }{2} \right) (V\cdot n)\, dS. 
\end{align}

\medskip

{\it Step 4.} We express the surface integrals into those with the factor $V\cdot n$ in
the integrand.  Note that on each side of $\Gamma$, we can write 
\[
\nabla \psi_\Gamma = (\nabla \psi_\Gamma \cdot n) n + \nabla_\Gamma \psi_\Gamma
= \partial_n \psi_\Gamma  n + \nabla_\Gamma \psi_\Gamma \quad \mbox{on } \Gamma,  
\]
where $\nabla_\Gamma \psi_\Gamma = (I - n \otimes n) \nabla \psi_\Gamma$ 
is the tangential derivative. Clearly $n \cdot \nabla_\Gamma \psi_\Gamma = 0.$  
Moreover, $\nabla_\Gamma \psi_\Gamma^+ = \nabla_\Gamma \psi_\Gamma^-$ on $\Gamma$. 
Thus, 
%$ \nabla \left( \psi_\Gamma^+ - \psi_\Gamma ^- \right)
%= (\partial_n \psi_\Gamma^+- \partial_n \psi_\Gamma^- ) n$ on $\Gamma.$ 
\[
 \nabla \psi_\Gamma^+ - \nabla \psi_\Gamma ^- 
= (\partial_n \psi_\Gamma^+- \partial_n \psi_\Gamma^- ) n \qquad \mbox{on } \Gamma. 
\]
By Theorem~\ref{t:DBPBE}, we have also $ \ve_+ \nabla \psi_\Gamma^+ \cdot n = 
\ve_- \nabla \psi_\Gamma^- \cdot n = \ve_\Gamma \nabla \psi_\Gamma \cdot n $ on $\Gamma$. 
Therefore, the first four terms in \reff{dGVE_2} are 
\begin{align}
\label{1234}
&  \int_\Gamma \ve_- (\nabla \psi^-_\Gamma \cdot V) (\nabla \psi^-_\Gamma \cdot n) \, dS
 - \int_\Gamma \ve_+ (\nabla \psi^+_\Gamma \cdot V) (\nabla \psi^+_\Gamma \cdot n) \, dS
\nonumber 
\\
&\quad 
- \int_\Gamma \frac{\ve_-}{2} | \nabla \psi^-_\Gamma |^2 ( V\cdot n)\, dS
+ \int_\Gamma \frac{\ve_+}{2} | \nabla \psi^+_\Gamma |^2 ( V\cdot n)\, dS
\nonumber \\
&\quad  
= - \int_\Gamma \ve_\Gamma \partial_n \psi_\Gamma 
(\partial_n \psi_\Gamma^+ - \partial_n \psi_\Gamma^-) (V\cdot n) \, dS  
\nonumber \\
&\qquad 
+ \int_\Gamma \frac{\ve_+}{2} | \partial_n \psi_\Gamma^+|^2 (V\cdot n)\, dS
+ \int_\Gamma \frac{\ve_+}{2} | \nabla_\Gamma  \psi_\Gamma|^2 (V\cdot n)\, dS
\nonumber \\
&\qquad 
- \int_\Gamma \frac{\ve_-}{2} | \partial_n \psi_\Gamma^-|^2 (V \cdot n)\, dS
- \int_\Gamma \frac{\ve_-}{2} | \nabla_\Gamma  \psi_\Gamma|^2 (V \cdot n)\, dS
\nonumber \\
&\quad  
= - \int_\Gamma \ve_+ | \partial_n \psi_\Gamma^+ |^2 (V \cdot n)\, dS 
+ \int_\Gamma \ve_- | \partial_n \psi_\Gamma^- |^2 (V \cdot n) \, dS 
\nonumber \\
&\qquad 
+ \int_\Gamma \frac{\ve_+}{2} | \partial_n \psi_\Gamma^+|^2 (V\cdot n)\, dS
+ \int_\Gamma \frac{\ve_+}{2} | \nabla_\Gamma  \psi_\Gamma|^2 (V\cdot n)\, dS
\nonumber \\
&\qquad 
- \int_\Gamma \frac{\ve_-}{2} | \partial_n \psi_\Gamma^-|^2 (V \cdot n)\, dS
- \int_\Gamma \frac{\ve_-}{2} | \nabla_\Gamma  \psi_\Gamma|^2 (V \cdot n)\, dS
\nonumber \\
&\quad  
= - \frac12 \left( \frac{1}{\ve_+} - \frac{1}{\ve_-} \right)
 \int_\Gamma  |\ve_\Gamma  \partial_n \psi_\Gamma |^2 (V \cdot n)\, dS 
+ \frac{\ve_+-\ve_-}{2} \int_\Gamma | \nabla_\Gamma  \psi_\Gamma|^2 (V \cdot n)\, dS. 
\end{align}

Similarly, on each side of $\Gamma$, we have with $u_\Gamma = \phi_{\Gamma, \infty}$
or $\hat{\phi}_{\Gamma, \infty}$ that 
\begin{align*}
\nabla u_\Gamma \cdot V &= (\partial_n u_\Gamma n + \nabla_\Gamma u_\Gamma) \cdot
( (V\cdot n) n + ( I - n \otimes n) V ) 
\\
& = \partial_n u_\Gamma (V \cdot n) +  \nabla_\Gamma u_\Gamma ( I - n\otimes n) V. 
\end{align*}
Moreover, $\ve_+ \partial_n u_\Gamma^+ = \ve_- \partial_n u_\Gamma^-$ and 
$\partial_\Gamma u_\Gamma^+ =  \partial_\Gamma u_\Gamma^-$ on $\Gamma$. Therefore, 
the next three terms in \reff{dGVE_2} become
\begin{align}
\label{567}
& \frac12 \int_\Gamma  \llbracket  \ve_\Gamma   
(\nabla \hat{\phi}_{\Gamma, \infty} \cdot V) (\nabla {\phi}_{\Gamma, \infty} \cdot n) 
\rrbracket_\Gamma \, dS
+ \frac12 \int_\Gamma \llbracket \ve_\Gamma (\nabla \hat{\phi}_{\Gamma, \infty} \cdot n) 
(\nabla {\phi}_{\Gamma, \infty} \cdot V) \rrbracket_\Gamma \, dS 
\nonumber \\
&\quad 
- \frac12 \int_\Gamma \llbracket \ve_\Gamma  
(\nabla \hat{\phi}_{\Gamma, \infty} \cdot \nabla {\phi}_{\Gamma, \infty}) ( V\cdot n) 
\rrbracket_{\Gamma}\, dS 
\nonumber \\
& \quad 
=  \int_\Gamma  \llbracket  \ve_\Gamma   
\partial_n \hat{\phi}_{\Gamma, \infty} \partial_n \phi_{\Gamma, \infty} 
\rrbracket_\Gamma (V \cdot n) \, dS
\nonumber \\
& \qquad 
- \frac12 \int_\Gamma  \llbracket  \ve_\Gamma   
( \partial_n \hat{\phi}_{\Gamma, \infty} \partial_n \phi_{\Gamma, \infty} 
+ \nabla_\Gamma \hat{\phi}_{\Gamma, \infty} \cdot \nabla_\Gamma \phi_{\Gamma, \infty} )
\rrbracket_\Gamma (V \cdot n) \, dS
\nonumber \\
&\quad 
= \frac12 \int_\Gamma  \llbracket  \ve_\Gamma   
\partial_n \hat{\phi}_{\Gamma, \infty} \partial_n \phi_{\Gamma, \infty} 
\rrbracket_\Gamma (V \cdot n) \, dS
- \frac12 \int_\Gamma  \llbracket  \ve_\Gamma   
 \nabla_\Gamma \hat{\phi}_{\Gamma, \infty} \cdot \nabla_\Gamma \phi_{\Gamma, \infty} 
\rrbracket_\Gamma (V \cdot n) \, dS. 
\end{align}

%%It now follows from \reff{dGVE_2}, \reff{1234}, and \reff{567} that 
It now follows from \reff{dGVE_2}--\reff{567} that 
\begin{align}
\label{dGVE3}
\delta_{\Gamma, V} E[\Gamma] 
& = - \frac12 \left( \frac{1}{\ve_+} - \frac{1}{\ve_-} \right)
 \int_\Gamma  |\ve_\Gamma  \partial_n \psi_\Gamma |^2 (V \cdot n)\, dS 
+ \frac{\ve_+-\ve_-}{2} \int_\Gamma | \nabla_\Gamma  \psi_\Gamma|^2 (V \cdot n)\, dS 
\nonumber \\
& \quad 
+ \frac12 \int_\Gamma  \llbracket  \ve_\Gamma   
\partial_n \hat{\phi}_{\Gamma, \infty} \partial_n \phi_{\Gamma, \infty} 
\rrbracket_\Gamma (V \cdot n) \, dS
- \frac12 \int_\Gamma  \llbracket  \ve_\Gamma   
 \nabla_\Gamma \hat{\phi}_{\Gamma, \infty} \cdot \nabla_\Gamma \phi_{\Gamma, \infty} 
\rrbracket_\Gamma (V \cdot n) \, dS
\nonumber \\
& \quad 
+ \int_{\Omega_+} \frac{1}{2}\left( \zeta_{\Gamma, V} 
- \nabla \phi_{\Gamma, \infty} \cdot V  \right)
B'\left( \psi_\Gamma -\frac{\phi_{\Gamma,\infty} }{2} \right) dX 
\nonumber \\
& \quad 
+ \int_\Gamma B \left( \psi_\Gamma -\frac{\phi_{\Gamma,\infty} }{2} \right) (V\cdot n)\, dS. 
\end{align}

{\it Step 5.}
We finally rewrite the volume integral above into a surface integral on the boundary $\Gamma.$ 
Recall from the beginning of Subsection~\ref{ss:DBF} that the signed distance 
function $\phi: \R^3 \to \R$ with respect to $\Gamma$ is a $C^3$-function and 
$\nabla \phi \ne 0$ in the neighborhood $\calN_0(\Gamma)$ of $\Gamma.$ 
We extend $n = \nabla \phi $ on $\Gamma$ 
to $\calN_0(\Gamma)$, i.e., we define $n = \nabla \phi $ at every point in $\calN_0(\Gamma)$. 
Note that $n \in C^2(\calN_0(\Gamma)).$ 
%Let $\phi: \Omega \to \R$ be the signed sistance function to $\Gamma,$
%negative in $\Omega_-$ and positive in $\Omega_+$.  Since $\Gamma$ is of the class $C^3$, 
%there exists a small neighborhoond ${\cal N}_\Gamma$ in $\Omega$ of $\Gamma$ such that 
%$\phi \in C^3 ({\cal N}_\Gamma)$ (cf.\ \cite{KrantzParks_JDE1981}) 
%and $\nabla \phi  \ne  0$ in ${\cal N}_\Gamma.$ Moreover, 
%$n= \nabla \phi / | \nabla \phi | $ is exactly the unit normal along $\Gamma$, pointing from 
%$\Omega_-$ to $\Omega_+.$  
%%$n$ from $\Gamma$ to a neighborhood of $\Gamma$ by 
%%Note that $n $ is a $C^1$ unit vector field in this neighborhood. 
Since $V \in {\cal V}$ vanishes outside $\calN_0(\Gamma)$, 
both the normal component $(V\cdot n) n$ and the tangential component 
$V - (V \cdot n)n = (I-n \otimes n) V $ of $V$ are in the class of vector fields $\cal V;$
cf.\ \reff{calV}. 
Since 
\[
V = (V\cdot n) n + (I-n \otimes n) V  \quad \mbox{and} \quad (I-n \otimes n) V \cdot n = 0,
\]
we have by Lemma~\ref{l:pGinfty} that 
\begin{align*}
\zeta_{\Gamma, V} - \nabla \phi_{\Gamma, \infty} \cdot V
& =  \zeta_{\Gamma, (V\cdot n) n + (I-n \otimes n) V} - 
\nabla \phi_{\Gamma, \infty} \cdot [ (V\cdot n) n + (I-n \otimes n) V ]   
\\
& = \zeta_{\Gamma, (V\cdot n) n } - \nabla \phi_{\Gamma, \infty} \cdot (V \cdot n) n
+ \zeta_{\Gamma, (I-n\otimes n) V } - \nabla \phi_{\Gamma, \infty} \cdot 
(I -n \otimes n) V 
\\
& = \zeta_{\Gamma, (V\cdot n) n } - \nabla \phi_{\Gamma, \infty} \cdot (V \cdot n) n
\qquad \mbox{in } \Omega. 
\end{align*}
Therefore, we may assume that 
\begin{equation}
\label{VisVnn}
V = (V \cdot n)n \qquad \mbox{in } \mathcal{N}_0(\Gamma).
\end{equation}

%This is defined on a neighborhood of $\Gamma$ but we can extend it smoothly 
%to the entire $\Omega$ or $\R^3$ so that the support of this new and extended vector filded
%is still a compact subset of $\Omega$ and it is away from the points $x_i$ $(i = 1, \dots, N)$
%and the boundary $\partial \Omega.$  

%% First, we argue that only the normal component of $V$ affects the 
%%the evolution of dielectric boundary. For two velocity fields $V_1$ 
%%and $V_2$ that satisfy $V_1\cdot n = V_2\cdot n$, we know from Theorem 
%%\ref{t:vn=0} that, with $V_0:=V_1-V_2$, the dielectric boundary 
%%force $\delta_{\Gamma, V_0} G[\Gamma] =0$. Thus, for any velocity 
%%field $V$, the dielectric boundaries governed by $V$ and $(V\cdot n ) n$ are the same.

%It follows from \reff{shiftedweakPBE} and \reff{phiGammaWeak} that 
%\[
%\int_\Omega \ve_\Gamma \nabla u_\Gamma \cdot \nabla \eta \, dX + 
% \int_{\Omega_+} B'\left( \psi_\Gamma -\frac{\phi_{\Gamma,\infty} }{2} \right) \eta \, dX  
%= 0 \qquad \forall \eta \in H_0^1(\Omega). 
%\]
%Choosing $\eta = \zeta_{\Gamma,V} - \nabla \phi_{\Gamma, \infty} \cdot V \in H_0^1(\Omega)$, 
%{\bf [In general, this is not necessary in $H^1_0(\Omega)$. But here $V$ only has the
%normal component. True? Check it.]}

By Lemma~\ref{l:pGinfty}, $\zeta_{\Gamma, V}^{\rm s} \in H^2(\Omega_{\rm s})$ for 
${\rm s} = -$ or $+$. Thus, by 
\reff{DwG}, $\Delta (\nabla \phi_{\Gamma, \infty} \cdot V) \in L^2(\Omega_{\rm s})$
%%\cap C(\Omega_{\rm s})$ 
for ${\rm s} = - $ or $+.$ Therefore, 
\begin{equation}
\label{good}
\nabla \phi_{\Gamma, \infty} \cdot V \in H^2(\Omega_{\rm s})
\qquad \mbox{for s } = - \mbox{ or } +.
\end{equation}
Recall from \reff{4notations} that 
$\psi_{\rm r} = \psi_\Gamma - \hat{\phi}_{\Gamma, \infty} \in H^1_0(\Omega)$.  
%where $ \hat{\phi}_{\Gamma, \infty}$ is defined by \reff{phiGammaWeak} and 
%$\hat{\phi}_{\Gamma, \infty} = \phi_\infty$ on $\partial \Omega.$
%It follows from  the first equation in \reff{interfaceform} with 
%$\psi_\Gamma $ replacing $\psi$, \reff{DhatphiGi}, and 
%the second equation in \reff{interfaceform} with $\psi_\Gamma$ replacing $\psi$ that
Note by Theorem~\ref{t:DBPBE} that $\Delta \psi_{\rm r}= 0$ in $\Omega_-$ and 
$\ve_+ \Delta \psi_{\rm r} = B'(\psi_\Gamma - \phi_{\Gamma, \infty}/2)$ in $\Omega_+$. 
Note also by \reff{DwG} in Lemma~\ref{l:pGinfty} that 
$\Delta (\zeta_{\Gamma, V} - \nabla \phi_{\Gamma, \infty} \cdot V) =0$ 
in $\Omega_- \cup \Omega_+$. 
%We then obtain by \reff{DwG} in Lemma~\ref{l:pGinfty},  
We then obtain by Green's second identity with our convention that the 
normal $n$ at $\Gamma$ pointing from $\Omega_-$ to $\Omega_+$ and the fact that 
$ \llbracket  \varepsilon_\Gamma \zeta_{\Gamma, V} \partial_n \psi_{\rm r}  \rrbracket_\Gamma = 0$ 
which follows from the third equation in \reff{interfaceform} that 
twice of the volume term in \reff{dGVE3} is 
\begin{align}
\label{Q}
%\label{SG12}
Q & := \int_{\Omega_+} \left(  \zeta_{\Gamma, V} - \nabla \phi_{\Gamma, \infty} \cdot V  \right)
B'\left( \psi_\Gamma  -\frac{\phi_{\Gamma,\infty} }{2} \right) dX  
\nonumber 
\\
& = \int_{\Omega_+} \ve_+ 
\left[ \left(  \zeta_{\Gamma, V} - \nabla \phi_{\Gamma, \infty} \cdot V  \right) \Delta \psi_{\rm r} 
- \psi_{\rm r} \Delta \left(  \zeta_{\Gamma, V} 
- \nabla \phi_{\Gamma, \infty} \cdot V  \right) \right] dX  
\nonumber 
\\
 & \quad + \int_{\Omega_-} \ve_- 
\left[ \left(  \zeta_{\Gamma, V} - \nabla \phi_{\Gamma, \infty} \cdot V  \right) 
\Delta \psi_{\rm r} 
- \psi_{\rm r} \Delta \left(  \zeta_{\Gamma, V} 
- \nabla \phi_{\Gamma, \infty} \cdot V  \right) \right] dX  
%%\\
%%& = - \int_\Omega \ve_\Gamma \nabla u_\Gamma \cdot \nabla 
%%( \zeta_{\Gamma, V} - \nabla \phi_{\Gamma, \infty} \cdot V )  \, dX
%%\\
%%& = - \int_\Omega \ve_\Gamma \nabla u_\Gamma \cdot \nabla 
%%( \zeta_{\Gamma, V} - \nabla \phi_{\Gamma, \infty} \cdot V )  \, dX
%%\nonumber 
%%\\
%& = - \int_{\Omega_-} \ve_- \nabla u_\Gamma \cdot \nabla 
%%( \zeta_{\Gamma, V} - \nabla \phi_{\Gamma, \infty} \cdot V  ) \, dX
%%- \int_{\Omega_+} \ve_+ \nabla u_\Gamma \cdot \nabla 
%%( \zeta_{\Gamma, V} -  \nabla \phi_{\Gamma, \infty} \cdot V  ) \, dX
%%\nonumber 
%%\\
%%& = \int_{\Omega_-} \ve_- u_\Gamma \Delta ( \zeta_{\Gamma, V} 
%%- \nabla \phi_{\Gamma, \infty} \cdot V ) \, dX - \int_\Gamma \ve_- u^-_\Gamma 
%%\partial_n ( \zeta_{\Gamma,V}^-  - \nabla \phi^-_{\Gamma, \infty} \cdot V ) \, dS
%%\nonumber 
%%\\
%%&\qquad + \int_{\Omega_+} \ve_+ u_\Gamma \Delta ( \zeta_{\Gamma, V} 
%%- \nabla \phi_{\Gamma, \infty} \cdot V ) \, dX + \int_\Gamma \ve_+ u^+_\Gamma \partial_n 
%%( \zeta_{\Gamma,V}^+ - \nabla \phi^+_{\Gamma, \infty} \cdot V ) \, dS
%%\nonumber 
\nonumber 
\\
& = - \int_\Gamma \llbracket  \varepsilon_\Gamma 
\left[  \left( \zeta_{\Gamma, V} - \nabla \phi_{\Gamma, \infty} \cdot V \right) 
\partial_n \psi_{\rm r} 
- \psi_{\rm r} \partial_n \left( \zeta_{\Gamma, V} - \nabla \phi_{\Gamma, \infty} \cdot V \right) \right]  
 \rrbracket_\Gamma  \, dS
\nonumber 
\\
& = \int_\Gamma \llbracket  \varepsilon_\Gamma  
(\nabla \phi_{\Gamma, \infty} \cdot V) \partial_n \psi_{\rm r} \rrbracket_\Gamma  \, dS
+ \int_\Gamma \llbracket  \varepsilon_\Gamma  \psi_{\rm r} 
\partial_n \zeta_{\Gamma, V} \rrbracket_\Gamma  \, dS
- \int_\Gamma \llbracket  \varepsilon_\Gamma  \psi_{\rm r} 
\partial_n ( \nabla {\phi}_{\Gamma, \infty} \cdot V ) \rrbracket_\Gamma \, dS 
\nonumber 
\\
&  = Q_1 + Q_2 - Q_3. 
\end{align}

It follows from \reff{VisVnn} that 
\begin{align}
\label{Q1}
Q_1 & = 
 \int_\Gamma \llbracket  \varepsilon_\Gamma  (\nabla \phi_{\Gamma, \infty} \cdot V) 
\partial_n \psi_{\rm r} \rrbracket_\Gamma  \, dS
= \int_\Gamma \llbracket  \varepsilon_\Gamma  \partial_n \phi_{\Gamma, \infty} 
\partial_n \psi_{\rm r} \rrbracket_\Gamma ( V \cdot n)  dS. 
\end{align}

Since $\llbracket \phi_{\rm r} \rrbracket_\Gamma = 0$ and 
$\llbracket \ve_\Gamma \partial_n \phi_{\Gamma, \infty} \rrbracket_\Gamma = 0$, we have 
by Lemma~\ref{l:pGinfty} (cf.\ \reff{jump_pnwG}) that 
\begin{align}
%\label{SG1}
\label{Q2}
Q_2 & = \int_\Gamma \llbracket \varepsilon_\Gamma \psi_{\rm r} 
\partial_n \zeta_{\Gamma,V} \rrbracket_\Gamma  \, dS
\nonumber \\
&= - \int_\Gamma \llbracket \varepsilon_\Gamma  \psi_{\rm r}  A_V'(0) 
\nabla \phi_{\Gamma, \infty} \cdot n  \rrbracket_\Gamma \,  dS
\nonumber\\
&=\int_\Gamma \llbracket \varepsilon_\Gamma  \psi_{\rm r}
\left[ \nabla V +(\nabla V)^T - (\nabla \cdot V)I \right] 
\nabla \phi_{\Gamma, \infty} \cdot n\rrbracket_\Gamma  \, dS
\nonumber\\
&=\int_\Gamma \llbracket \varepsilon_\Gamma  \psi_{\rm r}
\left[ \nabla V +(\nabla V)^T \right] 
\nabla \phi_{\Gamma, \infty} \cdot n\rrbracket_\Gamma \, dS
\nonumber\\
&=\int_\Gamma \llbracket \varepsilon_\Gamma  \psi_{\rm r}
\nabla \phi_{\Gamma, \infty} \cdot \left[\nabla V +(\nabla V)^T \right] n 
\rrbracket_\Gamma  \, dS. 
\end{align}
%%Let us assume that $ V\cdot \tau =0 \text{ and } V=(V\cdot n)n,$ 
%%with $n=\nabla \phi$ ($\phi$: signed distance function). 
%%So, $n$ is defined in a neighborhood of $\Gamma$ and it is still a unit vector.  
Denoting by $n^j$ the $j$th component of $n$
and noting that $ \partial_i n^j n^j = (1/2) \partial_i \| n \|^2  = 0,$
we obtain on each side of $\Gamma$ (i.e., on $\mathcal{N}_0(\Gamma) \cup \Omega_-$
and $\mathcal{N}_0(\Gamma) \cup \Omega_+$) that 
\begin{align*}
& \nabla \phi_{\Gamma, \infty} \cdot (\nabla V +(\nabla V)^T) n
\\
&\qquad  
=\partial_i \phi_{\Gamma, \infty} 
\left( \partial_j V^i+\partial_i V^j \right) n^j 
\nonumber\\
%%& \qquad =\varepsilon_\Gamma u_\Gamma \left( \partial_i \phi_{\Gamma, \infty} 
%%\partial_jV^i n^j+ \partial_i \phi_{\Gamma, \infty} \partial_iV^j n^j \right)
%%+\varepsilon_\Gamma  u_\Gamma \nabla \phi_{\Gamma, \infty} \cdot (\nabla V +(\nabla V)^T ) n
%%\nonumber\\
&\qquad =
 \partial_i \phi_{\Gamma, \infty} \partial_j( (V\cdot n) n^i) n^j 
+  \partial_i \phi_{\Gamma, \infty} \partial_i ((V\cdot n)n^j) n^j
\qquad \mbox{[by \reff{VisVnn}]}
\nonumber\\
&\qquad = \partial_i \phi_{\Gamma, \infty} \partial_j(V\cdot n)n^i n^j+
 \partial_i \phi_{\Gamma, \infty} (V\cdot n)\partial_jn^i n^j 
\nonumber\\
&\qquad \qquad 
+ \partial_i \phi_{\Gamma, \infty} \partial_i ( V\cdot n)n^j n^j
+  \partial_i \phi_{\Gamma, \infty} (V\cdot n)\partial_i n^j n^j
\nonumber\\
&\qquad = (\nabla\phi_{\Gamma, \infty} \cdot n) \nabla(V\cdot n)\cdot n+
\nabla \phi_{\Gamma, \infty} \cdot ((\nabla n) n)  (V\cdot n)
 + \nabla\phi_{\Gamma, \infty} \cdot\nabla(V\cdot n). 
\end{align*}
%%%%%%%%%%%%%%%%%%%%%%%%%%%%%%%%%%%%%%%
\begin{comment}

\begin{align*}
& \varepsilon_\Gamma  u_\Gamma  \nabla \phi_{\Gamma, \infty} \cdot (\nabla V +(\nabla V)^T) n
\\
&\qquad  
=\varepsilon_\Gamma u_\Gamma \partial_i \phi_{\Gamma, \infty} 
\left( \partial_j V^i+\partial_i V^j \right) n^j 
\nonumber\\
%%& \qquad =\varepsilon_\Gamma u_\Gamma \left( \partial_i \phi_{\Gamma, \infty} 
%%\partial_jV^i n^j+ \partial_i \phi_{\Gamma, \infty} \partial_iV^j n^j \right)
%%+\varepsilon_\Gamma  u_\Gamma \nabla \phi_{\Gamma, \infty} \cdot (\nabla V +(\nabla V)^T ) n
%%\nonumber\\
&\qquad =
\varepsilon_\Gamma u_\Gamma  \partial_i \phi_{\Gamma, \infty} \partial_j( (V\cdot n) n^i) n^j 
+ \varepsilon_\Gamma u_\Gamma  \partial_i \phi_{\Gamma, \infty} \partial_i ((V\cdot n)n^j) n^j
\nonumber\\
&\qquad =\varepsilon_\Gamma u_\Gamma  \partial_i \phi_{\Gamma, \infty} 
\partial_j(V\cdot n)n^i n^j+
\ve_\Gamma u_\Gamma \partial_i \phi_{\Gamma, \infty} (V\cdot n)\partial_jn^i n^j 
\nonumber\\
&\qquad \qquad 
+\ve_\Gamma u_\Gamma \partial_i \phi_{\Gamma, \infty} \partial_i ( V\cdot n)n^j n^j
+\ve_\Gamma u_\Gamma \partial_i \phi_{\Gamma, \infty} (V\cdot n)\partial_i n^j n^j
\nonumber\\
&\qquad =\varepsilon_\Gamma u_\Gamma (\nabla\phi_{\Gamma, \infty} \cdot n)
\nabla(V\cdot n)\cdot n+\varepsilon_\Gamma u_\Gamma 
\nabla \phi_{\Gamma, \infty} \cdot ((\nabla n) n)  (V\cdot n)
 +\varepsilon_\Gamma u_\Gamma  \nabla\phi_{\Gamma, \infty} \cdot\nabla(V\cdot n). 
\end{align*}

\end{comment}
%%%%%%%%%%%%%%%%%%%%%%%%%%%%%%%%%%%%%%%
%%where we used the fact that 
%%\begin{align}
%%\partial_i n^j n^j=\frac{1}{2}\partial_i (\|n\|^2)=\frac{1}{2}\partial_i (1)=0.
%%\end{align}
This and \reff{Q2}, together with the fact that 
$ \llbracket \varepsilon_\Gamma \nabla \phi_{\Gamma, \infty} \cdot n  \rrbracket_\Gamma =0 $ 
on $\Gamma$, lead to 
%%simplifies $Q_2 $ \reff{Q2}  as 
%%\llbracket \varepsilon_\Gamma u(\nabla\hat{\psi}_\infty\cdot n)
%%\nabla(V\cdot n)\cdot n\rrbracket=0 \quad \text{on } \Gamma.
\begin{align}
%%\label{first_integral}
%%\label{SG1again}
\label{Q2again}
Q_2 & =\int_\Gamma \llbracket\varepsilon_\Gamma \psi_{\rm r} \nabla \phi_{\Gamma, \infty} \cdot 
(\nabla n) n \rrbracket_\Gamma (V\cdot n)\, dS
+\int_\Gamma \llbracket\varepsilon_\Gamma \psi_{\rm r}
 \nabla \phi_{\Gamma, \infty} \cdot \nabla (V\cdot n)\rrbracket_\Gamma  \, dS
\nonumber \\
& = Q_{2, 1} + Q_{2, 2}. 
\end{align}

To further simplify these terms, let us recall the surface divergence 
$\nabla_\Gamma v$ for a vector field $v$ along the boundary $\Gamma$
and its integral on $\Gamma$
\begin{align}
%\label{Ggrad}
%& \nabla_\Gamma a  = (I-n\otimes n)\nabla a, 
%\\
\label{Gdiv}
&\nabla_\Gamma \cdot v =  \nabla \cdot v - (\nabla v)  n  \cdot n, 
\\
\label{Hvn}
&
\int_\Gamma \nabla_\Gamma \cdot v\, dS = 2 \int_\Gamma H   ( v\cdot n)  \, dS, 
\end{align}
where H is the mean curvature; cf.\ \cite{DelfourZolesio_Book87} (Section 5 of Chapter 9). 
%% (see formular(5.26) $\S 5$ ch.9 Delfour-Zolesio)

Consider the term $Q_{2, 1}$ in \reff{Q2again}.  Since $n = \nabla \phi$ is a unit vector field, 
we have $n \cdot (\nabla n) n = n^i \partial_j n^i n^j =  (1/2) n^j \partial_j (n^i n^i) = 0.$ Hence, 
on each side of $\Gamma$, we have 
%%by \reff{Ggrad} that 
\begin{equation}
\label{nnn}
\nabla \phi_{\Gamma, \infty} \cdot (\nabla n) n 
= \nabla_\Gamma  \phi_{\Gamma, \infty} \cdot (\nabla n) n.
%% \qquad \mbox{in } \mathcal{N}_0(\Gamma). 
\end{equation}
Let us denote $\alpha_\Gamma =\psi_{\rm r}
\nabla_\Gamma \phi_{\Gamma, \infty}$ and note that 
$\llbracket \alpha_\Gamma \rrbracket_\Gamma = 0.$ 
Hence $\alpha_\Gamma \in H^1(\mathcal{N}_0(\Gamma),\R^3).$ 
Note also that  $\alpha_\Gamma  \cdot n = 0.$ Thus, 
\begin{align}
\label{nTn}
(\nabla \alpha_\Gamma) n \cdot n + \alpha_\Gamma \cdot (\nabla n) n 
&= [ (\nabla \alpha_\Gamma)^T n + ( \nabla n)^T \alpha_\Gamma ] \cdot n
\nonumber \\
&= \nabla (\alpha_\Gamma \cdot n) \cdot n 
\nonumber \\
&= 0 \qquad \mbox{in } \mathcal{N}_0(\Gamma).
\end{align}
This implies that 
\begin{equation}
\label{alphaG}
(\nabla \alpha_\Gamma n) \cdot n = - \alpha_\Gamma \cdot (\nabla n)n
\in H^1(\mathcal{N}_0(\Gamma)).   
% \qquad \mbox{for s } = - \mbox{ or } +. 
\end{equation}
By \reff{VisVnn}, we have for ${\rm s}  = - $ or $+$ that 
\begin{align*} 
 \nabla (\nabla \phi_{\Gamma, \infty} \cdot V ) \cdot n
& = \nabla ( ( \nabla \phi_{\Gamma, \infty} \cdot n)  (V \cdot n)) \cdot n
\\
& = ( \nabla ( \nabla \phi_{\Gamma, \infty} \cdot n) \cdot n)   (V \cdot n) 
+ ( \nabla \phi_{\Gamma, \infty} \cdot n)  \nabla (V \cdot n) \cdot n
\qquad \mbox{in } \Omega_{\rm s} \cap \mathcal{N}_0(\Gamma).  
\end{align*}
This, together with \reff{phiGireg} and \reff{good}, implies for ${\rm s}  = - $ or $+$ that 
\begin{equation}
\label{ddp}
( \nabla ( \nabla \phi_{\Gamma, \infty} \cdot n) \cdot n)   (V \cdot n) \in 
H^1(\Omega_{\rm s} \cap \mathcal{N}_0(\Gamma)). 
\end{equation}
Therefore, since $\nabla_\Gamma \phi_{\Gamma, \infty}
= \nabla \phi_{\Gamma, \infty} - (\nabla \phi_{\Gamma, \infty} \cdot n) n, $ 
$\Delta \phi_{\Gamma, \infty} =0$ 
in $\Omega_-$ and $\Omega_+$, and $\psi_{\rm r}$ and $\phi_{\Gamma, \infty}$
are in $W^{1,\infty}$ on each side of $\Gamma$, 
we can verify that for ${\rm s}  = - $ or $+$ 
\begin{align}
\label{dalpha}
& (\nabla \cdot \alpha_\Gamma) (V \cdot n)
 = (\nabla \psi_{\rm r}  \cdot \nabla \phi_{\Gamma, \infty} ) (V \cdot n) 
- (\nabla \psi_{\rm r} \cdot n) (\nabla \phi_{\Gamma, \infty} \cdot n) (V\cdot n)
\nonumber \\
&\qquad 
- \psi_{\rm r} ( \nabla (\nabla \phi_{\Gamma, \infty} \cdot n) \cdot n) (V \cdot n)
- \psi_{\rm r} ( \nabla \phi_{\Gamma, \infty} \cdot n) ( \nabla \cdot n) ( V\cdot n)
\in H^1(\Omega_{\rm s} \cap \mathcal{N}_0(\Gamma)). 
\end{align}
By \reff{alphaG}, \reff{dalpha}, and \reff{Gdiv} (with $\alpha_\Gamma$ replacing $v$), 
we have for ${\rm s}  = - $ or $+$ that 
\begin{equation}
\label{Tdiv}
 ( \nabla_\Gamma  \cdot \alpha_\Gamma ) (V \cdot n) 
= ( \nabla \cdot \alpha_\Gamma ) (V \cdot n) - 
( \nabla \alpha_\Gamma  n  \cdot n ) ( V \cdot n) 
\in H^1(\Omega_{\rm s} \cap \mathcal{N}_0(\Gamma)). 
\end{equation}
With all the regularity results \reff{alphaG}, \reff{dalpha}, and \reff{Tdiv}, 
we have now by \reff{nnn}, \reff{nTn}, 
and \reff{Gdiv} (with $\alpha_\Gamma$ replacing $v$) that 
\begin{align}
\label{Q21}
Q_{2, 1} 
%%&= \int_\Gamma \llbracket\varepsilon_\Gamma u_\Gamma \nabla \phi_{\Gamma, \infty} \cdot 
%%(\nabla n) n \rrbracket_\Gamma (V\cdot n)\, dS
%%\nonumber \\
& = \int_\Gamma \llbracket\varepsilon_\Gamma \alpha_\Gamma \cdot 
(\nabla n) n \rrbracket_\Gamma (V\cdot n)\, dS
\nonumber \\
& = - \int_\Gamma \llbracket\varepsilon_\Gamma (\nabla \alpha_\Gamma) n  \cdot n 
\rrbracket_\Gamma (V\cdot n)\, dS
\nonumber \\
& =  \int_\Gamma \llbracket\varepsilon_\Gamma ( \nabla_\Gamma \cdot \alpha_\Gamma
-  \nabla \cdot \alpha_\Gamma) \rrbracket_\Gamma (V\cdot n)\, dS. 
\end{align}

%%%%%%%%%%%%%%%%%%%%%%%%%%%%%%%%%%%%%%%%%%%%%%%%%%%
\begin{comment}

\begin{align}\label{equal_1}
&\nabla \alpha_\Gamma n \cdot n +\alpha_\Gamma \cdot(\nabla n n) \nonumber\\
=&\partial_j \alpha_\Gamma^i n^j n^i + \alpha_\Gamma^i \partial_j n^i n^j  \nonumber\\
=&(\partial_j \alpha_\Gamma^i  n^i + \alpha_\Gamma^i \partial_j n^i) n^j \nonumber\\
=&\partial_j( \alpha_\Gamma^i  n^i ) n^j  \nonumber\\
=&\nabla( \alpha_\Gamma\cdot  n  ) \cdot n=0 \ ( \text{Since\ }   \alpha_\Gamma\cdot  n=0 )
\end{align}
So,
\begin{align}
&-\int_\Gamma \llbracket \varepsilon_\Gamma u \frac{\partial \hat{w}}{\partial n}\rrbracket dS\nonumber\\
=&\int_\Gamma \llbracket\varepsilon_\Gamma\nabla_\Gamma\cdot \alpha_\Gamma \rrbracket (V\cdot n) dS-\int_\Gamma \llbracket\varepsilon_\Gamma \alpha_\Gamma \cdot (\nabla n n)\rrbracket(V\cdot n)dS\nonumber\\
=&\int_\Gamma \llbracket\varepsilon_\Gamma \nabla\cdot \alpha_\Gamma-\varepsilon_\Gamma \nabla \alpha_\Gamma n\cdot n  -\varepsilon_\Gamma \alpha_\Gamma \cdot (\nabla n n)\rrbracket(V\cdot n)dS (\text{ Since (\ref{def_surface_div})}) \nonumber\\
=&\int_\Gamma \llbracket\varepsilon_\Gamma \nabla\cdot \alpha_\Gamma \rrbracket(V\cdot n)dS \nonumber (\text{ Since (\ref{equal_1})})\\
=&\int_\Gamma \llbracket\varepsilon_\Gamma \nabla\cdot (u\nabla_\Gamma\hat{\psi}_\infty)\rrbracket(V\cdot n)dS \nonumber
\end{align}

%\begin{align*}
%&\nabla \phi_{\Gamma, \infty} 
%= (\nabla \phi_{\Gamma, \infty} \cdot n) n + \nabla_\Gamma \phi_{\Gamma, \infty}, 
%\\
%& \nabla (V \cdot n) = ( \nabla (V \cdot n) \cdot n ) n + \nabla_\Gamma (V \cdot n).
%\end{align*}
%where $\nabla_\Gamma \hat{\psi}_\infty=\nabla \hat{\psi}_\infty-(\nabla 
%%\hat{\psi}_\infty\cdot n)n=(I-n\otimes n)\nabla \hat{\psi}_\infty$ 
%%is the surface gradient or tangential gradient of $\nabla \hat{\psi}_\infty$ along $\Gamma$.
%%Let $a=\nabla V\cdot n. $ Then
%%\begin{align}
%\nabla a=(\nabla a\cdot n)n+\nabla_\Gamma a.
%%\end{align}

\end{comment}
%%%%%%%%%%%%%%%%%%%%%%%%%%%%%%%%%%%%%%%%%%%%%%%%%%%

Consider now the term $Q_{2, 2}$ in \reff{Q2again}.  
%%We have Note by \reff{Ggrad} that on each side of $\Gamma$
On each side of $\Gamma$, 
\begin{align*}
\nabla \phi_{\Gamma, \infty} \cdot \nabla (V\cdot n)
&= \left[  (\nabla \phi_{\Gamma, \infty} \cdot n) n + \nabla_\Gamma \phi_{\Gamma, \infty} \right]
\cdot \left[ ( \nabla (V \cdot n) \cdot n ) n + \nabla_\Gamma (V \cdot n) \right]
\nonumber\\
%%&=[(\nabla \hat{\psi}_\infty\cdot n)n+\nabla_\Gamma \hat{\psi}_\infty]
%%\cdot [(\nabla a\cdot n)n+\nabla_\Gamma a]\nonumber\\
&=(\nabla \phi_{\Gamma, \infty} \cdot n ) ( \nabla ( V \cdot n) \cdot n)
+\nabla_\Gamma \phi_{\Gamma, \infty} \cdot \nabla_\Gamma (V \cdot n). 
\end{align*}
Since $\llbracket \psi_{\rm r} \rrbracket_\Gamma=0$ and 
$\llbracket \varepsilon_\Gamma\nabla\phi_{\Gamma, \infty} \cdot n \rrbracket_\Gamma=0$, we thus have 
\begin{align}
\label{veuDD}
& \llbracket \varepsilon_\Gamma \psi_{\rm r} \nabla \phi_{\Gamma, \infty} 
\cdot \nabla (V\cdot n) \rrbracket_\Gamma
\nonumber\\
& \qquad  =\llbracket  \varepsilon_\Gamma \psi_{\rm r} 
 (\nabla \phi_{\Gamma, \infty} \cdot n) (\nabla (V\cdot n) \cdot n) \rrbracket_\Gamma 
+ \llbracket \varepsilon_\Gamma \psi_{\rm r}  \nabla_\Gamma \phi_{\Gamma, \infty} 
\cdot \nabla_\Gamma (V \cdot n)  \rrbracket_\Gamma
\nonumber\\
& \qquad =\llbracket   \varepsilon_\Gamma \alpha_\Gamma 
\cdot \nabla_\Gamma (V \cdot n)  \rrbracket_\Gamma. 
\end{align}
One can verify that on both side of $\Gamma$ 
\begin{align*}
%%\nabla_\Gamma \cdot ( ( V\cdot n)  u_\Gamma \nabla_\Gamma \phi_{\Gamma, \infty} ) 
%%= (V \cdot n)  \nabla_\Gamma \cdot ( u_\Gamma \nabla_\Gamma \phi_{\Gamma, \infty} )
%%+ u_\Gamma \nabla_\Gamma \phi_{\Gamma, \infty} \cdot \nabla_\Gamma (V  \cdot n).  
\nabla_\Gamma \cdot ( ( V\cdot n)  \alpha_\Gamma ) 
= (V \cdot n)  \nabla_\Gamma \cdot  \alpha_\Gamma 
+ \alpha_\Gamma \cdot \nabla_\Gamma (V  \cdot n).  
\end{align*}
%%where the surface divergence $\nabla_\Gamma \cdot \alpha_\Gamma$ is defined as
%\begin{align}
%\label{def_surface_div}
%\end{align}
Consequently, we have by \reff{Q2again},  \reff{veuDD}, \reff{Hvn}, and the fact that 
$ \nabla_\Gamma \phi_{\Gamma, \infty} \cdot n = 0$ on each side of $\Gamma$  that 
\begin{align*}
Q_{2, 2} 
%%& = \int_\Gamma \llbracket \varepsilon_\Gamma \psi_{\rm r}  \nabla\phi_{\Gamma, \infty} \cdot
%%\nabla(V\cdot n)\rrbracket_\Gamma \,  dS
%%\\
& = \int_\Gamma \llbracket\varepsilon_\Gamma \alpha_\Gamma 
\cdot \nabla_\Gamma (V \cdot n) \rrbracket_\Gamma \, dS
\nonumber\\
&  = \int_\Gamma \llbracket\varepsilon_\Gamma \nabla_\Gamma \cdot ( (V \cdot n)
\alpha_\Gamma ) \rrbracket_\Gamma \, dS
-\int_\Gamma \llbracket\varepsilon_\Gamma (V \cdot n) \nabla_\Gamma \cdot 
\alpha_\Gamma \rrbracket_\Gamma\, dS 
\\
& = \int_\Gamma \llbracket 2 \varepsilon_\Gamma H ((V \cdot n) \alpha_\Gamma 
\cdot n ) \rrbracket_\Gamma \, dS - \int_\Gamma \llbracket\varepsilon_\Gamma 
\nabla_\Gamma  \cdot 
\alpha_\Gamma \rrbracket_\Gamma (V \cdot n)\, dS
\\
& = - \int_\Gamma \llbracket\varepsilon_\Gamma \nabla_\Gamma \cdot 
\alpha_\Gamma  \rrbracket_\Gamma (V \cdot n) \, dS. 
\end{align*}
%and the fact that
%\[
%\int_\Gamma \llbracket \varepsilon_\Gamma a \alpha \cdot n 2 H \rrbracket \, dS
%=\int_\Gamma 2 H  V\cdot n \llbracket \varepsilon_\Gamma  u_\Gamma 
%\nabla_\Gamma\hat{\psi}_\infty \cdot n \rrbracket \, dS =0. 
%\]
This, together with \reff{Q2again}, \reff{Q21}, and the notation $\alpha_\Gamma
= \psi_{\rm r} \nabla_\Gamma \phi_{\Gamma, \infty}$, implies that 
\begin{equation}
\label{Q2final}
Q_2 = 
- \int_\Gamma \llbracket\varepsilon_\Gamma \nabla \cdot \alpha_\Gamma \rrbracket_\Gamma (V\cdot n)\, dS 
= - \int_\Gamma \llbracket\varepsilon_\Gamma \nabla \cdot 
(\psi_{\rm r} \nabla_\Gamma \phi_{\Gamma, \infty}) \rrbracket_\Gamma (V\cdot n)\, dS. 
\end{equation}

%%%%%%%%%%%%%%%%%%%%%%%%%%%%%%%%%%%%%%%%%%%%%%%%%%%%%%%%%%%%%%%%%%%%%%%%%%%5
\begin{comment}
\begin{align}
Q_1 & =\int_\Gamma \llbracket\varepsilon_\Gamma u_\Gamma \nabla \phi_{\Gamma, \infty} \cdot 
(\nabla n) n \rrbracket_\Gamma (V\cdot n)\, dS
 - \int_\Gamma \llbracket\varepsilon_\Gamma 
\nabla_\Gamma ( u_\Gamma \nabla_\Gamma \phi_{\Gamma, \infty} ) \rrbracket_\Gamma \, dS 
\nonumber 
\\
& = -\int_\Gamma \llbracket\varepsilon_\Gamma u(\nabla n n)\cdot \nabla \hat{\psi}_\infty\rrbracket(V\cdot n)dS+\int_\Gamma \llbracket\varepsilon_\Gamma\nabla_\Gamma\cdot( u\nabla_\Gamma \hat{\psi}_\infty)\rrbracket (V\cdot n) dS\nonumber\\
& = -\int_\Gamma \llbracket\varepsilon_\Gamma u(\nabla n n)\cdot \nabla_\Gamma \hat{\psi}_\infty\rrbracket(V\cdot n)dS+\int_\Gamma \llbracket\varepsilon_\Gamma\nabla_\Gamma\cdot( u\nabla_\Gamma \hat{\psi}_\infty)\rrbracket (V\cdot n) dS\\
&\ ( \text{Since\ } \llbracket \varepsilon_\Gamma (\nabla\hat{\psi}_\infty\cdot n)\rrbracket = 0 \text{ or } (\nabla n n)\cdot n=0 )\nonumber
\end{align}
\end{comment}
%%%%%%%%%%%%%%%%%%%%%%%%%%%%%%%%%%%%%%%%%%%%%%%%%%%%%

Now, let us calculate the term $Q_3$ in \reff{Q}. 
Since $ V = (V \cdot n) n$ (cf.\ \reff{VisVnn}), we have from both sides of $\Gamma$ that 
\begin{align*}
\nabla(\nabla \phi_{\Gamma, \infty} \cdot V)\cdot n
& = \nabla \left( ( \nabla \phi_{\Gamma, \infty} \cdot n) ( V\cdot n) \right) \cdot n
\\
& = \nabla ( \nabla \phi_{\Gamma, \infty} \cdot n) \cdot n  ( V\cdot n) 
+ (\nabla \phi_{\Gamma, \infty} \cdot n) \nabla ( V\cdot n) \cdot n. 
\end{align*}
%%%%%%%%%%%%%%%%%%%%%%%%%%%%%%%%%%%%%%%%%%%%
\begin{comment}
%\begin{align}
%\int_\Gamma \llbracket  \varepsilon_\Gamma  u 
%%\frac{\partial }{\partial n}  (\nabla \hat{\psi}_\infty\cdot V) \rrbracket dS
%\end{align}
%%Again, assume that $V=(V\cdot n) n$, $n=\nabla \phi$, $\phi$ 
%%is the signed distance function. On each side
\begin{align*}
%%&\frac{\partial }{\partial n}  (\nabla \hat{\psi}_\infty\cdot V)
\nabla(\nabla \phi_{\Gamma, \infty} \cdot V)\cdot n
& =\partial_j (\partial_i \phi_{\Gamma, \infty} V^i) n^j
\\
& = \partial_{j i}  \phi_{\Gamma, \infty}  V^i n^j 
+\partial_{i}  \phi_{\Gamma, \infty}  \partial_{j } V^i n^j
\nonumber\\
& = \partial_{j i}  \phi_{\Gamma, \infty} (V\cdot n) n^i n^j
+\partial_{i}  \phi_{\Gamma, \infty}  \partial_{j } ((V\cdot n) n^i) n^j
\nonumber\\
& = \partial_{j i}  \phi_{\Gamma, \infty}  (V\cdot n) n^i n^j
  +\partial_{i}  \phi_{\Gamma, \infty}  \partial_{j } (V\cdot n)n^i n^j
  +\partial_{i}  \phi_{\Gamma, \infty} (V\cdot n)\partial_{j } n^i n^j
\nonumber\\
& =  \partial_j \left(  \partial_{i}  \phi_{\Gamma, \infty}   n^i \right) n^j  (V \cdot n)  
  +\partial_{i}  \phi_{\Gamma, \infty}  \partial_{j } (V\cdot n)n^i n^j
\nonumber 
\\
& = \left( \nabla^2  \phi_{\Gamma, \infty}  n\cdot n \right)  (V\cdot n) 
+ ( \nabla  \phi_{\Gamma, \infty} \cdot n)  \nabla (V\cdot n)\cdot n
+ ( \nabla n)  n\cdot \nabla  \phi_{\Gamma, \infty}  (V\cdot n). 
\end{align*}
\end{comment}
%%%%%%%%%%%%%%%%%%%%%%%%%%%%%%%%%%%%%%%%%%%%
Since $\llbracket \varepsilon_\Gamma \nabla\phi_{\Gamma, \infty} \cdot n \rrbracket_\Gamma =0$, 
we have by \reff{Q} and \reff{ddp} that 
\begin{align}
\label{Q3final}
Q_3 & = 
\int_\Gamma \llbracket  \varepsilon_\Gamma  \psi_{\rm r} \nabla 
(\nabla \phi_{\Gamma, \infty} \cdot V) \cdot n  \rrbracket_\Gamma  \, dS
= \int_\Gamma \llbracket  \varepsilon_\Gamma  \psi_{\rm r} 
 \nabla ( \nabla \phi_{\Gamma, \infty} \cdot n) \cdot n \rrbracket_\Gamma ( V \cdot n) \, dS. 
%%\nonumber 
%%\\
%%& =\int_\Gamma \llbracket\varepsilon_\Gamma  u_\Gamma \nabla^2  
%%\phi_{\Gamma, \infty}  n\cdot n  \rrbracket (V\cdot n)dS 
%%+ \int_\Gamma \llbracket  \varepsilon_\Gamma  u_\Gamma  (\nabla n) n\cdot \nabla  
%%\phi_{\Gamma, \infty} \rrbracket_\Gamma  (V\cdot n) \, dS. 
\end{align}

It now follows from \reff{Q}, \reff{Q1}, \reff{Q2final}, and \reff{Q3final} that  
%%and our notation $\alpha_\Gamma = u_\Gamma \nabla_\Gamma \phi_{\Gamma, \infty}$ that 
\begin{align}
\label{Qsemifinal}
Q &=  \int_\Gamma \llbracket\varepsilon_\Gamma 
[  \partial_n \phi_{\Gamma, \infty} \partial_n \psi_{\rm r} 
- \nabla \cdot ( \psi_{\rm r} \nabla_\Gamma \phi_{\Gamma, \infty} ) 
- \psi_{\rm r} \nabla ( \nabla \phi_{\Gamma, \infty} \cdot n) \cdot n] \rrbracket_\Gamma ( V \cdot n) \, dS. 
\end{align}
%%Note that on both sides of $\Gamma$,
%\begin{align*}
%\left( \nabla^2 \phi_{\Gamma, \infty} \right) n \cdot n 
%+ (\nabla n) n \cdot \nabla   \phi_{\Gamma, \infty} 
%= \nabla \left( \nabla  \phi_{\Gamma, \infty} \cdot n \right) \cdot n. 
%\end{align*}
By the definition of the tangential gradient, 
%% \reff{Ggrad}, 
the fact that $\Delta \phi_{\Gamma, \infty} = 0$ on both sides of $\Gamma$ 
(cf.\ \reff{DphiGi0}), and $\nabla \cdot n = 2 H$ on $\Gamma$, 
we can simplify the terms  inside the pair of brackets in \reff{Qsemifinal}. On both
sides of $\Gamma$, we have  
\begin{align*}
& \partial_n \phi_{\Gamma, \infty} \partial_n \psi_{\rm r}
- \nabla \cdot ( \psi_{\rm r} \nabla_\Gamma \phi_{\Gamma, \infty}) 
- \psi_{\rm r} \nabla ( \nabla \phi_{\Gamma, \infty}  \cdot n) \cdot n
\\
&\qquad 
= \partial_n \phi_{\Gamma, \infty} \partial_n \psi_{\rm r}
- \nabla \cdot [  \psi_{\rm r} \nabla \phi_{\Gamma, \infty} - \psi_{\rm r} 
( \nabla \phi_{\Gamma, \infty} \cdot n) n ]
- \psi_{\rm r} \nabla ( \nabla \phi_{\Gamma, \infty}  \cdot n) \cdot n
\\
& \qquad 
= \partial_n \phi_{\Gamma, \infty} \partial_n \psi_{\rm r} 
- \nabla \psi_{\rm r} \cdot \nabla \phi_{\Gamma, \infty} - \psi_{\rm r}  \Delta \phi_{\Gamma, \infty}
\\
&\qquad \qquad 
+ \nabla ( \psi_{\rm r} ( \nabla \phi_{\Gamma, \infty} \cdot n) ) \cdot n 
+ \psi_{\rm r} ( \nabla \phi_{\Gamma, \infty} \cdot n) ( \nabla \cdot n)
- \psi_{\rm r}  \nabla ( \nabla \phi_{\Gamma, \infty}  \cdot n) \cdot n
\\
& \qquad 
= \partial_n \phi_{\Gamma, \infty} \partial_n \psi_{\rm r} 
- \nabla \psi_{\rm r} \cdot \nabla \phi_{\Gamma, \infty} 
+ (\nabla \phi_{\Gamma, \infty} \cdot n) ( \nabla \psi_{\rm r} \cdot n)  
+ \psi_{\rm r} ( \nabla \phi_{\Gamma, \infty} \cdot n) ( \nabla \cdot n)
\\
& \qquad 
= 2 \partial_n \phi_{\Gamma, \infty} \partial_n \psi_{\rm r} 
- \left[ (\nabla \psi_{\rm r} \cdot n) n + \nabla_\Gamma \psi_{\rm r} \right]
 \left[ (\nabla \phi_{\Gamma, \infty} \cdot n) n + \nabla_\Gamma \phi_{\Gamma, \infty} \right]
+ 2 H \psi_{\rm r} \partial_n \phi_{\Gamma, \infty}
\\
& \qquad 
=  \partial_n \phi_{\Gamma, \infty} \partial_n \psi_{\rm r} 
-  \nabla_\Gamma  \phi_{\Gamma, \infty} \cdot  \nabla_\Gamma \psi_{\rm r}
+ 2 H \psi_{\rm r} \partial_n \phi_{\Gamma, \infty}. 
\end{align*}
Plug this into \reff{Qsemifinal}. Noting that 
%%$\llbracket\varepsilon_\Gamma \partial_n \phi_{\Gamma, \infty} \rrbracket_\Gamma = 0,$  
$\psi_{\rm r} = \psi_\Gamma - \hat{\phi}_{\Gamma, \infty}$ and that  
all $\nabla_\Gamma \psi_{\rm r}, $ $\nabla_\Gamma \phi_{\Gamma, \infty},$
$\ve_\Gamma \partial_n ( \psi_\Gamma - \hat{\phi}_{\Gamma, \infty} ), $
and $\ve_\Gamma \partial_n  \phi_{\Gamma, \infty}$ are 
continuous across the boundary $\Gamma$, we obtain that
\begin{align}
\label{Qfinal}
Q &= 
 \int_\Gamma \llbracket\varepsilon_\Gamma ( \partial_n \psi_{\rm r} 
\partial_n \phi_{\Gamma, \infty} 
- \nabla_\Gamma  \psi_{\rm r} \cdot  \nabla_\Gamma  \phi_{\Gamma, \infty} )
\rrbracket_\Gamma  (V\cdot n) \, dS 
\nonumber \\
& =  \int_\Gamma \llbracket\varepsilon_\Gamma 
[ \partial_n   ( \psi_\Gamma - \hat{\phi}_{\Gamma, \infty} ) 
\partial_n \phi_{\Gamma, \infty}
- \nabla_\Gamma  ( \psi_\Gamma - \hat{\phi}_{\Gamma, \infty} )
\cdot  \nabla_\Gamma  \phi_{\Gamma, \infty} ] \rrbracket_\Gamma  (V\cdot n) \, dS. 
%%\nonumber \\
%% & =  \left( \frac{1}{\ve_+} - \frac{1}{\ve_-} \right) \int_\Gamma 
%% \ve_\Gamma \partial_n   ( \psi_\Gamma - \hat{\phi}_{\Gamma, \infty} ) \ve_\Gamma
%5 \partial_n \phi_{\Gamma, \infty} (V \cdot n) \, dS 
%% \nonumber \\
%% & \qquad 
%% - (\ve_+ - \ve_-) \int_\Gamma \nabla_\Gamma  ( \psi_\Gamma - \hat{\phi}_{\Gamma, \infty} )
%%  \cdot  \nabla_\Gamma  \phi_{\Gamma, \infty} (V\cdot n) \, dS. 
\end{align}

Finally, we obtain by \reff{dGVE3}, \reff{Q}, and \reff{Qfinal} that some of the terms in 
$\delta_{\Gamma, V}E[\Gamma]$ \reff{dGVE3} are simplified into 
\begin{align*}
& \frac12 \int_\Gamma  \llbracket  \ve_\Gamma   
\partial_n \hat{\phi}_{\Gamma, \infty} \partial_n \phi_{\Gamma, \infty} 
\rrbracket_\Gamma (V \cdot n) \, dS
- \frac12 \int_\Gamma  \llbracket  \ve_\Gamma   
 \nabla_\Gamma \hat{\phi}_{\Gamma, \infty} \cdot \nabla_\Gamma \phi_{\Gamma, \infty} 
\rrbracket_\Gamma (V \cdot n) \, dS
\nonumber \\
& \quad 
+ \int_{\Omega_+} \frac{1}{2}\left( \zeta_{\Gamma, V} 
- \nabla \phi_{\Gamma, \infty} \cdot V  \right)
B'\left( \psi_\Gamma -\frac{\phi_{\Gamma,\infty} }{2} \right) dX 
\nonumber \\
& \quad 
= \frac12 \int_\Gamma  \llbracket  \ve_\Gamma   
\partial_n \hat{\phi}_{\Gamma, \infty} \partial_n \phi_{\Gamma, \infty} 
\rrbracket_\Gamma (V \cdot n) \, dS
- \frac12 \int_\Gamma  \llbracket  \ve_\Gamma   
 \nabla_\Gamma \hat{\phi}_{\Gamma, \infty} \cdot \nabla_\Gamma \phi_{\Gamma, \infty} 
\rrbracket_\Gamma (V \cdot n) \, dS
+ \frac12 Q
\\
& \quad 
=  
\frac12 \int_\Gamma \llbracket\varepsilon_\Gamma 
( \partial_n    \psi_\Gamma  \partial_n \phi_{\Gamma, \infty} - \nabla_\Gamma   \psi_\Gamma 
\cdot  \nabla_\Gamma  \phi_{\Gamma, \infty}) \rrbracket_\Gamma  (V\cdot n) \, dS 
\\
& \quad 
= \frac12 \int_\Gamma \varepsilon_+  
 \partial_n  \psi_\Gamma^+   \partial_n \phi_{\Gamma, \infty}^+ (V \cdot n) dS
- \frac12 \int_\Gamma \varepsilon_- 
 \partial_n  \psi_\Gamma^-   \partial_n \phi_{\Gamma, \infty}^-  (V \cdot n)  dS 
\\
& \quad \quad 
-\frac{\ve_+}{2}  \int_\Gamma \nabla_\Gamma \psi_\Gamma \cdot 
 \nabla_\Gamma  \phi_{\Gamma, \infty}  (V\cdot n) \, dS 
+\frac{\ve_-}{2}  \int_\Gamma \nabla_\Gamma \psi_\Gamma \cdot
\nabla_\Gamma  \phi_{\Gamma, \infty}  (V\cdot n) \, dS 
\\
& \quad 
= \frac12 \left( \frac{1}{\ve_+} - \frac{1}{\ve_-} \right)
\int_\Gamma 
\varepsilon_\Gamma   
 \partial_n  \psi_\Gamma  \ve_\Gamma  \partial_n \phi_{\Gamma, \infty} 
( V \cdot n) dS
- \frac{\ve_+ - \ve_-}{2}  
\int_\Gamma \nabla_\Gamma \psi_\Gamma \cdot
\nabla_\Gamma  \phi_{\Gamma, \infty}  (V\cdot n) \, dS. 
\end{align*}
This and \reff{dGVE3} imply the desired formula \reff{mainG1}. 
The proof is complete. 
\end{proof}

\medskip

\noindent
{\bf Acknowledgments.}
BL was supported in part by 
the US National Science Foundation through the grant DMS-1913144, 
the US National Institutes of Health through the grant R01GM132106, 
and a 2019--2020 Lattimer Research Fellowship, Division of Physical Sciences,
University of California, San Diego. 
ZZ was supported in part by the Natural Science Foundation of Zhejiang Province, China, 
through the grant LY17A010029. 
SZ was supported in part by the National Natural Science Foundation of China (NSFC)
through the grant NSFC 21773165, 
the Natural Science Foundation of Jiangsu Province, China, through the grant BK20160302,   
and the Young Elite Scientist Sponsorship Program, Jiangsu Association for Science and Technology, 
China.
%%and Soochow University, China, through the grant Q410700415.  

%%%%%%%%%%%%%%%%%%%%%%%%%%%%%%%%%%%%%%%%%%%%%%%%%%%%%%%%%%%%%%%%%%%%%%%%%%%%%%%%%
\begin{comment}

%\begin{appendices}
%%\begin{appendix}
%%\appendix
%%\section{Appendix. A General Lemma on Indifference of Tangential Motion in Surface Evolution}
%%\section{A Lemma on the Tangential Motion in Surface Evolution}

\appendix
\setcounter{secnumdepth}{0}
%%\section{Appendix \  A Lemma on Tangential Force in Surface Evolution}
\section{Appendix \  Tangential Force in Surface Evolution}
\label{Appendix}
\renewcommand{\thesection}{A}
\setcounter{equation}{0}

%%\end{appendix}
%%\end{appendices}

\end{comment}
%%%%%%%%%%%%%%%%%%%%%%%%%%%%%%%%%%%%%%%%%%%%%%%%%%%%%%%%%%%%%%%%%%%%%%%%%%%%%%%%%
}

\medskip

\bibliography{BdryVar,charge}
\bibliographystyle{plain}
\end{document}